\definecolor{rouge}{rgb}{0.7,0.00,0.00}
\definecolor{vert}{rgb}{0.00,0.5,0.00}
\definecolor{bleu}{rgb}{0.00,0.00,0.8}
\newtheorem{theorem}{Theorem}[section]
\newtheorem*{theorem*}{Theorem}
\newtheorem{lemma}[theorem]{Lemma}
\newtheorem{corollary}[theorem]{Corollary}
\newtheorem{condition}{Condition}
\renewcommand\dots{\hbox to 1em{.\hss.\hss.}}
\theoremstyle{definition}
\numberwithin{equation}{section}
\newcommand\ee{\varepsilon}
\DeclareMathOperator{\supp}{supp}
\DeclareMathOperator{\sh}{sinh}
\DeclareMathOperator{\ch}{cosh}
\DeclarePairedDelimiter\floor{\lfloor}{\rfloor}
\def\geq{\geqslant}
\def\leq{\leqslant}
\def\bb#1{\mathbb{#1}}
\def\scr#1{\mathscr{#1}}
\begin{document}
\title[Conditioned random walks]
{Local limit theorems for conditioned random walks\\by the heat kernel approximation} 
\author{ Ion~Grama}
\curraddr[I. Grama]{Univ Bretagne Sud, CNRS UMR 6205, LMBA, Campus de Tohannic, F-56000 Vannes, France}
\email[I. Grama]{ion.grama@univ-ubs.fr}
\author{Hui Xiao}
\curraddr[H. Xiao]{Academy of Mathematics and Systems Science, Chinese Academy of Sciences, Beijing 100190, China}
\email[H. Xiao]{xiaohui@amss.ac.cn}

\date{\today }
\subjclass[2020]{Primary 
60G50, %(Sums of independent random variables; random walks), 
60G40. %(Stopping times; optimal stopping problems; gambling theory).
Secondary 60F05.} % (Central limit and other weak theorems).}
\keywords{Random walk conditioned to stay positive, exit time, local limit theorem, heat kernel, Berry-Esseen bound}

\maketitle

\begin{abstract}   
We study the random walk $(S_n)_{n\geq 1}$ with independent and identically distributed real-valued increments having zero mean and an absolute moment of order $2 + \delta$ for some $\delta > 0$. For any starting point $x \in \mathbb{R}$, let $\tau_x = \inf\{k \geq 1 : x + S_k < 0\}$ denote the first exit time of the random walk $x + S_n$ from the half-line $[0, \infty)$. In the previous work \cite{GX-2024-CCLT}, we established a Gaussian heat kernel approximation for both the persistence probability 
$\mathbb{P}(\tau_x > n)$ and the joint distribution $\mathbb{P}(x + S_n \leq \cdot, \tau_x > n)$, uniformly over $x \in \mathbb{R}$ as $n \to \infty$. In this paper, we leverage these results to establish a novel conditioned {\it local limit theorem} for the walk $(x + S_n)_{n \geq 1}$. For $\mathbb{Z}$-valued random walks, we prove that the joint probability $\mathbb{P}(x + S_n = y, \tau_x > n)$ is uniformly approximated by a distribution governed by the Gaussian heat kernel over all $x, y \in \mathbb{Z}$ as $n \to \infty$. Our new asymptotic unifies into a single comprehensive formula the classical local limit theorem by Caravenna \cite{Carav05}, as well as various results relying on specific assumptions on $x$ and $y$. As a corollary, we obtain a new uniform-in-$x$ asymptotic formula for the local probability $\mathbb{P}(\tau_x = n)$. We also extend our analysis to non-lattice random walks.
\end{abstract}

%\tableofcontents

\section{Introduction}\label{Subsec-Intro}
Let $S_n = \sum_{i=1}^n X_i$, $n\geq 1$, be a random walk with independent and identically distributed real-valued increments $(X_i)_{i \geq 1}$  
having zero mean and finite variance $\sigma^2>0$. 
For simplicity of the exposition in this section 
we assume that $(S_n)_{n \geq 1}$ takes values on the lattice $\bb Z$ with minimal span equal to $1$, where $\bb Z$ denotes the set of integers.  
For any starting point $x \in \mathbb{Z}$, 
define the first passage time $\tau_x$ as the first moment when $x + S_n$ exits the non-negative half-line $\mathbb{R}_+ := [0, \infty).$
For $x, y \in \mathbb{Z}$, possibly depending on $n$, consider the local probability
\begin{align} \label{Objective-proba001}
\mathbb{P} \left( x + S_n = y, \tau_x > n - 1 \right).
\end{align}
Probabilities of the form \eqref{Objective-proba001} have been extensively studied in the literature. 
Notable contributions include the works of  
Borovkov \cite{Borovk62}, 
Feller \cite{Fel64}, Spitzer \cite{Spitzer}, 
Bolthausen \cite{Bolth}, Iglehart \cite{Igle74}, 
Kozlov \cite{Kozlov76}, 
Bertoin and Doney \cite{BertDoney94}, 
Caravenna and Chaumont \cite{Caravenna-Chaumont08}, 
Doney and Jones \cite{DJ12}, Denisov and Wachtel \cite{Den Wachtel 2011, DW24}, 
Kersting and Vatutin \cite{KV17}, among others. 

One of the reference results in this area is the local limit theorem by Caravenna \cite{Carav05}, which extends the classical local limit theorems of Gnedenko \cite{Gned48} and Stone \cite{Sto65} to conditioned random walks.
Reformulating it in a convenient way, the following asymptotic holds:
as $n \to \infty$, uniformly in $y \in \mathbb{Z}_+ = \{0, 1, 2, \ldots \}$,  
\begin{align} \label{eq-into-carav-001}  
\mathbb{P} \big(S_n = y, \tau_0 > n-1 \big) %=  \mathbb{P} \big(S_n = z \big| \tau_0 > n \big)  
   =   \frac{2\bb E(-S_{\tau_0})}{\sqrt{2\pi}\sigma^2 n }\phi^+\left(\frac{y}{\sigma\sqrt{n}}\right)  +  \frac{o(1)}{n},  
\end{align}  
where $y\mapsto \phi^+(y) = ye^{-y^2/2}$ on $\bb R_+$ is the Rayleigh density function.

The following remarks are in order.  
First, in \eqref{eq-into-carav-001} the starting point is $x = 0$.  
Actually, the limit law $\phi^+$ remains unchanged as long as  $\frac{x}{\sqrt{n}} \to 0$ as $n \to \infty$.  
However, going beyond this restriction shows a different limiting law in place of the Rayleigh distribution.  
Second, the leading term in \eqref{eq-into-carav-001} is meaningful only for values of $y$ satisfying  
$y \in (\alpha_n \sqrt{n}, \alpha_n^{-1} \sqrt{n})$,  
where $\alpha_n \to 0$ is a sequence determined by the Landau symbol $o(1)$  
in \eqref{eq-into-carav-001}.  
Taking $y$ to be fixed yields only a rough upper bound for the local probability,  
as in this case, the leading term in \eqref{eq-into-carav-001} vanishes faster than the error term:  
 $\frac{1}{n}\phi^+\big(\frac{y}{\sigma\sqrt{n}}\big) = O(n^{-3/2}).$  
 
 Over time, numerous studies have explored cases beyond these constraints, examining  
scenarios where the starting point $x \geq 0$ and the arrival point $y \geq 0$ satisfy  
either $\frac{x}{\sqrt{n}} \to 0$ or $x \asymp \sqrt{n}$, and analogously,  
either $\frac{y}{\sqrt{n}} \to 0$ or $y \asymp \sqrt{n}$ as $n \to \infty$.  
Asymptotic results for some of these cases have been established by  
Alili and Doney \cite{Alili-Donney99},  
Bryn-Jones and Doney \cite{Bryn-Jones-Doney06},  
Vatutin and Wachtel \cite{VatWacht09},  
Doney \cite{Don12},  
Denisov, Tarasov and Wachtel \cite{DTW24, DTW24b},
and very recently in \cite{GX-2024-AIHP}.  
From this previous work, 
it is known that uniformly in $x, y \in \bb Z_+$ satisfying $\frac{x}{\sqrt{n}} \to 0$ and $\frac{y}{\sqrt{n}} \to 0$, 
\begin{align} \label{eq-intro-000a}
 \bb{P} \Big( x+S_n = y,  \tau_x >n-1\big) 
   \sim \frac{ 2V(x) \check V(y) }{\sqrt{2\pi}\sigma^3 n^{3/2} }, 
\end{align}
where $V$ and $\check V$ are the harmonic functions of the random walks $S_n$ and $\check S_n=-S_n$ killed upon exiting $\bb R_+$ (see the next section for precise definitions).
Notably, this asymptotic differs from that given by \eqref{eq-into-carav-001}. 
Further examples of exact asymptotics can be found in \cite{GX-2024-AIHP}; 
see also the consequences of Theorem \ref{Th-lattice-general equiv-c001} 
in Section \ref{sec:Main result and consequences}. 
These results highlight a variety of behaviors depending on the assumptions about $x$ and $y$.  
However, to the best of our knowledge, a complete understanding 
of how these diverse results interconnect has remained elusive--until now.  

In their previous work \cite{GX-2024-CCLT}, the authors established a Gaussian heat kernel approximation for both the persistence probability $\mathbb{P}(\tau_x > n)$ and the joint distribution  
$\mathbb{P}(x + S_n \leq u \sigma \sqrt{n}, \tau_x > n)$ for $u \geq 0$, uniformly over $x \in \mathbb{R}$ as $n \to \infty$.  
In this paper, we extend the heat kernel approach to analyze the asymptotic of the local probability \eqref{Objective-proba001}.  
This method reveals a new limiting law governed by the normalized heat kernel, 
which characterizes the asymptotic behavior of the probability \eqref{Objective-proba001} 
across different regimes of $x$ and $y$. 
Our result unifies various distinct behaviors into a single comprehensive formula, significantly generalizing Caravenna's conditioned local limit theorem \eqref{eq-into-carav-001} and the equivalence \eqref{eq-intro-000a}. 
Furthermore, our findings enable the study of moderate deviations for cases where both $x$ and $y$ grow to infinity at a rate faster than $\sqrt{n}$. 
We also provide precise upper and lower bounds in various regimes, enhancing the scope and applicability of our analysis.

Let us briefly outline the main result in the lattice case. 
Set $\phi(u)=\frac{1}{\sqrt{2\pi}}e^{-u^2/2}$ for the standard normal density function on $\bb R$. 
The key role in the following will be played by the function $\psi$ defined as follows:  
for any $x,y\in \mathbb{R}$,  
\begin{align} \label{Def-Levydens}
\psi(x,y) :=  \phi(x-y) -\phi(x+y) = \frac{1}{\sqrt{2\pi }}   \left( e^{-\frac{(x-y)^2}{2}} - e^{-\frac{(x+y)^2}{2}} \right).
\end{align}   
The function $\psi$ is closely related to the one-dimensional heat kernel and will be referred to as such.  
In addition, we introduce the following normalized version of $\psi$: for any $x,y\in \mathbb{R}$,
\begin{align} \label{heat-kernel-p_h-001} 
p(x,y) = \frac{\psi(x,y)}{H(x)H(y)}, 
\end{align}
where $H(x)=\int_{\bb R_+} \psi(x,y)dy$.
One can verify that the function $(x,y)\mapsto p(x,y)$ is well-defined for $x=0$ and/or $y=0$,
and that the normalization by $H(x)H(y)$ ensures that $p(x,y)$ remains positive for all $x, y \in \mathbb{R}$,
see Fig.\ \ref{plotpHH}. We also need the function $L(x)=\frac{H(x)}{x}$ for $x\in \bb R$, see Fig. \ref{FigFuncL-001}.  
For short,  denote $V_n(x)=V(x)L(\frac{x}{\sigma\sqrt{n}})$ and $\check V_n(x)=\check V(x)L(\frac{x}{\sigma\sqrt{n}})$,
$x\in \bb R$.

From the general results of this paper, 
it follows that under the moment condition of order $2+\delta$ with $\delta > 0$, the following %novel type 
heat kernel approximation holds: 
as $n\to\infty$, uniformly for $x, y \in \mathbb{Z}$,  
\begin{align}\label{eq-intro-001}
\mathbb{P}  \Big(x+ S_n =y,  \tau_x > n - 1 \Big)
=   \frac{V_n(x) \check V_n(y) }{ \sigma^3 n^{3/2}}   
p \left( \frac{x}{\sigma\sqrt{n}}, \frac{y}{\sigma\sqrt{n}}  \right)   
+ \frac{r_n(x,y)}{n^{3/2}}o(1),
\end{align}  
where $r_n(x,y)$ is an explicit rate function given in Theorem \ref{CLLT-lattice-n3/2 main result}.
In the case $x=0$ and $y \in (\alpha_n \sqrt{n}, \alpha_n^{-1} \sqrt{n})$ where $\alpha_n\to 0$ as $n\to\infty$, 
the expansion \eqref{eq-intro-001} refines the Caravenna asymptotic \eqref{eq-into-carav-001}
by providing a rate of convergence, see the derivation of \eqref{new extention Carav-001} 
in Section  \ref{sec:Main result and consequences}. 
Using the expression for $r_n(x,y)$, 
we derive the following asymptotic equivalence
which significantly generalizes \eqref{eq-intro-000a}: 
as $n \to \infty$,  uniformly for $x, y \in \mathbb{Z}_+$ satisfying 
$|y-x| \leq \sigma \sqrt{q n \log n}$ 
 with $q < \frac{\delta}{8(\delta+3)}$,  
\begin{align}\label{eq-intro-002}
\mathbb{P}  \Big(x+ S_n =y,  \tau_x > n - 1 \Big)
\sim   \frac{V_n(x) \check V_n(y) }{ \sigma^3  n^{3/2}  }   
p \left( \frac{x}{\sigma\sqrt{n}}, \frac{y}{\sigma\sqrt{n}}  \right).
\end{align}  
When $\frac{x}{\sqrt{n}}\to 0$ and $\frac{y}{\sqrt{n}}\to 0$,  \eqref{eq-intro-002} reduces to  \eqref{eq-intro-000a}. 
However, it also yields a non-trivial asymptotic for cases
where $x \asymp \sqrt{qn\log n}$ and/or $y \asymp \sqrt{q n\log n}$. 
Various %Further 
asymptotics for different regimes of $x$ and $y$ are derived from \eqref{eq-intro-001} 
and \eqref{eq-intro-002} in Section \ref{sec:Main result and consequences}.

The above results also allow us to analyze the asymptotics of the local probability $\mathbb{P} ( \tau_x =n )$.
We summarize our main findings for $\mathbb{P} ( \tau_x =n )$,
with more complete statements in Section \ref{sec:loc Th for exit time-001}: 
from Theorem \ref{theorem local for tau lattice}, 
we have that, as $n \to \infty$, uniformly for $x \in \mathbb{Z}$,
\begin{align} \label{intro-local-tau_x-001}
  \mathbb{P} ( \tau_x =n+1 ) 
   = \phi\left(\frac{x}{\sigma\sqrt{n}}\right) \frac{2 V(x) \varkappa  }{\sigma^3 n^{3/2}}   
   + \frac{r_n(x)}{n^{3/2}} O(1),
\end{align}
where $r_n(x)$ is given explicitly and
$\varkappa = \sum_{k  \in \bb Z_+ } \check V(k) \bb P(\check S_1>k ).$
Using the exact expression for $r_n(x)$, %in Theorem \ref{theorem local for tau lattice},
we obtain the following equivalence: as $n\to\infty$, uniformly for
$x \in \mathbb{Z}_+$ satisfying $ x \leq \sigma \sqrt{2 q n \log n}$ with $q < \frac{\delta}{8(\delta+3)}$,  
\begin{align} \label{intro-local-tau_x-002}
  \mathbb{P} ( \tau_x =n+1 ) 
   \sim \phi\left(\frac{x}{\sigma\sqrt{n}}\right) \frac{2  V_n(x) \varkappa }{\sigma^3 n^{3/2}}. 
\end{align}
Our results extend asymptotic formulas obtained by 
Eppel \cite{Eppel-1979} and Vatutin and Wachtel \cite{VatWacht09} for $x=0$,
as well as  Doney's results \cite{Don12}  that established distinct formulas
for the regimes $\frac{x}{\sqrt{n}} \to 0$ and $x \asymp \sqrt{n}$.
To our knowledge, equation \eqref{intro-local-tau_x-001} represents the first result in the literature that provides 
an explicit rate of convergence for this problem.
 
 The preceding discussion focused on lattice random walks; however, we establish similar results  
in the non-lattice case, see Section \ref{sec: results non-lattice case}.  

The proof method employed in this paper differs from traditional approaches used to establish such results.  
The classical method relies on Wiener-Hopf identities; see Feller \cite{Fel64} and Spitzer \cite{Spitzer} for details.  
In contrast, our approach is significantly simpler and primarily based on the application of the Markov property.  
Using this property, the random process is divided into three consecutive segments,  
denoted as $I_1, I_2, I_3$.  
For two of these segments, $I_1$ and $I_3$,  
we apply the conditioned central limit theorem for both forward and reversed random walks,  
as established in \cite{GX-2024-CCLT}.  
For the middle segment, $I_2$, we use the standard local limit theorem.  
By appropriately convolving the resulting limit laws corresponding to $I_1$ and $I_2$,  
we first obtain a conditioned local limit theorem of order $n^{-1}$,  
extending Caravenna's result \cite{Carav05}.  
Further convolving this with the reversed conditioned central limit theorem for $I_3$,  
we derive a new global behavior for the conditioned sum $S_n$.  

This convolution-based approach was previously applied in \cite{GX-2024-AIHP}  
to study conditioned random walks in cases where the starting point $x$  
is either close to the origin or of order $\sqrt{n}$ as $n \to \infty$, separately.  
Notably, this method is particularly well-suited for handling dependent random variables,  
including Markov chains, products of random matrices, and dynamical systems;  
see \cite{GLL20}, \cite{GQX24}, \cite{GQX24b} and \cite{GQX23}.  
The main contribution of this paper is to demonstrate that the convolution approach  
can be used to derive a new limit law based on the heat kernel,  
which characterizes the uniform-in-$x$ and in-$y$  asymptotic behavior of local probabilities. 

The paper is organized as follows. In Section \ref{Sec-Nota-bak}, we introduce the notation and present a heat kernel-type conditioned central limit theorem, which will be used in the proofs. Section \ref{Sec-CondLocLimTh-lattice-001} focuses on our results in the lattice case, while the corresponding results for the non-lattice case are presented in Section \ref{sec: results non-lattice case}. In Section \ref{Sec-Auxiliary statements}, we state some auxiliary results, including useful properties of the heat kernel. Finally, Sections \ref{SecProof Theor-latticecase-001} and \ref{SecProof-non-lattice Theor-probtau-001} are dedicated to the proofs for the lattice and non-lattice cases, respectively.

We conclude this section by introducing some necessary notation.
Throughout, we denote by $c$ generic positive constants, and by $c_\alpha$  positive constant that depends only on the specified index. These constants may vary from line to line.
$\bb Z$ will denote the set of integers.
The indicator function of a set $B$ is denoted by $\mathds{1}_B$. For brevity, given a random variable $X$ and an event $B$, we write $\mathbb{E}(X; B)$ for the expectation $\mathbb{E}(X \mathds{1}_B)$.
For any $t > 0$, let
$\phi_t(x) = \frac{1}{\sqrt{2\pi t}} \exp\left(-\frac{x^2}{2t}\right)$ and
$\Phi_t(x) = \int_{-\infty}^x \phi_t(u) du$,  $x \in \mathbb{R}$
denote, respectively, the normal density function and cumulative distribution function with mean zero and variance $t$.
When $t=1$, we write $\phi(x) = \phi_1(x)$ and $\Phi(x) = \Phi_1(x)$.
% for the standard normal density and cumulative distribution functions, respectively.

The notation $f_n(\alpha) = o(g_n(\alpha))$ uniformly for $\alpha \in A_n$ as $n\to\infty$,  
means that there exists a sequence $(\ee_n)_{n\geq 1}$ such that $\lim_{n\to\infty}\ee_n=0$ and $f_n(\alpha) \leq \ee_n g_n(\alpha)$ for any $\alpha \in A_n$ and $n\geq 1$. 
In the same way, $f_n(\alpha) = O(g_n(\alpha))$ uniformly for $\alpha \in A_n$ as $n \to \infty$, means that
there exists $c>0$ such that $f_n(\alpha) \leq c g_n(\alpha)$ for any $\alpha \in A_n$ and $n\geq 1$.
Similarly, the notation $f_n(\alpha) \sim g_n(\alpha)$ uniformly for $\alpha \in A_n$ as $n \to \infty$, 
means that 
$\lim_{n \to \infty} \sup_{\alpha\in A_n} | f_n(\alpha) / g_n(\alpha) - 1| = 0$.

For any measurable function $g : \mathbb{R} \to \mathbb{R}$, we denote its $L^1$-norm by
$\| g\|_1=\int_{\bb R} |g(y)|dy$.
Finally, $f*g$ denotes the convolution of two functions $x\mapsto f(x)$ and $x\mapsto g(x)$ on $\bb R$, 
i.e.\ $f*g(x)=\int_{\bb R} f(x-z)g(z)dz$ for $x\in \bb R$.  
When $y\mapsto f(y)$ is a function on $\bb R$ and $(x,y) \mapsto g(x,y)$ is a function on $\bb R^2$, 
we write $f*g\,(x,y)=\int_{\bb R} f(y-z)g(x,z)dz$ for $x,y\in \bb R$.

\section{Notation and background results} \label{Sec-Nota-bak}
Assume that on a probability space $(\Omega, \mathscr{F}, \mathbb{P})$, we are given a sequence of independent and identically distributed real-valued random variables $(X_i)_{i \geq 1}$ with $\mathbb{E}(X_1) = 0$ and $\mathbb{E}(X_1^2) = \sigma^2 \in (0, \infty)$.
For each $n \geq 1$, set $S_n = \sum_{i=1}^{n} X_{i}.$
Given a starting point $x \in \mathbb{R} := (-\infty, \infty)$, we define the first passage time $\tau_x$ as the first time when the random walk $(x + S_n)_{n \geq 0}$ exits the non-negative half-line $\mathbb{R}_+ := [0, \infty)$:
\begin{align*}
\tau_x = \inf \left\{ k \geq 1: x + S_{k} < 0 \right\}, \quad \text{with} \quad \inf \emptyset = \infty.
\end{align*}
Our standing assumption is that the increment $X_1$ has a moment of order $2+\delta$ for some $\delta > 0$, that is, 
\begin{align} \label{Moment condition 2+delta} 
\mathbb{E}(|X_1|^{2+\delta}) < \infty. 
\end{align}
The asymptotic behavior of the probabilities in \eqref{Objective-proba001} is governed by a harmonic function $V(x)$ associated with the random walk $(x+S_n)_{n\geq 1}$ killed upon exiting $\mathbb{R}_+$, which we now proceed to introduce.  
It is known that under condition \eqref{Moment condition 2+delta}, for any $x \geq 0$, the expectation $\mathbb{E} |S_{\tau_x}|$ is finite; see for instance \cite{BertDoney94}.  
From this basic fact, by a monotone convergence argument, one can verify that for any $x \geq 0$,
\begin{align}\label{Def_V_002}
\lim_{n \to \infty} \mathbb{E} \left(x+S_n ; \tau_x > n\right) = V(x) := -\mathbb{E} S_{\tau_x}.
\end{align}
Moreover, using the Markov property and relation \eqref{Def_V_002}, it follows that for any $x \geq 0$,
\begin{align} \label{Doob transf}
\mathbb{E}\left( V(x+S_1); x+S_1 \geq 0 \right) = V(x),
\end{align}
which shows that the function $V$ is harmonic for the random walk $(x+S_n)_{n\geq 1}$ killed at the stopping time $\tau_x$.

Using identity \eqref{Doob transf}, one can uniquely extend the function $V$ to the whole real line $\bb R$
by setting $V(x)= \bb E V(x+S_1)\mathds 1 _{\{ x+S_1\geq 0  \}}$ for $x<0$, 
so that the harmonicity property \eqref{Doob transf} is preserved for any $x\in \bb R$. 
The harmonic function $V$ satisfies the following properties:
$V(x)\geq 0$ for $x\in \bb R$,   
$x\mapsto V(x)$ is non-decreeasing on $\bb R$,
$x \leq V(x) \leq c(1 + x) $ for any $x \geq 0$ and $\lim_{x\to \infty }\frac{V(x)}{x} = 1.$ 
The support of $V$ is given by (\cite[Example 2.10]{GLL18Ann}) 
\begin{align} \label{support-V}
\supp V = \{ x \in \bb R: \bb P(x + X_1 \geq 0) >0  \}. 
\end{align}
In particular, since $\bb E(X_1)=0$ and $\sigma^2>0$, 
from \eqref{support-V} it follows that $V(0)>0$ and hence $V$ is strictly positive on $\bb R_+$. 

Define the reversed random walk by letting $\check X_i=-X_i$ and $\check S_n  =\sum_{i=1}^{n}\check X_{i}= -S_n$, $n \geq 1.$ 
For any $x\in \bb R$, 
consider the first moment $\check \tau_x$ when the random walk $(x+\check S_n )_{n\geq 1}$ 
exits the non-negative half-line $\bb R_+$, 
which is defined as  
\begin{align}\label{definition of tau x star-001}
\check \tau_x = \inf \left\{ k \geq 1: x+\check S_{k} < 0 \right\}
 \quad \mbox{with} \quad   \inf\emptyset = \infty.   
\end{align}
The harmonic function of the random walk $(x+\check S_n )_{n\geq 1}$ killed at leaving $\bb R_+$ will be denoted by $\check V$.
The properties of the function $\check V$ are  similar to those of $V$.
For example, $\check V$ is non-decreasing and non-negative on $\bb R$ with $\check V(0) >0$, and moreover 
\begin{align}\label{support-dual-func}
\supp \check V = \{ x \in \bb R: \bb P(x +\check  X_1 \geq 0) >0  \}.
\end{align}

%%%%%%%%%%%%%%%%%%%%%%%%%%%%%%%%%%%%%%%%%%%%%%%%%%%%%%%%%
% KEEP FOR ARXIV VERSION
The relations of $V$and $\check V$ to various renewal functions in the same context is described 
in the Appendix \ref{sec: renewal func formulation}.

The function $\psi$ defined in the introduction (see \eqref{Def-Levydens}), is related to %Consider 
the one-dimensional heat kernel $(t,x,y)\mapsto \psi_t(x,y)$, which for any 
for any $t>0$ and $x,y\in \bb R$, is given by
\begin{align} \label{heat-kernel-001}
\psi_t(x,y) = \frac{1}{\sqrt{t}} \psi\left(\frac{x}{\sqrt{t}}, \frac{y}{\sqrt{t}}\right) = \phi_t(x-y)-\phi_t(x+y),
\end{align}
where we recall that $\phi_t$ stands for the normal density function with mean zero and variance $t$.
It is straightforward to verify that $\psi_t$ satisfies the heat equation with Dirichlet type initial conditions: 
for any $t\geq 0$ and $x,y\in \bb R$, it holds 
$\frac{\partial }{\partial t}\psi_t(x,y) = \frac{1}{2}\frac{\partial^2 }{\partial y^2}\psi_t(x,y),$  
with $\psi_t(x,0)=0$ for any $t>0$, and $\lim_{t\to0}\psi_t(x,y)=\delta(x-y)-\delta(x+y)$,
where $\delta$ is the Dirac delta function on $\bb R$, i.e.  $\delta(0) =\infty$ and $\delta(x) =0$ for $x\not=0.$   

Let us briefly describe some elementary properties of the heat kernel $\psi_t$.
The function $\psi_t$ is 
symmetric on $\bb R \times \bb R$, i.e. $\psi_t(x, y) = \psi_t(y, x)$ for any $x, y \in \bb R$, 
and nonnegative on $\bb R_+ \times \bb R_+$. 
Also the map $y\mapsto \psi_t(x, y)$ is odd, i.e. $\psi_t(x,-y) = -\psi_t(x, y)$ for any $x, y \in \bb R$.
There is a relation of the heat kernel $\psi_t$  
to the standard  Brownian motion $(B_t)_{t\geq 0}$ conditioned to stay positive. 
For $t>0$ and  $x\geq 0$, by L\'evy's formula (see \cite{Levy37}, Theorem 42.I, pp.194-195), 
it holds $\bb P \big( x+B_t \in dy, \tau_x^{B}>1 \big)=\psi_t(x,y)dy$, 
where  $\tau_x^{B}$ is the exit time of $(x+B_t)_{t\geq 0}$ from the half-line $\bb R_+$. 

For any $x\in \bb R$, the function $y\mapsto \psi(x,y)$ can be normalized by introducing 
the function $x\mapsto H(x)$ defined on the whole real line by setting, for any $x\in \bb R$,
\begin{align}\label{eq-definition of H-abc001} 
H(x) := \int_{0}^{\infty} \psi(x,y) dy =  2 \Phi(x) - 1. 
\end{align}
Then, the one-side normalized version of the heat kernel $\psi$ is defined as follows: 
for any $x\in \bb R$,
\begin{align} \label{def of func ell_H-001}
\ell(x,y) = \frac{\psi(x,y)}{H(x)}, 
\quad \ell(0,y) := \lim_{x\to 0} \ell(x,y)  = \phi^+(y) := y e^{-y^{2}/2}, \quad y\in \bb R. 
\end{align}
 The function $y\mapsto \phi^+(y)$ restricted on $\bb R_+$ is the Rayleigh density function.
By \eqref{eq-definition of H-abc001} and \eqref{def of func ell_H-001}, for any $x \in \bb R$, 
the function $y \mapsto \ell(x,y)$ is a probability density on $\bb R_+$: 
\begin{align*}%\label{}
\int_{\bb R_+}  \ell(x, y) dy = 1. 
\end{align*}
Moreover,  for any $x, y \in \bb R$, it holds
$ \ell(x, y) = - \ell(x, -y)$ and $\ell(x, y) =  \ell(-x, y).$
The plot of $(x,y) \mapsto \ell(x,y)$ is shown in Fig. \ref{FigFunc-h-002}.

In the sequel we will need a normalized version of the function $H$   
which will play an important role in the formulation of the main results: 
for any $x\in\bb R$, set
\begin{align} \label{func L as integral of heat kern}
L(x) := \frac{H(x)}{x} = \frac{2\Phi(x)-1}{x} \quad\mbox{with}\quad L(0) := \lim_{x\to 0} L(x) = \frac{2}{\sqrt{2\pi}}.
  \end{align}
The function $L$ is positive and even on $\bb R$, i.e.\ $L(x)>0$ and $L(x)=L(-x)$, for any $x\in \bb R$. 
Moreover, $x\mapsto L(x)$ is decreasing on $\bb R_+$ 
and satisfies $L(x) \sim |x|^{-1}$, as $|x|\to\infty$. 
The plot of $x\mapsto L(x)$ is given in Fig. \ref{FigFuncL-001}. 
\begin{figure}
  \includegraphics[width=10cm]{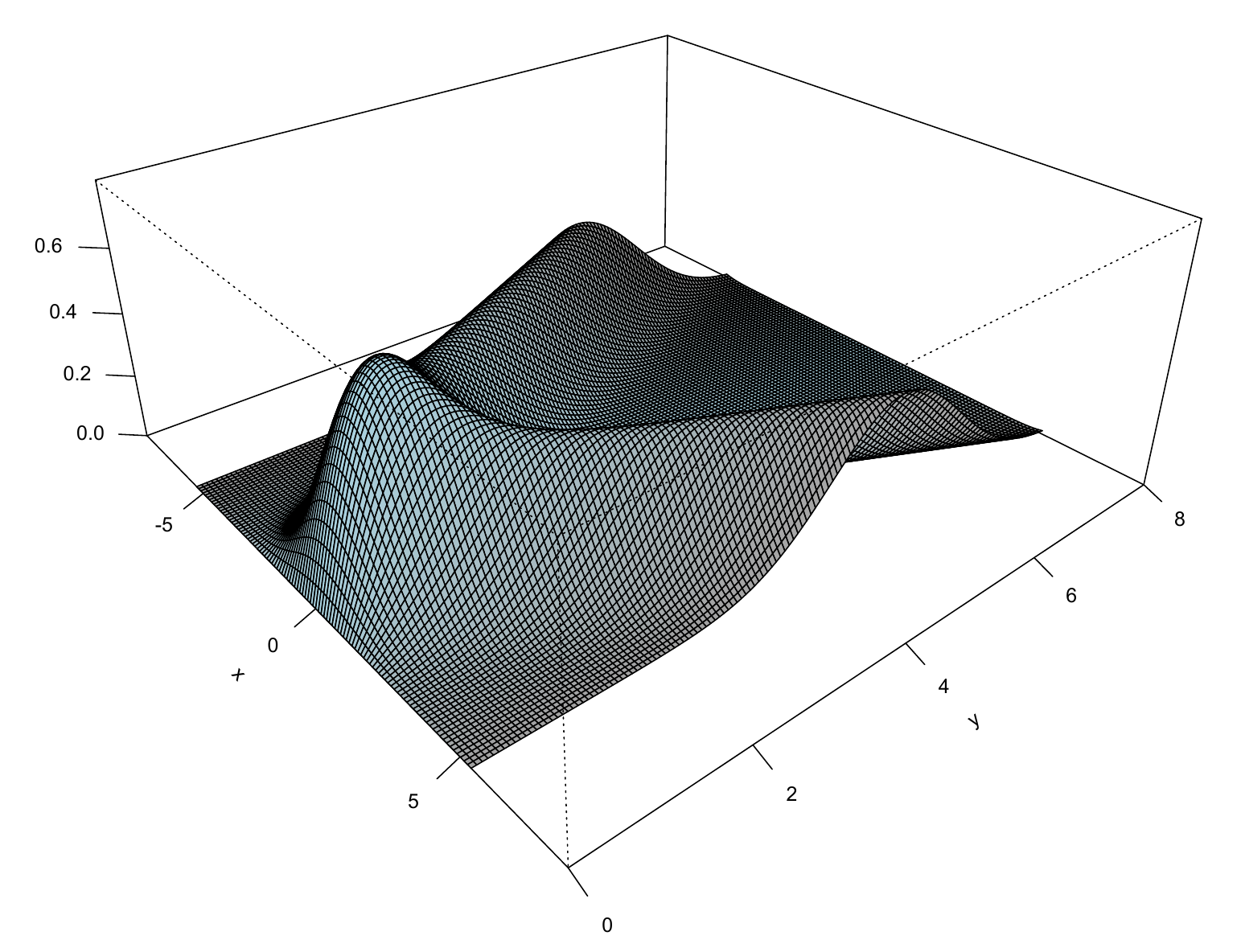}
  \caption{Plot of the function $(x,y)\mapsto \ell(x,y)$.}
  \label{FigFunc-h-002}
\end{figure}
\begin{figure}
  \includegraphics[width=9cm]{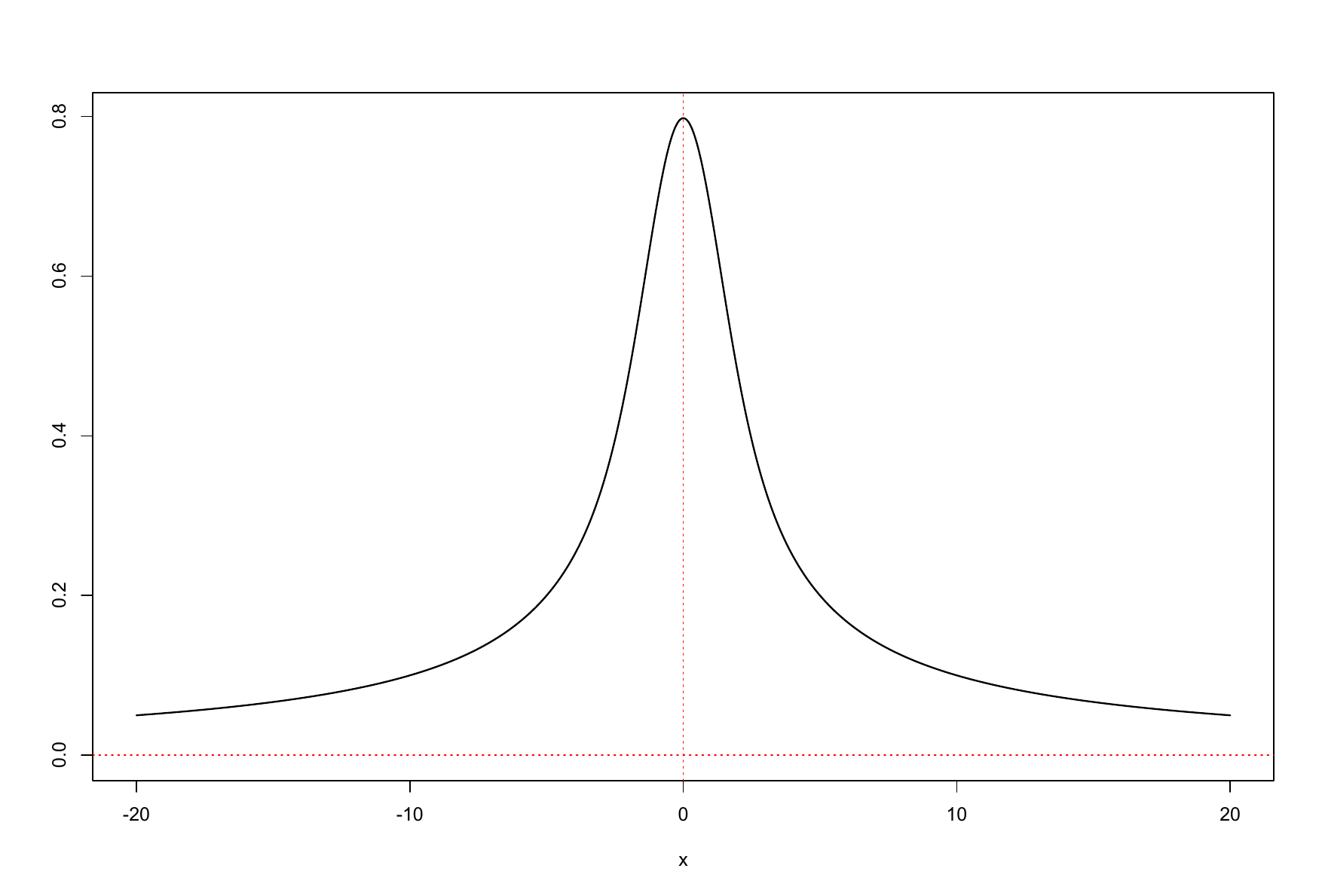}
  \caption{Plot of the function $x\to L(x)$.}
  \label{FigFuncL-001}
\end{figure}

To formulate the main results of the paper it will be convenient to introduce the following notation:  for any $n\geq 1$ and $x\in \bb R$,
\begin{align} \label{def of Q-001}
V_n(x) = V(x)L\left(  \frac{x}{ \sigma\sqrt{n}} \right), \qquad  \check V_n(x) = \check V(x)L\left(  \frac{x}{ \sigma\sqrt{n}} \right),
\end{align}
By  \cite[Lemma 2.4]{GX-2024-CCLT}, %the properties of the functions $V$, $\check V$ and $L$ 
for any $x_0\in \supp V$ and $z_0\in \supp \check V$,
 there exist constants $c_1,c_2>0$ such that, for any $n\geq 1$, $x\in [x_0,\infty)$ and $z\in [z_0,\infty)$,
\begin{align} \label{lower bound of VL-001}
c_1 \leq V_{n}(x) \leq  c_2\sqrt{n}, \qquad c_1 \leq \check V_{n}(z) \leq  c_2\sqrt{n}.
\end{align}

The following first-order uniform asymptotics of the persistence probability $\bb{P} (\tau_x >n )$ 
and a central limit type theorem for the joint probability $\bb{P} (\frac{x+S_n}{\sigma \sqrt{n}}  \leq  u, \tau_x >n )$ 
have been determined in the paper \cite{GX-2024-CCLT}.

\begin{theorem}[\cite{GX-2024-CCLT}]\label{introTheor-probtauUN-001} 
Assume that $\bb E (X_1) = 0$, $\bb E (X^2_1)= \sigma^2  > 0$  and that there exists $\delta > 0$ 
such that $\bb E (|X_1|^{2+\delta})  < \infty.$ 
Denote $R_{n,\delta}(x) := n^{-\frac{\delta}{4} } + V_n(x)  n^{-\frac{\delta}{4(3+\delta)} }\log n.$
Then: \\
1. There exists $c >0$ such that for any  $n \geq 1$ and $x\in \bb R$,   
\begin{align} \label{UniformCondtau-001}
 \Bigg| \bb{P} \left( \tau_{x}>n\right) - \frac{V_n(x) }{\sigma \sqrt{n}} \Bigg| 
 \leq  \frac{c}{\sqrt{n}} R_{n,\delta}(x).
\end{align}
2. There exists $c >0$ such that for any  $n \geq 1$, $u\geq 0$ and $x\in \bb R$,   
\begin{align} \label{UniformCondInt-001}
\Bigg| \bb{P} \Bigg(  \frac{x+S_n }{ \sigma \sqrt{n}} \leq u, \tau_x >n\Bigg) 
      - \frac{V_{n}(x) }{\sigma \sqrt{n}}     
       \int_{0}^{u}  \ell \left( \frac{x}{\sigma \sqrt{n}}, z   \right)  dz  \Bigg| 
       \leq \frac{c}{\sqrt{n}} R_{n,\delta}(x).
\end{align}
\end{theorem}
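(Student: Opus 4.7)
The plan is to prove both assertions simultaneously, with (1) obtained from (2) by letting $u\to\infty$. The central idea is a Markov decomposition at an intermediate time $m=\lfloor n^{\alpha}\rfloor$, with $\alpha\in(0,1)$ to be tuned in terms of $\delta$:
\begin{align*}
\bb P\Big(\tfrac{x+S_n}{\sigma\sqrt n}\leq u,\tau_x>n\Big)
= \bb E\Big[f_{n-m}(x+S_m);\,\tau_x>m\Big],
\end{align*}
where $f_k(y):=\bb P\big(\tfrac{y+S_k}{\sigma\sqrt n}\leq u,\tau_y>k\big)$. For the inner function $f_{n-m}(y)$, evaluated at the random point $y=x+S_m$, I would use a KMT-type strong coupling between $(y+S_j)_{j\leq n-m}$ and a Brownian motion started at $y$ together with L\'evy's identity $\bb P(y+B_t\in dz,\tau^B_y>t)=\psi_t(y,z)\,dz$ to obtain, up to a quantitative error provided by the $2+\delta$ moment assumption,
\[
f_{n-m}(y)\;\approx\;\int_0^{u\sigma\sqrt n}\psi_{(n-m)\sigma^2}(y,z)\,dz.
\]

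The outer expectation is then computed by invoking a conditioned central limit theorem for the first $m$ steps, of Denisov-Wachtel type. The limiting conditional law of $(x+S_m)/\sigma\sqrt m$ given $\tau_x>m$ is explicit: for $x/\sigma\sqrt m\to 0$ it is the Rayleigh density $\phi^+$, and more generally it is encoded by $V(x)$ together with the normalized kernel $\ell(x/\sigma\sqrt m,\cdot)$. Substituting yields a double integral of $\psi_{(n-m)\sigma^2}(y,z)$ against the conditioned law at time $m$, which by the semigroup (reproducing) property of the heat kernel $\psi_t$ collapses to $\psi_{n\sigma^2}(x,z)$ at leading order. Normalising, and using that $L(x/\sigma\sqrt n)=H(x/\sigma\sqrt n)/(x/\sigma\sqrt n)$ is precisely the factor produced by the $V$-Doob transform, one recovers the announced expression $\frac{V(x)L(x/\sigma\sqrt n)}{\sigma\sqrt n}\int_0^u\ell(x/\sigma\sqrt n,z)\,dz = \frac{V_n(x)}{\sigma\sqrt n}\int_0^u\ell(x/\sigma\sqrt n,z)\,dz$. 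The case $x<0$ can be reduced to $x\geq 0$ by one application of the harmonicity relation \eqref{Doob transf}.

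The main obstacle, in my view, is securing uniformity of the remainder over $x\in\bb R$ with the explicit rate $R_{n,\delta}(x)=n^{-\delta/4}+V_n(x)n^{-\delta/(4(3+\delta))}\log n$. For starting points satisfying $x/\sqrt n\to 0$, only the first summand is relevant and comes from Berry-Esseen and truncation errors in the classical conditional CLT; for $x\asymp\sqrt n$, the second summand dominates and reflects the combined cost of the KMT coupling on the terminal segment and the conditional CLT on the initial segment. Balancing these two contributions through the choice of $\alpha$ produces the exponent $\delta/(4(3+\delta))$, with the logarithmic factor arising from moderate-deviations-type tail estimates. The requirement that the prefactor of the error be $V_n(x)$ rather than $V(x)$ is exactly what permits a single uniform statement: $V_n$ tracks the magnitude of the main term across both regimes, so that the relative error stays controlled whether $V(x)$ is bounded or of order $\sqrt n$.
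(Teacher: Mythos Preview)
This theorem is not proved in the present paper: it is quoted verbatim from the authors' earlier work \cite{GX-2024-CCLT} and used here as a black-box input (see the sentence immediately preceding the statement and its repeated invocation via Lemma~\ref{New-ApplCondLT-002}). There is therefore no proof in this paper to compare your proposal against.

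That said, your sketch is in the right spirit for how such results are obtained in this line of work. A Markov split combined with the heat-kernel semigroup identity is indeed the engine; the present paper uses exactly this mechanism (Sections~\ref{SecProof Theor-latticecase-001}--\ref{SecProof-non-lattice Theor-probtau-001}) to derive its \emph{local} theorems from Theorem~\ref{introTheor-probtauUN-001}, splitting at $m=\lfloor\varepsilon n\rfloor$ with $\varepsilon=n^{-\delta/(4(3+\delta))}$ and applying the ordinary LLT on one block and the conditioned CLT on the other. Two cautions about your outline: first, the KMT coupling under only a $(2+\delta)$-moment yields an approximation error of order $n^{1/2}$ up to logarithms (not $n^{1/(2+\delta)}$-type bounds), which is too coarse to produce the rate $R_{n,\delta}(x)$ directly; the companion paper almost certainly avoids KMT and instead uses Berry--Esseen/local-limit bounds on the unconditioned block. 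Second, invoking a ``Denisov--Wachtel type'' conditioned CLT on the first block is circular unless you mean a cruder input (e.g.\ the classical $x$-fixed asymptotic $\bb P(\tau_x>m)\sim cV(x)/\sqrt m$), since the uniform-in-$x$ version with the $V_n$ prefactor is precisely what you are trying to establish.
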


Note that the bounds \eqref{UniformCondtau-001} and \eqref{UniformCondInt-001}  
hold uniformly for all starting points $x\in \mathbb{R}$.  
This fact will be one of the crucial points for establishing the main results of the paper.   

We will use below the following equivalence, 
which is obtained from this theorem and the first inequality in \eqref{lower bound of VL-001}:  
for any $x_0 \in \supp V$, uniformly in $x \in [x_0, \infty)$, as $n\to\infty$,  
\begin{align*} 
\mathbb{P} \left( \tau_{x}>n\right) \sim \frac{V_{n}(x) }{\sigma \sqrt{n}}.
\end{align*}  
From \eqref{UniformCondtau-001}, we derive the following upper bound, 
which will also be used in the proofs: there exists $c>0$ such that, for any $n \geq 1$ and $x \in \mathbb{R}$,  
\begin{align}\label{upper-bound-probab-tau-x}
\mathbb{P} \left( \tau_{x}>n\right)
\leq  c \frac{ n^{- \frac{\delta}{4}} + V_{n}(x) }{ \sqrt{n}}.
\end{align}  
Similar asymptotics and bounds apply to the reversed walk $\check S_n$ 
and the stopping time $\check \tau_x$.

To state our local limit theorems, we introduce the function $(x,y)\mapsto p(x,y)$ which will henceforth be referred to as the two-side normalized heat kernel: 
for any $x,y\in \bb R$, 
\begin{align} \label{def of func ell_H-002}
p (x, y) %= p_H (x, y) % = \frac{\ell(x,y)}{H(y)}  
= \frac{\psi(x,y)}{H(x)H(y)} = \frac{\phi(x-y)-\phi(x+y)}{(2\Phi(x)-1)(2\Phi(y)-1)}. 
\end{align}
The function $p$ is well defined on $\bb R \times \bb R$. Indeed, 
in view of \eqref{def of func ell_H-001}, for any $y\in \bb R$, 
\begin{align} \label{approx by normal density-001b} 
p(0,y) = \lim_{x\to 0} p(x, y) =  \frac{\ell(0, y)}{H(y)}  
= \frac{\phi^+(y)}{H(y)} = \phi_L(y) :=  \frac{e^{-y^{2}/2}}{ L( y )}, 
\end{align}
where 
\begin{align*} 
p(0, 0) = \phi_L(0)= \lim_{y\to 0} \frac{e^{-y^{2}/2}}{L( y )} = \frac{1}{L(0)}=\frac{\sqrt{2\pi}}{2}.
\end{align*}
The function $p$ is positive  and symmetric on $\bb R \times \bb R$, and, for any $x\in \bb R$, 
the map $y\mapsto p(x, y)$ is pair on $\bb R$: 
 $p(x, y) > 0,$  $p(x, y) = p(y, x),$  $p(x,-y) = p(x, y),$ for any  $x, y \in \bb R$.
The plot of $p$ is given in Fig.\ \ref{plotpHH}.
\begin{figure}
  \includegraphics[width=10cm]{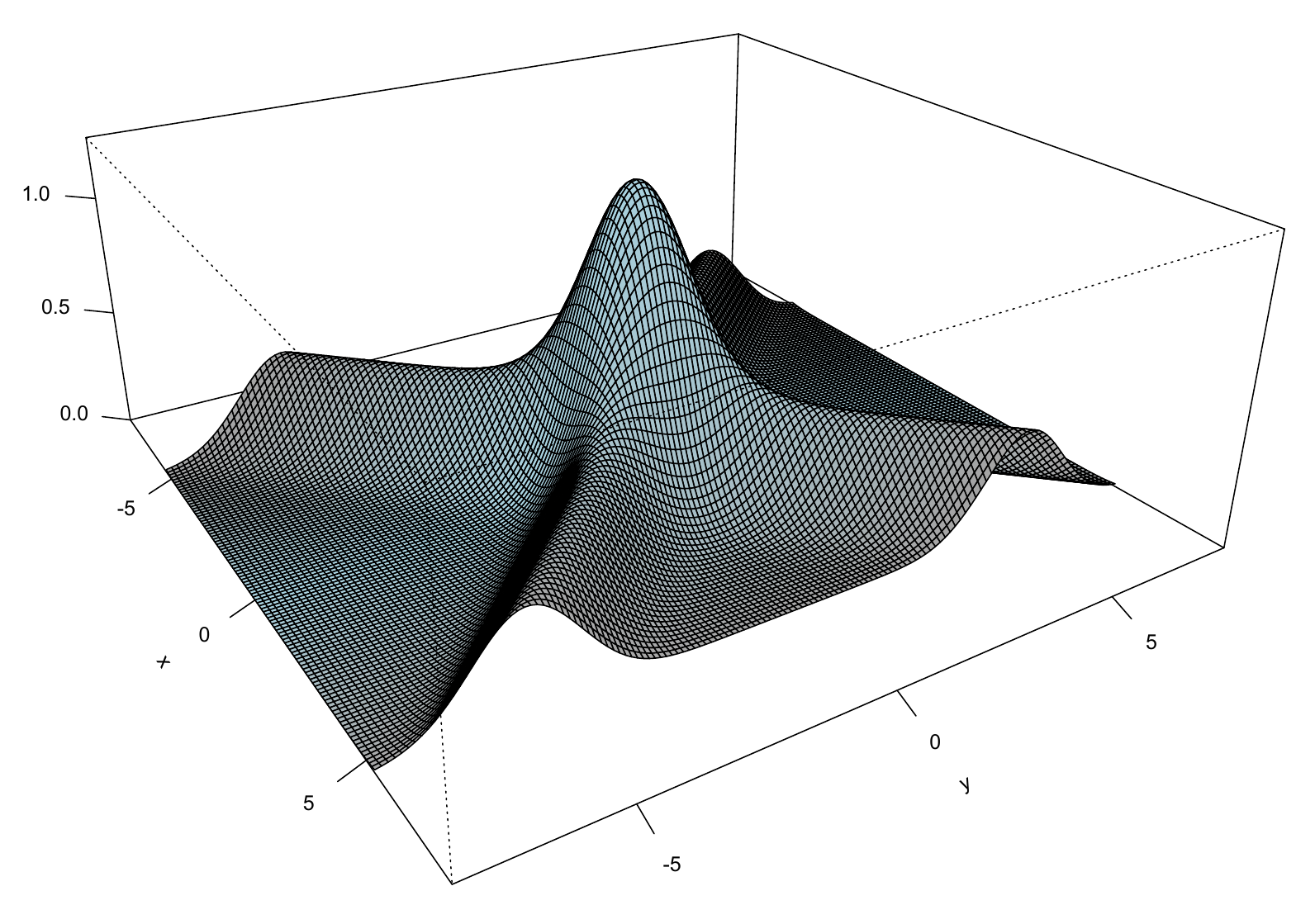}
  \caption{Plot of the function $(x,y)\mapsto p(x,y)$.}
  \label{plotpHH}
\end{figure}

\section{Conditioned local limit theorems in the lattice case} \label{Sec-CondLocLimTh-lattice-001}
\subsection{Main result} \label{sec:Main result and consequences}
Let $\hbar > 0$ and $a \in [0, \hbar)$.  
The random variable $X_1$ is said to be $(\hbar, a)$-lattice if its law is supported by the   
lattice $\hbar \mathbb{Z} + a$ and it is the coarsest possible one supporting $X_1$. 
%In other words, $X_1$ does not lie on a sub-lattice of smaller step size.
Formally, $\mathbb{P}(X_1 \in \hbar \mathbb{Z} + a) = 1$ and for any integer $m > 1$, it holds
 $\mathbb{P}(X_1 \in m  \hbar \mathbb{Z} + a) < 1$.

The main result of the paper in the lattice case is the following conditioned local limit theorem with rate $n^{-3/2}$, 
which is an extension of Gnedenko's local limit theorem \cite{Gned48} to conditioned random walks. 

\begin{theorem} \label{CLLT-lattice-n3/2 main result} %\label{UnifballotLLThe
Assume that $X_1$ is $(\hbar, a)$-lattice, $\bb E (X_1) = 0$, $\bb E (X^2_1)= \sigma^2$,  
and that there exists $\delta > 0$ such that $\bb E (|X_1|^{2+\delta})  < \infty.$
Then there exists a constant $c>0$ such that, 
uniformly for $n\geq 2$ and $x, y \in \bb R$ satisfying $ y-x \in \hbar \bb Z+na$, 
\begin{align}\label{latt-theorem-n3/2 001}
& \bigg|  \bb{P}  \Big(x+ S_n = y,  \tau_x > n - 1 \Big)
-  \hbar \frac{V_n(x) \check V_n(y) }{ \sigma^3 n^{3/2} }   p \left( \frac{x}{\sigma\sqrt{n}} , \frac{y}{\sigma\sqrt{n} } \right) \bigg|
 \notag\\
& \qquad\qquad\qquad\qquad  
\leq  c \frac{\left(n^{-\frac{\delta}{8} }  + V_{n}(x) \right)
  \left( n^{-\frac{\delta}{8}} + \check V_n(y)  \right)}{n^{3/2  }}  n^{ -\frac{\delta}{8(3+\delta)} }\log n. 
 \end{align}
\end{theorem}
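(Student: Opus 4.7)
The plan is to implement the three-segment Markov decomposition outlined in the introduction. Pick $k = k(n) \to \infty$ with $k = o(n)$, set $n_1 = n_3 = k$ and $n_2 = n - 2k$; the precise rate $k \asymp n^{\alpha}$ will be tuned at the end against the remainder $R_{n,\delta}$ of Theorem~\ref{introTheor-probtauUN-001}. Applying the Markov property at times $n_1$ and $n_1 + n_2$, and using the duality $\bb P(z + S_m = y, \tau_z > m-1) = \bb P(y + \check S_m = z, \check\tau_y > m-1)$ on the third segment, the probability factorises as
\begin{equation*}
\bb P(x + S_n = y, \tau_x > n - 1) \;=\; \sum_{z_1, z_3 \in \bb Z_+} A(z_1) \, M(z_1, z_3) \, \check B(z_3),
\end{equation*}
where $A(z_1) = \bb P(x + S_k = z_1, \tau_x > k)$, $\check B(z_3) = \bb P(y + \check S_k = z_3, \check\tau_y > k-1)$, and $M(z_1, z_3) = \bb P(z_1 + S_{n_2} = z_3, \tau_{z_1} > n_2)$.

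The key technical step is a quantitative Gaussian heat-kernel approximation of the middle factor,
\begin{equation*}
M(z_1, z_3) \;=\; \hbar\, \psi_{\sigma^2 n_2}(z_1, z_3) \;+\; \mathrm{error},
\end{equation*}
the lattice analogue of L\'evy's reflection identity. I would establish it by combining Gnedenko's LLT for the unconditioned probability $\bb P(z_1 + S_{n_2} = z_3)$ with a first-passage decomposition of the exit correction $\bb P(z_1 + S_{n_2} = z_3, \tau_{z_1} \leq n_2)$; the latter involves the reverse walk through duality, and the moment hypothesis $\bb E|X_1|^{2+\delta} < \infty$ supplies the quantitative rate. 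The vanishing $\ell(a, 0) = 0$ ensures that the boundary zone where this pointwise approximation is weakest (small $z_1$ or $z_3$) is automatically suppressed by the density shape $\ell$ inherited by the outer factors.

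For the outer factors I would use Theorem~\ref{introTheor-probtauUN-001}, upgraded to the density-level statement $A(z_1) \approx \hbar V_k(x)\, \ell(x/\sigma\sqrt{k}, z_1/\sigma\sqrt{k})/(\sigma^2 k)$ (justified by summation by parts against the smooth middle kernel, which varies on the coarser scale $\sigma\sqrt{n_2}$), and similarly for $\check B$. Rescaling via $v_i = z_i/(\sigma\sqrt{k})$, the main term becomes
\begin{equation*}
\frac{\hbar\, V_k(x) \check V_k(y)}{\sigma^2 k} \int_0^\infty \!\! \int_0^\infty \ell\!\left(\tfrac{x}{\sigma\sqrt{k}}, v_1\right) \psi_{\sigma^2 n_2}(\sigma\sqrt{k}\, v_1,\, \sigma\sqrt{k}\, v_3) \, \ell\!\left(\tfrac{y}{\sigma\sqrt{k}}, v_3\right) dv_1 \, dv_3 .
\end{equation*}
Substituting $\ell(a, v) = \psi(a, v)/H(a)$ and using the scaling $\psi_{\sigma^2 k}(x, z) = \psi(x/\sigma\sqrt{k}, z/\sigma\sqrt{k})/(\sigma\sqrt{k})$, the integrand becomes the product $\psi_{\sigma^2 k}(x, z_1)\, \psi_{\sigma^2 n_2}(z_1, z_3)\, \psi_{\sigma^2 k}(z_3, y)$ divided by $H(x/\sigma\sqrt{k})\, H(y/\sigma\sqrt{k})$. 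Two applications of the Dirichlet semigroup identity $\int_0^\infty \psi_s(u,z)\,\psi_t(z,w)\,dz = \psi_{s+t}(u,w)$ collapse the double integral to $\psi_{\sigma^2 n}(x, y)/[H(x/\sigma\sqrt{k})\, H(y/\sigma\sqrt{k})]$. The front factors $V_k(x)/H(x/\sigma\sqrt{k}) = V(x)\,\sigma\sqrt{k}/x$, and symmetrically in $y$, cancel the $k$-dependence, leaving the main term $\hbar\, V(x) \check V(y)\,\psi_{\sigma^2 n}(x, y)/(xy)$, which an elementary rewriting identifies with the claimed $\hbar\, V_n(x) \check V_n(y)\, p(x/\sigma\sqrt{n}, y/\sigma\sqrt{n})/(\sigma^3 n^{3/2})$.

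The principal difficulty is the middle-segment heat-kernel bound, uniformly in $z_1, z_3$, together with the density-level reading of Theorem~\ref{introTheor-probtauUN-001} at the correct quantitative rate. The product structure of the error in the theorem mirrors one remainder $R_{n,\delta}$ from each outer segment, multiplied by the small gain $n^{-\delta/(8(3+\delta))}\log n$ coming from the middle approximation; the exponents $\delta/8$ and $\delta/(8(3+\delta))$ emerge from optimising $k \asymp n^{\alpha}$ to balance the Theorem~\ref{introTheor-probtauUN-001} remainder against the Gnedenko-LLT error.
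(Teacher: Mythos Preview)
Your three-segment decomposition and the semigroup collapse of the main term are sound, and the final identification $\hbar V(x)\check V(y)\psi_{\sigma^2 n}(x,y)/(xy) = \hbar V_n(x)\check V_n(y)\,p(\cdot)/(\sigma^3 n^{3/2})$ is correct. But the route diverges from the paper's, and the divergence is precisely at the point you flag as ``the principal difficulty''.

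The paper never approximates a \emph{conditioned} local probability by $\hbar\psi$ pointwise. Instead it proceeds in two stages. First it proves the Caravenna-type bound (Theorem~\ref{T-Caravenatype-lattice 001}) by splitting $n=k+m$ with a \emph{short} second block $m=\lfloor\varepsilon n\rfloor$, on which only the \emph{unconditioned} Gnedenko LLT is used; the exit correction $I_{n,2}$ is handled separately via Fuk--Nagaev, and the convolution identity $\phi_v * \ell_{1-v} = \ell$ produces the $\ell$-density. Second, it splits $n=k+m$ again with $m=\lfloor n/2\rfloor$, applies Theorem~\ref{T-Caravenatype-lattice 001} on the first block, uses duality plus the reversed conditioned CLT (Lemma~\ref{New-ApplCondLT-002}) on the second, and a single application of the semigroup identity $\int\psi_s\psi_t=\psi_{s+t}$ collapses the result to $p$. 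So the heat kernel $\psi$ appears only through convolution of conditioned densities, never as a pointwise approximation of a conditioned local probability.

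Your middle step $M(z_1,z_3)\approx\hbar\psi_{\sigma^2 n_2}(z_1,z_3)$ is where the gap lies. The ``L\'evy reflection'' heuristic fails for non-symmetric increments: after first exit at time $j$ the overshoot $-(z_1+S_j)$ is $O(1)$ in distribution, independent of $z_1$, so the first-passage decomposition of $\bb P(z_1+S_{n_2}=z_3,\tau_{z_1}\le n_2)$ does not produce $\phi_{\sigma^2 n_2}(z_1+z_3)$. In the regime $z_1,z_3=O(\sqrt{k})=o(\sqrt{n_2})$ that your outer blocks force, the true asymptotic of $M$ is $\hbar\,2V(z_1)\check V(z_3)/(\sqrt{2\pi}\sigma^3 n_2^{3/2})$, whereas $\hbar\psi_{\sigma^2 n_2}(z_1,z_3)\sim\hbar\,2z_1 z_3/(\sqrt{2\pi}\sigma^3 n_2^{3/2})$; the ratio is $V(z_1)\check V(z_3)/(z_1 z_3)$, which does not tend to $1$ when $z_1$ or $z_3$ stays bounded. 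Your remark that $\ell(a,0)=0$ suppresses this zone is only a first-order cancellation; turning it into the quantitative rate $n^{-\delta/(8(3+\delta))}\log n$ would require knowing $M$ to precisely the accuracy you are trying to prove, or equivalently would require Theorem~\ref{T-Caravenatype-lattice 001} as input --- which is what the paper establishes first.
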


The proof of this theorem is provided in Section \ref{SecProofUnifballotLLTheor-lattice-001} 
as a consequence of a more general result stated in the form of a two-sided bound in 
Theorem \ref{T-Caravenatype-lattice 001}. 

Theorem $\ref{CLLT-lattice-n3/2 main result}$ provides meaningful asymptotic in large ranges for $x$ and $y$.
 In this regard, it offers a new perspective on various previous results from
\cite{Carav05, VatWacht09, Don12} and \cite{GX-2024-AIHP}. 

As an example, we demonstrate how the expansion \eqref{latt-theorem-n3/2 001}, which leverages the Gaussian heat kernel approximation, 
is related to the Caravenna type expansion \eqref{eq-into-carav-001}. 
Applying Theorem \ref{CLLT-lattice-n3/2 main result} with $x=0$ and using the identities \eqref{def of Q-001} and \eqref{approx by normal density-001b}, 
we obtain, for $n\geq 2$ and $ y \in \hbar \bb Z+na$,
\begin{align}\label{new extention Carav-001}
& \left|  \bb{P}  \Big(S_n = y,  \tau_0 > n - 1 \Big)
-  \hbar \frac{2V(0) \check V(y) }{\sigma^3 n^{3/2} }  \phi\left(\frac{y}{\sigma\sqrt{n}} \right) \right|
\leq  c \frac{ n^{-\frac{\delta}{8}} + \check V_n(y) }{n^{3/2  }}  n^{ -\frac{\delta}{8(3+\delta)} }\log n, 
 \end{align}
where $V(0)=\bb E(-S_{\tau_0})$.
This bound refines the  asymptotic of  Caravenna \eqref{eq-into-carav-001} 
under the $2+\delta$ moment assumption  by establishing an explicit rate of convergence.
To reconcile the two expressions, observe that by Rogozin's estimate \eqref{Rogizin estim-001}, we have 
$\frac{\check V(y) }{\sigma \sqrt{n} }  \phi\left(\frac{y}{\sigma\sqrt{n}} \right) \sim \phi^+\left(\frac{y}{\sigma\sqrt{n}} \right)$
as $n\to\infty$, uniformly for $y\geq \beta_n$, for any sequence $(\beta_n)_{n\geq 1}$ tending to infinity. 
% such that $\lim_{n\to\infty}\beta_n=\infty$.
In particular, for $y\sim\sqrt{n}$, both \eqref{new extention Carav-001} and  \eqref{eq-into-carav-001} yield the same leading asymptotic.
The primary advancement of \eqref{new extention Carav-001} 
is that it provides a nontrivial asymptotic uniformly for all $y\in \bb R$, 
including the case of fixed $y$, where \eqref{eq-into-carav-001} becomes trivial.
Further extensions of the Caravenna-type result are given by Theorem \ref{T-Caravenatype-lattice 001} for the lattice case and Theorem \ref{t-B 002} for the non-lattice case.

%%%%%%%%%%%%%%%%%%%%%%%%%%%%%%%%%%%%%%%%%%%%%%%%%%%%%%%%%%%
\subsection{Equivalence results} \label{sec-equiv-results-000}
To establish an equivalence result we consider the superlevel sets of the kernel $(x,y)\mapsto p(x,y)$: 
for any $\alpha\geq 0$, set  
\begin{align} \label{superlevelset-001}
\mathcal Q(\alpha) = \left\{  (x,y) \in \bb R^2 :  p(x,y) \geq \alpha \right\}.
\end{align}
The plots of typical level sets of the function $p$ are given in Fig.\ \ref{plotlevelsp001}. 
\begin{theorem} \label{Th-lattice-general equiv-c001}
Assume that $X_1$ is $(\hbar, a)$-lattice, $\bb E (X_1) = 0$, $\bb E (X^2_1)= \sigma^2$,  
and that there exists $\delta > 0$ such that $\bb E (|X_1|^{2+\delta})  < \infty.$ 
Let $x_0\in \supp V$, $y_0\in \supp \check V$ and $q<\frac{\delta}{8(3+\delta)}$. 
Then, as $n\to\infty$, uniformly for $x\geq x_0$ and $y\geq y_0$ such that $(x,y)\in \mathcal Q(n^{-q}),$
\begin{align} \label{lattice-corrol-geneq00}
\bb{P}  \Big(x+ S_n = y,  \tau_x > n - 1 \Big)
\sim  \hbar \frac{V_n(x) \check V_n(y) }{ \sigma^3 n^{3/2} }   
p \left( \frac{x}{\sigma\sqrt{n}} , \frac{y}{\sigma\sqrt{n} } \right).
\end{align}
In particular, the equivalence \eqref{lattice-corrol-geneq00} holds uniformly for $x\geq x_0$ and $y\geq y_0$ 
such that $y-x\in \hbar \bb Z+na$ and $|y-x| \leq \sigma \sqrt{2q n \log n}$ or  $|y+x| \leq \sigma \sqrt{2q n \log n}$.
\end{theorem}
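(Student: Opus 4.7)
The approach is to derive Theorem \ref{Th-lattice-general equiv-c001} directly from the absolute bound of Theorem \ref{CLLT-lattice-n3/2 main result} by showing that, on $\mathcal Q(n^{-q})$, the error term is of smaller order than the heat-kernel main term. Denote the main term by
\begin{align*}
M_n(x,y) := \hbar\,\frac{V_n(x)\check V_n(y)}{\sigma^3 n^{3/2}}\,p\!\left(\frac{x}{\sigma\sqrt n},\frac{y}{\sigma\sqrt n}\right)
\end{align*}
and the error bound appearing in \eqref{latt-theorem-n3/2 001} by $E_n(x,y)$, so that $|\mathbb{P}(x+S_n=y,\tau_x>n-1)-M_n(x,y)|\leq E_n(x,y)$. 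Since $M_n(x,y)>0$ on $\mathcal Q(n^{-q})$, it suffices to prove $E_n(x,y)/M_n(x,y)=o(1)$ uniformly.

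For the main claim, first simplify $E_n$. Since $x\geq x_0\in\supp V$ and $y\geq y_0\in\supp\check V$, the left inequality in \eqref{lower bound of VL-001} gives $V_n(x),\check V_n(y)\geq c_1>0$ uniformly in $n$, so the summands $n^{-\delta/8}$ inside $E_n$ are absorbed into the $V_n,\check V_n$ factors, yielding $E_n(x,y)\leq c'\,V_n(x)\check V_n(y)\,n^{-3/2}\,n^{-\delta/(8(3+\delta))}\log n$. Dividing by $M_n$ and using the defining inequality $p(x/(\sigma\sqrt n),y/(\sigma\sqrt n))\geq n^{-q}$ on $\mathcal Q(n^{-q})$,
\begin{align*}
\frac{E_n(x,y)}{M_n(x,y)} \leq c''\,n^{q-\delta/(8(3+\delta))}\log n = o(1),
\end{align*}
the last equality holding precisely because $q<\delta/(8(3+\delta))$ by hypothesis. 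This proves \eqref{lattice-corrol-geneq00}.

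For the ``in particular'' statement, it is enough to verify that each of the two strips is contained in $\mathcal Q(c_*\,n^{-q})$ for some constant $c_*>0$; since the hypothesis $q<\delta/(8(3+\delta))$ is strict, the constant $c_*$ is absorbed by shrinking $q$ by an arbitrarily small $\varepsilon>0$, and the first part applied with $q+\varepsilon$ then concludes. Setting $u=x/(\sigma\sqrt n)$ and $v=y/(\sigma\sqrt n)$, note that $u\geq x_0/(\sigma\sqrt n)=o(1)$ and $v\geq y_0/(\sigma\sqrt n)=o(1)$, so up to a harmless $o(1)$ shift one may assume $u,v\geq 0$; then the antidiagonal strip $|u+v|\leq\sqrt{2q\log n}$ is contained in the diagonal strip, because $|u-v|\leq u+v$ for $u,v\geq 0$. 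For the diagonal strip $|u-v|\leq\sqrt{2q\log n}$, combining the factorization $\psi(u,v)=\frac{1}{\sqrt{2\pi}}e^{-(u-v)^2/2}(1-e^{-2uv})$ (valid for $u,v\geq 0$) with the trivial bound $H(u)H(v)\leq 1$ gives
\begin{align*}
p(u,v) \geq \frac{n^{-q}}{\sqrt{2\pi}}\cdot\frac{1-e^{-2uv}}{H(u)H(v)}.
\end{align*}
The main---and essentially only---subtle point is to bound the factor $(1-e^{-2uv})/(H(u)H(v))$ below by a positive constant uniformly in $u,v\geq 0$. This follows from an elementary case analysis (the ratio tends to $1$ as $uv\to\infty$, and to $\pi/2$ as $uv\to 0$, since $1-e^{-2uv}\sim 2uv$ while $H(u)H(v)\sim 4uv/(2\pi)$, with continuity covering the intermediate range), or preferably by invoking the corresponding analytic property of the normalized heat kernel that I would expect to appear among the auxiliary results of Section \ref{Sec-Auxiliary statements}.
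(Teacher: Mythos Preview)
Your argument is correct and matches the paper's approach: the first part is identical, and for the ``in particular'' clause both you and the paper reduce to showing $p(u,v)\geq c\,\phi(|u-v|)$ for $u,v\geq 0$, the paper via Lemma~\ref{Lem-p equiv x to inf} and level-set considerations, you via the factorization $\psi(u,v)=\phi(u-v)(1-e^{-2uv})$. One caveat: the uniform lower bound on $(1-e^{-2uv})/(H(u)H(v))$ is not isolated as a standalone lemma in Section~\ref{Sec-Auxiliary statements}, so you must rely on your own case analysis (which is correct, though the limit as $uv\to 0$ is $\pi$, not $\pi/2$).
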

\begin{figure}
  \includegraphics[width=7cm]{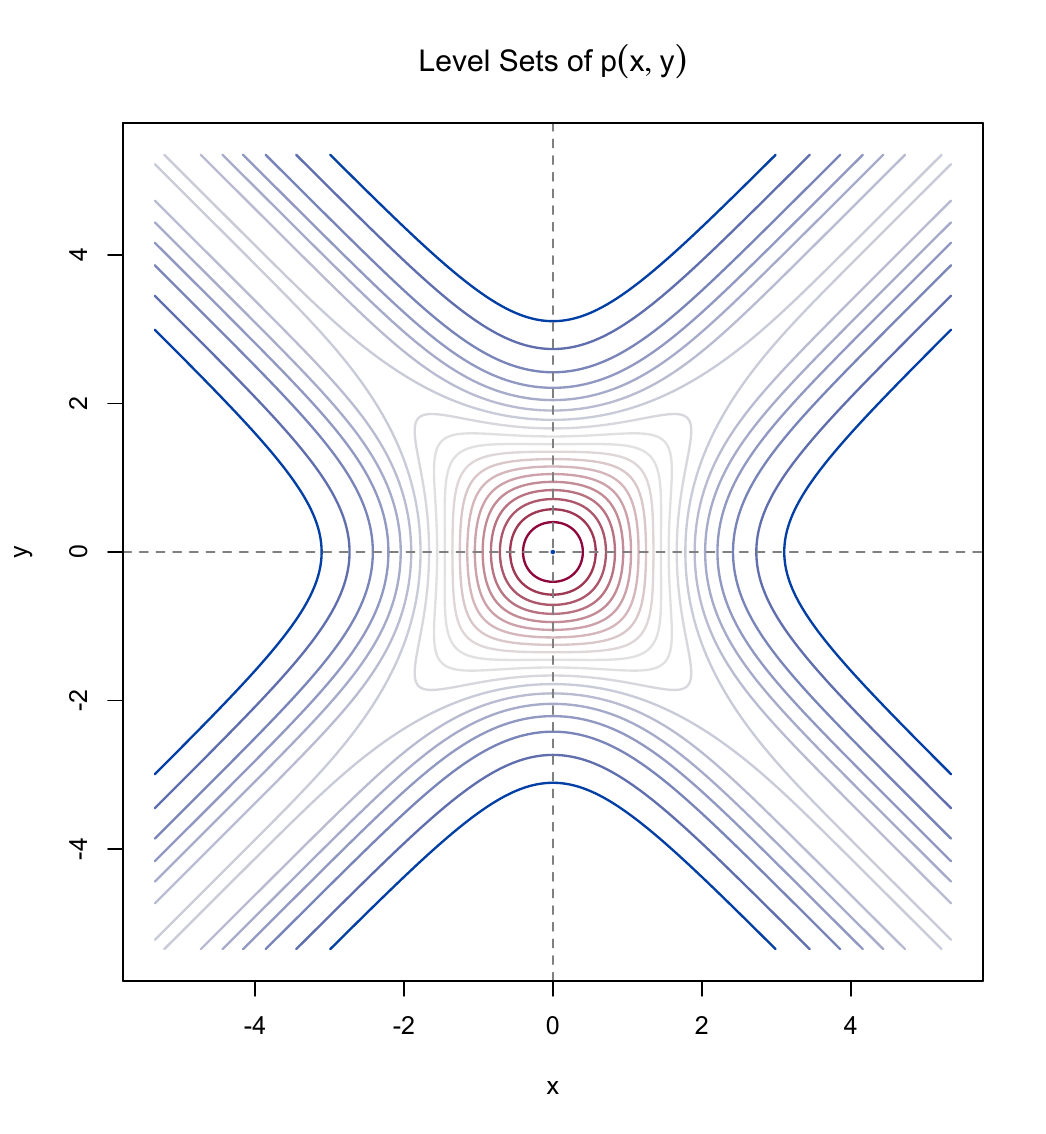}
  \caption{Level sets of the function $(x,y)\mapsto p(x,y)$, where levels range from  $0.025$ (blue) to $\frac{\sqrt{2\pi}}{2}$ (red).}
  \label{plotlevelsp001}
\end{figure}
\begin{proof}
The first part  is an immediate consequence of Theorem \ref{CLLT-lattice-n3/2 main result} 
and of the fact that by the definition, on the set $\mathcal Q(n^{-q})$ it holds $p(x,y)\geq n^{-q},$ so that the remainder term is 
$o$ compared with the leading term.
For the second part, we note that %by \eqref{lim as x to infty-aa001}, 
$\lim_{x\to\infty} p(x, x - h) = \phi(h)$, for any $h\in \bb R$, see Lemma \ref{Lem-p equiv x to inf}. 
Using this fact and the level sets of $p$ (see Fig.\ \ref{plotlevelsp001}),
it is straightforward to verify that there exists a constant $\alpha_0>0$ such that, for any $\alpha\in (0, \alpha_0]$,
\begin{align*} 
\scr D(\alpha) := \left\{ (x,y)\in \bb R^2: |x-y|\leq q_\alpha \ \text{or}\ |x+y|\leq q_{\alpha}  \right\} \subset \mathcal Q(\alpha),
\end{align*}
where $q_\alpha=\phi^{-1}(\alpha) =\sqrt{-2\ln (\sqrt{2\pi}\alpha)}$.
With $\alpha=\frac{n^{-q}}{\sqrt{2\pi}}$ we have $q_{\alpha} = \sqrt{2qn\log n}$, so that 
we obtain the second assertion of Theorem \ref{Th-lattice-general equiv-c001}. 
\end{proof}

Now we address several typical scenarios depending on the behavior of $x$ and $y$.
Under the assumptions of Theorem \ref{Th-lattice-general equiv-c001}, we deduce the following assertions,
where the sequence $(\alpha_n)_{n\geq 1}$ converges to zero and the sequence $(\beta_n)_{n\geq 1}$ converges to infinity.

\begin{itemize}
\item[a1:] 
As $n\to\infty$, uniformly in $x \geq x_0$ and $y\geq y_0$ such that $\frac{|x|}{\sqrt{n}} \leq \alpha_n$, $\frac{|xy|}{n} \leq\alpha_n$, 
$y \leq \sigma \sqrt{2q n \log n}$ 
and $y-x\in \hbar \bb Z+na$, 
\begin{align} \label{lattice-corrol-geneq01}
 \bb{P} \Big( x+S_n = y,  \tau_x >n-1 \Big) 
   \sim \hbar \frac{2V(x) \check V(y) }{\sigma^3 n^{3/2} }  \phi \left(\frac{y}{\sigma\sqrt{n}}\right). 
\end{align}

\item[a2:] 
As $n\to\infty$,  uniformly in $x \geq x_0$ and $y\geq y_0$ such that 
$\frac{|y|}{\sqrt{n}} \leq \alpha_n$, $\frac{|xy|}{n} \leq \alpha_n$,  $x \leq \sigma \sqrt{2q n \log n}$ 
and $y-x\in \hbar \bb Z+na$,
\begin{align} \label{lattice-corrol-geneq02}
 \bb{P} \Big( x+S_n = y,  \tau_x >n-1 \Big) 
   \sim \hbar \frac{2V(x) \check V(y) }{\sigma^3 n^{3/2} }  \phi \left(\frac{x}{\sigma\sqrt{n}}\right). 
\end{align}

\item[a3:] 
As $n\to\infty$, uniformly in $x \geq \beta_n$, $y \geq \beta_n$, such that  
$|y-x| \leq \sigma \sqrt{2q n\log n}$ and $y-x\in \hbar \bb Z+na$,
\begin{align} \label{lattice-corrol-eq03}
 \bb{P} \Big( x+S_n = y,  \tau_x >n-1 \Big) 
  \sim \frac{\hbar}{\sigma \sqrt{n} }\psi\left( \frac{x}{\sigma\sqrt{n}} , \frac{y}{\sigma\sqrt{n} } \right)
   =  \hbar \,  \psi_{\sigma^2n} \left( x , y \right). 
\end{align}
\end{itemize}

Claims a1-a2 are deduced from Theorem \ref{Th-lattice-general equiv-c001} using
 \eqref{def of Q-001}, \eqref{func L as integral of heat kern} and Lemmas \ref{lem-inequality for L}, \ref{Lem-p equiv-x to 0}.
The claim a3 is obtained using  Rogozin's estimate \eqref{Rogizin estim-001} and a similar one for $\check V$. 
The claims a1-a3 improve various statements which have been proved in the literature.
For example, from \eqref{lattice-corrol-geneq01} and \eqref{lattice-corrol-geneq02} we have:
\begin{itemize}
\item[a4:] 
As $n\to\infty$,  uniformly in $x \geq x_0$ and $y\geq y_0$ such that  $\frac{|x|}{\sqrt{n}}\leq \alpha_n$, $\frac{|y|}{\sqrt{n}} \leq \alpha_n$  
and $y-x\in \hbar \bb Z+na$,
\begin{align} \label{lattice-corrol-eq01}
 \bb{P} \Big( x+S_n = y,  \tau_x >n-1\big) 
   \sim \hbar \frac{2V(x) \check V(y) }{\sqrt{2\pi}\sigma^3 n^{3/2} }, 
\end{align}
\end{itemize}
which implies the asymptotic \eqref{eq-intro-000a}. 
As other examples, from \eqref{lattice-corrol-geneq01} it follows that: 
\begin{itemize}
\item[a5:] 
As $n\to\infty$, uniformly in $x \geq x_0$ and $y\geq y_0$  such that  $\frac{|x|}{\sqrt{n}} \leq \alpha_n$, $y\asymp\sqrt{n}$  
and $y-x\in \hbar \bb Z+na$, 
\begin{align} \label{lattice-corrol-eq02}
 \bb{P} \Big( x+S_n = y,  \tau_x >n-1\big) 
   \sim \hbar \frac{2V(x) }{\sqrt{2\pi}\sigma^2 n }  \phi^+ \left(\frac{y}{\sigma\sqrt{n}}\right). 
\end{align}
\item[a6:]
As $n\to\infty$, uniformly in $x \geq x_0$ and $y\geq y_0$  such that  $\frac{|x|}{\sqrt{n}} \leq \alpha_n$, $\frac{|xy|}{n}\leq \alpha_n$, $y \sim \sigma \sqrt{2q n\log n}$ and $y-x\in \hbar \bb Z+na$,
\begin{align} \label{lattice moder-corrol-eq03}
 \bb{P} \Big( x+S_n = y,  \tau_x >n-1\Big) 
   \sim \hbar \frac{2V(x)  }{\sqrt{2\pi}\sigma^2 }\frac{\sqrt{2q \log n}}{n^{1+q}}. 
\end{align}
\end{itemize}
Analogous results can be deduced from \eqref{lattice-corrol-geneq02} when $\frac{y}{\sqrt{n}} \to 0$ for various regimes of $x$.

Moreover, when $\frac{xy}{n}\to \infty$, from  \eqref{lattice-corrol-eq03} 
we deduce the following behavior which is similar to that of the ordinary local limit theorem. 
Indeed, using the identity $\psi(x,y)=\phi(x-y)(1-e^{-2xy})$, we have: 
\begin{itemize}
\item[a7:] 
As $n\to\infty$, uniformly in $x\geq \beta_n$, $y\geq \beta_n$ such that $\frac{xy}{n} \to \infty$,  
$|y-x| \leq \sigma \sqrt{2 q n\log n}$ and $y-x\in \hbar \bb Z+na$, 
\begin{align} \label{lattice-corrol-eq03bbb}
 \bb{P} \Big( S_n = y-x,  \tau_x >n-1 \Big) 
  \sim \frac{\hbar}{\sigma \sqrt{n} }\phi\left( \frac{y-x}{\sigma\sqrt{n} } \right). 
\end{align}
\end{itemize}
Note that the right-hand side is the expression appearing in the ordinary local limit theorem.
In particular, \eqref{lattice-corrol-eq03bbb} holds true uniformly for $\frac{x}{\sqrt{n}} \to \infty$, $y\geq \sqrt{n}$
(or $\frac{y}{\sqrt{n}} \to \infty$, $x\geq \sqrt{n}$), and
$|y-x| \leq \sigma \sqrt{2 q n\log n}$ such that $
y-x\in \hbar \bb Z+na$, as $n\to\infty$.

Results similar to \eqref{lattice-corrol-eq03}, \eqref{lattice-corrol-eq01}, \eqref{lattice-corrol-eq02} and \eqref{lattice moder-corrol-eq03}
have been obtained earlier in 
Caravenna \cite{Carav05}, Vatutin and Wachtel \cite{VatWacht09}, 
Doney \cite{Don12}, and \cite{GX-2024-AIHP} by other approaches.

%%%%%%%%%%%%%%%%%%%%%%%%%%%%%%%%%%%%%%%%%%%
\subsection{A local limit theorem for the exit time} \label{sec:loc Th for exit time-001}
Recall that in this section %$(S_n)_{n\geq 0}$ 
$X_1$ takes values on the non-centered lattice $\hbar \bb Z+a$,
where $\hbar>0$ and $a\in [0,\hbar)$. 
For any $n\geq 1$ and $x\in \bb R$, denote
\begin{align} \label{def of varkappa_n(x) abcd001}
\varkappa_n(x) :=\sum_{y\geq 0 : \, y-x \in \hbar \bb Z+na} \check V(y) \bb P(X_1<-y).   
\end{align}
The following theorem gives the asymptotic of the local behavior of the exit time  $\tau_x$, where the starting point 
$x$ belongs to the whole real line $\bb R$.

\begin{theorem} \label{theorem local for tau lattice}
Assume that $X_1$ is $(\hbar, a)$-lattice, $\bb E (X_1) = 0$, $\bb E (X^2_1)= \sigma^2$,  
and that there exists $\delta > 0$ such that $\bb E (|X_1|^{2+\delta})  < \infty.$ 
There exists a constant $c>0$ such that for any $n\geq 2$ and $x\in \bb R$, 
\begin{align} \label{locLT for time tau-002 lattice}
 \left| \bb{P} ( \tau_x =n+1 ) 
-  \hbar  \phi\left(\frac{x}{\sigma\sqrt{n}}\right) \frac{2 V(x) \varkappa_n(x) }{\sigma^3 n^{3/2}}    \right| 
  \leq  c \frac{n^{-\frac{\delta}{8} }  + V_{n}(x)}{n^{3/2  }}  n^{ -\frac{\delta}{8(3+\delta)} }\log n. 
\end{align}
\end{theorem}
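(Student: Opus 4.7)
The plan is to combine a Markov decomposition at time $n$ with the heat-kernel local limit theorem from Theorem~\ref{CLLT-lattice-n3/2 main result}. Since $\{\tau_x = n+1\}$ means $x+S_k \geq 0$ for $k \leq n$ and $x+S_{n+1} < 0$, and since on $\{x+S_n \geq 0\}$ the events $\{\tau_x > n\}$ and $\{\tau_x > n-1\}$ coincide, conditioning on the value $y = x+S_n$ and using independence of $X_{n+1}$ from $(X_1,\ldots,X_n)$ yields
\begin{equation*}
\mathbb{P}(\tau_x = n+1) = \sum_{y \geq 0\,:\, y-x \in \hbar\mathbb{Z}+na} \mathbb{P}(x+S_n = y,\, \tau_x > n-1)\, \mathbb{P}(X_1 < -y).
\end{equation*}

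I would then apply Theorem~\ref{CLLT-lattice-n3/2 main result} to each summand, splitting it into a main term and a remainder. The remainder is of size $c(n^{-\delta/8}+V_n(x))(n^{-\delta/8}+\check V_n(y))/n^{3/2}\cdot n^{-\delta/(8(3+\delta))}\log n$. When summed against $\mathbb{P}(X_1<-y)$, the factor $(n^{-\delta/8}+V_n(x))$ pulls out, and one uses $\check V_n(y) \leq c(1+y)$ together with the second-moment identity $\sum_{y\geq 0}(1+y)\mathbb{P}(X_1<-y) \leq c\,\mathbb{E}(X_1^2) < \infty$ to bound $\sum_{y\geq 0}(n^{-\delta/8}+\check V_n(y))\mathbb{P}(X_1<-y)$ by a constant. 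This reproduces precisely the right-hand side of \eqref{locLT for time tau-002 lattice}.

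For the leading term the key is the algebraic identity $V_n(x)\,\check V_n(y)\,p(u,v) = V(x)\,\check V(y)\,\psi(u,v)/(uv)$, where $u = x/(\sigma\sqrt{n})$ and $v = y/(\sigma\sqrt{n})$; this follows from $H(w) = wL(w)$ combined with the definitions in \eqref{def of Q-001} and \eqref{def of func ell_H-002}. A Taylor expansion of $\phi(u\pm v)$ around $v=0$ gives $\psi(u,v)/(uv) = 2\phi(u) + \tfrac{v^2}{3}(u^2-3)\phi(u) + O(v^4)$, and since $(1+u^2)\phi(u)$ is bounded while $(u,v)\mapsto \psi(u,v)/(uv)$ is itself bounded on $\mathbb{R}^2$, one obtains $|\psi(u,v)/(uv) - 2\phi(u)| \leq C\min(v^2, 1) \leq C v^\beta$ for any $\beta \in (0,2]$. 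Choosing $\beta = \min(\delta,2)$ and invoking $\sum_{y}(1+y)y^\beta\mathbb{P}(X_1<-y) \leq c\,\mathbb{E}|X_1|^{2+\beta} < \infty$, the error from replacing $\psi(u,v)/(uv)$ by $2\phi(u)$ in the sum is of order $V(x)\,n^{-\beta/2}/n^{3/2}$, and the main term collapses to $\hbar \phi(u)\cdot 2V(x)\varkappa_n(x)/(\sigma^3 n^{3/2})$.

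The main obstacle is ensuring that this approximation error is uniformly of the claimed order across all $x \in \mathbb{R}$, particularly when $u$ is large (where $V(x)$ can grow linearly in $x$ and the naive bound $V(x)n^{-\beta/2}/n^{3/2}$ is too weak relative to the $V_n(x)$ envelope). This is handled by noting that the main term itself contains the factor $\phi(u)$, which decays exponentially: refining the Taylor bound to $|\psi(u,v)/(uv) - 2\phi(u)| \leq C(1+u^2)\phi(u) v^\beta$ ensures the error inherits the same $\phi(u)$-decay profile as the main term, so that combined with $V(x)\phi(u) \lesssim V_n(x)$ (via $L(u) \gtrsim 1/(1+|u|)$), the overall estimate stays within the envelope $(n^{-\delta/8}+V_n(x))\,n^{-3/2-\delta/(8(3+\delta))}\log n$.
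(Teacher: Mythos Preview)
Your overall structure---the Markov decomposition at time $n$ followed by Theorem~\ref{CLLT-lattice-n3/2 main result} applied termwise---matches the paper's proof exactly, and your handling of the remainder term from Theorem~\ref{CLLT-lattice-n3/2 main result} is correct.

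The gap is in your treatment of the main term for large $u = x/(\sigma\sqrt n)$. The ``refined Taylor bound''
\[
\left|\frac{\psi(u,v)}{uv} - 2\phi(u)\right| \leq C(1+u^2)\phi(u)\,v^\beta
\]
is false as a uniform statement in $(u,v)$: take $u=v$ large, where $\psi(u,u)/u^2 \approx \phi(0)/u^2$ decays only polynomially while the right-hand side decays like $\phi(u)$. Even restricted to $|v|\leq 1$, the Taylor remainder involves $\phi'''(u\pm\tilde v)$, which behaves like $\phi(|u|-1)$ rather than $\phi(u)$---this introduces an $e^{|u|}$ factor you have not accounted for (though it can be absorbed into $e^{-u^2/4}$). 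The upshot is that without truncating the $y$-sum, a single pointwise bound on $\psi(u,v)/(uv)-2\phi(u)$ cannot deliver the uniformity in $x$ you need.

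The paper's fix (Lemma~\ref{lem-appox for varkappa_n-001}) is to keep the main term in the form $\frac{V_n(x)}{\sigma^3 n^{3/2}}K_n(x)$ with $K_n(x)=\sum_y \check V_n(y)\,\bb P(X_1<-y)\,p(u,v)$, truncate at $y\leq \beta_n\sqrt n$, and use the \emph{uniform-in-$u$} Lipschitz bound $|p(u,v)-p(u,0)|\leq c|v|$ from Lemma~\ref{lem-holder prop for ell-001}(4) together with $|L(v)-L(0)|\leq c|v|$. This yields $|K_n(x)-\phi_L(u)L(0)\varkappa_n(x)|\leq c\beta_n + c(\beta_n\sqrt n)^{-\delta}$, an \emph{absolute} bound with no $V(x)$ or $V_n(x)$ factor, after which the identity $V_n(x)\phi_L(u)=\sqrt{2\pi}\,V(x)\phi(u)$ produces the main term. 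Your algebraic identity $\psi(u,v)/(uv)=L(u)L(v)p(u,v)$ is in fact the bridge between the two approaches: factoring out $L(u)$ reduces your difference to exactly $L(u)\bigl[L(v)p(u,v)-L(0)p(u,0)\bigr]$, and bounding the bracket is precisely Lemma~\ref{lem-appox for varkappa_n-001}. So your route can be completed, but it needs the truncation and the uniform Lipschitz input, not the pointwise Taylor bound as stated.
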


An equivalence result can be obtained from Theorem \ref{theorem local for tau lattice} by restricting the range of $x$.  
Let $x_0\in \supp V$ and $q< \frac{\delta}{8(3+\delta)}$. 
Then, as $n\to \infty$, uniformly in $x_0\leq x \leq \sigma \sqrt{2qn\log n}$,
\begin{align} \label{equiv for prob tau=n 001}
 \bb{P} ( \tau_x =n+1 ) \sim  \hbar  \phi\left(\frac{x}{\sigma\sqrt{n}}\right) \frac{2 V(x) \varkappa_n(x) }{\sigma^3 n^{3/2}},
\end{align}
where hereafter we use the convention that $0\sim 0$.

Below we analyze two particular cases of \eqref{equiv for prob tau=n 001}.
Firstly, for any $x_0\in \supp V$, 
it holds that, 
as $n\to\infty$,
uniformly in $x \geq x_0$ such that $\frac{x}{\sqrt{n}} \to 0$ and $x\in \hbar \bb Z+na$,
\begin{align} \label{local tau_x-lattice001}
\bb{P} ( \tau_x =n+1 ) \sim \hbar \frac{2V(x) \varkappa_n(x) }{\sqrt{2\pi}\sigma^3 n^{3/2}}.
\end{align}
Secondly, for some sequence $\beta_n\to \infty$ as $n\to\infty$ 
depending on the rate of convergence in \eqref{locLT for time tau-002 lattice},
it holds that, as $n\to \infty$, uniformly in $x\in \hbar \bb Z+na$ satisfying 
$x \in (\beta_n^{-1} \sqrt{n}, \beta_n \sqrt{n})$,  %$x\asymp \sqrt{n}$ as $n\to\infty$,
\begin{align} \label{local tau_x-lattice002}
\bb{P} ( \tau_x =n+1 ) \sim \hbar \frac{2 \phi^+\left(\frac{x}{\sigma\sqrt{n}}\right) \varkappa_n(x) }{\sqrt{2\pi}\sigma^2 n}.
\end{align}
In particular one can choose $\beta_n=\sigma \sqrt{2 q\log n}$, where $q < \frac{\delta}{8(3+\delta)}$.

There is an alternative formulation of the function $x\mapsto \varkappa_n(x)$ in terms 
of the function $u\mapsto \varkappa(u)$ defined as follows:  
for any $u\in [0, \hbar)$, 
\begin{align} \label{definition varkappa-aa001}
\varkappa(u) = \sum_{k =0}^{\infty} \check V(\hbar k +u) \bb P(X_1< -\hbar k-u).
\end{align}
For any $n\geq 1$ and $x\in \bb R$, denote by $\{na + x\}_{\hbar}$ the unique real number in $[0,\hbar)$ such that,  
$na+x=m\hbar + u$ for some $m\in \bb Z$.
With this notation, for any $n\geq 1$ and $x\in \bb R$,
\begin{align} \label{identity for varkappa(u) aa001}
\varkappa_n(x) = \varkappa(\left\{na + x\right\}_{\hbar}).   
\end{align}
Using Markov's inequality and the fact that $\check V(y) \leq c(1+y)$ for any $y\geq 0$, 
the function $\varkappa$ is bounded on $[0,\hbar)$, and therefore $\varkappa_n$ is also bounded on $\bb R$. 

The identity \eqref{identity for varkappa(u) aa001} implies the following straightforward properties of $\varkappa_n$.
When the random walk $S_n$ is supported on the lattice $\hbar \bb Z$ (which is equivalent to the condition $a=0$), the function $x\mapsto \varkappa_n(x)= \varkappa(\left\{x\right\}_\hbar)$ 
is periodic on $\bb R$ with period $\hbar$.
If, in addition to $a=0$, we assume that $x \in \hbar \bb Z$, then $\varkappa_n(x) = \varkappa(0)$
which is a constant.  
Similarly, if $a=0$ and the starting point $x$ takes values in the (generally non-centered) lattice $\hbar \bb Z+b$, 
with $b \in \bb R$, 
then we have $\varkappa_n(x) = \varkappa(\left\{b\right\}_\hbar)$ which is also a constant. 

The following lemmas give some more properties of the function $x\mapsto \varkappa_n(x)$. 
\begin{lemma} \label{lemma-varkappa-001}
Assume that $X_1$ is $(\hbar, a)$-lattice, $\bb E (X_1) = 0$ and $\bb E (X^2_1)= \sigma^2>0$. \\
1. There exists a constant $c>0$ such that, for any $x\in \bb R$ and $n\geq 1$,
\begin{align*} %\label{001}
\bb P (\tau_x=n+1) \leq c \varkappa_n(x). 
\end{align*}
In particular, if $\varkappa_n(x)=0$, then $\bb P (\tau_x=n+1) = 0$. \\
 2. If $\varkappa_n(x)=0$ for some $x\in \bb R$ and $n\geq 1$, then $\bb P(X_1<a-\hbar)=0$.
\end{lemma}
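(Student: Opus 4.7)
The plan is to reduce the lemma to the uniform persistence bound \eqref{upper-bound-probab-tau-x} applied to the reversed walk $\check S$, combined with an elementary analysis of the lattice support of $X_1$.

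For Part 1, I would begin by conditioning on $X_{n+1}$ via the Markov property at time $n$, which yields
\[
\bb P(\tau_x = n+1) \;=\; \sum_{y \geq 0,\, y - x \in \hbar \bb Z + na} \bb P(x + S_n = y,\, \tau_x > n-1)\, \bb P(X_1 < -y),
\]
and the summation index here matches exactly the one defining $\varkappa_n(x)$ in \eqref{def of varkappa_n(x) abcd001}. It therefore suffices to show $\bb P(x + S_n = y,\, \tau_x > n-1) \leq c\, \check V(y)$ uniformly in $n \geq 1$, $x \in \bb R$ and $y \geq 0$. My tool for this is the time-reversal identity
\[
\bb P(x + S_n = y,\, \tau_x > n-1) \;=\; \bb P(y + \check S_n = x,\, \check \tau_y > n-1),
\]
which comes from the exchangeability of $(X_1, \ldots, X_n)$. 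Dropping the equality constraint and invoking \eqref{upper-bound-probab-tau-x} for $\check S$ (with $n-1$ in place of $n$, and the trivial bound in the remaining case $n = 1$) gives a bound of order $(n^{-\delta/4} + \check V_n(y))/\sqrt{n}$. To absorb this into $c\, \check V(y)$ I would use that $L \leq L(0)$ forces $\check V_n(y) \leq L(0)\, \check V(y)$, and that $\bb P(X_1 \leq 0) > 0$ (a consequence of $\bb E X_1 = 0$ with $\sigma^2 > 0$) together with the characterization \eqref{support-dual-func} and the monotonicity of $\check V$ provides the uniform lower bound $\check V(y) \geq \check V(0) > 0$ for all $y \geq 0$. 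Summing against $\bb P(X_1 < -y)$ then delivers $\bb P(\tau_x = n+1) \leq c\, \varkappa_n(x)$.

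For Part 2, I would argue by contrapositive: assuming $\bb P(X_1 < a - \hbar) > 0$, I aim to show $\varkappa_n(x) > 0$ for every $x \in \bb R$ and $n \geq 1$. The assumption yields a point $y^* \in \supp X_1$ with $y^* \leq a - 2\hbar$. Given $(x, n)$, let $y^+$ be the unique representative of $x + na$ modulo $\hbar$ lying in $[0, \hbar)$; this is the smallest admissible $y \geq 0$ in the sum defining $\varkappa_n(x)$. I would verify two points: (i) $\check V(y^+) > 0$, which follows from $y^+ \geq 0$ together with the monotonicity of $\check V$ and $\check V(0) > 0$; and (ii) $\bb P(X_1 < -y^+) > 0$, which holds because $a < \hbar$ and $y^+ < \hbar$ imply $y^* + y^+ \leq (a - 2\hbar) + \hbar = a - \hbar < 0$, so $y^* < -y^+$ and hence $\bb P(X_1 < -y^+) \geq \bb P(X_1 = y^*) > 0$. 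Consequently the $y^+$-term of $\varkappa_n(x)$ is strictly positive.

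The main obstacle I anticipate lies in Part 1, namely securing the bound $\bb P(\check \tau_y > n-1) \leq c\, \check V(y)$ with a single constant valid for all $n \geq 1$ and $y \geq 0$. The small-$n$ regime requires the uniform lower bound $\check V(y) \geq \check V(0) > 0$ on $[0, \infty)$ to absorb the $n^{-\delta/4}/\sqrt{n}$ contribution, while large $n$ leans on the boundedness of $L$ at zero; the case $n = 1$ is treated separately by the trivial estimate $\bb P(\check \tau_y > 0) = \bb P(X_1 \leq y) \leq 1 \leq c\, \check V(y)$.
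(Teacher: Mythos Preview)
Your argument is correct, but Part~1 takes a longer route than the paper's. After the same Markov decomposition, the paper crudely bounds $\bb P(\tau_x=n+1)\leq\bb P(x+S_n\geq0,\,x+S_{n+1}<0)$, decomposes over the lattice value of $S_n$, and uses $\bb P(S_n=\cdot)\leq1$ together with independence to reach $\sum_{k\geq0}\bb P(X_1<-k\hbar-u_{n,x})$ with $u_{n,x}=\{na+x\}_\hbar$; comparing with the elementary lower bound $\varkappa_n(x)\geq\check V(0)\sum_{k\geq0}\bb P(X_1<-k\hbar-u_{n,x})$ gives the constant $c=1/\check V(0)$. Your detour through duality and \eqref{upper-bound-probab-tau-x} works, but formally invokes the $2+\delta$ moment hypothesis of Theorem~\ref{introTheor-probtauUN-001}, which the lemma does not assume (only finite variance). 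In fact, within your own decomposition the trivial estimate $\bb P(x+S_n=y,\tau_x>n-1)\leq1\leq\check V(y)/\check V(0)$ already delivers the result and collapses to the paper's argument, so the duality and persistence bound are unnecessary overhead. One small slip: $\bb P(\check\tau_y>0)=1$ since $\check\tau_y\geq1$ by definition, not $\bb P(X_1\leq y)$; this does not affect your conclusion. Your Part~2 contrapositive is equivalent to the paper's direct argument, which extracts $\bb P(X_1<-u_{n,x})=0$ from the $k=0$ term of \eqref{latt-loctau-bbb001} and observes that on the lattice $\hbar\bb Z+a$ this forces $\bb P(X_1<a-\hbar)=0$.
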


The following lemma asserts that $\varkappa_n(x)$ is bounded from below, except the case when 
$\bb P(X_1< a-\hbar)=0$, where it is $0$ for $x$ and $n\geq 1$ such that $\{na + x\}_{\hbar} \in [\hbar - a,\hbar)$.

\begin{lemma} \label{lemma-varkappa-002}
Assume that $X_1$ is $(\hbar, a)$-lattice, $\bb E (X_1) = 0$ and $\bb E (X^2_1)= \sigma^2>0$.\\  
1. If $\bb P(X_1< a-\hbar)=0$, then $\bb P(X_1= a-\hbar)>0$, and for any $x\in \bb R$ and $n\geq 1$, 
\begin{align*} %\label{001}
\varkappa_n(x)
= \left\{
\begin{array}{ll}
 \check V(\{na + x\}_{\hbar})\bb P(X_1=a-\hbar)       & \text{if } \{na + x\}_{\hbar} \in [0,\hbar-a), \\
 0       & \text{if } \{na + x\}_{\hbar} \in [\hbar - a,\hbar).
\end{array}
\right.   
\end{align*}
In particular, with $c=\check V(0)\bb P(X_1=a-\hbar)>0$, we have 
$\varkappa_n(x)\geq c$, for any  $x\in \bb R$ and $n\geq 1$
satisfying $\{na + x\}_{\hbar} \in [0,\hbar-a)$.\\ 
2. If $\bb P(X_1< a-\hbar)>0$, then, for any $x\in \bb R$ and $n\geq 1$, 
$$
\varkappa_n(x)\geq \check V(0) \bb P(X_1< a-\hbar)>0.
$$ 
\end{lemma}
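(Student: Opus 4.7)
The plan is to substitute directly into the definition
\[ \varkappa_n(x) = \sum_{y \geq 0,\; y - x \in \hbar\bb Z + na} \check V(y)\, \bb P(X_1 < -y), \]
and to observe that its summation set is the one-sided arithmetic progression $\{y_0 + k\hbar : k \in \bb Z_+\}$, where $y_0 := \{na + x\}_\hbar \in [0, \hbar)$. Since $X_1$ takes values in $\hbar \bb Z + a$, the entire analysis then reduces to reading off the tail probabilities $\bb P(X_1 < -y)$ from the lattice structure.

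For Part 1, I first show that $\bb P(X_1 = a - \hbar) > 0$: the hypothesis $\bb P(X_1 < a - \hbar) = 0$ gives $X_1 \geq a - \hbar$ almost surely; if additionally $\bb P(X_1 = a - \hbar) = 0$, then $X_1 \geq a$ almost surely, which combined with $\bb E X_1 = 0$ forces $a = 0$ and $X_1 \equiv 0$, contradicting $\sigma^2 > 0$. Next, because $X_1$ has no mass strictly below $a - \hbar$, the lattice structure gives $\bb P(X_1 < -y) = \bb P(X_1 = a - \hbar)$ for every $y \in [0, \hbar - a)$ and $\bb P(X_1 < -y) = 0$ for $y \geq \hbar - a$. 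In the summation set, $y_0$ is the unique element in $[0, \hbar)$, and every $y_0 + k\hbar$ with $k \geq 1$ satisfies $y_0 + k\hbar \geq \hbar \geq \hbar - a$. Hence only the $y = y_0$ term can be nonzero, and it is nonzero if and only if $y_0 < \hbar - a$, which yields the claimed case distinction. The ``In particular'' bound then follows from the monotonicity of $\check V$ together with $\check V(0) > 0$.

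For Part 2, I retain only the first term of the sum to obtain
\[ \varkappa_n(x) \geq \check V(y_0)\, \bb P(X_1 < -y_0). \]
Monotonicity gives $\check V(y_0) \geq \check V(0) > 0$. For the probability factor, the lattice structure forces $\{X_1 < a - \hbar\} = \{X_1 \leq a - 2\hbar\}$, and the inequalities $y_0 < \hbar \leq 2\hbar - a$ (the latter from $a < \hbar$) imply $a - 2\hbar < -y_0$. Therefore $\{X_1 < a - \hbar\} \subseteq \{X_1 < -y_0\}$, so $\bb P(X_1 < -y_0) \geq \bb P(X_1 < a - \hbar) > 0$ by hypothesis, giving the claimed bound.

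The only mildly subtle step is the verification that $\bb P(X_1 = a - \hbar) > 0$ in Part 1, which rests on combining $\bb E X_1 = 0$ with $\sigma^2 > 0$; the rest of the argument is direct bookkeeping on the one-sided lattice indexing the defining sum.
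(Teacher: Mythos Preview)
Your proof is correct and follows essentially the same approach as the paper: rewrite $\varkappa_n(x)$ via the arithmetic progression $\{y_0+k\hbar:k\geq 0\}$ with $y_0=\{na+x\}_\hbar$, observe that under the Part~1 hypothesis only the $k=0$ term can survive and read off the case distinction, and for Part~2 keep the $k=0$ term and use the lattice inclusion $\{X_1<a-\hbar\}=\{X_1\leq a-2\hbar\}\subseteq\{X_1<-y_0\}$. If anything, your version is slightly more complete than the paper's, since you explicitly verify the claim $\bb P(X_1=a-\hbar)>0$ (via $\bb E X_1=0$ and $\sigma^2>0$), which is asserted in the statement but left implicit in the paper's proof.
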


The proof of Theorem \ref{theorem local for tau lattice} and Lemmas \ref{lemma-varkappa-001} and \ref{lemma-varkappa-002} are given in Section \ref{sec: proof LLT for tau}. 
 
%%%%%%%%%%%%%%%%%%%%%%%%%%%%%%%%
\subsection{Upper and lower bounds}
From Theorem \ref{CLLT-lattice-n3/2 main result} we have the following upper bound: 
there exists a constant $c>0$ such that, for any $n \geq 1$ and $x,y\in \bb R$,
\begin{align} \label{Uniformbound-cc001}
\bb P_{n,x,y} := \bb{P}  \Big(x+ S_n = y,  \tau_x > n - 1 \Big)
\leq c \frac{\left(n^{-\frac{\delta}{8} }  + V_{n}(x) \right)  \left( n^{-\frac{\delta}{8}} + \check V_n(y)  \right)}{n^{3/2  }}. 
\end{align}
A more precise two-sided bound can be obtained under the assumptions of Theorem \ref{Th-lattice-general equiv-c001}. 
Let $x_0\in \supp V$, $z_0\in \supp \check V$ and $q<\frac{\delta}{8(3+\delta)}$. 
Then, there exists a constant $c>0$ such that, 
for any $n\geq 1$, $x\geq x_0$ and $y\geq y_0$ satisfying $z-x\in \hbar \bb Z+na$ and $|y-x| \leq \sigma \sqrt{q n \log n}$,
\begin{align} \label{lattice bound-corrol-geneq00}
c^{-1} \frac{V_n(x) \check V_n(z) }{ n^{3/2} }   
p \left( \frac{x}{\sigma\sqrt{n}} , \frac{y}{\sigma\sqrt{n} } \right)
\leq \bb P_{n,x,y}
\leq c \frac{V_n(x) \check V_n(y) }{ n^{3/2} }   
p \left( \frac{x}{\sigma\sqrt{n}} , \frac{y}{\sigma\sqrt{n} } \right).
\end{align}
Various scenarios depending of the behavior of $x$ and $y$ can be considered as in Section \ref{sec-equiv-results-000}.

An upper bound for the probability $\bb{P} ( \tau_x =n )$ follows
under the assumptions of Theorem \ref{theorem local for tau lattice}:
there exists a constant $c>0$ such that for any $n\geq 1$ and $x \in \bb R$, 
\begin{align*} 
 \bb{P} ( \tau_x =n +1 ) 
\leq  c \frac{n^{-\frac{\delta}{8} }  + V_{n}(x) }{n^{3/2  }}. 
\end{align*}
A precise two-sided bound can be obtained also from Theorem \ref{theorem local for tau lattice}: 
there exists a constant $c>0$ such that for any $n\geq 2$ and $x \leq \sigma \sqrt{2 q n \log n}$, 
\begin{align*} 
c^{-1} \phi\left(\frac{x}{\sigma\sqrt{n}}\right) \frac{V_n(x)  }{\sigma^3 n^{3/2}}
\leq  \bb{P} ( \tau_x =n+1 ) 
\leq c \phi\left(\frac{x}{\sigma\sqrt{n}}\right) \frac{V_n(x)  }{\sigma^3 n^{3/2}}. 
\end{align*}

%%%%%%%%%%%%%%%%%%%%%%%%%%%%%%%%%%%%%%%%%%%%%%%%%%%
\section{Conditioned local limit theorems in the non-lattice case}\label{sec: results non-lattice case}
\subsection{Main result and corollaries}
We say that the random variable $X_1$ is non-lattice 
if for any $\hbar >0$ and $a \in [0, \hbar)$ it holds that
$\bb P(X_1 \in  \hbar \bb Z +a ) \not=1$. 

Our main result in the non-lattice case is the following theorem. 

\begin{theorem} \label{main res non-lattice case-001}
Assume that $X_1$ is non-lattice, $\bb E (X_1) = 0$, $\bb E (X^2_1)= \sigma^2$,  and that there exists $\delta > 0$ 
such that $\bb E (|X_1|^{2+\delta})  < \infty.$
Let $v_0 >0$. Then, there exist a constant $c>0$ and a sequence $(\alpha_n)_{n \geq 1}$ of positive numbers
 satisfying $\lim_{n\to\infty}\alpha_n = 0$, 
such that  for any $n \geq 2$,  $x\in \bb R$, $y \in  \supp \check V$ and $v\geq v_0$, 
\begin{align*}
& \left| \bb{P}  \Big( x+S_n \in [y, y +  v),  \tau_x > n - 1 \Big) 
 - (1+\alpha_n)  \frac{V_n(x) }{ \sigma^3 n^{3/2} }  
  \int_{y}^{y+v}  \check V_n(z)  p \left( \frac{x}{\sigma\sqrt{n}} , \frac{z}{\sigma\sqrt{n} } \right) dz \right| \notag\\
 & \qquad\qquad\qquad  \leq c \frac{\left(n^{-\frac{\delta}{8} }  + V_{n}(x) \right) 
   \left(  \int_{y}^{y+v}  \check V_n(z) dz + v n^{-\frac{\delta}{4}} \right) }{  n^{3/2} } n^{-\frac{\delta}{8(3+\delta)} } \log n.
 \end{align*}
 \end{theorem}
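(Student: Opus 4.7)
The plan is to apply the three-segment convolution strategy outlined in the introduction. Split the time interval at $n_1$ and $n - n_3$ with $n_1 = n_3 = \lfloor n^{1/2} \rfloor$ (chosen so that $n_1^{-\delta/4} = n^{-\delta/8}$ and $n_1^{-\delta/(4(3+\delta))} = n^{-\delta/(8(3+\delta))}$, matching the exponents in the stated error), and set $n_2 = n - n_1 - n_3$. By the Markov property applied twice,
\begin{align*}
P_n := \mathbb{P}(x + S_n \in [y, y+v),\ \tau_x > n-1) = \int_0^\infty\!\!\int_0^\infty \mu_1(du)\,\mu_2(u, dz)\,\mu_3(z),
\end{align*}
where $\mu_1(du) = \mathbb{P}(x + S_{n_1} \in du, \tau_x > n_1)$, $\mu_2(u, dz) = \mathbb{P}(u + S'_{n_2} \in dz, \tau'_u > n_2)$, and $\mu_3(z) = \mathbb{P}(z + S''_{n_3} \in [y, y+v), \tau''_z > n_3 - 1)$, with $S'$ and $S''$ independent copies of $S$.

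For the outer factors I invoke Theorem \ref{introTheor-probtauUN-001}. It controls $\mu_1$ at the CDF level by $V_{n_1}(x) \int_0^u \ell(x/\sigma\sqrt{n_1}, z)\,dz/(\sigma\sqrt{n_1})$; I lift this to an integrated bound against the smooth kernel $\psi_{\sigma^2 n_2}(\cdot, z)$ via integration by parts (the CDF error becomes multiplied by the total variation of the kernel rather than its supremum). For $\mu_3$ I apply the time-reversal identity following from $(X_1, \ldots, X_{n_3}) \stackrel{d}{=} (-\check X_{n_3}, \ldots, -\check X_1)$, which lets me rewrite $\mu_3(z)$ as an integral over $w \in [y, y+v)$ against the sub-density of $w + \check S_{n_3}$ restricted to $\check\tau_w > n_3 - 1$ evaluated at $z$. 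Applying Theorem \ref{introTheor-probtauUN-001} to the reversed walk then yields $\mu_3(z) \approx \int_y^{y+v} \check V_{n_3}(w)\,\ell(w/\sigma\sqrt{n_3}, z/\sigma\sqrt{n_3})\,dw/(\sigma^2 n_3)$. For the middle factor I combine Stone's local limit theorem with a reflection argument: $\mu_2(u, dz)/dz \approx \phi_{\sigma^2 n_2}(z - u) - \phi_{\sigma^2 n_2}(z + u) = \psi_{\sigma^2 n_2}(u, z)$, where the subtracted term accounts for paths from $u > 0$ to $z > 0$ that cross zero and return.

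Substituting the three approximations and performing the $u$-integral uses the identity $\int_0^\infty \tilde u\,\ell(a, \tilde u)\,d\tilde u = a/H(a)$ (obtained from direct evaluation of $\int \tilde u\,[\phi(a - \tilde u) - \phi(a + \tilde u)]\,d\tilde u$); combined with $V_n(x)(x/\sigma\sqrt{n})/H(x/\sigma\sqrt{n}) = V(x)$ and the symmetric step for the $z$-integral against $\check V_{n_3}$, the factors assemble into $V_n(x)\,\check V_n(z)\,p(x/\sigma\sqrt{n}, z/\sigma\sqrt{n})/(\sigma^3 n^{3/2})$ via the definition $p(a, b) = \psi(a, b)/(H(a) H(b))$; the replacement $n_2 \to n$ in the heat kernel and the averaging in Stone's theorem over $[y, y+v)$ produce the $(1 + \alpha_n)$ prefactor. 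The main obstacle is uniform error control in $x$ and $y$: the three segments contribute multiplicative errors of the form $n^{-\delta/8} + V_n(x)$ (first segment), a Berry-Esseen type error of size $v\,n^{-\delta/4}$ (middle, from the non-lattice Stone LLT over intervals of length $v$), and $n^{-\delta/8} + \check V_n(\cdot)$ (third segment), and their cross products must be reorganized to produce the stated bound $(n^{-\delta/8} + V_n(x))(\int \check V_n\,dz + v\,n^{-\delta/4}) \cdot n^{-\delta/(8(3+\delta))} \log n / n^{3/2}$. The tightest step is keeping these errors uniform in $x$ ranging over all of $\mathbb{R}$; I would first establish an intermediate Caravenna-type local limit theorem (order $n^{-1}$) by convolving only the first two segments, and then upgrade to the full heat kernel form (order $n^{-3/2}$) via the third-segment convolution.
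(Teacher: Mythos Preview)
Your proposal contains a genuine gap in the treatment of the middle segment. You write that $\mu_2(u,dz)/dz \approx \psi_{\sigma^2 n_2}(u,z)$ via ``Stone's local limit theorem with a reflection argument,'' where the subtracted term $\phi_{\sigma^2 n_2}(z+u)$ accounts for paths that cross zero. But the reflection principle is exact only for Brownian motion; for a random walk it fails because the walk overshoots the boundary at the crossing time, so the reflected path does not land at $z$. Quantitatively, the statement $\mathbb P(u+S_{n_2}\in dz,\ \tau_u>n_2)/dz \approx \psi_{\sigma^2 n_2}(u,z)$ with the needed uniformity in $u$ is \emph{precisely} the conditioned local limit theorem you are trying to prove (in its Caravenna-type form, Theorem~\ref{t-B 002}), so invoking it for the middle block is circular. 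Your three-way split with $n_1=n_3=\lfloor\sqrt n\rfloor$ cannot be completed without an independent proof of this approximation.

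The paper's route avoids this entirely. It never approximates a conditioned middle block by $\psi$. Instead it performs two successive \emph{two}-way splits, each into pieces of comparable size. First, $n=k+m$ with $m=\lfloor\varepsilon n\rfloor$: on the first $k$ steps apply the conditioned integral CLT (Theorem~\ref{introTheor-probtauUN-001}), and on the last $m$ steps apply the \emph{unconditioned} local limit theorem (Theorem~\ref{LLT-general}); the lower bound requires a Fuk--Nagaev estimate to control paths that exit during the last $m$ steps. This yields the Caravenna-type bound (Theorem~\ref{t-B 002}) with main term $\frac{V_n(x)}{\sigma^2 n}\ell(\cdot,\cdot)$. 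Second, split $n=k+m$ with $m=\lfloor n/2\rfloor$: apply Theorem~\ref{t-B 002} to the first block and the conditioned CLT (via duality, Lemma~\ref{lemma-duality-lemma-2_Cor}) to the second. The heat kernel $\psi$ and the density $p$ emerge not from any reflection but from the semigroup identity of Lemma~\ref{lem: product heat kernels},
\[
\int_{\mathbb R_+}\psi_s(x,u)\,\psi_t(u,y)\,du=\psi_{s+t}(x,y),
\]
applied to the convolution of the two $\ell$-kernels coming from the two conditioned CLTs (this is the content of Lemma~\ref{Lem-ballot-nonlattice-001}). The exponent $n^{-\delta/8}$ arises from the choice $\varepsilon=n^{-\delta/(4(3+\delta))}$ in the first split, not from segments of length $\sqrt n$. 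Your final paragraph gestures toward this two-step structure, but the body of the proposal does not implement it, and the middle-segment step as written does not go through.
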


Since $\check V(0)>0$, the previous theorem holds true uniformly for $y\in [0,\infty)$. % $[0,\infty)\subsetneq \supp \check V$.
The proof of Theorem \ref{main res non-lattice case-001} is given in Section \ref{sec-proof of equiv for intervals} 
based  on the more precise result of Theorem \ref{theorem-n3/2-upper-lower-bounds} 
which is stated in the form of two-sided bounds.  

An equivalence result can be obtained by restricting the range of  the variables $x, y$ 
in Theorem \ref{main res non-lattice case-001}, 
following an approach  similar to that used in Section \ref{sec-equiv-results-000}. 
\begin{theorem}\label{TH-non-latticeEQUIVALENCE-001} 
Assume that $X_1$ is non-lattice, $\bb E (X_1) = 0$, $\bb E (X^2_1)= \sigma^2$,  and that there exists $\delta > 0$ 
such that $\bb E (|X_1|^{2+\delta})  < \infty.$ 
Let $x_0\in \supp V$, $y_0\in \supp \check V$, $v_0>0$ and $q<\frac{\delta}{8(3+\delta)}$.
Then, as $n \to \infty$,  uniformly in $x\geq x_0$, $y\geq y_0$, $v\geq v_0$ 
satisfying $|y-x|+v\leq \sigma\sqrt{2q n\log n}$ or $|y+x|+v\leq \sigma\sqrt{2q n\log n}$,
\begin{align*} %\label{ModerDevballotLLT-eq001}
\bb{P} \left(x+S_n \in [y,y+v), \tau_x >n-1\right) 
\sim  \frac{V_{n}(x)}{\sigma^3 n^{3/2}} 
\int_{y}^{y+v} \check V_{n}(z) p \left( \frac{x}{\sigma\sqrt{n}}, \frac{z}{\sigma\sqrt{n}}   \right) dz. 
\end{align*}
\end{theorem}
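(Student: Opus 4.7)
The plan is to deduce Theorem~\ref{TH-non-latticeEQUIVALENCE-001} from the uniform approximation in Theorem~\ref{main res non-lattice case-001} by showing that, on the region in question, the leading term
\[
M_n(x,y,v) := \frac{V_n(x)}{\sigma^3 n^{3/2}} \int_y^{y+v} \check V_n(z)\, p\!\left(\tfrac{x}{\sigma\sqrt{n}}, \tfrac{z}{\sigma\sqrt{n}}\right) dz
\]
dominates the error. This is completely parallel to how Theorem~\ref{Th-lattice-general equiv-c001} is deduced from Theorem~\ref{CLLT-lattice-n3/2 main result} in the lattice setting, and it reuses the same superlevel-set inclusion.

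First, I would simplify the error term. Because $x \geq x_0 \in \supp V$ and $z \geq y_0 \in \supp \check V$ for every $z \in [y, y+v]$, the uniform lower bounds in \eqref{lower bound of VL-001} give $V_n(x) \geq c_1$ and $\check V_n(z) \geq c_1$. Hence $n^{-\delta/8} + V_n(x) \leq c\,V_n(x)$ and $v \leq c \int_y^{y+v} \check V_n(z)\, dz$, so the $v n^{-\delta/4}$ contribution is absorbed into $\int_y^{y+v}\check V_n(z)\, dz$. Dividing the resulting simplified error by $M_n$ reduces the problem to establishing
\[
\frac{1}{\displaystyle\min_{z \in [y,y+v]} p\!\left(\tfrac{x}{\sigma\sqrt{n}}, \tfrac{z}{\sigma\sqrt{n}}\right)} \cdot n^{-\delta/(8(3+\delta))} \log n \longrightarrow 0.
\]

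Second, I would bound $p$ from below on the relevant range. Setting $x' = x/(\sigma\sqrt{n})$ and $z' = z/(\sigma\sqrt{n})$, the triangle inequality gives $|z' - x'| \leq (|y-x| + v)/(\sigma\sqrt{n})$ and $|z' + x'| \leq (|y+x| + v)/(\sigma\sqrt{n})$; under either branch of the hypothesis, one of these is at most $\sqrt{2q \log n}$. By the inclusion $\scr D(\alpha) \subset \mathcal Q(\alpha)$ noted in the proof of Theorem~\ref{Th-lattice-general equiv-c001}, with $\alpha = n^{-q}/\sqrt{2\pi}$ (for which $q_\alpha = \sqrt{2q\log n}$), this forces $p(x', z') \geq n^{-q}/\sqrt{2\pi}$ for every $z \in [y, y+v]$. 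Substituting yields an error-to-leading ratio of order $n^{q - \delta/(8(3+\delta))} \log n$, which vanishes since $q < \delta/(8(3+\delta))$; together with Theorem~\ref{main res non-lattice case-001} this gives the equivalence.

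The main step that requires care is handling the two branches $|y-x|+v \leq \sigma\sqrt{2qn\log n}$ and $|y+x|+v \leq \sigma\sqrt{2qn\log n}$ of the hypothesis uniformly, so that the lower bound on $p$ applies over the full two-branch region $\scr D(n^{-q}/\sqrt{2\pi})$ and not only near the diagonal $z = x$. Beyond that, the argument is a short substitution in the bound of Theorem~\ref{main res non-lattice case-001}: no new probabilistic input is required, since Theorem~\ref{main res non-lattice case-001} has already absorbed the non-lattice local-limit ingredients.
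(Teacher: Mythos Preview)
Your proposal is correct and follows essentially the same route as the paper: the paper's proof simply notes that for $z\in[y,y+v]$ one has $|x-z|\leq |x-y|+v\leq \sigma\sqrt{2qn\log n}$ and then invokes Theorem~\ref{main res non-lattice case-001} together with the superlevel-set argument from the proof of Theorem~\ref{Th-lattice-general equiv-c001}. You have spelled out the details---in particular the use of \eqref{lower bound of VL-001} to absorb the $n^{-\delta/8}$ and $vn^{-\delta/4}$ pieces and the handling of both branches of the hypothesis---that the paper leaves implicit in the phrase ``in the same way as in the proof of Theorem~\ref{Th-lattice-general equiv-c001}.''
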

\begin{proof}
Note that, for $y\geq y_0$, $v\geq v_0$ and $z\in [y,y+v]$ we have %$z\geq y\geq y_0$.
$ |x-z|\leq |x-y|+v\leq   \sigma\sqrt{2qn\log n}.$
Therefore, the assertion of the theorem follows from Theorem  \ref{main res non-lattice case-001},
in the same way as in the proof of Theorem \ref{Th-lattice-general equiv-c001}.
\end{proof}

The heat kernel approximations established in Theorems \ref{main res non-lattice case-001} and \ref{TH-non-latticeEQUIVALENCE-001}
are effective across various regimes dictated by the behavior of $x$, $y$ and $v$. 
These results provide a unified framework that generalizes and improves upon several previous findings. 
Specifically, from Theorems \ref{main res non-lattice case-001} and \ref{TH-non-latticeEQUIVALENCE-001}, we recover and enhance 
the main results of \cite{Carav05}, \cite{VatWacht09} and \cite{Don12} for the Gaussian case. 
Furthermore, by setting $v = u\sqrt{n}$ in Theorem \ref{main res non-lattice case-001}, 
one recovers the conditioned central limit theorem \eqref{UniformCondInt-001}, albeit with a slower convergence rate.

To provide further insight, we analyze several asymptotic regimes depending on the behavior of $x$  and $y$. 
Under the conditions of Theorem \ref{TH-non-latticeEQUIVALENCE-001}, 
the following equivalences hold, 
where the sequence $(\alpha_n)_{n\geq 1}$ converges to zero and the sequence $(\beta_n)_{n\geq 1}$ diverges to infinity.
\begin{itemize}
\item[b1:] 
As $n \to \infty$,  uniformly in $x\geq x_0$, $y\geq y_0$, $v\geq v_0$ satisfying $\frac{x}{\sqrt{n}} \leq \alpha_n$, 
$\frac{|xy|+|xv|}{n} \leq \alpha_n$,  and $|x-y|+v\leq \sigma\sqrt{2q n\log n}$, 
\begin{align} \label{non-lattice-corrol-geneq01}
\bb{P} \left(x+S_n \in [y,y+v], \tau_x >n-1\right) 
\sim  \frac{2V(x)}{\sigma^3 n^{3/2}} 
\int_{y}^{y+v} \check V(z)  \phi \left(\frac{z}{\sigma\sqrt{n}}   \right) dz. 
\end{align}
In particular, as $n\to\infty,$ uniformly in $x\geq x_0$, $y\geq y_0$, $v\geq v_0$ satisfying the conditions listed above and $\frac{|yv|+v^2}{n}\leq \alpha_n$, 
\begin{align} \label{non-lattice-corrol-geneq01b}
\bb{P} \left(x+S_n \in [y,y+v], \tau_x >n-1\right) 
\sim  \frac{2V(x) \int_{y}^{y+v} \check V(z)dz}{\sigma^3 n^{3/2}  } 
 \phi \left(\frac{y}{\sigma\sqrt{n}} \right). 
\end{align}

\item[b2:] 
As $n \to \infty$,  uniformly in $x\geq x_0$, $y\geq y_0$, $v\geq v_0$ satisfying $\frac{|y|+|v|}{\sqrt{n}} \leq\alpha_n$,
$\frac{|xy|+|xv|}{n}\leq \alpha_n$ 
and $|x-y|+v\leq \sigma\sqrt{2q n\log n}$, 
\begin{align} \label{non-lattice-corrol-geneq02}
\bb{P} \left(x+S_n \in [y,y+v], \tau_x >n-1\right) 
\sim  \frac{V(x) \int_{y}^{y+v} \check V(z) dz}{\sigma^3 n^{3/2}} 
  \phi \left(\frac{x}{\sigma\sqrt{n}} \right). 
\end{align}

\item[b3:] 
As $n \to \infty$,  uniformly in $x\geq \beta_n$, $y\geq \beta_n$, $v\geq v_0$ and $|y-x|+v\leq \sigma\sqrt{2q n\log n}$, 
\begin{align} \label{non-lattice-corrol-eq03}
\bb{P} \left(x+S_n \in [y,y+v], \tau_x >n-1\right) 
  \sim \frac{1}{\sigma \sqrt{n} }  \int_{y}^{y+v}  \psi\left( \frac{x}{\sigma\sqrt{n}} , \frac{z}{\sigma\sqrt{n} } \right) dz. 
\end{align}
\end{itemize}
Assertions b1-b2 follow from Theorem \ref{TH-non-latticeEQUIVALENCE-001} 
via the equivalences established in Lemmas \ref{Lem-p equiv-x to 0} and \ref{lem-inequality for L}. 
The claim b3 follows by Rogozin's estimate \eqref{Rogizin estim-001}. 
We note that because $V(0) > 0$ and $\check V(0) > 0$, the parameters $x_0$ and $z_0$ in b1-b2 may be set to zero: $x_0=0$ and $y_0=0$. 
These claims generalize results previously obtained in \cite{VatWacht09, Don12, GX-2024-AIHP}.

Let us finish this section by stating a result which is similar to the results in Section \ref{sec:Main result and consequences}, 
provided an additional condition on $v$ is assumed. 
Theorem \ref{main res non-lattice case-001} and Lemma \ref{lem-holder prop for ell-001} imply the following.
\begin{corollary} \label{non-lattice-equiv-xlarge-001}
Assume that $X_1$ is non-lattice, $\bb E (X_1) = 0$, $\bb E (X^2_1)= \sigma^2$,  and that there exists $\delta > 0$ 
such that $\bb E (|X_1|^{2+\delta})  < \infty.$
Let $v_0 >0$. 
Then, there exist a constant $c>0$ and a sequence $(\alpha_n)_{n \geq 1}$ of positive numbers with $\alpha_n \to 0$ as $n \to \infty$, 
such that  for any $n \geq 2$,  $x\in \bb R$, $y \in  \supp \check V$ and $v \geq v_0$,  
\begin{align*}
& \left| \bb{P}  \Big( x+S_n \in [y,y + v),  \tau_x > n - 1 \Big) 
 - \left(1+ \alpha_n \right)  \frac{V_n(x) \int_{y}^{y+v}  \check V_n(z)   dz }{ \sigma^3 n^{3/2} }  
   p \left( \frac{x}{\sigma\sqrt{n}} , \frac{y}{\sigma\sqrt{n} } \right) \right|  \notag\\
 & \leq  c \frac{\left(n^{-\frac{\delta}{8} }  + V_{n}(x)\right)  \left(  \int_{y}^{y+v}  \check V_n(z) dz + v n^{-\frac{\delta}{4}}  \right) }{  n^{3/2} }
 \left(n^{-\frac{\delta}{8(3+\delta)} } \log n + \frac{v}{\sqrt{n}}\right).
 \end{align*}
\end{corollary}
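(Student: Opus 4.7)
The plan is to deduce Corollary~\ref{non-lattice-equiv-xlarge-001} directly from Theorem~\ref{main res non-lattice case-001} by freezing the $z$-dependent factor $p\bigl(\tfrac{x}{\sigma\sqrt{n}},\tfrac{z}{\sigma\sqrt{n}}\bigr)$ inside the integral at $z=y$, and controlling the resulting approximation error through a H\"older-type modulus of continuity for the two-side normalized heat kernel $p$, as supplied by Lemma~\ref{lem-holder prop for ell-001}.

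First, I invoke Theorem~\ref{main res non-lattice case-001} to write
\[
\bb{P}\bigl(x+S_n\in[y,y+v),\,\tau_x>n-1\bigr) \;=\; (1+\alpha_n)\,\frac{V_n(x)}{\sigma^3 n^{3/2}}\,I_n(x,y,v) \;+\; R_1,
\]
where $I_n(x,y,v) := \int_{y}^{y+v}\check V_n(z)\, p\bigl(\tfrac{x}{\sigma\sqrt{n}},\tfrac{z}{\sigma\sqrt{n}}\bigr)\,dz$ and the remainder $R_1$ is bounded by the estimate in Theorem~\ref{main res non-lattice case-001}, which already contributes the summand $n^{-\delta/(8(3+\delta))}\log n$ in the bracketed factor of the final error.

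Next, I decompose
\[
I_n(x,y,v) - p\Bigl(\tfrac{x}{\sigma\sqrt{n}},\tfrac{y}{\sigma\sqrt{n}}\Bigr)\int_{y}^{y+v}\check V_n(z)\,dz \;=\; \int_{y}^{y+v}\check V_n(z)\,\Delta_n(x,y,z)\,dz,
\]
where $\Delta_n(x,y,z) := p\bigl(\tfrac{x}{\sigma\sqrt{n}},\tfrac{z}{\sigma\sqrt{n}}\bigr) - p\bigl(\tfrac{x}{\sigma\sqrt{n}},\tfrac{y}{\sigma\sqrt{n}}\bigr)$. Lemma~\ref{lem-holder prop for ell-001} yields a Lipschitz-type bound of the form $|\Delta_n(x,y,z)|\leq c\,|z-y|/(\sigma\sqrt{n})$ uniformly in the first argument; since $|z-y|\leq v$ on the integration interval, this second error is bounded by $c\,(v/\sqrt{n})\int_{y}^{y+v}\check V_n(z)\,dz$. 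Multiplying by the prefactor $(1+\alpha_n)V_n(x)/(\sigma^3 n^{3/2})$, and using the trivial inequalities $V_n(x)\leq n^{-\delta/8}+V_n(x)$ and $\int_{y}^{y+v}\check V_n(z)\,dz\leq \int_{y}^{y+v}\check V_n(z)\,dz+vn^{-\delta/4}$, the contribution of this step embeds into the same product prefactor as $R_1$, adding the summand $v/\sqrt{n}$ to the bracket.

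Summing the two error contributions gives exactly the stated bound with the combined factor $n^{-\delta/(8(3+\delta))}\log n + v/\sqrt{n}$. The principal technical point is that the Lipschitz control provided by Lemma~\ref{lem-holder prop for ell-001} must be uniform in $a=x/(\sigma\sqrt{n})\in\bb{R}$; I expect this uniformity, which follows from the explicit form \eqref{def of func ell_H-002}, the smoothness of $\phi$, and the behavior of $H$ (well-defined and bounded away from zero at the origin by virtue of the definition $p(0,0)=\sqrt{2\pi}/2$, and with $H(a)\sim a$ as $|a|\to\infty$ neutralized by $\psi$ in the numerator), to be the main obstacle in turning this proposal into a rigorous proof.
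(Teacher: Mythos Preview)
Your proposal is correct and follows essentially the same route as the paper: apply Theorem~\ref{main res non-lattice case-001}, then replace $p\bigl(\tfrac{x}{\sigma\sqrt{n}},\tfrac{z}{\sigma\sqrt{n}}\bigr)$ by $p\bigl(\tfrac{x}{\sigma\sqrt{n}},\tfrac{y}{\sigma\sqrt{n}}\bigr)$ inside the integral using the Lipschitz bound \eqref{lem-holder prop for ell-002} of Lemma~\ref{lem-holder prop for ell-001}. Your worry about uniformity in the first argument is unfounded: part (4) of that lemma already gives $|p(x,y+a)-p(x,y)|\leq c|a|$ for all $x,y,a\in\bb R$, so no further work is needed.
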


\begin{proof}[Proof of Corollary \ref{non-lattice-equiv-xlarge-001}]
By \eqref{lem-holder prop for ell-002} of Lemma \ref{lem-holder prop for ell-001}, there exists a constant $c>0$ such that, 
for any $x, y \in \bb R$, $v \in \bb R_+$ and $z\in [y,y+v]$,
\begin{align*} %\label{}
\left|p \left( \frac{x}{\sigma\sqrt{n}} , \frac{z}{\sigma\sqrt{n} } \right) - p \left( \frac{x}{\sigma\sqrt{n}} , \frac{y}{\sigma\sqrt{n} } \right)\right| 
 \leq c \frac{v}{\sigma\sqrt{n}},
\end{align*}
which together with Theorem \ref{main res non-lattice case-001} implies the assertion of the corollary.
\end{proof}
This corollary allows us to extend the results of  Section \ref{Sec-CondLocLimTh-lattice-001}
stated for lattice distributions to the case of non-lattice distributions, which because of the space limitations will not be pursued here.

%%%%%%%%%%%%%%%%%%%%%%%%%%%%%%%%%%%%%
\subsection{Extension for functions}
All of the above results can be extended from intervals to a more general case of functions. 
For instance,  the following theorem holds.  

\begin{theorem}\label{UnifballotLLTheor-001} 
Assume that $X_1$ is non-lattice, $\bb E (X_1) = 0$, $\bb E (X^2_1)= \sigma^2$,  
and that there exists $\delta > 0$ 
such that $\bb E (|X_1|^{2+\delta})  < \infty.$ 
Let $f:\bb R \to \bb R_+$ be a function such that
$y\mapsto f(y)(1+|y|)$ is directly Riemann integrable. 
Then, for any constant $v_0>0$, 
as $n\to\infty$, uniformly in $x,y \in \bb R$ and $v \geq  v_0$,
\begin{align*} %\label{formUniformballotLLT-001bb}
 \bb{E} \left(  f \left(\frac{x+S_n-y}{v} \right); \tau_x >n-1\right) 
 &=  \frac{V_{n}(x)}{\sigma^3 n^{3/2}} 
\int_{\bb R}  f\left(\frac{z-y}{v}\right) \check V_{n}(z) p \left( \frac{x}{\sigma\sqrt{n}}, \frac{z}{\sigma\sqrt{n}}   \right) dz \\
&  \quad +   v \frac{\left(1  + V_{n}(x)\right) \left( 1 + v  + \check V_{n}(y)   \right)}{n^{3/2}} o(1).
\end{align*}
\end{theorem}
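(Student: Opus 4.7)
The plan is to reduce Theorem \ref{UnifballotLLTheor-001} to the interval version Theorem \ref{main res non-lattice case-001} via a standard direct-Riemann-integrability sandwich argument. Given $\epsilon>0$, choose $h=h(\epsilon)>0$ and step functions $f_h^-\leq f\leq f_h^+$, constant on each cell $[kh,(k+1)h)$, $k\in \bb Z$, satisfying
\begin{align*}
\sum_{k\in \bb Z} h (1+|kh|)(f_h^+(kh)-f_h^-(kh))<\epsilon, \qquad \sum_{k\in \bb Z} h (1+|kh|) f_h^+(kh) \leq M_f,
\end{align*}
with $M_f$ depending only on $f$. Such a choice exists since $u\mapsto f(u)(1+|u|)$ is directly Riemann integrable by hypothesis.

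Writing $I_k = [y+kvh,y+(k+1)vh)$ and exploiting the decomposition $f_h^\pm((z-y)/v) = \sum_k f_h^\pm(kh)\mathds 1_{I_k}(z)$, the expectation is sandwiched between
\begin{align*}
\sum_{k\in\bb Z} f_h^\pm(kh)\, \bb P(x+S_n\in I_k, \tau_x>n-1).
\end{align*}
Each interval $I_k$ has length $vh \geq v_0 h =: v_0'$, so I can apply Theorem \ref{main res non-lattice case-001} with the constant $v_0'$ to each of these probabilities. Summing the leading terms contributes, up to the factor $1+\alpha_n$ of that theorem,
\begin{align*}
\frac{V_n(x)}{\sigma^3 n^{3/2}} \sum_{k\in\bb Z} f_h^\pm(kh) \int_{I_k} \check V_n(z)\, p\!\left(\frac{x}{\sigma\sqrt n}, \frac{z}{\sigma\sqrt n}\right) dz,
\end{align*}
which differs from the target $\frac{V_n(x)}{\sigma^3 n^{3/2}} \int_{\bb R} f((z-y)/v) \check V_n(z)\, p(\cdots)\, dz$ by a Riemann-approximation gap controlled by $\sum_k h(1+|kh|)(f_h^+-f_h^-)(kh) < \epsilon$ combined with the linear growth bound $\check V_n(z) \leq c(1+|z|)$; this yields a gap of order $\epsilon \cdot v(1+V_n(x))(1+v+\check V_n(y))/n^{3/2}$. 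The summed additive remainder from Theorem \ref{main res non-lattice case-001} is at most
\begin{align*}
c\, \frac{n^{-\delta/8}+V_n(x)}{n^{3/2}}\, n^{-\delta/(8(3+\delta))}\log n \sum_{k\in\bb Z} f_h^+(kh) \Bigl(\int_{I_k} \check V_n(z)\, dz + vh\, n^{-\delta/4}\Bigr),
\end{align*}
and the linear bound $\check V_n(z) \leq c(1+|z|)$ together with the weighted DRI assumption gives $\sum_k f_h^+(kh) \int_{I_k} \check V_n(z)\, dz \leq c\, v(1+v+\check V_n(y))\, M_f$, so the full additive error fits the target form $v(1+V_n(x))(1+v+\check V_n(y))/n^{3/2} \cdot o(1)$. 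Letting first $n\to\infty$ (killing $\alpha_n$ and the heat-kernel error per interval for fixed $h$) and then $\epsilon\to 0$ concludes the proof.

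The main obstacle is the nested limit: the constants in Theorem \ref{main res non-lattice case-001} depend on $v_0' = v_0 h(\epsilon)$ and degrade as $\epsilon\to 0$, so $n\to\infty$ must be taken first for each fixed $\epsilon$ and only afterwards $\epsilon\to 0$. The other delicate point is matching the prescribed factor $1+v+\check V_n(y)$ in the error, which requires the pointwise estimate $\check V_n(y+uv) \leq c(1+\check V_n(y) + v(1+|u|))$ (from monotonicity and the linear growth of $\check V$) combined with the $(1+|u|)$-weighted direct Riemann integrability of $f$; this is precisely where the $(1+|y|)$ factor in the hypothesis is used, and it also secures absolute convergence of the countable sums over $k\in \bb Z$ uniformly in $v \geq v_0$.
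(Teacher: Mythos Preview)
Your approach is correct and standard, but it differs from the route the paper indicates. The paper states that Theorem \ref{UnifballotLLTheor-001} is to be derived directly from Theorem \ref{theorem-n3/2-upper-lower-bounds} (the two–sided functional bound), not from the interval version Theorem \ref{main res non-lattice case-001}. In the paper's route one applies \eqref{theorem-n3/2 001}--\eqref{theorem-n3/2 002} to the scaled function $z\mapsto f((z-y)/v)$ with step-function choices of $g$ and $h$ coming from the direct Riemann integrability of $u\mapsto f(u)(1+|u|)$; the error integral $\int g(z)(\check V_n(z)+n^{-\delta/4})\,dz$ then translates, via the change of variables $z=y+uv$ and the estimate $\check V_n(y+uv)\leq c(1+\check V_n(y)+v(1+|u|))$ you already identified, directly into the claimed remainder.

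Your detour through Theorem \ref{main res non-lattice case-001} adds one layer and one genuine caveat: that theorem is stated for left endpoints in $\supp\check V$ (in particular $y\geq 0$), whereas your cells $I_k=[y+kvh,y+(k+1)vh)$ with $k$ negative can have left endpoint outside $\supp\check V$; those intervals need a separate (easy) treatment, e.g.\ by splitting at $0$ and bounding the negative part crudely. The paper's direct use of Theorem \ref{theorem-n3/2-upper-lower-bounds} sidesteps this, since that result carries no support restriction on $f,g,h$. Otherwise your error accounting, the nested limit $n\to\infty$ before $\epsilon\to 0$, and the role of the weight $(1+|u|)$ are all in order.
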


Theorem \ref{UnifballotLLTheor-001} can be obtained from Theorem \ref{theorem-n3/2-upper-lower-bounds} by standard method,
so the proof will not be detailed here.
By using a truncation argument and assuming convenient integrability assumptions on the target function $f$  
one can obtain equivalences similar to those given by Theorem \ref{TH-non-latticeEQUIVALENCE-001} 
and cases b1, b2 and b3. 

%%%%%%%%%%%%%%%%%%%%%%%%%%%%%%%%%%
\subsection{A local limit theorem for the exit time}
The following assertion gives the asymptotic behaviour of the local behaviour of the exit time  $\tau_x$.
Denote 
\begin{align} \label{def kappanonlatt 001}
\varkappa = \int_{\bb R_-} \check V(y) dy = \int_{\bb R_+} \check V(y)  \bb P (\check X_1 >y ) dy.
\end{align}
Under conditions $\bb E (X_1) = 0$ and $\bb E (X^2_1)= \sigma^2  > 0$,   
it holds $0 < \varkappa < \infty$.

\begin{theorem} \label{theorem local for tau non-lattice }
Assume that $X_1$ is non-lattice, $\bb E (X_1) = 0$, $\bb E (X^2_1)= \sigma^2$,  
and that there exists $\delta > 0$ 
such that $\bb E (|X_1|^{2+\delta})  < \infty.$ 
Then, as $n\to\infty,$ uniformly in $x\in \bb R$,
\begin{align*} %\label{locLT for time tau-001}
\bb{P} ( \tau_x =n ) 
   =  (1+o(1)) \phi\left(\frac{x}{\sigma\sqrt{n}}\right) \frac{2 V(x) \varkappa}{\sigma^3 n^{3/2}} 
 +  c  \frac{n^{-\frac{\delta}{6} }  + V_{n}(x) n^{-\frac{\delta}{8(3+\delta)} } \log n}{  n^{3/2} }.
\end{align*}
\end{theorem}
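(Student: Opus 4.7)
The plan is to apply the functional conditioned local limit theorem for non-lattice walks, Theorem~\ref{UnifballotLLTheor-001} (or its quantitative two-sided version Theorem~\ref{theorem-n3/2-upper-lower-bounds}), to reduce $\mathbb{P}(\tau_x = n)$ to an explicit integral against the heat kernel. The decisive first step is to condition on $\mathcal{F}_{n-1}$ and use the independence of $X_n$:
\begin{align*}
\mathbb{P}(\tau_x = n) = \mathbb{E}\bigl(G(x + S_{n-1});\, \tau_x > n-1\bigr), \qquad G(z) := \mathbb{P}(X_1 < -z),
\end{align*}
noting that $\{\tau_x > n-1\}$ forces $x + S_{n-1} \geq 0$, so only the values of $G$ on $\mathbb{R}_+$ are relevant. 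The moment condition $\mathbb{E}|X_1|^{2+\delta} < \infty$ and Markov's inequality yield $G(z) \leq c(1+z)^{-2-\delta}$, hence $z \mapsto G(z)(1+|z|)$ is integrable and, being monotone on $\mathbb{R}_+$, directly Riemann integrable. Applying Theorem~\ref{UnifballotLLTheor-001} to the walk of length $n-1$ with $f = G$, $y = 0$, $v = 1$ then gives
\begin{align*}
\mathbb{P}(\tau_x = n) = \frac{V_{n-1}(x)}{\sigma^3 (n-1)^{3/2}} \int_0^\infty G(z)\, \check V_{n-1}(z)\, p\!\left(\frac{x}{\sigma\sqrt{n-1}}, \frac{z}{\sigma\sqrt{n-1}}\right) dz + E_n(x),
\end{align*}
where $E_n(x)$ is the error of that theorem.

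Next I would extract the leading asymptotics of the integral. Since $G(z)$ decays like $z^{-2-\delta}$, its mass is concentrated on bounded $z$, where the continuity of $L$ near $0$ and of $p$ in its second argument (Lemmas~\ref{lem-inequality for L} and \ref{lem-holder prop for ell-001}) give, uniformly in $x \in \mathbb{R}$,
\begin{align*}
\check V_{n-1}(z)\, p\!\left(\frac{x}{\sigma\sqrt{n-1}}, \frac{z}{\sigma\sqrt{n-1}}\right) = L(0)\, \check V(z)\, p\!\left(\frac{x}{\sigma\sqrt{n-1}}, 0\right) + o(1).
\end{align*}
By dominated convergence, together with the definition \eqref{def kappanonlatt 001} of $\varkappa$, the integral equals $(1+o(1))\,(2\varkappa/\sqrt{2\pi})\,\phi_L(x/(\sigma\sqrt{n-1}))$. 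The identities $\phi_L(u) = e^{-u^2/2}/L(u)$ and $V_{n-1}(x) = V(x) L(x/(\sigma\sqrt{n-1}))$ combine to give
\begin{align*}
V_{n-1}(x)\, \phi_L\!\left(\frac{x}{\sigma\sqrt{n-1}}\right) = V(x)\, e^{-x^2/(2\sigma^2 (n-1))} = \sqrt{2\pi}\, V(x)\, \phi\!\left(\frac{x}{\sigma\sqrt{n-1}}\right),
\end{align*}
so that multiplying out produces the announced main term $\frac{2 V(x)\varkappa}{\sigma^3 n^{3/2}} \phi(x/(\sigma\sqrt{n}))$, the $n \leftrightarrow n-1$ discrepancy being absorbed into the remainder.

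The principal obstacle is quantitative: to obtain the prescribed remainder $n^{-\delta/6} + V_n(x) n^{-\delta/(8(3+\delta))}\log n$ uniformly in $x$. For this I would work from Theorem~\ref{theorem-n3/2-upper-lower-bounds} rather than the qualitative Theorem~\ref{UnifballotLLTheor-001}, splitting the $z$-integral at a threshold $A_n$ chosen as a power of $n$. On the tail $z > A_n$, using $G(z) \leq c z^{-2-\delta}$ and $\check V(z) \leq c(1+z)$, the contribution is of order $A_n^{-\delta}$; on the bulk $z \leq A_n$, the H\"older continuity of $p$ in its second argument combined with the modulus of continuity of $L$ near $0$ contributes $O(A_n/\sqrt{n})$ per unit of integrated mass. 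Balancing $A_n^{-\delta} \asymp A_n/\sqrt{n}$ (which gives $A_n = n^{1/(2(\delta+1))}$) and combining with the intrinsic rate $n^{-\delta/(8(3+\delta))}\log n$ inherited from Theorem~\ref{theorem-n3/2-upper-lower-bounds} produces the stated remainder after straightforward bookkeeping.
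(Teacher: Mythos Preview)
Your proposal is correct and follows essentially the same route as the paper: the Markov property reduces $\mathbb{P}(\tau_x=n)$ to a conditioned expectation of $G$, Theorem~\ref{theorem-n3/2-upper-lower-bounds} converts this to a heat-kernel integral, and a cutoff at $n^{1/(2(1+\delta))}$ (the paper writes this as $\beta_n\sqrt{n}$ with $\beta_n=n^{-\delta/(2(1+\delta))}$) combined with the Lipschitz bounds on $p$ and $L$ extracts the main term --- this last step is packaged in the paper as Lemma~\ref{lem-appox for varkappa-non-latt-001}. One minor indexing point: to fit the format of Theorem~\ref{theorem-n3/2-upper-lower-bounds}, which pairs $S_N$ with $\tau_x>N-1$, you should write the expectation as $\mathbb{E}\bigl(G(x+S_{n-1})\mathds 1_{\{x+S_{n-1}\geq 0\}};\tau_x>n-2\bigr)$ before invoking the theorem with $N=n-1$, exactly as the paper does when it phrases the computation for $\mathbb{P}(\tau_x=n+1)$.
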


The proof of this theorem is deferred to Section \ref{sec proof of equiv local for tau}.

As a consequence we obtain the following equivalence result.
\begin{theorem} \label{theorem local equiv for tau non-lattice}
Under the conditions of Theorem \ref{theorem local for tau non-lattice }, 
for any $x_0\in \supp V$ and $0<q < \frac{\delta}{8(3+\delta)}$,
as $n \to \infty$, uniformly in $x \in \bb R$ such that $x_0\leq x \leq \sigma \sqrt{2q n \log n}$, 
\begin{align*} 
\bb{P} ( \tau_x =n ) 
   \sim \phi\left(\frac{x}{\sigma\sqrt{n}}\right) \frac{2V(x) \varkappa}{\sigma^3 n^{3/2}}.
\end{align*}
\end{theorem}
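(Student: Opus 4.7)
The plan is to deduce the equivalence directly from Theorem \ref{theorem local for tau non-lattice} by verifying that, in the specified range of $x$, the explicit remainder term is negligible compared with the main term. The key mechanism is that the restriction $x \leq \sigma\sqrt{2qn\log n}$ forces the Gaussian factor $\phi(x/(\sigma\sqrt{n}))$ to stay above $n^{-q}/\sqrt{2\pi}$, while $q<\frac{\delta}{8(3+\delta)}$ is chosen precisely so that both pieces of the error term decay strictly faster than $n^{-q}$.

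First I would rewrite the expansion from Theorem \ref{theorem local for tau non-lattice} in the form
\begin{align*}
\bb P(\tau_x=n) = \phi\!\left(\tfrac{x}{\sigma\sqrt n}\right)\frac{2V(x)\varkappa}{\sigma^3 n^{3/2}}\bigl(1+o(1)\bigr) + \frac{c}{n^{3/2}}\Bigl(n^{-\delta/6}+V_n(x)\,n^{-\delta/(8(3+\delta))}\log n\Bigr),
\end{align*}
and show that on the set $\{x_0 \leq x \leq \sigma\sqrt{2qn\log n}\}$ the ratio of the remainder to the main term tends to zero uniformly. Since $x_0\in\supp V$, the harmonic function satisfies $V(x)\geq V(x_0)>0$ on this range, so $V(x)$ contributes a positive lower bound. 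From $x\leq \sigma\sqrt{2qn\log n}$ we immediately obtain
\begin{align*}
\phi\!\left(\tfrac{x}{\sigma\sqrt n}\right) = \tfrac{1}{\sqrt{2\pi}}\,e^{-x^2/(2\sigma^2 n)} \geq \tfrac{1}{\sqrt{2\pi}}\,n^{-q},
\end{align*}
so the main term is at least a constant multiple of $V(x_0)\,\varkappa\, n^{-q-3/2}$.

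Next I would dispose of the two pieces of the remainder separately. For the first piece, $n^{-\delta/6}/n^{3/2}$: since $q<\frac{\delta}{8(3+\delta)}<\frac{\delta}{24}\leq \frac{\delta}{6}$, the ratio $n^{-\delta/6}/n^{-q}=n^{-(\delta/6-q)}\to 0$. For the second piece, I would use the definition $V_n(x)=V(x)L(x/(\sigma\sqrt n))$ together with the fact that $L$ is positive, even, and decreasing on $\bb R_+$ (cf. \eqref{func L as integral of heat kern}), hence $L(x/(\sigma\sqrt n))\leq L(0)=2/\sqrt{2\pi}$ uniformly, yielding $V_n(x)\leq c\,V(x)$. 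Therefore
\begin{align*}
\frac{V_n(x)\,n^{-\delta/(8(3+\delta))}\log n}{V(x)\,n^{-q}} \leq c\,n^{-(\delta/(8(3+\delta))-q)}\log n,
\end{align*}
which tends to zero since the exponent $\delta/(8(3+\delta))-q$ is strictly positive. Combining the two bounds, the error term is $o(1)$ relative to the main term uniformly for $x_0\leq x\leq \sigma\sqrt{2qn\log n}$, proving the asserted equivalence.

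No real obstacle is anticipated: the proof is essentially bookkeeping to verify that the threshold $q<\frac{\delta}{8(3+\delta)}$ is compatible with both remainder exponents, and to bound $V_n(x)$ in terms of $V(x)$ via the uniform bound on $L$. The only point requiring mild care is the boundedness of $L$ on the range $x/(\sigma\sqrt n)\in [0,\sqrt{2q\log n}]$, which unbounded as $n\to\infty$, but this is harmless because $L$ is decreasing on $\bb R_+$ so its maximum over this range is attained at $0$.
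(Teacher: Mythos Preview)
Your proposal is correct and follows essentially the same route as the paper: derive the equivalence directly from the expansion in Theorem~\ref{theorem local for tau non-lattice } by checking that the explicit remainder is $o(1)$ relative to the main term on the stated range of $x$. The paper does not spell out this argument for Theorem~\ref{theorem local equiv for tau non-lattice} explicitly, but it uses the identical mechanism for the analogous lattice equivalence \eqref{equiv for prob tau=n 001} and for Theorem~\ref{Th-lattice-general equiv-c001}.
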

To our knowledge, Theorem \ref{theorem local equiv for tau non-lattice} is the first result in the literature providing 
an equivalence for $x$ beyond $\sqrt{n}$.

To compare this asymptotic with the results in \cite{GX-2024-AIHP},
let us analyze two particular cases. 
Firstly, from Theorem \ref{theorem local equiv for tau non-lattice} we have, for any $x_0\in \supp V$,  as $n\to\infty$, 
uniformly in $x\geq x_0$ satisfying $\frac{x}{\sqrt{n}}\to 0$, 
\begin{align*} %\label{local tau_x-old001}
\bb{P} ( \tau_x =n ) \sim \frac{2 V(x) \varkappa }{\sqrt{2\pi}\sigma^3 n^{3/2}}.  %\int_{\bb R_-} \check V(y)dy.
\end{align*}
Secondly, uniformly in $x\asymp \sqrt{n}$ as $n\to\infty$, %one gets
\begin{align*} %\label{local tau_x-old002}
\bb{P} ( \tau_x =n ) \sim \frac{2 \phi^+\left(\frac{x}{\sigma\sqrt{n}}\right) \varkappa }{\sqrt{2\pi}\sigma^2 n}. 
\end{align*}

%%%%%%%%%%%%%%%%%%%%%%%%%%%%%%%%%%%%%%%%%%%%%
%\section{Random walks with drift}
%To be developed ...

%%%%%%%%%%%%%%%%%%%%%%%%%%%%%%%%%%%%%%%%%%%%%
\section{Auxiliary statements}\label{Sec-Auxiliary statements}
%%%%%%%%%%%%%%%%%%%%%%%%%%%%%%%%%%%%%%%%%%%%%%%%%%
\subsection{Effective local limit theorems} \label{subsec-nonasymptoticLLT}

The local limit theorem, which is a refinement of the central limit theorem, has attracted significant interest since the foundational work of Gnedenko \cite{Gned48}, Shepp \cite{She64}, and Stone \cite{Sto65}.
In this section, we present effective local limit theorems for the random walk $(S_n)_{n \geq 1}$, 
involving target functions and explicit convergence rates.
These results will play a key role in deriving error terms in the conditioned local limit theorems.

For lattice valued random walks, the following local limit theorem is well known 
(see e.g. Ibgarimov and Linnik \cite{Ibrag-Linnik1971} and Petrov \cite{Petrov1971}). 
Recall that $\phi_v$  denotes the normal density of mean $0$ and variance $v$.
 \begin{theorem} \label{Th-lattice-LLT-general}
Assume that $X_1$ is $(\hbar, a)$-lattice, $\bb E (X_1) = 0$, $\bb E (X^2_1)= \sigma^2$,  and that there exists $\delta > 0$ 
such that $\bb E (|X_1|^{2 + \delta})  < \infty.$ 
Let $\delta_1 = \min\{ 1, \delta \}$. 
Then, there exists a constant $c >0$ such that, for any $n\geq 1$,
\begin{align*}%\label{eq-lattice-LLT-general001}
\sup_{z \in \hbar \bb Z+na}\left|  \bb{P}  (S_n = z)  -  \hbar \, \phi_{\sigma^2 n}\left(z \right) \right|
\leq  \frac{c}{n^{ (1 + \delta_1 )/2 }}. 
\end{align*}
\end{theorem}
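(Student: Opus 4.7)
The plan is to use Fourier inversion on the lattice and compare the characteristic function of $S_n$ with that of a centered Gaussian of variance $\sigma^2 n$. Let $\varphi(t)=\bb E e^{itX_1}$. Since $X_1$ is $(\hbar,a)$-lattice with minimal span $\hbar$, the walk $S_n$ is supported on $\hbar\bb Z+na$ and Fourier inversion yields, for any $z \in \hbar\bb Z+na$,
\begin{align*}
\bb{P}(S_n=z) = \frac{\hbar}{2\pi}\int_{-\pi/\hbar}^{\pi/\hbar} e^{-itz}\varphi(t)^n\,dt,
\qquad
\phi_{\sigma^2 n}(z) = \frac{1}{2\pi}\int_{\bb R} e^{-itz} e^{-\sigma^2 n t^2/2}\,dt.
\end{align*}
Subtracting these two expressions and noting that the Gaussian tail $\int_{|t|>\pi/\hbar}e^{-\sigma^2 n t^2/2}dt$ is exponentially small in $n$, the proof reduces to bounding, uniformly in $z$,
\begin{align*}
I_n := \int_{-\pi/\hbar}^{\pi/\hbar}\bigl|\varphi(t)^n - e^{-\sigma^2 n t^2/2}\bigr|\,dt.
\end{align*}

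After the substitution $u=t\sqrt{n}$, I would split $I_n$ into a low-frequency region $\{|u|\leq \eta\sqrt{n}\}$ for a sufficiently small constant $\eta>0$ and a high-frequency region $\{\eta\sqrt{n}<|u|\leq \pi\sqrt{n}/\hbar\}$. On the high-frequency part, I would invoke the $(\hbar,a)$-lattice hypothesis: minimality of the lattice together with continuity of $\varphi$ yields $\sup_{\eta\leq|t|\leq \pi/\hbar}|\varphi(t)|\leq 1-\kappa$ for some $\kappa>0$, so this contribution is $O(e^{-cn})$, negligible compared with the target $n^{-(1+\delta_1)/2}$. On the low-frequency part, the moment condition $\bb E|X_1|^{2+\delta}<\infty$ gives, by the standard Esseen-type expansion, $\varphi(t) = 1 - \sigma^2 t^2/2 + \rho(t)$ with $|\rho(t)|\leq c|t|^{2+\delta_1}$ for $|t|$ small. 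Taking logarithms, I get $n\log\varphi(u/\sqrt{n}) = -\sigma^2 u^2/2 + r_n(u)$ with $|r_n(u)|\leq c\,n^{-\delta_1/2}|u|^{2+\delta_1}$, and the elementary inequality $|e^a-e^b|\leq|a-b|\max(|e^a|,|e^b|)$ combined with the bound $|\varphi(u/\sqrt{n})|^n\leq e^{-\sigma^2 u^2/4}$ (valid once $\eta$ is small enough) gives
\begin{align*}
\bigl|\varphi(u/\sqrt{n})^n - e^{-\sigma^2 u^2/2}\bigr|\leq c\,n^{-\delta_1/2}|u|^{2+\delta_1}e^{-\sigma^2 u^2/4}.
\end{align*}
Integrating in $u$ over $\bb R$ yields a contribution of order $n^{-\delta_1/2}$, and combining with the prefactor $\hbar/(2\pi\sqrt{n})$ from the change of variables produces the claimed rate $n^{-(1+\delta_1)/2}$.

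The main technical point is to establish the Taylor remainder bound $|\rho(t)|\leq c|t|^{2+\delta_1}$ under only a $(2+\delta)$ moment. This is done by truncating $e^{itX_1}$ at level $1/|t|$ and treating the cases $\delta\leq 1$ and $\delta>1$ separately, which is precisely why the exponent $\delta_1=\min\{1,\delta\}$ appears. One must also choose $\eta>0$ small enough so that $\log|\varphi(t)|\leq -\sigma^2 t^2/4$ holds for $|t|\leq\eta$, ensuring the uniform dominating bound $|\varphi(u/\sqrt{n})|^n\leq e^{-\sigma^2 u^2/4}$ throughout the low-frequency range. Once these analytic ingredients are in place, the remaining estimates are routine, since $|u|^{2+\delta_1}e^{-\sigma^2 u^2/4}$ is integrable on $\bb R$ and the final bound is uniform in $z$ because $|e^{-itz}|=1$.
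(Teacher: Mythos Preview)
Your Fourier-analytic argument is correct and is precisely the classical route: lattice inversion, Gaussian tail removal, the low/high-frequency split, the Esseen remainder $|\rho(t)|\leq c|t|^{2+\delta_1}$ from the $(2+\delta)$-moment, and the $(\hbar,a)$-minimality to bound $|\varphi|$ strictly below $1$ on $[\eta,\pi/\hbar]$. The paper itself does not give a proof of this statement at all; it simply quotes it as well known with references to Ibragimov--Linnik and Petrov, and the proof you outline is exactly the one found in those sources.
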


For non-lattice valued random walks, we need a more sophisticated local limit theorem. 
Let $f, g:\mathbb R \mapsto \mathbb R_+$ 
be Borel measurable functions and $\alpha >0$.
We say that $g$ $\alpha$-dominates $f$ and we use notation $f \leq_{\alpha} g$ if
\begin{align} \label{def_upper_envelope_001} f(u)  \leq  g(u+v), \ \forall u\in \bb R,\ \forall |v| \leq \alpha.   
\end{align}
For any measurable function $y \mapsto g(y)$ on $\bb R$, 
we denote by  $\| g\|_1=\int_{\bb R} |g(y)|dy$ the $L_1$-norm of $g$.
The following theorem is from \cite[Theorem 2.7]{GX-2024-AIHP}. 

 \begin{theorem} \label{LLT-general}
Assume that $X_1$ is non-lattice, $\bb E (X_1) = 0$, $\bb E (X^2_1)= \sigma^2$,  and that there exists $\delta > 0$ 
such that $\bb E (|X_1|^{2 + \delta})  < \infty.$ 
Let $\delta_1 = \min\{ 1, \delta \}$. 
Then, there exists a constant $c >0$ with the following property: 
for any $\alpha \in (0, \frac{1}{2})$ there exists a constant $c_{\alpha} > 0 $ such that: 

\noindent 1. For any  integrable functions $f,g: \bb R\mapsto \bb R_+$ satisfying $f\leq_{\alpha} g$ and $n\geq 1$,
\begin{align}\label{LLT-general001}
  \bb{E} f (S_n) 
- (1 + c \alpha) \int_{\bb R } g (t) \phi_{\sigma^2 n}(t)  dt 
\leq  \frac{c_{\alpha}}{n^{ (1 + \delta_1 )/2 }} \left\Vert g\right\Vert_1. 
\end{align}

\noindent  2. For any integrable functions $f,h: \bb R\mapsto \bb R_+$ satisfying $f\geq_{\alpha} h$ and $n\geq 1$,
\begin{align}\label{LLT-general002}
  \bb{E} f (S_n) 
 -  \int_{\bb R }  \big[ h(t) -  c \alpha f(t) \big]  \phi_{\sigma^2 n}(t)  dt  
  \geq   -  \frac{c_{\alpha}}{n^{ (1 + \delta_1) /2 }}  \left\Vert f \right\Vert_1. 
\end{align}
\end{theorem}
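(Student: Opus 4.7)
The plan is to reduce \eqref{LLT-general001} and \eqref{LLT-general002} to a quantitative Stone-type local limit theorem on short intervals of length of order $\alpha$, and to use the $\alpha$-domination to pass from indicators of intervals to the envelopes $g, h$. The starting ingredient is the following effective Stone bound, obtainable by Esseen's smoothing inequality applied to the characteristic function $\varphi$: under the non-lattice hypothesis and the $2+\delta$ moment, there is a constant $c>0$ such that, uniformly in $x \in \bb R$, $h\in (0, 1]$, and $n\geq 1$,
\begin{align*}
\Big|\bb P(S_n \in [x, x+h)) - h\,\phi_{\sigma^2 n}(x)\Big| \leq \frac{c\, h}{n^{(1+\delta_1)/2}}.
\end{align*}
Here the non-lattice condition gives $\sup_{\eta\leq |t|\leq T}|\varphi(t)|<1$ for every $\eta, T>0$, and the $2+\delta$ moment controls the third-order term in the Taylor expansion of $\varphi$ near $0$, from which the stated rate follows by a standard Fourier inversion estimate.

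To prove the upper bound \eqref{LLT-general001}, I partition $\bb R$ into consecutive intervals $I_k$ of length $\alpha$ with centers $y_k$. For any $u \in I_k$, writing $v=y_k-u$ gives $|v|\leq \alpha/2<\alpha$, and hence by $f\leq_\alpha g$ we have $f(u)\leq g(u+v)=g(y_k)$. Summing over $k$ and applying the Stone bound,
\begin{align*}
\bb E f(S_n) \leq \sum_{k} g(y_k)\,\bb P(S_n\in I_k) \leq \alpha\sum_k g(y_k)\phi_{\sigma^2 n}(y_k) + \frac{c\,\alpha\sum_k g(y_k)}{n^{(1+\delta_1)/2}}.
\end{align*}
The remainder is dominated by $c_\alpha \|g\|_1/n^{(1+\delta_1)/2}$ via the Riemann sum bound $\alpha\sum_k g(y_k) \leq c\|g\|_1$, where $c_\alpha$ absorbs the factor $1/\alpha$ coming from the cell length; this is where the dependence on $\alpha$ enters the constant. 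For the main term, a further application of $\alpha$-domination (to compare $g(y_k)$ with the local $L^1$ average of $g$ on an $\alpha$-neighborhood) together with the elementary Lipschitz bound on $\phi_{\sigma^2 n}$ at scale $\alpha$ produces the multiplicative factor $(1+c\alpha)$ in front of $\int_{\bb R} g\phi_{\sigma^2 n}$. The lower bound \eqref{LLT-general002} follows symmetrically: from $f\geq_\alpha h$ one gets $f(u)\geq h(y_k)$ on $I_k$, and the discretization loss on the lower side is precisely absorbed by the $-c\alpha f(t)$ correction inside the integrand, using the already established upper bound on $\bb E f(S_n)$ to estimate the correction.

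The main obstacle is the compatibility of the two small parameters $\alpha$ and $n^{-1/2}$: the per-cell Stone remainder is of order $\alpha\, n^{-(1+\delta_1)/2}$ and summing over the $\sim \|g\|_1/\alpha$ cells intersecting the effective support of $g$ forces the blow-up constant $c_\alpha\sim 1/\alpha$. The multiplicative factor $(1+c\alpha)$ on the main Gaussian integral reflects the irreducible Riemann-sum error for general measurable $g$, and any attempt to remove it would require additional smoothness of $g$. The real subtlety is establishing the Stone bound uniformly in the cell location $y_k$ with the correct $h$-dependence on the right-hand side, which forces the full Esseen smoothing with a truncated characteristic function rather than a pointwise CLT comparison.
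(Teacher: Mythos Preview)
The paper does not prove this theorem; it is quoted from \cite[Theorem~2.7]{GX-2024-AIHP} and used as a black box. So there is no ``paper's proof'' to compare against, and your discretization-plus-Stone outline is a reasonable route to a self-contained argument. Two steps, however, are incorrect as written.

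First, the Stone bound you invoke, $\big|\bb P(S_n\in[x,x+h))-h\phi_{\sigma^2 n}(x)\big|\leq ch\,n^{-(1+\delta_1)/2}$ uniformly in $h\in(0,1]$, is false for general non-lattice laws. If $X_1$ is discrete non-lattice (say supported on $\{0,1,\sqrt 2\}$), then $S_n$ has atoms of size $\asymp n^{-1/2}$, and for $h$ much smaller than the minimal gap the left side is $\asymp n^{-1/2}$ while your right side is $o(n^{-1/2})$. What \emph{is} true, and all you actually need, is the fixed-window version: for each $\alpha>0$ there exists $c_\alpha$ with $\sup_x\big|\bb P(S_n\in[x,x+\alpha))-\alpha\phi_{\sigma^2 n}(x)\big|\leq c_\alpha n^{-(1+\delta_1)/2}$. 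The $\alpha$-dependence of $c_\alpha$ here is exactly the $c_\alpha$ appearing in the theorem, and the blow-up as $\alpha\to 0$ is genuine, not an artifact of summing over cells.

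Second, the ``Riemann sum bound'' $\alpha\sum_k g(y_k)\leq c\|g\|_1$ is false for arbitrary integrable $g$ (take $g$ to be narrow bumps centred at the $y_k$). The fix is immediate from the $\alpha$-domination itself: for $u\in I_k$ and \emph{any} $w\in I_k$ one has $|u-w|\leq\alpha$, hence $f(u)\leq g(w)$, so $f(u)\leq m_k:=\inf_{I_k}g$. Then $\alpha\sum_k m_k\leq\sum_k\int_{I_k}g=\|g\|_1$ trivially, and the rest of your argument goes through with $m_k$ in place of $g(y_k)$. The analogous fix for the lower bound uses $M_k:=\sup_{I_k}h$ and the averaging inequality $h(u)\leq(2\alpha)^{-1}\int_{u-\alpha}^{u+\alpha}f$ to control $\sum_k M_k$ by $\|f\|_1/\alpha$.

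With these two corrections your argument actually yields $\bb E f(S_n)\leq \int g\,\phi_{\sigma^2 n}+c_\alpha'\|g\|_1 n^{-(1+\delta_1)/2}$ and the dual lower bound with $h$ in place of $g$, i.e.\ a version \emph{without} the $(1+c\alpha)$ and $-c\alpha f$ corrections. Those factors in the stated theorem presumably reflect a smoothing proof in \cite{GX-2024-AIHP} rather than a necessity; your discretization route is cleaner on this point.
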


Note that in Theorem \ref{LLT-general}, the constant $c_{\alpha}$ may diverge as $\alpha \to 0$.
Without loss of generality, we will assume in what follows that the map $\alpha\mapsto c_{\alpha}$ is increasing as $\alpha\to 0.$
It is well known that in the absence of further assumptions on the law of $X_1$, this rate of divergence cannot be controlled.
Another feature of Theorem \ref{LLT-general} is that it does not require the function $f$  to be directly Riemann integrable.
It is for this reason that the theorem is formulated as a two-sided bound.

%%%%%%%%%%%%%%%%%%%%%%%%%%%%%%%%%%%%%%%%%%%%%
\subsection{Properties of the heat kernel}
The following result is from \cite[Lemma 2.3]{GX-2024-CCLT}. 

\begin{lemma} \label{lem-inequality for L}
There exists a constant $c>0$ such that for any $x, x' \in \bb R$, 
\begin{align*} 
\left| \frac{L(x')}{L(x)} -1 \right| \leq c | x' - x |.
\end{align*}
 \end{lemma}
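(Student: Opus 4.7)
The plan is to reduce the claim to a uniform bound on the logarithmic derivative $L'/L$ combined with explicit pointwise bounds on $L$, handling separately the regimes where $|x'-x|$ is small and where it is large.

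The first step is to establish that
\[
M \;:=\; \sup_{x\in\bb R}\left|\frac{L'(x)}{L(x)}\right| \;<\; \infty.
\]
From $L(x)=(2\Phi(x)-1)/x$ one computes, for $x\ne 0$, that $L'(x)/L(x)=2\phi(x)/(2\Phi(x)-1)-1/x$. Taylor expansions of $2\Phi(x)-1$ and $2\phi(x)$ around $x=0$ show this expression extends continuously to the value $0$ at $x=0$, while as $|x|\to\infty$ the first term decays exponentially and $1/x\to 0$. A continuous function on $\bb R$ with vanishing limits at $\pm\infty$ is bounded, so $M<\infty$.

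In the regime $|x'-x|\leq 1/M$, the mean value theorem applied to $\log L$ yields $|\log L(x')-\log L(x)|\leq M|x'-x|\leq 1$, and then the elementary inequality $|e^t-1|\leq e|t|$ valid for $|t|\leq 1$ gives $|L(x')/L(x)-1|\leq eM|x'-x|$.

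For $|x'-x|>1/M$, exponentiating the Lipschitz bound on $\log L$ no longer produces a linear estimate, so I would instead establish the polynomial bound $L(x')/L(x)\leq C(1+|x'-x|)$ uniformly in $x,x'\in\bb R$. By the evenness of $L$ it suffices to treat $|x|\geq|x'|\geq 0$, after which I would split into the sub-cases (i) $|x'|\geq 1$, using $L(u)\asymp 1/u$ on $[1,\infty)$; (ii) $|x'|<1\leq|x|$, using $L(|x'|)\leq L(0)$ together with $L(|x|)\geq c/|x|$; and (iii) $|x|,|x'|<1$, where $L$ is uniformly bounded away from $0$. In each case the triangle inequality $|x|\leq|x'|+|x'-x|$ closes the argument. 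The desired linear estimate then follows from $|L(x')/L(x)-1|\leq L(x')/L(x)+1\leq C(1+|x'-x|)+1\leq c|x'-x|$, using $|x'-x|>1/M$ to absorb the additive constants. The main obstacle is precisely this second regime, where the logarithmic-derivative bound yields only an exponential rate and one must exploit the algebraic form $L(u)=(2\Phi(u)-1)/u$ together with the decay $L(u)\sim 1/|u|$ to extract the claimed linear behaviour.
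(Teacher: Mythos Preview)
The paper does not give its own proof of this lemma; it simply cites \cite[Lemma 2.3]{GX-2024-CCLT}. So there is no in-paper argument to compare against.

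Your proposal is correct. The reduction to boundedness of $L'/L$ is the natural approach, and your verification that $2\phi(x)/(2\Phi(x)-1)-1/x$ extends continuously to $0$ at the origin and vanishes at infinity is right (a Taylor expansion of $H(x)=2\Phi(x)-1=2\phi(0)\bigl(x-x^3/6+O(x^5)\bigr)$ gives $L'(x)/L(x)=-x/3+O(x^3)$ near $0$). The small-increment regime then follows from the mean value theorem on $\log L$ together with $|e^t-1|\leq e|t|$ for $|t|\leq 1$, exactly as you write.

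For the large-increment regime, your outline is also sound. One minor clarification: the evenness reduction works because $L(x')/L(x)=L(|x'|)/L(|x|)$ and $\bigl||x'|-|x|\bigr|\leq |x'-x|$, so it suffices to prove $L(a)/L(b)\leq C(1+|a-b|)$ for $a,b\geq 0$; when $a\geq b$ the ratio is at most $1$ (since $L$ is decreasing on $\bb R_+$), and when $b\geq a$ your three sub-cases go through using $H(1)/u\leq L(u)\leq 1/u$ for $u\geq 1$ and $L(1)\leq L(u)\leq L(0)$ for $0\leq u\leq 1$. The final absorption of the additive constant via $|x'-x|>1/M$ is correct.
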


We continue with the important Lipschitz property of function $y\mapsto \ell(x,y)$,
which is crucial in the proof of Theorems \ref{UnifballotLLTheor-001}, \ref{T-Caravenatype-lattice 001} and \ref{t-B 002}.
 
\begin{lemma} \label{Lipschiz  property for ell_H in y} 
(1) There exists a constant $c > 0$ such that, for any $x, y \in \bb R$,
\begin{align}\label{derivative-ell-H-ineq}
\left| \frac{\partial}{\partial y}  \ell(x, y) \right| 
\leq  c \left( e^{- \frac{(x-y)^2}{3} } + e^{- \frac{(x+y)^2}{3}} \right). 
\end{align} 
(2) There exists a constant $c > 0$ such that, for any $x, y, a \in \bb R$, 
 \begin{align} \label{ell-intergr-bound-002}
\left| \ell(x, y+a)  -   \ell(x, y) \right|  \leq c |a|. 
\end{align}
(3) There exists a constant $c > 0$ such that, for any $x, y \in \bb R$ and $|a|\leq 1$, 
 \begin{align} \label{ell-intergr-bound-001}
\left| \ell(x, y+a)  -   \ell(x, y) \right| 
\leq  c \left( e^{- \frac{(x-y)^2}{4} } + e^{- \frac{(x+y)^2}{4}} \right) |a|. 
\end{align}
(4) There exists a constant $c >0$ such that for any $x, y \in \bb R$ with $|y| \leq 1$, 
\begin{align}\label{bound-ell-qwe}
|\ell(x, y)| \leq c e^{- \frac{x^2}{4}} |y|. 
\end{align}
\end{lemma}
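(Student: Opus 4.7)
The plan is to establish (1) first; statements (2)--(4) will then follow from (1) by short algebraic manipulations.

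For (1) I would compute
\[
\partial_y \ell(x,y) = \frac{(x-y)\phi(x-y) + (x+y)\phi(x+y)}{H(x)}
\]
and split into two regimes. When $|x|\geq 1$, the denominator satisfies $|H(x)|\geq H(1)>0$, and the elementary inequality $|u|\phi(u)\leq c\,e^{-u^2/3}$ (a consequence of $|u|\leq c' e^{u^2/6}$) applied termwise to the numerator gives the bound immediately. The harder regime is $|x|\leq 1$, where numerator and denominator both vanish at $x=0$. Here I would exploit the factorization $\psi(x,y) = 2\sqrt{2\pi}\,\phi(x)\phi(y)\sinh(xy)$ (immediate from expanding $(x\pm y)^2/2$) and use the entire function $s(u) = \sinh(u)/u$ with $s(0)=1$ to rewrite
\[
\partial_y \ell(x,y) = 2\sqrt{2\pi}\,\phi(y)\cdot \frac{x\phi(x)}{H(x)} \cdot \bigl[\cosh(xy) - y^2 s(xy)\bigr].
\]
The factor $x\phi(x)/H(x)$ extends continuously to $x=0$ with value $1/2$ and is uniformly bounded on $[-1,1]$. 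Since $\cosh$ and $s$ are nondecreasing and even, the inequality $|xy|\leq |y|$ controls the bracket by $\cosh(y) + |y|\sinh(|y|)$; multiplying by $\phi(y)$ and absorbing the polynomial factor into $e^{|y|-y^2/6}$ yields $|\partial_y \ell(x,y)| \leq c\,e^{-y^2/3}$. The elementary comparison $y^2 - \min((x-y)^2,(x+y)^2) = 2|x||y| - x^2 \leq 2|y|$ (for $|x|\leq 1$), together with a trivial estimate on $|y|\leq 2$, then gives $e^{-y^2/3}\leq c'(e^{-(x-y)^2/3} + e^{-(x+y)^2/3})$, completing (1).

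Part (2) is immediate from (1) and the crude bound $e^{-(x-y)^2/3}+e^{-(x+y)^2/3}\leq 2$. For (3) and (4) the key ingredient is the algebraic identity
\[
\frac{(x-y)^2}{4} - \frac{(x-z)^2}{3} = -\frac{(x-y-4(z-y))^2}{12} + (z-y)^2,
\]
which for $|z-y|\leq 1$ implies $e^{-(x-z)^2/3}\leq e\cdot e^{-(x-y)^2/4}$, and the analogous bound with $x+z,\,x+y$ holds by the same computation. Integrating the derivative estimate from (1) over $[y,y+a]$ then yields (3); for (4), the observation $\psi(x,0)=0$ gives $\ell(x,0)=0$, so I integrate $\partial_y \ell$ from $0$ to $y$ and use the same identity with $y$ replaced by $0$ to get $e^{-(x-t)^2/3}\leq e\cdot e^{-x^2/4}$ for $|t|\leq 1$.

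The main obstacle in this plan is the $|x|\leq 1$ regime of (1): the ratio is of indeterminate type $0/0$ at $x=0$, and the factorization through $\sinh$ is essential both to resolve this indeterminacy and to produce an intermediate bound depending only on $y$ that can subsequently be recast into the required two-sided form in $(x\mp y)^2$.
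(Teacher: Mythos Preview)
Your proposal is correct, with one small presentational slip. In the last step of (1) you wrote $y^2-\min((x-y)^2,(x+y)^2)=2|x||y|-x^2\leq 2|y|$, but this inequality points the wrong way for what you need: to get $e^{-y^2/3}\leq c'\,e^{-\min/3}$ you need an upper bound on $\min-y^2$, not on $y^2-\min$. The fix is immediate, since $\min-y^2=x^2-2|x||y|\leq x^2\leq 1$ for $|x|\leq 1$; no separate treatment of small $|y|$ is required.

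Your route for the delicate $|x|\leq 1$ case of (1) differs from the paper's. The paper rewrites the derivative as
\[
\partial_y\ell(x,y)=\frac{\phi(x-y)+\phi(x+y)}{L(x)}-y\,\ell(x,y),
\]
bounds the first term using $L(x)\geq L(1)$, and handles the second via $|1-e^{-2xy}|\leq 2|xy|e^{2|xy|}$ together with $|x/H(x)|\leq 1/H(1)$, obtaining directly a bound of the form $ce^{-(x-y)^2/3}$ without an intermediate $e^{-y^2/3}$ step. Your $\sinh$ factorization is arguably cleaner: it makes the cancellation of the $0/0$ indeterminacy transparent through the single factor $x\phi(x)/H(x)=\phi(x)/L(x)$, at the cost of the extra comparison between $e^{-y^2/3}$ and the two-sided Gaussian. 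For (3), your exact completing-the-square identity is a sharpened version of the paper's inequality $(t+b)^2\geq \tfrac{3}{4}t^2-3b^2$; both yield the same conclusion. Parts (2) and (4) are handled identically in both approaches.
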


\begin{proof} 
(1) We first prove \eqref{derivative-ell-H-ineq} in the case $|x|\leq 1$.
By \eqref{def of func ell_H-001}, 
we have, for any $x, y \in \bb R$, 
\begin{align} \label{derivative-ell-identity}
\frac{\partial}{\partial y} \ell(x, y) 
=  \frac{(x-y) \phi(x- y) + (x+y) \phi(x+y)}{H(x)} 
=  \frac{\phi(x- y) + \phi(x+ y)}{L(x)} - y \ell(x, y). 
\end{align}
Since $L(x)\geq L(1)$ for $|x|\leq 1$, we have that, for $|x| \leq 1$ and $y \in \bb R$, 
\begin{align*} %\label{001}
\bigg| \frac{\phi(x- y) + \phi(x+ y)}{L(x)} \bigg|  
\leq c \left( e^{- \frac{(x-y)^2}{2} } + e^{- \frac{(x+y)^2}{2}} \right). 
\end{align*}
Since $|1 - e^{-t}| \leq |t| e^{|t|}$ for $t \geq 0$ and $|\frac{x}{H(x)}| \leq \frac{1}{H(1)}$ for $|x| \leq 1$, 
we get that, for $|x| \leq 1$ and $y \in \bb R$,  
\begin{align}\label{bound-ell-abc-01}
\big| y \ell(x, y) \big| 
& =  \frac{1}{\sqrt{2 \pi}} e^{- \frac{(x- y)^2}{2}}  \left|  \frac{ y \left( 1 - e^{- 2 xy}  \right) }{H(x)} \right|  \notag\\ 
& \leq \frac{1}{\sqrt{2 \pi}} e^{- \frac{(x-y)^2}{2}}   \left|  \frac{  2 x y^2 e^{|2 xy|}  }{H(x)} \right|   %\notag\\
 \leq  c  e^{- \frac{(x-y)^2}{2}}  y^2 e^{2 |y|}
 \leq  c'  e^{- \frac{(x-y)^2}{3}}. 
\end{align}
Therefore, we obtain \eqref{derivative-ell-H-ineq} for $|x|\leq 1$. 

For $|x|>1$, we have $H(x)\geq H(1)>0$, and therefore, by \eqref{derivative-ell-identity} and the inequality 
$|t| e^{- \frac{t^2}{2}} \leq c e^{- \frac{t^2}{3}}$ for $t \in \bb R$, 
\begin{align*}%\label{}
\left| \frac{\partial}{\partial y} \ell(x, y)  \right|
= \left| \frac{(x-y) \phi(x- y) + (x+y) \phi(x+y)}{H(x)}  \right|
\leq  c \left( e^{- \frac{(x-y)^2}{3} } + e^{- \frac{(x+y)^2}{3}} \right),  
\end{align*}
which ends the proof of \eqref{derivative-ell-H-ineq}. 

(2) The inequality \eqref{ell-intergr-bound-002} is a consequence of \eqref{derivative-ell-H-ineq}. 

(3) Since $\ell(x, y+a) - \ell(x, y) = a \frac{\partial}{\partial y} \ell(x,\tilde y)$
with $\tilde y = y+\theta a$ and $\theta \in (0,1)$,
using \eqref{derivative-ell-H-ineq} together with the inequality $(t + b)^2\geq \frac{3}{4} t^2 - 3 b^2$ for $t, b \in \bb R$, 
we obtain  \eqref{ell-intergr-bound-001}.  

(4) Taking $y = 0$ in \eqref{ell-intergr-bound-001} and using the fact that $\ell(x, 0) = 0$, 
we get \eqref{bound-ell-qwe}. 
\end{proof}

We also need the following Lipschitz continuity of the function $x \mapsto \ell(x, y)$.  % uniformly in $y \in \bb R$,

\begin{lemma} \label{Holder prop for int ell} 
There exists a constant $c>0$ such that, for any $y \in \bb R$ and $x, x' \in \bb R$, % with $|x-x'|\leq 1$, 
 \begin{align}\label{Lipschitz-ell-x-001}
\left|  \ell(x', y)  -   \ell(x, y)  \right|  \leq c |x-x'|. 
\end{align}
In addition, there exists a constant $c>0$ such that $|\ell (x, y)| \leq c$ for any $x, y \in \bb R$. 
\end{lemma}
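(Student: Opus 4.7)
The plan is to prove a uniform bound on $\partial_x \ell(x,y)$ on $\mathbb{R}\times \mathbb{R}$; the Lipschitz estimate \eqref{Lipschitz-ell-x-001} then follows by the mean value theorem. Boundedness will be obtained along the way. The key computation is
\[
\partial_x \ell(x,y) = \frac{\psi_x(x,y)}{H(x)} - \frac{\psi(x,y)\,H'(x)}{H(x)^2},
\]
with $\psi_x(x,y) = -(x-y)\phi(x-y) + (x+y)\phi(x+y)$ and $H'(x) = 2\phi(x)$. I would split the analysis according to $|x|\geq 1$ or $|x|< 1$.

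For $|x|\geq 1$, $H(x)\geq H(1)>0$ is bounded away from zero. Since $\phi$ and $t\mapsto t\phi(t)$ are globally bounded on $\mathbb{R}$, each of $\psi(x,y)$, $\psi_x(x,y)$, $H'(x)$ is bounded uniformly in $(x,y)$, which immediately gives $|\partial_x \ell(x,y)|\leq c$ and $|\ell(x,y)|\leq c$.

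For $|x|<1$, both $\psi(x,y)$ and $H(x)$ vanish at $x=0$, so the formula above is of the form $0/0$ there and direct estimation fails. To handle the removable singularity, I would use the identity
\[
\psi(x,y) \;=\; \phi(x-y)-\phi(x+y) \;=\; \tfrac{2}{\sqrt{2\pi}}\,\sinh(xy)\,e^{-(x^2+y^2)/2}
\]
together with $H(x)=x\,L(x)$, to rewrite
\[
\ell(x,y) = \frac{\alpha(x,y)}{L(x)}, \qquad \alpha(x,y) := \frac{\psi(x,y)}{x} = \tfrac{2y}{\sqrt{2\pi}}\,e^{-(x^2+y^2)/2}\,\frac{\sinh(xy)}{xy},
\]
where $\sinh(t)/t$ is understood to equal $1$ at $t=0$, making it an entire function of $t$. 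By the properties of $L$ recalled after \eqref{func L as integral of heat kern}, $L$ is smooth and $L(x),L'(x)$ are bounded above while $L(x)\geq L(1)>0$ on $[-1,1]$. It therefore suffices to bound $\alpha$ and $\partial_x\alpha$ uniformly in $y$ for $|x|\leq 1$. Using the elementary bounds $|\sinh(t)/t|\leq \cosh(t)\leq e^{|t|}$ and an analogous estimate for its derivative, together with the Gaussian factor $e^{-y^2/2}$ which dominates $e^{|xy|}\leq e^{|y|}$, I obtain $|\alpha(x,y)|\leq c$ and $|\partial_x\alpha(x,y)|\leq c$ uniformly in $(x,y)\in[-1,1]\times \mathbb{R}$.

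The main obstacle is precisely this removable singularity at $x=0$: both halves of the quotient $\psi/H$ are linear in $x$ to leading order (since $\psi$ is odd in $x$ and $H(0)=0$, $H'(0)=\sqrt{2/\pi}>0$), so one must factor out the common $x$ before estimating. Once the two cases $|x|\geq 1$ and $|x|\leq 1$ are combined, $\sup_{x,y}|\partial_x \ell(x,y)|\leq c$, hence $|\ell(x',y)-\ell(x,y)|\leq c|x-x'|$, and the pointwise uniform bound $|\ell(x,y)|\leq c$ follows either from the case analysis directly or from $|\ell(0,y)|=|\phi^+(y)|\leq e^{-1/2}$ combined with the Lipschitz bound when $|x|$ is bounded, and from the $|x|\geq 1$ estimate otherwise.
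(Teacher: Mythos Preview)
Your proof is correct. The key observation that $\psi(x,y)=\tfrac{2}{\sqrt{2\pi}}\sinh(xy)\,e^{-(x^2+y^2)/2}$ factors out the common zero at $x=0$ cleanly via $\ell=\alpha/L$, and your bounds on $\alpha$, $\partial_x\alpha$, $L$, $L'$ are all valid (the Gaussian factor $e^{-y^2/2}$ indeed absorbs the $e^{|xy|}\leq e^{|y|}$ growth coming from $\sinh/\cosh$ when $|x|\leq 1$).

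The paper itself does not prove \eqref{Lipschitz-ell-x-001} but defers to \cite[Lemma~2.5]{GX-2024-CCLT}; it then deduces boundedness by combining the Lipschitz bound with the trivial estimate at $|x|=1$. Your argument is therefore more self-contained than what appears here. The structural difference is minor: you obtain boundedness directly from the representation $\ell=\alpha/L$ on $|x|\leq 1$, whereas the paper uses the Lipschitz estimate plus a single reference point. Both routes are short; yours has the advantage of not relying on the external lemma.
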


\begin{proof}
We refer to \cite[Lemma 2.5]{GX-2024-CCLT} for the proof of \eqref{Lipschitz-ell-x-001}. 
Now we show the boundedness of $\ell$. For $|x| \geq 1$ and $y \in \bb R$, it follows from the fact that $\psi$ is bounded and $H$ has a strictly positive lower bound.
For  $|x| < 1$ and $y \in \bb R$, using \eqref{Lipschitz-ell-x-001}, we have $|\ell (x, y)| \leq |\ell (x, y) - \ell (1, y)| + |\ell (1, y)| \leq c |x| + c' \leq c + c'$, as required. 
\end{proof}

We now establish Lipschitz continuity properties for the bivariate function $(x,y) \mapsto p(x,y)$ on $\bb R \times \bb R$.

\begin{lemma} \label{lem-holder prop for ell-001}
(1) There exists a constant $c > 0$ such that, for any $x, y \in \bb R$ with $|y| \leq 2$,
\begin{align}\label{deriv of ell bound aa001}
\left| \frac{\partial}{\partial y}  p(x, y) \right| 
\leq  c e^{- \frac{x^2}{4}}  |y|. 
\end{align} 
(2) There exists a constant $c > 0$ such that for any $x \in \bb R$, $|y| \leq 2$ and $|a|\leq 1$, 
\begin{align}\label{Lipschitz-continuity-pxy}
| p(x,y+a) - p(x, y) | \leq c e^{- \frac{x^2}{5}} |a|. 
\end{align}
(3) There exists a constant $c > 0$ such that $|p(x, y)| \leq c$ for any $x, y \in \bb R$.  \\
(4) There exists a constant $c > 0$ such that for any $x, y, a \in \bb R$, 
\begin{align}\label{lem-holder prop for ell-002}
| p(x,y+a) - p(x, y)| \leq c |a|. 
\end{align}
\end{lemma}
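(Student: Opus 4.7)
Plan. The identity $p(x,y) = \psi(x,y)/(H(x)H(y))$ together with the symmetries $\psi(x,-y)=-\psi(x,y)$ and $H(-y)=-H(y)$ gives $p(x,-y)=p(x,y)$, so in particular $\partial_y p(x,0) = 0$. Using $H(x) = xL(x)$ and $\sinh(xy) = (e^{xy}-e^{-xy})/2$, I first rewrite
\begin{equation*}
p(x,y) = \frac{2 e^{-(x^2+y^2)/2}\, s(xy)}{\sqrt{2\pi}\, L(x)L(y)}, \qquad s(u) := \frac{\sinh u}{u},
\end{equation*}
where $s$ is smooth, even, $s(0)=1$, and $s(u)\sim e^{|u|}/(2|u|)$ at infinity. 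Since $L$ is smooth, even, strictly positive, bounded below on compact sets, and satisfies $L(x)\geq c/(1+|x|)$ globally, this representation makes the full structure transparent and reduces all four claims to bookkeeping on smooth quantities.

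For part (1), I differentiate this formula in $y$ using $s'(u) = u\,t(u)$ with $t$ smooth (from the Taylor series of $\sinh$) and $L'(y) = y\,\Lambda(y)$ with $\Lambda$ smooth (since $L$ is even and hence $L'(0)=0$), obtaining
\begin{equation*}
\partial_y p(x,y) = \frac{2\, y\, e^{-(x^2+y^2)/2}}{\sqrt{2\pi}\, L(x)}\left[\frac{x^2 t(xy) - s(xy)}{L(y)} - \frac{s(xy)\,\Lambda(y)}{L(y)^2}\right].
\end{equation*}
To bound the bracket uniformly for $|y|\leq 2$ I split into two regimes. For $|xy|$ bounded, Taylor estimates on $s$ and $t$ give a bound of size $O(1+x^2)$ on the bracket; combined with the Gaussian factor $e^{-x^2/2}$ and $1/L(x)\leq c(1+|x|)$, this yields $|\partial_y p(x,y)|\leq c|y|e^{-x^2/4}$. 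For $|xy|$ large, the tails $s(u),\, t(u)u^2 \lesssim e^{|u|}$ combine with the AM-GM inequality $(|x|-|y|)^2\geq \tfrac12 x^2 - y^2$ to give $e^{|xy|-(x^2+y^2)/2}\leq ce^{-x^2/4}$, and the same conclusion follows after absorbing $1/L(x)$ into a slightly enlarged constant.

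Part (2) follows from part (1) by the mean value theorem $p(x,y+a) - p(x,y) = a\,\partial_y p(x,y+\theta a)$; since $|y+\theta a|\leq 3$, one re-runs the argument of part (1) on $|y|\leq 3$ to get $|\partial_y p(x,\xi)|\leq c e^{-x^2/5}$, the slightly weaker exponent reflecting the looser AM-GM constant. Part (3) splits by $|y|$: for $|y|\leq 1$ the MVT with part (1) gives $|p(x,y)-p(x,0)|\leq cy^2 e^{-x^2/4}$ and $p(x,0) = e^{-x^2/2}/L(x)\leq c(1+|x|)e^{-x^2/2}\leq c'$; for $|y|\geq 1$, $|H(y)|\geq H(1) > 0$ and $|\ell(x,y)|\leq c$ by Lemma \ref{Holder prop for int ell}, hence $|p(x,y)| = |\ell(x,y)|/|H(y)|\leq c/H(1)$. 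Part (4) follows by showing $|\partial_y p|$ is globally bounded: this holds by part (1) on $|y|\leq 1$, and on $|y|\geq 1$ the quotient $p = \ell/H$ has $|H|$ bounded below while $|\partial_y\ell|$ and $|\ell|$, $|H'|$ are all bounded by Lemmas \ref{Lipschiz  property for ell_H in y} and \ref{Holder prop for int ell}; hence $|\partial_y p|\leq c$ uniformly, and integrating yields $|p(x,y+a)-p(x,y)|\leq c|a|$.

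The main obstacle is in part (1): the naive quotient rule on $p = \psi/(HH)$ produces singular terms of orders $1/y$ and $1/y^2$ at $y=0$, and extracting their cancellation down to a clean factor $y$ requires the reparametrization via $s, t, L, \Lambda$ above. Moreover, the large-$|xy|$ regime is delicate because the exponential growth of $s(xy)\sim e^{|xy|}$ only marginally loses against the Gaussian $e^{-(x^2+y^2)/2}$, so the decay in $x$ requires a careful AM-GM comparison; a mild loss in the exponential constant is acceptable and is reflected in the looser exponent $e^{-x^2/5}$ in part (2).
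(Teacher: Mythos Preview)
Your approach is correct and in fact takes a cleaner route than the paper's own proof. The key difference is your reparametrization
\[
p(x,y) = \frac{2\, e^{-(x^2+y^2)/2}\, s(xy)}{\sqrt{2\pi}\, L(x)L(y)}, \qquad s(u)=\frac{\sinh u}{u},
\]
which makes the even structure in $y$ transparent and lets you pull out the factor $y$ from $\partial_y p$ directly via $s'(u)=u\,t(u)$ and $L'(y)=y\,\Lambda(y)$. The paper instead computes $\partial_y p$ from the raw definition, obtains an expression $Q_1(x,y)/(xy^2 L(x)L(y)^2)$ with an apparent $1/y^2$ singularity, and then removes it by Taylor-expanding $\phi$, $L$, $\psi$, and $\Theta(x,y)=\phi(x-y)+\phi(x+y)$ to fourth order and checking term by term that $Q_1$ carries a factor $xy^3$. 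Your route avoids that cancellation-by-hand entirely. For the large-$|x|$ decay the paper uses $|\phi^{(k)}(x\pm\tilde y)|\leq c\,e^{-x^2/3}$, while you use $s(u),\,u^2 t(u)\lesssim e^{|u|}$ together with $e^{|xy|-(x^2+y^2)/2}=e^{-(|x|-|y|)^2/2}$; both lead to $e^{-x^2/4}$ after absorbing polynomial prefactors.

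One point deserves a word in your write-up of part (1). In the large-$|xy|$ regime the bracket contains $x^2 t(xy)=(xy)^2 t(xy)/y^2\lesssim e^{|xy|}/y^2$, so the bound you actually need is
\[
\frac{|y|\,e^{-(|x|-|y|)^2/2}}{L(x)\,y^2}\;\leq\; c\,|y|\,e^{-x^2/4}.
\]
This is fine because $|xy|\geq 1$ forces $1/y^2\leq x^2$, and then $(1+|x|)^3 e^{-(|x|-2)^2/2}\leq c\,e^{-x^2/4}$; but you should make that step explicit rather than say only ``absorbing $1/L(x)$''. Also, in part (2) your reasoning about why the exponent weakens to $-x^2/5$ is not quite right: re-running part (1) on $|\xi|\leq 3$ still gives $|\partial_y p(x,\xi)|\leq c\,|\xi|\,e^{-x^2/4}\leq 3c\,e^{-x^2/4}$, so you actually obtain the stronger $e^{-x^2/4}$; the stated $e^{-x^2/5}$ is simply weaker and therefore trivially implied. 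Parts (3) and (4) match the paper's argument essentially verbatim.
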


\begin{proof} 
(1) By \eqref{def of func ell_H-002}, we have 
\begin{align*} %\label{001}
\frac{\partial}{\partial y} p(x,y) 
&=  \frac{x(\phi(x-y) + \phi(x+ y))}{H(x)H(y)} -\frac{y \psi(x,y)}{H(x)H(y)} - \frac{2\phi(y)\psi(x, y) }{H(x)H(y)^2}. 
%\\ 
%&=  \frac{\phi(x-y) + \phi(x+ y)}{L(x)H(y)} - y \frac{\ell(x,y)}{H(y)}     - \frac{\ell(x, y) 2\phi(y)}{H(y)^2} \\
%&=  \frac{\phi(x-y) + \phi(x+ y)}{L(x)H(y)} - y \frac{\psi(x,y)}{xL(x)H(y)}     - \frac{\psi(x, y) 2\phi(y)}{xL(x)H(y)^2}\\ 
%&=  \frac{\phi(x-y) + \phi(x+ y)}{L(x)H(y)} - y \frac{\phi(x-y) - \phi(x+ y)}{xL(x)H(y)}     - \frac{\left(\phi(x-y) - \phi(x+ y)\right) 2\phi(y)}{xL(x)H(y)^2}\\ 
%&=  \frac{x H(y) \left(\phi(x-y) + \phi(x+ y)\right)  - y H(y) \left(\phi(x-y) - \phi(x+ y)\right)   - 2\phi(y) \left(\phi(x-y) - \phi(x+ y)\right) }{xL(x)H(y)^2}. 
\end{align*}
Since $H(x)=xL(x)$, %after some regroupment of the terms, 
we obtain
\begin{align} \label{derivative of p 001}
\frac{\partial}{\partial y} p(x,y) =  \frac{ Q_1(x,y) }{xy^2L(x)L(y)^2} - \frac{y \psi(x,y)}{H(x)H(y)}, 
\end{align}
where
\begin{align} 
&Q_1(x,y) = x y L(y) \Theta(x,y) -  2\phi(y) \psi(x,y), \label{def of Q(x,y)-qqqq002}\\
&\Theta(x,y) = \phi(x-y) + \phi(x+y). \label{def of Q(x,y)-qqqq004}
\end{align}
We first deal with the second term in \eqref{derivative of p 001}. 
Since $\psi(x,y) = \ell(x,y)H(x)$, 
by \eqref{bound-ell-qwe} of Lemma \ref{Lipschiz  property for ell_H in y}, 
there exists $c>0$ such that, for any $x\in \bb R$ and $|y|\leq 2$,
 \begin{align} \label{bound for Q_2 bb001}
\left|\frac{y \psi(x,y)}{H(x)H(y)}\right| = \left|\frac{\ell(x,y)}{L(y)}\right| 
 \leq  c e^{- \frac{x^2}{4}}  |y|.
\end{align}
Next we deal with the first term in \eqref{derivative of p 001}. 
Since
\begin{align*} %\label{aa001}
\Theta(x,0) = 2\phi(x),\quad \Theta'_y(x,0) = 0,\quad  \Theta''_{y^2}(x,y) = \phi''(x-y) + \phi''(x+y), 
\end{align*}
by Taylor's formula, for any $x,y\in \bb R$, % $|x|\leq 1$ and $|y|\leq 1$, 
there exists $\tilde y \in \bb R$ with $|\tilde y|\leq |y|$ such that
\begin{align} \label{Theta-aa001}
\Theta(x,y) = 2\phi(x) + \left( \phi''(x-\tilde y) + \phi''(x+\tilde y) \right)\frac{y^2}{2},
\end{align}
where the function $\phi''$ is bounded on $\bb R$.
In the same way, by \eqref{Def-Levydens}, for any $x, y \in \bb R $, there exists $\tilde y \in \bb R$ with $|\tilde y|\leq |y|$ such that
\begin{align} \label{expan for func psi 001}
\psi(x,y) = 2 x \phi(x) y -  \left(\phi'''(x-\tilde y) + \phi'''(x+\tilde y)\right)\frac{y^3}{6}. 
\end{align}
Substituting \eqref{Theta-aa001} and \eqref{expan for func psi 001} into \eqref{def of Q(x,y)-qqqq002}, we derive that 
\begin{align} \label{formula-Qxy-aa001}
Q_1(x,y) 
&= 2xy\phi(x)L(y) +\frac{1}{2}xy^3L(y)\left( \phi''(x-\tilde y) + \phi''(x+\tilde y) \right) \notag\\
& \quad - 4xy\phi(x)\phi(y) + \frac{1}{3}y^3\phi(y)\left( \phi'''(x-\tilde y) + \phi'''(x+\tilde y) \right).
\end{align}
As $L(0) = \frac{2}{\sqrt{2\pi}}$ and $L'(0) = 0$, there exists $\tilde y \in \bb R$ with $|\tilde y|\leq |y|$ such that  
\begin{align*} %\label{aa001}
L(y) = \frac{2}{\sqrt{2\pi}} +L''(\tilde y) \frac{y^2}{2}, \quad \phi(y) = \frac{1}{\sqrt{2\pi}} +\phi''(\tilde y) \frac{y^2}{2}. 
\end{align*}
Plugging this into \eqref{formula-Qxy-aa001}, we arrive at
\begin{align*} %\label{aa001}\frac{}{}
Q_1(x,y)
&= \frac{4xy}{\sqrt{2\pi}}\phi(x) + xy^3\phi(x)L''(\tilde y) + \frac{1}{2}xy^3L(y)\left( \phi''(x-\tilde y) + \phi''(x+\tilde y) \right)\\
& \quad - \frac{4xy}{\sqrt{2\pi}}\phi(x)  -2xy^3\phi(x) \phi''(\tilde y)    + \frac{1}{3}y^3\phi(y)\left( \phi'''(x-\tilde y) + \phi'''(x+\tilde y) \right)\\
&= xy^3 \bigg( \phi(x)L''(\tilde y) + \frac{1}{2}L(y)\left( \phi''(x-\tilde y) + \phi''(x+\tilde y) \right)\\
& \qquad\qquad  -2\phi(x) \phi''(\tilde y)    + \frac{1}{3}\phi(y)\frac{\phi'''(x-\tilde y) + \phi'''(x+\tilde y)}{x} \bigg).
\end{align*}
Therefore, 
\begin{align} \label{bound Q1 general case aa001}
\frac{Q_1(x,y)}{xy^2 L(x)L(y)^2} 
= \frac{y}{L(x)L(y)^2} 
\bigg( &\phi(x)L''(\tilde y) + \frac{1}{2}L(y)\left( \phi''(x-\tilde y) + \phi''(x+\tilde y) \right)  \notag\\
& -2\phi(x) \phi''(\tilde y)   +  \frac{1}{3}\phi(y)\frac{\phi'''(x-\tilde y) + \phi'''(x+\tilde y)}{x} \bigg).
\end{align}

We first consider the case when $|x|\leq 1$.
Since $\phi'''(t) = (3t - t^3) \phi(t)$, we get 
\begin{align} \label{third deriv of psi aa001}
 \phi'''(x-y) + \phi'''(x+y)  
& =  \left[ 3 (x-y) - (x-y)^3   \right]  \phi(x-y) +  \left[ 3 (x+y) - (x+y)^3  \right]  \phi(x+y)  \notag\\
& = x\Theta(x, y) \left(3-x^2 - 3y^2 \right) + y\psi(x, y) \left(3 - y^2 - 3x^2 \right).
\end{align}
By the Lipschitz property of the function $\ell$ (see Lemma \ref{Holder prop for int ell}) and the fact that $\ell(x, 0) = 0$, 
we have $|\ell(x, y)| = |\ell(x, y) - \ell(x, 0)|  \leq c|y|$. 
Since $H(x)\leq \min\{|x|,1\}$,  we obtain
\begin{align} \label{first bound for func psi 001}
\left|\psi(x,y)\right| = \left|H(x) \ell(x,y)\right|  \leq c \min\{|x|,1\}|y|.
%\left|\psi(x,y)\right| = \left|H(x) \ell(x,y)\right| = \leq c \min\{|x|,1\} \min\{|y|,1\}.
\end{align}
From \eqref{third deriv of psi aa001}, \eqref{first bound for func psi 001} and the fact that $|\Theta(x,y)| \leq 1$ (cf.\ \eqref{def of Q(x,y)-qqqq004}), 
it follows that there exists a constant $c>0$ such that for any $|x|\leq 1$ and $|y|\leq 2$,
\begin{align} \label{bound 3d derivative-001}
\left| \frac{\phi'''(x-y) + \phi'''(x+ y)}{x} \right| \leq c.
\end{align}
Plugging this into \eqref{bound Q1 general case aa001} yields that there exists $c>0$ such that for any $|x|\leq 1$ and $|y|\leq 2$, 
\begin{align*} %\label{aa001}
\left|\frac{Q_1(x,y)}{xy^2L(x)L(y)^2}\right| \leq c \left|\frac{y}{L(x)L(y)^2}\right| \leq c|y|. 
\end{align*}
Together with \eqref{derivative of p 001} and \eqref{bound for Q_2 bb001}, this implies that 
 \eqref{deriv of ell bound aa001} holds in the case when $|x|\leq 1$ and $|y|\leq 2$.

Next we consider the case when $|x| > 1$ and $|y|\leq 2$. 
Since $|\tilde y| \leq |y|$, 
there exists $c >0$ such that $|\phi''(x \pm \tilde y)| \leq c e^{- \frac{x^2}{3}}$ 
and $|\phi'''(x \pm \tilde y)| \leq c e^{- \frac{x^2}{3}}$. 
Implementing this into \eqref{bound Q1 general case aa001} and using the fact that $\frac{1}{L(x)} \leq \frac{|x|}{H(1)}$ for $|x| > 1$,
we get 
\begin{align*}%\label{}
\left|\frac{Q_1(x,y)}{xy^2L(x)L(y)^2}\right| \leq  c e^{- \frac{x^2}{4}} |y|. 
\end{align*}
Combining this with \eqref{derivative of p 001} and \eqref{bound for Q_2 bb001}, 
we obtain that  \eqref{deriv of ell bound aa001} holds in the case when $|x| > 1$ and $|y|\leq 2$.
This ends the proof of \eqref{deriv of ell bound aa001}. 

(2) The inequality \eqref{Lipschitz-continuity-pxy} follows from \eqref{deriv of ell bound aa001}. 

(3) Since $p(x, y) = \frac{\ell(x, y)}{H(y)}$, by the boundedness of $\ell$ (Lemma \ref{Holder prop for int ell}) and the fact that $|H(y)| \geq H(1)$ for $|y| \geq 1$, 
we get that $|p(x, y)| \leq c$ for $x \in \bb R$ and $|y| \geq 1$. 
For $x \in \bb R$ and $|y| < 1$, using \eqref{Lipschitz-continuity-pxy}, we have $|p(x, y)| \leq |p(x, y) - p(x, 1)| + |p(x, 1)| \leq c$, as required. 

(4) Finally, we prove \eqref{lem-holder prop for ell-002}. 
It is enough to consider the case where $|a|\leq 1$, 
otherwise the inequality is trivial because of the boundedness of the function $p$. 
In this case, for $|y|\leq 2$, the assertion follows from \eqref{Lipschitz-continuity-pxy}. 
For $|y| > 2$ and $|a| \leq 1$, since $|H(y+a)|$ and $|H(y)|$ are bounded away from $0$, 
by telescoping, using \eqref{ell-intergr-bound-002} of Lemma \ref{Lipschiz  property for ell_H in y} and the boundedness of $\ell$ (Lemma \ref{Holder prop for int ell}), we deduce that
\begin{align*} 
&\left| p(x, y+a)  -   p(x, y) \right| \notag\\
& \leq \frac{1}{|H(y)|}\left| \ell(x, y+a)  -  \ell(x, y)\right|  + |\ell(x, y+a)| \left| \frac{H(y+a)-H(y)}{H(y+a)H(y)}  \right|  \notag\\
& \leq c |a|, 
\end{align*}
completing the proof of \eqref{lem-holder prop for ell-002}. 
\end{proof}

%%%%%%%%%%%%%%%%%%%%%%%%%
\begin{lemma} \label{Lem-p equiv x to inf}
let $(\beta_n)_{n\geq 1}$ be a sequence diverging to infinity.
Then, as $n\to\infty$, uniformly in $x\geq \beta_n$ and $y\geq\beta_n$, 
\begin{align*} %\label{}
p(x, y) \sim \phi(x-y).
\end{align*}
\end{lemma}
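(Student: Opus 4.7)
The plan is to exploit the explicit formula for $p$ together with the identity $\psi(x,y)=\phi(x-y)(1-e^{-2xy})$, which is obtained by factoring $e^{-(x-y)^2/2}$ out of $\phi(x-y)-\phi(x+y)=\tfrac{1}{\sqrt{2\pi}}(e^{-(x-y)^2/2}-e^{-(x+y)^2/2})$ and noting that $(x+y)^2-(x-y)^2=4xy$. Using this identity together with the definition $p(x,y)=\psi(x,y)/(H(x)H(y))$, the assertion reduces to showing that
\begin{equation*}
\frac{1-e^{-2xy}}{H(x)H(y)} \longrightarrow 1
\end{equation*}
uniformly in $x,y\geq \beta_n$ as $n\to\infty$.

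To establish this uniform limit I would handle the numerator and denominator separately. For the numerator, $x,y\geq \beta_n$ forces $xy\geq \beta_n^2\to\infty$, so $e^{-2xy}\leq e^{-2\beta_n^2}\to 0$, giving $1-e^{-2xy}\to 1$ uniformly on the region. For the denominator, I would recall that $H(x)=2\Phi(x)-1$ is nondecreasing in $x\geq 0$ with $\lim_{x\to\infty}H(x)=1$, so $H(x)\geq H(\beta_n)\to 1$ uniformly in $x\geq \beta_n$, and likewise for $H(y)$. Combining the two uniform limits yields the desired ratio converging to $1$.

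The argument is essentially a routine computation, so there is no real obstacle; the only point requiring attention is that both $x$ and $y$ need to be simultaneously bounded below by the same diverging sequence $\beta_n$ to force $xy\to\infty$. Note that if one only assumed $x\geq \beta_n$ but allowed $y$ bounded, the exponential $e^{-2xy}$ would not go to zero in general, so the hypothesis $y\geq \beta_n$ is genuinely used.
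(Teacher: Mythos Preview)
Your proposal is correct and follows essentially the same approach as the paper: both use the factorization $\psi(x,y)=\phi(x-y)(1-e^{-2xy})$ and then bound $(1-e^{-2xy})/(H(x)H(y))$ using $xy\geq\beta_n^2$ for the numerator and $H(x),H(y)\geq H(\beta_n)\to 1$ for the denominator. The paper packages these into a single explicit inequality for $|p(x,y)/\phi(x-y)-1|$, but the substance is identical.
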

\begin{proof}
For any $x\geq\beta_n$ and $y\geq \beta_n$, by the identity $\psi(x,y)=\phi(x-y)(1-e^{-2xy})$,
\begin{align*} %\label{001}
\left|\frac{p(x,y)}{\phi(x-y)}-1\right| 
= \left|\frac{1}{H(x)H(y)} \left(1-e^{-2xy}\right)-1\right|
\leq  \left| \frac{1-H(\beta_n)^2}{H(\beta_n)^2}\right| +  \frac{ e^{-2\beta_n^2}}{H(\beta_n)H(\beta_n)}. 
\end{align*}
Taking the limit as $n\to \infty$, we obtain the assertion of the Lemma.
\end{proof}

\begin{lemma} \label{Lem-p equiv-x to 0}

There exists a constant $c>0$ such that, for any $\ee\in (0,1]$ and $x,y\in \bb R$ satisfying $|x|\leq \ee$ and  $|yx|\leq  \ee$,
\begin{align*} %\label{}
\left|\frac{p(x, y)}{e^{-y^2/2}/L(y)} -1\right| \leq c \ee.
\end{align*}
\end{lemma}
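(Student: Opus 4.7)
The plan is to factor $p(x,y)/(e^{-y^2/2}/L(y))$ into a product of three terms, each of which is close to $1$ when $|x|\le\ee$ and $|xy|\le\ee$, and then combine the estimates by a simple telescoping argument.

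First, I would rewrite $\psi$ in a form that makes the small-$x$ behaviour transparent. Using
\begin{align*}
\psi(x,y) = \frac{1}{\sqrt{2\pi}}\bigl( e^{-(x-y)^2/2} - e^{-(x+y)^2/2} \bigr)
= \frac{2}{\sqrt{2\pi}}\, e^{-(x^2+y^2)/2}\sinh(xy),
\end{align*}
and recalling $H(u)=u L(u)$, the definition \eqref{def of func ell_H-002} gives
\begin{align*}
p(x,y) = \frac{2}{\sqrt{2\pi}}\cdot \frac{e^{-x^2/2} e^{-y^2/2}}{L(x)L(y)}\cdot \frac{\sinh(xy)}{xy}.
\end{align*}
Dividing by $e^{-y^2/2}/L(y)$ and using $L(0)=2/\sqrt{2\pi}$ yields the clean factorisation
\begin{align*}
\frac{p(x,y)}{e^{-y^2/2}/L(y)} = \frac{L(0)}{L(x)}\cdot e^{-x^2/2}\cdot \frac{\sinh(xy)}{xy},
\end{align*}
with the convention $\sinh(xy)/(xy)=1$ at $xy=0$. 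At $x=0$ all three factors equal $1$, confirming the limit in \eqref{approx by normal density-001b}.

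Next I would bound each factor. Lemma \ref{lem-inequality for L} (applied with $x'=0$) gives $|L(0)/L(x)-1|\le c|x|\le c\ee$. Taylor expansion of the exponential gives $|e^{-x^2/2}-1|\le c x^2\le c\ee^2\le c\ee$ since $\ee\in(0,1]$. Finally, the power series of $\sinh$ yields $|\sinh(t)/t-1|\le c t^2$ for $|t|\le 1$, so with $t=xy$ we get $|\sinh(xy)/(xy)-1|\le c\ee^2\le c\ee$. Each factor is therefore bounded in absolute value by a constant (uniformly in the specified range).

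Finally, I would combine the three bounds by a standard telescoping identity: if $|a_i-1|\le c\ee$ and $|a_i|\le C$ for $i=1,2,3$, then $|a_1 a_2 a_3 - 1|\le c'\ee$ with $c'$ depending only on $c$ and $C$. This delivers the required inequality. There is no real obstacle here; the only thing to be a bit careful about is to ensure the factorisation in the first display is valid even when $x=0$ or $xy=0$, which is handled by the continuity conventions already set up in \eqref{approx by normal density-001b} and by the fact that $\sinh(t)/t$ extends smoothly through the origin.
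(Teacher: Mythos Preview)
Your proof is correct. The factorisation $\psi(x,y)=\frac{2}{\sqrt{2\pi}}e^{-(x^2+y^2)/2}\sinh(xy)$ is exact, the division by $e^{-y^2/2}/L(y)$ is done correctly, and each of the three factors is bounded and within $c\ee$ of $1$ for the reasons you give; the telescoping step is then routine.

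The paper's route is different in flavour and somewhat heavier: it Taylor-expands $\psi(y,x)$ in $x$ to third order (via \eqref{expan for func psi 001}), writes the remainder explicitly as $2xy\phi(y)e^{-\tilde x^2/2}B(\tilde x,y)$ where $B$ is computed in closed form in terms of $\cosh(xy)$ and $\sinh(xy)/(xy)$, and then argues $|B(\tilde x,y)|\le c\ee^2$ before applying Lemma~\ref{lem-inequality for L}. Your approach bypasses the Taylor step entirely by starting from the exact hyperbolic identity, which is why your argument is shorter and more transparent. What the paper's route buys is perhaps consistency with the expansion machinery used elsewhere in Section~\ref{Sec-Auxiliary statements}, but for this particular lemma your direct factorisation is the cleaner choice.
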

\begin{proof}
By \eqref{expan for func psi 001},
for any $x, y \in \bb R $, there exists $\tilde x \in \bb R$ with $|\tilde x|\leq |x|$ such that
\begin{align*} %\label{develpsi-001}
\psi(x,y)
&= \psi(y,x) = 2 x y \phi(y) -  \left(\phi'''(y-\tilde x) + \phi'''(y+\tilde x)\right)\frac{x^3}{6}.
\end{align*}
Taking into account that $\phi'''(y) = \phi(y)(3y-y^3)$, we obtain
\begin{align*} %\label{001}
\psi(x,y) 
=  2 x y \phi(y) - 2xy \phi(y) e^{-\frac{1}{2} \tilde x^2} B(\tilde x,y),
\end{align*}
where
\begin{align*} %\label{001}
B(x,y)
&= \frac{x^2}{12y} \left(e^{y x } (y- x) (3 - (y- x)^2)+ e^{-y x} (y+ x)  (3- (y+ x)^2)\right) \notag\\
&= \frac{x^2}{6}\left( (3-y^2-3x^2) \ch(xy) + x^2(x^2+3y^2-3) \frac{\sh(xy)}{xy}   \right).              
\end{align*}
Therefore,
\begin{align} \label{develpsi-002}
\frac{p(x,y)}{e^{-y^2/2}/L(y)}
&= \frac{1}{H(x)H(y)} \frac{\psi(x,y)}{e^{-y^2/2}/L(y)} =  \frac{2}{\sqrt{2\pi}L(x)} \left(1-  e^{-\frac{1}{2}\tilde x^2} B(\tilde x,y)\right).
\end{align}
The functions $u\mapsto \cosh(u)$ and $u\mapsto \frac{\sinh(u)}{u}$ are bounded for $|u|\leq c$, for any fixed constant $c>0$. 
This implies that, for $|x|\leq \ee$ and $|xy|\leq \ee$,  we have that
$|B(\tilde x, y)| \leq c \ee^2$. The assertion of the lemma now follows from \eqref{develpsi-002} 
using Lemma  \ref{lem-inequality for L}. 
\end{proof}

%%%%%%%%%%%%%%%%%%%%%%%%%%%%%%%%%%%%%%
\subsection{Convolution properties of the heat kernel}
Recall that the normal density of mean $0$ and variance $t > 0$ is denoted by $\phi_t$
and that $\Phi_t$ stands for the corresponding cumulative distribution function, i.e. 
$\phi_t(x) = \frac{1}{\sqrt{2\pi t}} \exp\left(-\frac{x^2}{2t}\right)$ and
$\Phi_t(x) = \int_{-\infty}^x \phi_t(u) du$,  $x \in \mathbb{R}$. 
The heat kernel $\psi_t$ with scale parameter $t$ is defined by \eqref{heat-kernel-001}.

By elementary calculations we have the following.
\begin{lemma}\label{Lem-integral-phi-s}
For any $s, t > 0$ and $x, y\in \bb R$, we have 
\begin{align*}%\label{}
\int_{\bb R_+} \phi_s(z-x) \phi_t(z-y) dz
%& = \frac{1}{2\pi \sqrt{s t}}\int_{\bb R_+} e^{-\frac{1}{2s}(z-x)^2 -\frac{1}{2t}(z-y)^2} dz \notag\\
 = \phi_{s+t}(x-y) \Phi_{\frac{s t}{s+t}}\left( \frac{tx+sy}{s+t}  \right). 
\end{align*}
\end{lemma}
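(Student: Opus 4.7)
The proof is a direct computation via completing the square, standard for products of Gaussian densities. The plan is first to rewrite the integrand in a factored form that separates the $z$-dependence from the $x,y$-dependence, and then to evaluate the resulting Gaussian integral over $\bb R_+$.

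First I would recall the classical identity
\begin{align*}
\phi_s(z-x)\phi_t(z-y) = \phi_{s+t}(x-y)\,\phi_{\frac{st}{s+t}}\!\left(z - \frac{tx+sy}{s+t}\right),
\end{align*}
which is verified by inspecting the quadratic form in the exponent. Writing out the exponents and collecting the coefficients of $z^2$, $z$, and the constant term, one obtains a coefficient $\frac{1}{2}(\frac{1}{s}+\frac{1}{t}) = \frac{s+t}{2st}$ for $z^2$; completing the square in $z$ produces the mean $\frac{tx+sy}{s+t}$ and leaves a residual constant term which simplifies to $\frac{(x-y)^2}{2(s+t)}$. The normalization constants combine as $\frac{1}{2\pi\sqrt{st}} = \frac{1}{\sqrt{2\pi(s+t)}}\cdot \frac{1}{\sqrt{2\pi \cdot st/(s+t)}}$, matching the right-hand side.

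Next I would pull the $z$-independent factor $\phi_{s+t}(x-y)$ outside the integral and treat the remaining integral
\begin{align*}
\int_{\bb R_+} \phi_{\frac{st}{s+t}}\!\left(z - \frac{tx+sy}{s+t}\right) dz.
\end{align*}
Setting $\mu = \frac{tx+sy}{s+t}$ and $\sigma^2 = \frac{st}{s+t}$ and substituting $u = z-\mu$, this becomes $\int_{-\mu}^{\infty}\phi_{\sigma^2}(u)\,du$, which by the evenness of $\phi_{\sigma^2}$ equals $\int_{-\infty}^{\mu}\phi_{\sigma^2}(u)\,du = \Phi_{\sigma^2}(\mu) = \Phi_{\frac{st}{s+t}}\!\left(\frac{tx+sy}{s+t}\right)$. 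Combining the two factors yields the stated identity.

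There is no real obstacle here: the only step that requires care is the algebraic completion of the square, and in particular verifying that the residual constant exponent is exactly $\frac{(x-y)^2}{2(s+t)}$ rather than something more complicated. That routine simplification is the core of the calculation, and once it is performed correctly the rest follows automatically from the symmetry of the Gaussian density.
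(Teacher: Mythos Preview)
Your proof is correct and is exactly the ``elementary calculation'' the paper alludes to but does not write out: the paper states the lemma with only the remark ``By elementary calculations we have the following,'' so your completion-of-the-square argument supplies precisely the intended details.
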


%%%%%%%%%%%%%%%%%%%%%%%%%%%%%
% DO NOT DELETE THE PROOF
%%%%%%%%%%%%%%%%%%%%%%%%%%%%%
%\begin{proof}
%By simple transformations, it holds that for any $s, t>0$ and $x, y, z \in \bb R$,
%\begin{align*} %\label{001}
%\frac{1}{s}(z-x)^2 + \frac{1}{t}(z-y)^2 
%%&= \frac{z^2}{s} + \frac{z^2}{t}  + \frac{2z(x-y)}{t} + \frac{(x-y)^2}{t} \\
%%&=  \frac{s+t}{st} \left(z^2   + \frac{2z(x-y)s}{(s+t)}\right) + \frac{(x-y)^2}{t} \\
%%&=  \frac{s+t}{st} \left(z^2   + \frac{2z(x-y)s}{(s+t)} + \frac{(x-y)^2s^2}{(s+t)^2}   \right) -\frac{(x-y)^2s}{t(s+t)} + \frac{(x-y)^2}{t} \\
%%&=  \frac{s+t}{st} \left(z   + \frac{(x-y)s}{(s+t)}  \right)^2 -\frac{(x-y)^2s}{t(s+t)} + \frac{(x-y)^2}{t} \\
%&=  \frac{(x-y)^2}{s+t} + \frac{s+t}{st} \left(z  - \frac{tx+ sy}{s+t}  \right)^2.
%\end{align*}
%Therefore, by \eqref{norm dens mean 0 var v 001} and \eqref{def-Phi-v}, we get 
%\begin{align*} %\label{001}
%\int_{\bb R_+} \phi_s(z-x) \phi_t(z-y) dz %\frac{1}{2\pi \sqrt{s t}}\int_{\bb R} e^{-\frac{1}{2s}(z-x)^2 -\frac{1}{2t}(z-y)^2} dz 
%&= \frac{1}{ \sqrt{2\pi(s+t)}}e^{-\frac{(x-y)^2}{2(s+t)}}   
%\frac{1}{\sqrt{2\pi  \frac{s t}{s+t}}}\int_{\bb R_+} e^{-\frac{s+t}{2st} \left(z - \frac{tx+sy}{(s+t)}  \right)^2  } dz\\
%&= \phi_{s+t}(x-y) \frac{1}{\sqrt{2\pi  \frac{s t}{s+t}}} \int_{- \frac{tx+sy}{s+t}}^{\infty} e^{-\frac{s+t}{2st} z ^2  } dz\\
%&= \phi_{s+t}(x-y) \left(1-\Phi_{\frac{s t}{s+t}}\left( - \frac{tx+sy}{s+t}  \right)\right)\\
%&= \phi_{s+t}(x-y) \Phi_{\frac{s t}{s+t}}\left( \frac{tx+sy}{s+t}  \right), 
%\end{align*}
%completing the proof of the lemma. 
%\end{proof}

The following lemma is similar to \cite[Lemma 2.39, page 41]{Gyrya-Salof-Coste2011}. 

\begin{lemma} \label{lem: product heat kernels}
For any $s,t>0$ and $x,y\in \bb R$, we have 
\begin{align*} %\label{001}
\int_{\bb R_+} \psi_{s}(x, z) \psi_t(z, y) dz
%= \int_{0}^{\infty} \psi_{s}(x, z) \psi_t(y, z)dz 
= \psi_{s+t}(x, y).
\end{align*}
\end{lemma}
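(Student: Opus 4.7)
The plan is to expand the product $\psi_{s}(x,z)\psi_t(z,y)$ using the definition $\psi_t(x,y) = \phi_t(x-y) - \phi_t(x+y)$, and then evaluate each of the resulting four integrals on $\bb R_+$ via Lemma \ref{Lem-integral-phi-s}. After that, the identity $\Phi_v(u) + \Phi_v(-u) = 1$ for the Gaussian distribution function should collapse the pieces into the desired expression.

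Concretely, I would write
\begin{align*}
\psi_s(x,z)\psi_t(z,y)
&= \bigl[\phi_s(x-z)-\phi_s(x+z)\bigr]\bigl[\phi_t(z-y)-\phi_t(z+y)\bigr],
\end{align*}
and split the integral over $\bb R_+$ into four terms. Applying Lemma \ref{Lem-integral-phi-s} to each term, with the appropriate sign changes $x \leftrightarrow -x$, $y \leftrightarrow -y$, yields four expressions of the form $\phi_{s+t}(x \pm y)\,\Phi_{\frac{st}{s+t}}(\cdot)$. Specifically, the two cross terms carry the factor $\phi_{s+t}(x+y)$ with arguments $\pm \frac{tx-sy}{s+t}$ inside $\Phi$, while the two diagonal terms carry $\phi_{s+t}(x-y)$ with arguments $\pm \frac{tx+sy}{s+t}$.

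Grouping and using $\Phi_v(u)+\Phi_v(-u)=1$ for each matched pair, the $\Phi$-factors disappear and one is left with $\phi_{s+t}(x-y) - \phi_{s+t}(x+y) = \psi_{s+t}(x,y)$, which is exactly the claim.

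This is essentially a direct bookkeeping computation, so there is no real obstacle. The only point that requires minor care is getting the signs of the arguments of $\Phi_{\frac{st}{s+t}}$ consistent when applying Lemma \ref{Lem-integral-phi-s} after the substitutions $x \mapsto \pm x$, $y \mapsto \pm y$; once that is done correctly, the pairing of complementary $\Phi$-values is automatic.
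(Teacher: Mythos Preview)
Your proposal is correct and follows essentially the same approach as the paper: expand $\psi_s\psi_t$ into four Gaussian products, apply Lemma~\ref{Lem-integral-phi-s} to each, and pair the resulting terms using $\Phi_v(u)+\Phi_v(-u)=1$ to collapse everything to $\phi_{s+t}(x-y)-\phi_{s+t}(x+y)$. The paper's proof is exactly this computation, written out with the same grouping you describe.
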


\begin{proof} 
%The first equality follows from the symmetry of the function $\psi_t$. 
By \eqref{heat-kernel-001} and the symmetry of $\phi_t$, we have, for any $s, t>0$ and $x, y\in \bb R$,
\begin{align*}%\label{}
\int_{\bb R_+} \psi_{s}(x,z) \psi_t(y,z)dz 
& =  \int_{\bb R_+} \phi_s(z-x) \phi_t(z-y) dz +   \int_{\bb R_+} \phi_s(z+x) \phi_t(z+y) dz  \notag\\
& \quad  -  \int_{\bb R_+} \phi_s(z+x) \phi_t(z-y) dz -  \int_{\bb R_+} \phi_s(z-x) \phi_t(z+y) dz. 
\end{align*}
By Lemma \ref{Lem-integral-phi-s}, we get 
\begin{align*}%\label{}
& \int_{\bb R_+} \phi_s(z-x) \phi_t(z-y) dz +   \int_{\bb R_+} \phi_s(z+x) \phi_t(z+y) dz \notag\\
& =  \phi_{s+t}(x-y) \Phi_{\frac{s t}{s+t}}\left( \frac{tx+sy}{s+t}  \right)
  +  \phi_{s+t}(x-y) \left(1-\Phi_{\frac{s t}{s+t}}\left(\frac{tx+sy}{s+t}  \right)\right)  \notag\\
& = \phi_{s+t}(x-y), 
\end{align*}
and similarly,
\begin{align*}%\label{}
 -  \int_{\bb R_+} \phi_s(z+x) \phi_t(z-y) dz -  \int_{\bb R_+} \phi_s(z-x) \phi_t(z+y) dz  
= -\phi_{s+t}(x+y). 
\end{align*}
The desired identity follows. 
\end{proof}

We denote, for any $v \in (0, 1]$,  
\begin{align} \label{new-density-scale-v}
\ell_{v}(x, y) 
= \frac{\psi_{v}(x, y)}{H(x)},  \quad x, y \in \bb R. 
%= \frac{1}{\sqrt{v}} \ell \left(\frac{x}{\sqrt{v}}, \frac{y}{\sqrt{v}}\right), \quad x, y \in \bb R. 
\end{align}

\begin{lemma} \label{convol-phi-psi-001} 
For any $v \in (0,1)$ and $x, y \in \bb R$, we have
\begin{align} \label{ConvoNormalLevy02}
\phi_{v} * \psi_{1-v}(x, y) := \int_{-\infty}^{\infty} \phi_{v} (y-z) \psi_{1-v}(x, z) dz 
=  \psi (x, y) 
\end{align} 
and 
\begin{align}\label{Convo-normal-ell-01}
\phi_{v} * \ell_{1-v}(x, y) := \int_{-\infty}^{\infty} \phi_{v} (y-z) \ell_{1-v}(x, z) dz 
=  \ell (x, y). 
\end{align}
\end{lemma}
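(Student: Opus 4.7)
The plan is to reduce both identities to the elementary Gaussian convolution identity $\phi_{v} * \phi_{1-v} = \phi_{1} = \phi$, which is a standard fact about sums of independent centered Gaussians with variances $v$ and $1-v$.

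First I would prove \eqref{ConvoNormalLevy02}. By the definition \eqref{heat-kernel-001}, the function $z \mapsto \psi_{1-v}(x,z)$ splits as $\phi_{1-v}(z-x) - \phi_{1-v}(z+x)$, and the convolution operator is linear in the second argument, so
\begin{align*}
\phi_{v} * \psi_{1-v}(x,y)
&= \int_{-\infty}^{\infty} \phi_{v}(y-z)\,\phi_{1-v}(z-x)\,dz - \int_{-\infty}^{\infty} \phi_{v}(y-z)\,\phi_{1-v}(z+x)\,dz.
\end{align*}
In the first integral, substituting $u = z - x$ turns the integrand into $\phi_{v}((y-x)-u)\phi_{1-v}(u)$, so the integral equals $(\phi_{v} * \phi_{1-v})(y-x) = \phi(y-x) = \phi(x-y)$. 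In the second, substituting $u = z + x$ yields $(\phi_{v} * \phi_{1-v})(y+x) = \phi(x+y)$. Taking the difference gives $\phi(x-y) - \phi(x+y) = \psi(x,y)$ by \eqref{Def-Levydens}, establishing the first identity.

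For \eqref{Convo-normal-ell-01}, the factor $H(x)$ in the definition \eqref{new-density-scale-v} of $\ell_{1-v}$ depends only on $x$, so it pulls out of the $z$-integration. Thus
\begin{align*}
\phi_{v} * \ell_{1-v}(x,y) = \frac{1}{H(x)}\,\phi_{v} * \psi_{1-v}(x,y) = \frac{\psi(x,y)}{H(x)} = \ell(x,y),
\end{align*}
where the middle equality uses the identity just proved and the last equality is the definition of $\ell$ in \eqref{def of func ell_H-001}.

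There is no real obstacle here: the proof is a direct computation based on the Gaussian convolution semigroup property applied separately to the two terms in the heat kernel. The only thing to be mindful of is that the integration in \eqref{ConvoNormalLevy02} is over all of $\mathbb{R}$ rather than $\mathbb{R}_+$ (contrast with Lemma \ref{lem: product heat kernels}), which is precisely what makes the shift substitutions legitimate and keeps the argument to a few lines.
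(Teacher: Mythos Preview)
Your proof is correct and takes a more direct route than the paper's. You split $\psi_{1-v}(x,z)=\phi_{1-v}(z-x)-\phi_{1-v}(z+x)$ and apply the Gaussian semigroup identity $\phi_v*\phi_{1-v}=\phi$ to each piece separately over the full line; the paper instead invokes the heat-kernel semigroup identity on $\bb R_+$ (Lemma~\ref{lem: product heat kernels}, itself proved via Lemma~\ref{Lem-integral-phi-s}) to obtain $\int_{\bb R_+}\phi_v(y-z)\psi_{1-v}(x,z)\,dz=\psi(x,y)+\int_{\bb R_+}\phi_v(y+z)\psi_{1-v}(x,z)\,dz$, and then uses the oddness of $z\mapsto\psi_{1-v}(x,z)$ to identify the right-hand side with the full-line integral. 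Your approach is shorter and entirely self-contained, not requiring Lemmas~\ref{Lem-integral-phi-s}--\ref{lem: product heat kernels}; the paper's approach, on the other hand, emphasizes the connection with the Dirichlet heat-kernel semigroup property, which is thematically consistent with the rest of the section but logically heavier here. Both derive \eqref{Convo-normal-ell-01} from \eqref{ConvoNormalLevy02} by pulling out the $z$-independent factor $1/H(x)$, exactly as you do.
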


\begin{proof}
Using Lemma \ref{lem: product heat kernels}, we have, for any $v \in (0,1)$ and $x, y \in \bb R$, 
\begin{align*} %\label{001}
  \int_{0}^{\infty} \phi_{v}(y-z)  \psi_{1-v}(x,z)   dz  
 = \psi(x,y) + \int_{0}^{\infty} \phi_{v}(y+z) \psi_{1-v}(x, z)  dz.  
\end{align*}
By a change of variable $z = - z'$ and the fact that $\psi_{1-v}(x, -z') = - \psi_{1-v}(x, z')$, 
\begin{align*}%\label{}
\int_{0}^{\infty} \phi_{v}(y+z) \psi_{1-v}(x, z)  dz
& = \int_{-\infty}^0  \phi_{v}(y - z') \psi_{1-v}(x, -z')  dz' \notag\\
& = - \int_{-\infty}^0  \phi_{v}(y - z') \psi_{1-v}(x, z')  dz',
\end{align*}
which proves \eqref{ConvoNormalLevy02}. 
By \eqref{new-density-scale-v}, the identity \eqref{Convo-normal-ell-01} is a consequence of \eqref{ConvoNormalLevy02}. 
\end{proof}

%%%%%%%%%%%%%%%%%%%%%%%%%%%%%%%%%%%%%%%
\subsection{Conditioned central limit theorem and Rogozin estmate}
The conditioned integral limit theorem (Theorem \ref{introTheor-probtauUN-001}), 
implies the following integral form of the Gaussian heat kernel approximation 
with explicit rate of convergence, whose proof is straightforward.  

\begin{lemma}\label{New-ApplCondLT-002}
There exists $c > 0$ such that, for any $x\in \bb R$, $n \geq 1$ 
and any differentiable function $\varphi$ on $\bb R$ satisfying $\lim_{t \to \pm \infty} \varphi(t) = 0$, 
\begin{align*} 
\left| \bb E \left( \varphi \left(\frac{x+S_{n}}{\sigma \sqrt{n}}\right); \tau_{x}>n \right) 
 -   \frac{V_n(x)}{ \sigma \sqrt{n}}  \int_{\bb R _{+}}  \varphi(t) 
\ell \left(\frac{x}{\sigma\sqrt{n}}, t \right)dt \right| 
 \leq  c \frac{R_{n,\delta}(x)}{\sqrt{n}}   \int_{\bb R}  | \varphi'(t) | dt, 
\end{align*}
where $R_{n,\delta}(x)$ is defined in Theorem \ref{introTheor-probtauUN-001}. % \eqref{UniformCondtau-001b}.
\end{lemma}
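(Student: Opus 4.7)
The plan is to reduce the expectation against a smooth test function $\varphi$ to the integral (cumulative) form already established in Theorem \ref{introTheor-probtauUN-001}, by means of an integration-by-parts identity. Set $U = \frac{x + S_n}{\sigma \sqrt{n}}$. Since $\tau_x > n$ forces $x + S_k \geq 0$ for $k = 1,\dots,n$, in particular $U \geq 0$ on the event of interest.

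Using the boundary condition $\lim_{s \to +\infty} \varphi(s) = 0$, write $\varphi(U) = -\int_U^{\infty} \varphi'(s)\, ds = -\int_{\bb R} \mathds 1_{\{U \leq s\}}\, \varphi'(s)\, ds$. Multiplying by $\mathds 1_{\{\tau_x > n\}}$ and applying Fubini's theorem yields
\begin{align*}
\bb E\left( \varphi(U);\, \tau_x > n \right) = -\int_{\bb R} \varphi'(s)\, \bb P\left( U \leq s,\, \tau_x > n \right) ds
= -\int_0^{\infty} \varphi'(s)\, \bb P\left( U \leq s,\, \tau_x > n \right) ds,
\end{align*}
where the restriction to $s \geq 0$ uses $U \geq 0$ on $\{\tau_x > n\}$.

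Next, I would invoke the uniform bound \eqref{UniformCondInt-001} of Theorem \ref{introTheor-probtauUN-001}, which gives
$\left| \bb P(U \leq s, \tau_x > n) - \frac{V_n(x)}{\sigma\sqrt{n}} \int_0^s \ell\bigl(\frac{x}{\sigma\sqrt{n}}, z\bigr) dz \right| \leq \frac{c R_{n,\delta}(x)}{\sqrt{n}}$ for every $s \geq 0$. Substituting this into the previous display produces a main term plus an error term of absolute value at most $\frac{c R_{n,\delta}(x)}{\sqrt{n}} \int_0^{\infty} |\varphi'(s)| ds \leq \frac{c R_{n,\delta}(x)}{\sqrt{n}} \int_{\bb R} |\varphi'(s)| ds$, which matches the bound claimed in the lemma.

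It remains to transform the main term into the required shape. Interchanging the order of integration (again by Fubini) gives
\begin{align*}
-\int_0^{\infty} \varphi'(s) \int_0^s \ell\left(\tfrac{x}{\sigma\sqrt{n}}, z\right) dz\, ds
= -\int_0^{\infty} \ell\left(\tfrac{x}{\sigma\sqrt{n}}, z\right) \int_z^{\infty} \varphi'(s)\, ds\, dz
= \int_0^{\infty} \varphi(z)\, \ell\left(\tfrac{x}{\sigma\sqrt{n}}, z\right) dz,
\end{align*}
where the last equality uses $\varphi(+\infty) = 0$ once more. Multiplying by $\frac{V_n(x)}{\sigma\sqrt{n}}$ yields exactly the main term in the statement. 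The argument is essentially routine; no serious obstacle arises, as the role of the condition $\lim_{t \to \pm \infty} \varphi(t) = 0$ is purely to justify the two boundary evaluations, and the absolute integrability of $\varphi'$ ensures that both Fubini applications are valid.
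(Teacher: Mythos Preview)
Your proof is correct and follows essentially the same route as the paper: integrate by parts to reduce the smooth expectation to a cumulative-probability integral, apply Theorem \ref{introTheor-probtauUN-001} pointwise, then undo the integration by parts via Fubini. The only cosmetic difference is that you use the representation $\varphi(U)=-\int_U^\infty \varphi'(s)\,ds$ (exploiting $\varphi(+\infty)=0$) and hence the CDF $\bb P(U\le s,\tau_x>n)$ restricted to $s\ge 0$, whereas the paper uses $\varphi(U)=\int_{-\infty}^U \varphi'(t)\,dt$ (exploiting $\varphi(-\infty)=0$) and the survival function $\bb P(U>t,\tau_x>n)$ over all $t\in\bb R$, which requires invoking both \eqref{UniformCondtau-001} (for $t<0$) and \eqref{UniformCondInt-001} (for $t\ge 0$); your version is marginally cleaner since it only needs \eqref{UniformCondInt-001}.
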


\begin{proof}
Since the function $t \mapsto \varphi(t)$ is differentiable on $\bb R$ and vanishes as $t \to \pm \infty$, 
using integration by parts,  we get that for any $x \in \bb R$,
\begin{align*} %\label{ApplCondLT-001}
\bb E \left( \varphi \left(\frac{x+S_{n}}{\sigma \sqrt{n}}\right); \tau_{x}>n \right) 
%& = \bb E  \bigg( \int_{-\infty}^{ \frac{x+S_{n}}{\sigma \sqrt{n}} }  \varphi'(t) dt; \tau_{x}>n  \bigg) \notag\\
 = \int_{\bb R}  \varphi'(t) \ \bb{P}\left(  \frac{x+S_{n}}{\sigma  \sqrt{n}} > t, \tau_{x}>n \right)  dt.  
 \end{align*}
By Theorem \ref{introTheor-probtauUN-001} (\eqref{UniformCondtau-001} and \eqref{UniformCondInt-001}) 
and the fact that $y\mapsto \ell(x,y)$ is a density function on $\bb R_+$, 
 there exists $c >0$ such that for any $n \geq 1$, $x \in \bb R$ and $t \in \bb R$,  
%for all $t\in \bb R_+$ and $x\in \bb R$,
\begin{align*} %\label{C-th-ee002-001}
 \left|  \bb{P} \left(  \frac{x+S_{n}}{\sigma \sqrt{n}} > t,  \tau_{x}>n\right) - 
\frac{V_n(x)}{\sigma \sqrt{n}}  \int_{ \max\{t, 0\} }^{\infty}  \ell \left( \frac{x}{\sigma \sqrt{n}}, y   \right)  dy  \right|
 \leq  c \frac{R_{n,\delta}(x)}{\sqrt{n}}. 
\end{align*}
Using integration by parts and the fact that $\ell ( \frac{x}{\sigma \sqrt{k}}, 0) = 0$, we have 
\begin{align*} %\label{inegr by parts-999-01}
\int_{\bb R}  \varphi'(t) \int_{ \max\{t, 0\} }^{\infty}  \ell \left( \frac{x}{\sigma \sqrt{k}},  y \right)  dy dt
=\int_{\bb R _{+}}  \varphi(t) \ell \left( \frac{x}{\sigma \sqrt{k}},  t \right) dt. 
\end{align*}
This concludes the proof of the lemma. 
\end{proof}

By Rogozin \cite[Corollary 4]{Rog77} the following holds: 
\begin{lemma} \label{lemma-Rogozin-001}
Assume that $\bb E (X_1) = 0$, $\bb E (X^2_1)= \sigma^2  > 0$  and that there exists $\delta > 0$ 
such that $\bb E (|X_1|^{2+\delta})  < \infty.$ Then
there exists a constant $c>0$ such that, for any $y \geq 1$, it holds 
\begin{align} \label{Rogizin estim-001}
0\leq  \frac{V(y)}{y} - 1  \leq c y^{-\delta}.
% \quad\mbox{and}\quad  0\leq  \frac{\check V(y)}{y} - 1  \leq c y^{-\delta}.
\end{align} 
\end{lemma}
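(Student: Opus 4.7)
The plan splits naturally into a trivial lower bound and a non-trivial upper bound.

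For the lower bound $V(y) \geq y$, I would argue directly from \eqref{Def_V_002}: we have $V(y) = \bb E(-S_{\tau_y})$, and on the event $\{\tau_y < \infty\}$, by definition $y + S_{\tau_y} < 0$, i.e.\ $-S_{\tau_y} > y$. Since the centered walk $(S_n)$ is recurrent under the second moment assumption, $\tau_y < \infty$ almost surely, and so $V(y) > y$. Dividing by $y \geq 1$ gives $V(y)/y - 1 \geq 0$.

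For the upper bound, the strategy is to reduce the statement to Rogozin's renewal theorem for renewal functions with a finite $(1+\delta)$-moment. First, via standard fluctuation theory for the walk $(S_n)$ killed at $\tau_y$ (cf.\ the Appendix \ref{sec: renewal func formulation}), one expresses $V$ as a constant multiple of the renewal function $U^+(y) = \sum_{k\geq 0}\bb P(H_1^+ + \cdots + H_k^+ \leq y)$ associated with the strict ascending ladder heights $H^+ = S_{T^+}$, where $T^+ = \inf\{n \geq 1 : S_n > 0\}$ and the $H_i^+$ are i.i.d.\ copies of $H^+$. The normalization is pinned down by the required asymptotic $V(y)/y \to 1$, which forces $V(y) = \bb E(H^+) \cdot U^+(y)$. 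Next, one transfers the moment assumption: under $\bb E(X_1) = 0$ and $\bb E(|X_1|^{2+\delta}) < \infty$, a classical moment-transfer result for ladder heights of centered random walks yields $\bb E((H^+)^{1+\delta}) < \infty$. With this in place, \cite[Corollary 4]{Rog77} applied to $U^+$ gives $|U^+(y) - y/\bb E(H^+)| \leq c\, y^{-\delta}$ for $y \geq 1$; multiplying through by $\bb E(H^+)$ yields $V(y) - y \leq c\, y^{1-\delta}$, i.e.\ $V(y)/y - 1 \leq c\, y^{-\delta}$.

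The main obstacle is the moment-transfer step from $X_1$ to the ascending ladder height $H^+$, which is classical but delicate: it is proved via Wald-type identities combined with martingale control of the ladder-epoch overshoot, and it is precisely here that the extra half-moment in $\bb E(|X_1|^{2+\delta})$ is consumed in order to bound $\bb E((H^+)^{1+\delta})$. Once this input and the renewal representation of $V$ are granted, the remainder of the argument is a direct application of Rogozin's theorem.
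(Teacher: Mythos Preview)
The paper gives no proof of this lemma; it simply attributes the estimate to Rogozin \cite[Corollary~4]{Rog77} in the line preceding the statement. Your sketch is therefore more detailed than what the paper offers, and it is essentially the correct way to unpack that citation.

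One correction, however: the renewal representation of $V$ is through the \emph{strict descending} ladder heights of $S$, not the ascending ones. Concretely, with $T^- = \tau_0 = \inf\{n\geq 1: S_n<0\}$ and $H^- = -S_{T^-}>0$, one has $V(0) = \bb E(-S_{\tau_0}) = \bb E(H^-)$ and $V(y) = \bb E(H^-)\,U^-(y)$, where $U^-(y)=\sum_{k\geq 0}\bb P(H_1^-+\cdots+H_k^-\leq y)$. This is exactly what the Appendix records via $V(x)/V(0)=U_K(x)$, after identifying $U_K$ with the strict descending ladder renewal function by time-reversal. Your formula $V=\bb E(H^+)U^+$ fails already at $y=0$ for asymmetric walks (take e.g.\ $X_1=2$ with probability $1/3$ and $X_1=-1$ with probability $2/3$: then $V(0)=1$ but $\bb E(H^+)=8/5$). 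The moment-transfer step you invoke holds equally for $H^-$ (indeed $\bb E|X_1|^{2+\delta}<\infty$ implies $\bb E((H^-)^{1+\delta})<\infty$ by the same classical results), so with this fix your route via Rogozin's renewal-function estimate goes through.
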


%%%%%%%%%%%%%%%%%%%%%%%%%%%%%%%%%%%%%%%%%%%%%%%%%%%%%%%%%%
\section{Proofs in the lattice case} \label{SecProof Theor-latticecase-001}

\subsection{Caravenna type local limit theorem in the lattice case}  \label{Sec Caravenna formulation latticecase-001}
In this subsection we formulate the following  Caravenna type local limit theorem in the lattice case. 

\begin{theorem} \label{T-Caravenatype-lattice 001} 
Assume that $X_1$ is $(\hbar, a)$-lattice, $\bb E (X_1) = 0$, $\bb E (X^2_1)= \sigma^2$,  and that there exists $\delta > 0$ 
such that $\bb E (|X_1|^{2+\delta})  < \infty.$ 
Then, one can find a constant $c >0$ such that, 
for any $n\geq 1$, $x \in \bb R$, $y\in \bb R_+$ such that $y-x \in \hbar \bb Z+na$, 
\begin{align} \label{Carav-type-bound-001}
\left|  \bb{P} \big( x+S_n = y, \tau_x >n \big)
   -  \hbar \frac{V_n(x)}{\sigma^2 n}  
   \ell\left(\frac{x}{\sigma\sqrt{n}} ,\frac{y}{\sigma\sqrt{n}}\right) \right| 
\leq  c \frac{ n^{-\frac{\delta}{6} }  + V_{n}(x) n^{-\frac{\delta}{8(3+\delta)} }\log n }{  n }. 
\end{align}
\end{theorem}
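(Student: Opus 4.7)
The plan is to split the walk at time $m = n - k$ via the Markov property, apply the standard local limit theorem to the short terminal segment of length $k$, apply the integral conditioned limit theorem to the long initial segment of length $m$, and merge the two via the heat-kernel convolution identity. Here $k = k_n$ is a sub-linear portion of $n$, to be optimized at the end. By the Markov property,
\begin{align*}
\bb P(x+S_n = y, \tau_x > n) = \sum_{z \geq 0,\, z-x \in \hbar\bb Z+ma} \bb P(x+S_m = z, \tau_x > m)\, \bb P(z + \tilde S_k = y, \tilde\tau_z > k),
\end{align*}
where $\tilde S$ is an independent copy of $S$.

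For the inner factor I would first discard the stay-positive constraint $\tilde\tau_z > k$: the error $\bb P(z + \tilde S_k = y, \tilde\tau_z \leq k)$ is controlled by the strong Markov property at $\tilde\tau_z$ together with the uniform $O(k^{-1/2})$ local bound from Theorem \ref{Th-lattice-LLT-general}. This contribution is small because typical $z$ under the first-segment weight lies at order $\sqrt m \gg \sqrt k$, forcing $\bb P(\tilde\tau_z \leq k)$ to be tiny. Theorem \ref{Th-lattice-LLT-general} then replaces $\bb P(z + \tilde S_k = y)$ by $\hbar\, \phi_{\sigma^2 k}(y-z) + O(k^{-(1+\delta_1)/2})$ with $\delta_1 = \min\{1,\delta\}$. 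The sum is thereby reduced to $\hbar\, \bb E[\phi_{\sigma^2 k}(y - (x+S_m));\, \tau_x > m]$. Lemma \ref{New-ApplCondLT-002} with $\varphi(t) = \phi_{\sigma^2 k}(y - \sigma\sqrt m\, t)$ (so that $\|\varphi'\|_1 = O(1/\sqrt k)$) then yields
\begin{align*}
\bb E[\phi_{\sigma^2 k}(y - (x+S_m));\, \tau_x > m] = \frac{V_m(x)}{\sigma^2 m} \int_0^\infty \phi_{\sigma^2 k}(y - z)\, \ell\!\left(\frac{x}{\sigma\sqrt m},\frac{z}{\sigma\sqrt m}\right) dz + O\!\left(\frac{R_{m,\delta}(x)}{\sqrt{mk}}\right).
\end{align*}

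The last step is the heat-kernel identity. I would write $\ell(\tfrac{x}{\sigma\sqrt m}, \tfrac{z}{\sigma\sqrt m}) = \sigma\sqrt m\, \psi_{\sigma^2 m}(x,z)/H(x/\sigma\sqrt m)$, extend the $z$-integration from $\bb R_+$ to $\bb R$ using the oddness of $z \mapsto \psi_{\sigma^2 m}(x,z)$ (up to a boundary correction), and apply the convolution identity $\phi_{\sigma^2 k} * \psi_{\sigma^2 m}(x,\cdot) = \psi_{\sigma^2 n}(x,\cdot)$ from Lemma \ref{lem: product heat kernels} (see also Lemma \ref{convol-phi-psi-001}). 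After collecting factors via $H(a) = a L(a)$ and simplifying, the main term becomes exactly $\hbar\, V_n(x)\, \ell(x/\sigma\sqrt n, y/\sigma\sqrt n)/(\sigma^2 n)$, matching \eqref{Carav-type-bound-001}.

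The hard part will be the uniform-in-$(x,y)$ optimization of $k_n$ and the careful balance between four distinct error sources: the standard LLT error $k^{-(1+\delta_1)/2}$ (after summation against the first-segment mass this produces the $n^{-\delta/6}/n$ contribution), the conditioned CLT remainder $R_{m,\delta}(x)/\sqrt{mk}$ (producing the $V_n(x)\, n^{-\delta/(8(3+\delta))}\log n/n$ contribution), the short-segment stay-positive correction, and the boundary term from passing from $\bb R_+$ to $\bb R$ in the convolution. The last of these is delicate when $y$ is close to $0$ and the heat kernel $\psi_{\sigma^2 n}(x,y)$ itself is small, so the argument must track the ratio of the boundary correction to the main term rather than its absolute size. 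The two distinct exponents $\delta/6$ and $\delta/(8(3+\delta))$ in the stated rate reflect which of the standard-LLT and conditioned-CLT errors dominates in each regime of $V_n(x)$.
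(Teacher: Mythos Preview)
Your approach matches the paper's almost exactly: the same Markov split into a long conditioned segment and a short unconditioned one, the standard lattice LLT (Theorem~\ref{Th-lattice-LLT-general}) on the short segment, the conditioned integral theorem (Lemma~\ref{New-ApplCondLT-002}) on the long segment, and the heat-kernel convolution (Lemma~\ref{convol-phi-psi-001}) to produce the target $\ell$; the paper also chooses the short segment of length $\floor{\ee n}$ with $\ee = n^{-\delta/(4(3+\delta))}$, exactly the optimization you anticipate.

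Two small deviations are worth flagging. First, for the ``short-segment stay-positive correction'' (the paper's $I_{n,2}$), the paper does not argue via your strong-Markov-at-$\tilde\tau_z$ route; instead it \emph{bootstraps}: it first proves the upper bound in \eqref{Carav-type-bound-001}, then applies that upper bound to the long-segment factor to extract an explicit $\ell(\tfrac{x}{\sigma\sqrt k},\tfrac{z}{\sigma\sqrt k})$ weight in the $z$-sum. The small-$z$ contribution is then killed by $\ell(\cdot,z)\leq cz$, and the large-$z$ contribution by the Fuk--Nagaev inequality (Lemma~\ref{Lem_FukNagaev}). Your direct route would also work, but you should expect to need a Fuk--Nagaev-type bound to quantify ``$\bb P(\tilde\tau_z\leq k)$ tiny'' under only a $(2+\delta)$-moment, and a separate small-$z$ argument. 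Second, your worry about the boundary correction near $y=0$ is over-stated: the paper bounds it in absolute value by $c\ee^{1/2}$ via the Lipschitz estimate $|\ell_{1-\eta_n}(\cdot,t)|\leq c|t|$ (see \eqref{lattice-bound-ell-psi-a}), uniformly in $x,y$; no ratio-tracking is needed.
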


From the above result, it follows that, for any $x_0 \in \supp V$,
one can find a constant $c >0$ such that, 
for any $n\geq 1$, $x \geq x_0$, $y\in \bb R_+$ such that $y-x \in \hbar \bb Z+na $, 
\begin{align} \label{caravenna type cor-001}
\left|  \bb{P} \big( x+S_n = y \big| \tau_x >n \big)
   - \frac{\hbar}{\sigma \sqrt{n}}  
   \ell\left(\frac{x}{\sigma\sqrt{n}} ,\frac{y}{\sigma\sqrt{n}}\right) \right| 
\leq  c \frac{  n^{-\frac{\delta}{8(3+\delta)} }\log n }{  \sqrt{n} }. 
\end{align}
With $x=0$, the bound \eqref{caravenna type cor-001} improves the result of 
Caravenna \cite{Carav05}  under the additional moment assumption $\bb E (|X_1|^{2+\delta})  < \infty.$

For fixed values of $y \in \bb Z$, 
Theorem \ref{T-Caravenatype-lattice 001}  does not give a reasonable approximation, 
since the remainder term $o(n^{-1})$ is larger than the main term which is of order $n^{-3/2}$. 
To improve the remainder term we need to convolve Theorem \ref{T-Caravenatype-lattice 001} 
with the conditioned integral type theorem. 
This gives rise to another type of conditioned local limit theorem (also called {\it ballot theorem}), 
which ensures the rate $n^{-3/2}$ for the main term. 
This result is stated in Theorem \ref{CLLT-lattice-n3/2 main result} and will be proved in the next section. 

\subsection{Proof of the upper bound in the lattice case}
In this subsection we prove the upper bound in \eqref{Carav-type-bound-001}.

Let $\ee \in (0,\ee_0)$, where $\ee_0>0$ is a sufficiently small constant. 
Set $m = \floor{\ee n}$ and $k = n-m$, where $\floor{\cdot}$ denotes the integer part of a real number.  
We can assume $n>n_0(\ee) := \frac{4}{\ee^2}$, 
as otherwise, the desired bound holds trivially. 
%otherwise the bound that we want to prove becomes trivial. 
By the Markov property,  for any $x \in \bb R$ and $y \in \bb R_+$ such that $y - x \in \hbar \bb Z+na$, we have
\begin{align} \label{lattice-JJJ-markov property}
\bb{P} \big( &x+S_n =y, \tau_x >n \big) \notag \\
&= \sum_{z\geq 0:\, z-x \in \hbar \bb Z + ka}  \bb{P} \big( z+S_{m}=y, \tau_z >m \big) \bb{P}\big( x+S_{k} = z,  \tau_x >k\big)
 \notag \\
&= \sum_{z\geq 0:\, z-x \in \hbar \bb Z + ka}  \bb{P} \big( z+S_{m}=y  \big) \bb{P} \big( x+S_{k} = z,  \tau_x >k \big) \notag\\
& \quad - \sum_{z\geq 0:\, z-x \in \hbar \bb Z + ka}  \bb{P} \big( z+S_{m}=y , \tau_z \leq m  \big) \bb{P} \big( x+S_{k} = z,  \tau_x >k \big) \notag\\
& = : I_{n,1}(x,y) - I_{n,2}(x,y),
\end{align}
where %$k+m=n$, $z-x\in \hbar \bb Z+na$ and 
$z-x \in \hbar \bb Z + ka$ is equivalent to $y-z \in \hbar \bb Z + ma$, since $y - x \in \hbar \bb Z + na$. 

For the upper bound, it suffices to estimate $I_{n,1}(x,y)$, as $I_{n,2}(x,y)$ is non-negative.
The analysis for $I_{n,2}(x,y)$ is more involved and will be addressed in the next section, 
where we establish the lower bound for  $\bb{P} \big( x+S_n =y, \tau_x >n \big)$.

By the local limit theorem (Theorem \ref{Th-lattice-LLT-general}), 
there is a constant $c >0$ such that, for any $y, z \in \bb R$ with $z-y \in \hbar \bb Z + ma$,
\begin{align} \label{lattice-JJJJJ-1111-001}
\big| \bb{P} \big( y+S_{m}=z  \big) - \hbar \phi_{\sigma^2 m} \left( y-z \right) \big| 
 \leq    \frac{c }{ m^{(1 + \delta_1)/2}}  
 \leq    c\frac{ \ee^{-(1 + \delta_1)/2}}{ n^{(1 + \delta_1)/2}}, 
\end{align}
where  $\delta_1 = \min\{ 1, \delta \}$, $\delta>0$ is the exponent from the moment condition $\bb E (|X_1|^{2+\delta})  < \infty,$
and  $\phi_{v}(x) = \frac{1}{\sqrt{v}} \phi \left(\frac{x}{\sqrt{v}}\right)$ with $\phi$ being the standard Gaussian density. 
Since the function $L$ is decreasing on $\bb R_+$ and even on $\bb R$ (cf.\ \cite{GX-2024-CCLT}), 
it follows that $n \mapsto V_n(x)$ is increasing, i.e.,
for any $x \in \bb R$ and $1 \leq k \leq n$, 
\begin{align}\label{inequality-L-increase-n}
V_k(x) = V(x) L \left(\frac{x}{\sigma \sqrt{k}} \right) \leq V(x) L \left(\frac{x}{\sigma \sqrt{n}} \right) = V_n(x). 
\end{align}
By \eqref{upper-bound-probab-tau-x}  and \eqref{inequality-L-increase-n}, 
we have that for any $x \in \bb R$ and $k = n - \floor{\ee n}$, 
\begin{align}\label{lattice-upper-bound-probab-tau-x-002}
\bb{P} \left( \tau_{x} > k \right) \leq c \frac{ k^{- \frac{\delta}{4}} + V_k(x) }{\sqrt{k}} 
\leq c' \frac{ n^{- \frac{\delta}{4}} + V_n(x)}{\sqrt{n}}. 
\end{align}
Using this along with \eqref{lattice-JJJ-markov property} and \eqref{lattice-JJJJJ-1111-001}, 
we get that for any $x \in \bb R$ and $y \in \bb R_+$ with $y - x \in \hbar \bb Z + na$, 
\begin{align} \label{lattice-JJJ004}
| I_{n,1}(x,y)  -   J_n(x,y) | 
& \leq   c \frac{ \ee^{-(1 + \delta_1)/2}}{ n^{(1 + \delta_1)/2} }  \bb{P} \left( \tau_{x} > k \right) 
    \leq  c\ee^{- \frac{1 + \delta_1}{2}} 
\frac{ n^{- \frac{\delta}{4}} + V_n(x) }{ n^{ 1 + \frac{\delta_1}{2}} },  
\end{align}
where for brevity we denote
\begin{align} \label{lattice-JJJ006}
J_n(x,y) 
&:  = \hbar  \sum_{z\geq 0:\, z-x \in \hbar \bb Z + ka}   
    \phi_{\sigma^2  m} \left( y-z \right)
   \bb{P}\left( x+S_{k} =  y, \tau_{x}>k\right) \notag\\
 &  = \hbar \,   \bb E \Big( \phi_{\sigma^2  m} \left( x+S_{k}-z\right) ; \tau_{x}>k \Big) . 
\end{align}
Using Lemma \ref{New-ApplCondLT-002}, %the conditioned integral limit theorem (Theorem \ref{introTheor-probtauUN-001}), 
we get the following heat kernel approximation for $J_n(x, y)$ with a rate of convergence.

\begin{lemma}\label{lattice-New-ApplCondLT-002}
There exist constants $c, \ee_0 > 0$ such that, for any $n \geq 1$, 
$\ee \in (0, \ee_0)$, $x \in \bb R$ and $y \in \bb R_+$ with $y - x \in \hbar \bb Z + na$, 
\begin{align*}
 \left| J_{n}(x,y) - \hbar \frac{V_n(x)}{ \sigma^2 n }
   \ell\left(\frac{x}{\sigma\sqrt{n}} ,\frac{y}{\sigma\sqrt{n}}\right) \right| 
    \leq  c \ee^{-1/2} \frac{R_{n, \delta}(x)}{ n }  + c \ee^{1/2} \frac{V_n(x)}{ n }, 
\end{align*}
where 
 $\ell$ is defined by \eqref{def of func ell_H-001} and $R_{n,\delta}(x)$ is from Theorem \ref{introTheor-probtauUN-001}.
\end{lemma}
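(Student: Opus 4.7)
The plan is to apply the conditioned integral limit theorem of Lemma~\ref{New-ApplCondLT-002} to the representation $J_n(x,y) = \hbar\,\bb E(\phi_{\sigma^2 m}(x+S_k - y);\tau_x > k)$, identify the heat-kernel main term using the convolution identity of Lemma~\ref{convol-phi-psi-001}, and then bound the residual coming from the restriction of the conditioned density to $\bb R_+$. Taking $\varphi(t) = \phi_{\sigma^2 m}(\sigma\sqrt{k}\,t - y)$, the substitution $s = \sigma\sqrt{k}\,t - y$ yields $\int_{\bb R} |\varphi'(t)|\,dt = \int_{\bb R} |\phi_{\sigma^2 m}'(s)|\,ds = \sqrt{2/\pi}/(\sigma\sqrt{m})$. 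Combined with $R_{k,\delta}(x) \lesssim R_{n,\delta}(x)$ and $m \asymp \ee n$, Lemma~\ref{New-ApplCondLT-002} then bounds its own remainder by $c\,\ee^{-1/2}R_{n,\delta}(x)/n$, which is the first error term of the claimed inequality.

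After the change of variable $z = \sigma\sqrt{k}\,t$ and the scaling identity $\psi(a,b) = \sigma\sqrt{k}\,\psi_{\sigma^2 k}(\sigma\sqrt{k}\,a,\sigma\sqrt{k}\,b)$, the main term from Lemma~\ref{New-ApplCondLT-002} reads $\hbar(V(x)/x)\int_0^\infty \phi_{\sigma^2 m}(z-y)\,\psi_{\sigma^2 k}(x,z)\,dz$. The crucial algebraic simplification is
\[
\frac{V_k(x)}{\sigma\sqrt{k}\,H(x/(\sigma\sqrt{k}))} \;=\; \frac{V(x)}{x},
\]
which follows from $H(u)=uL(u)$ and $V_k(x)=V(x)L(x/(\sigma\sqrt{k}))$. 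Since this prefactor is independent of $k$, the same identity also recasts the target as $\hbar(V(x)/x)\,\psi_{\sigma^2 n}(x,y)$. A scaled version of Lemma~\ref{convol-phi-psi-001} (with variances $\sigma^2 m$ and $\sigma^2 k$ summing to $\sigma^2 n$) gives $\int_{-\infty}^\infty \phi_{\sigma^2 m}(z-y)\,\psi_{\sigma^2 k}(x,z)\,dz = \psi_{\sigma^2 n}(x,y)$; folding the $(-\infty,0]$ piece via the oddness $\psi_{\sigma^2 k}(x,-z) = -\psi_{\sigma^2 k}(x,z)$ then isolates the remaining discrepancy as $\hbar(V(x)/x)\,E_2$, where $E_2 := \int_0^\infty \phi_{\sigma^2 m}(z+y)\,\psi_{\sigma^2 k}(x,z)\,dz$.

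The main obstacle is proving $\hbar(V(x)/x)\,E_2 \leq c\,\ee^{1/2}V_n(x)/n$ with a positive power of $\ee$. I would start from the elementary bound $\psi_{\sigma^2 k}(x,z) = \phi_{\sigma^2 k}(x-z)(1 - e^{-2xz/\sigma^2 k}) \leq (2xz/\sigma^2 k)\phi_{\sigma^2 k}(x-z)$ and complete squares via Lemma~\ref{Lem-integral-phi-s} to write $\phi_{\sigma^2 m}(z+y)\phi_{\sigma^2 k}(x-z) = \phi_{\sigma^2 n}(x+y)\,\phi_{\sigma^2 mk/n}(z-\mu)$ with $\mu = (mx-ky)/n$. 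Bounding the first moment $\int_0^\infty z\,\phi_{\sigma^2 mk/n}(z-\mu)\,dz \leq \mu^+ + \sigma\sqrt{mk/n} \leq mx/n + \sigma\sqrt{\ee n}$ yields
\[
(V(x)/x)\,E_2 \;\leq\; c\,V(x)\,\phi_{\sigma^2 n}(x+y)\Bigl[\tfrac{\ee x}{\sigma^2 n} + \tfrac{\sqrt{\ee}}{\sigma\sqrt{n}}\Bigr].
\]
Converting $V(x)$ into $V_n(x)$ requires a case split: for $x \leq \sigma\sqrt{n}$ the lower bound $L(x/(\sigma\sqrt{n})) \geq L(1)$ gives $V(x) \leq cV_n(x)$, while for $x > \sigma\sqrt{n}$ the asymptotic $L(u) \sim 1/|u|$ gives $V(x) \leq cV_n(x)\,x/(\sigma\sqrt{n})$. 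Coupling these with the uniform Gaussian estimates $x\,\phi_{\sigma^2 n}(x+y) \leq c$ and $x^2\,\phi_{\sigma^2 n}(x+y) \leq c\sigma\sqrt{n}$ (elementary consequences of the boundedness of $u\mapsto u^je^{-u^2/2}$ for $j=1,2$), both terms in the bracket reduce in every regime to at most $c\ee\,V_n(x)/n$, which is dominated by $c\sqrt{\ee}\,V_n(x)/n$ since $\ee \in (0,\ee_0]$, completing the plan.
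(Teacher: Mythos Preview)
Your argument is correct and follows the same skeleton as the paper: apply Lemma~\ref{New-ApplCondLT-002} to $\varphi(t)=\phi_{\sigma^2 m}(\sigma\sqrt{k}\,t-y)$, then identify the main term via the convolution identity of Lemma~\ref{convol-phi-psi-001}. The only real divergence is in how you control the residual $E_2=\int_0^\infty\phi_{\sigma^2 m}(z+y)\psi_{\sigma^2 k}(x,z)\,dz$. The paper stays in the $\ell$-scale: writing the residual as $\int_{y/(\sigma\sqrt{n})}^\infty\phi_{\eta_n}(t)\,\ell_{1-\eta_n}(\tfrac{x}{\sigma\sqrt{n}},\tfrac{y}{\sigma\sqrt{n}}-t)\,dt$ and invoking the one-line Lipschitz bound $|\ell_{1-\eta_n}(a,b)|\le c|b|$ gives $c\int|t|\phi_{\eta_n}(t)\,dt\le c\sqrt{\eta_n}\le c\sqrt{\ee}$ directly, with no case analysis. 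Your route---bounding $\psi$ by $(2xz/\sigma^2 k)\phi_{\sigma^2 k}(x-z)$, completing the square, and splitting on $|x|\lessgtr\sigma\sqrt{n}$---works but is noticeably heavier.

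Two minor points. First, your sentence ``both terms in the bracket reduce in every regime to at most $c\ee\,V_n(x)/n$'' is not quite right: the $\sqrt{\ee}/(\sigma\sqrt{n})$ term only yields $c\sqrt{\ee}\,V_n(x)/n$ (take $x$ bounded and $y=0$ to see that the extra factor of $\sqrt{\ee}$ is not there). Your very next clause already asserts the correct $c\sqrt{\ee}\,V_n(x)/n$ bound, so the conclusion is unaffected. Second, the prefactor $V(x)/x$ is a convenient shorthand only for $x>0$; for $x\le 0$ you should either note that $(V(x)/x)\psi_{\sigma^2 k}(x,\cdot)$ is the well-defined product $V_k(x)\,\ell(\tfrac{x}{\sigma\sqrt{k}},\cdot)/(\sigma\sqrt{k})$, or use the oddness $\psi_{\sigma^2 k}(x,z)=-\psi_{\sigma^2 k}(|x|,z)$ and rerun your estimate with $|x|$ in place of $x$.
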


\begin{proof}
Using the change of variables $t = \sigma \sqrt{k} t'$ and $s = \sigma \sqrt{n} s'$, we get 
\begin{align}\label{lattice-def-Jn-x-abc}
J_n(x,y) 
&=  \frac{\hbar}{\sigma\sqrt{m}} \bb E \bigg( \phi \bigg( \frac{x+S_{k}-y}{\sigma\sqrt{m}}\Bigg) ; \tau_{x}>k \bigg) 
\notag\\
&=  \frac{ \hbar}{\sigma\sqrt{n}}  
 \bb E \bigg( \frac{1}{\sqrt{m/n}} \phi \bigg( \frac{x+S_{k}-y}{\sigma\sqrt{n}} \sqrt{n/m} \Bigg) ; \tau_{x}>k \bigg) \notag\\
&= \frac{ \hbar}{\sigma\sqrt{n}} \bb E \left( \varphi_{n, y} \left(\frac{x+S_{k}}{\sigma \sqrt{k}} \right); \tau_{x}>k \right), 
\end{align}
where, with $\eta_n = \frac{m}{n} = \frac{\floor{\ee n}}{n}$ and $\phi_{v}(x) = \frac{1}{\sqrt{v}}\phi \left(\frac{x}{\sqrt{v}}\right)$, 
%with the notation, for $t\in \bb R$,
\begin{align} \label{lattice-JJJ006b}
 \varphi_{n, y}(t) 
 =  \phi_{\eta_n} \left( t  \sqrt{k/n} -\frac{y}{\sigma\sqrt{n}}  \right),  \quad  t, y \in \bb R. 
\end{align}
Since the function $t \mapsto \varphi_{n,y}(t)$ is differentiable on $\bb R$ and vanishes as $t \to \pm \infty$, 
using Lemma \ref{New-ApplCondLT-002}, we get 
\begin{align} \label{lattice-ApplCondLT-002}
\left| J_{n}(x,y)  -   \hbar \frac{V_k(x)  }{\sigma \sqrt{k}}  
  \int_{\bb R _{+}}  \varphi_{n,y}(t) \ell \left( \frac{x}{\sigma \sqrt{k}},  t \right) dt  \right|  
%&  \leq  c \frac{R_k(\delta,x)}{\sqrt{k}}   \int_{\bb R}  | \varphi'_n(t) | dt \notag\\
 \leq  c \frac{R_{n,\delta}(x)}{\sqrt{n}}   \int_{\bb R}  | \varphi'_{n,y}(t) | dt,
\end{align}
where we used $R_{k, \delta}(x) \leq c' R_{n,\delta}(x)$ for some constant $c'>0$. 
From \eqref{lattice-JJJ006b} and a change of variable, there exists $c>0$ such that for any $y \in \bb R$, 
\begin{align} \label{lattice-JJJ-20001}
\int_{\bb R}  | \varphi'_{n,y}(t) | dt 
&  \leq \frac{1}{\sigma\sqrt{k}}  \int_{\bb R}  \left[   
   \left|  \frac{1}{\sqrt{m/k}}  \phi'\left( \frac{ t -\frac{y}{\sigma\sqrt{k}} }{\sqrt{m/k}}\right) \right|  \right]
\frac{dt}{\sqrt{m/k}} \notag\\
&  = \frac{1}{\sigma\sqrt{m}} \int_{\bb R}  \left[    \left| \phi'\left( t - \frac{y}{\sigma\sqrt{m}}\right) \right|  \right] dt   \notag\\
&  \leq  \frac{c}{\ee^{1/2} \sqrt{n}} . 
\end{align}
Now we transform the main term in \eqref{lattice-ApplCondLT-002}. 
By a change of variable $t = s \sqrt{n/k}$ and \eqref{new-density-scale-v}, we have 
\begin{align} \label{lattice-heat-kernel-esti-002}
& \frac{V_k(x)}{ \sigma \sqrt{k}}  \int_{\bb R _{+}}  \varphi_{n,y}(t) \ell \left(\frac{x}{\sigma\sqrt{k}}, t \right) dt  \notag\\
& =  \frac{V_k(x)}{ \sigma \sqrt{k}} 
  \int_{\bb R _{+}}  \varphi_{n,y} \left( s \sqrt{n/k} \right) \frac{1}{\sqrt{k/n}} 
   \ell \left(\frac{\frac{x}{\sigma\sqrt{n}}}{\sqrt{k/n}}, \frac{s}{\sqrt{k/n}} \right) ds  \notag \\
& = \frac{V_n(x)}{ \sigma \sqrt{n}}  \int_{\bb R _{+}}  \varphi_{n,y} \left( s \sqrt{n/k} \right) 
 \ell_{1-\eta_n}\left(\frac{x}{\sigma\sqrt{n}}, s \right) ds, 
\end{align}
where $\eta_n = \frac{m}{n}$ and $1 - \eta_n = \frac{k}{n}$,
and we used the identity 
$$
\frac{V_k(x)}{ \sigma \sqrt{k}} \frac{1}{H( \frac{x}{\sigma \sqrt{k}})} = \frac{V_n(x)}{ \sigma \sqrt{n}} \frac{1}{H( \frac{x}{\sigma \sqrt{n}})}.
$$ 
For $v \in (0, 1]$ and $x, y \in \bb R$, let 
\begin{align} \label{def-ell-H-plus-minus}
\ell_{v}^+(x,y)= \ell_{v}(x,y) \mathds 1_{\{ y > 0 \}}. 
\end{align}
The convolution $\phi_{v} * \ell^+_{1-v}$ is understood as an ordinary convolution of $\phi_{v}$ 
with respect to the second variable of the function $\ell^+_{1-v}$: for any $v \in (0, 1]$ and $x, y \in \bb R$, 
\begin{align} \label{def-convo-ell-H-plus}
\phi_{v} * \ell^+_{1-v}(x, y) := \int_{-\infty}^{\infty} \phi_{v} (y-z) \ell^+_{1-v}(x, z) dz
= \int_{\bb R_+} \phi_{v} (y-z) \ell_{1-v}(x, z) dz. 
\end{align}
By \eqref{lattice-JJJ006b} and \eqref{def-convo-ell-H-plus},  %\eqref{def-convo-ell-H-plus} and Fubini's theorem, 
we get
\begin{align*}
 \int_{\bb R _{+}}  \varphi_{n,y} \left( s \sqrt{n/k} \right) \ell_{1-\eta_n}\left(\frac{x}{\sigma\sqrt{k}}, s \right) ds  
&= \frac{1}{\sigma\sqrt{n}} \int_{\bb R _{+}}   \phi_{\eta_n} \left( \frac{y}{\sigma\sqrt{n}} - s   \right) 
    \ell_{1-\eta_n}\left(\frac{x}{\sigma\sqrt{n}} , s \right) ds  \notag\\
& = \frac{1}{\sigma\sqrt{n}} \phi_{\eta_n}*\ell^+_{1-\eta_n}\left(\frac{x}{\sigma\sqrt{n}} , \frac{y}{\sigma\sqrt{n}} \right).
\end{align*}
Combining this with \eqref{lattice-ApplCondLT-002}, \eqref{lattice-JJJ-20001} and \eqref{lattice-heat-kernel-esti-002}, 
we get 
\begin{align} \label{lattice-heat-kernel-esti-003}
\left| J_n(x,y) 
 -   \hbar \frac{V_n(x)}{ \sigma^2 n }
  \phi_{\eta_n}*\ell^+_{1-\eta_n}\left(\frac{x}{\sigma\sqrt{n}} ,\frac{y}{\sigma\sqrt{n}}\right) \right| 
 \leq  c \ee^{-1/2} \frac{ R_{n, \delta}(x)}{ n }. 
\end{align}
By Lemma \ref{convol-phi-psi-001} and \eqref{def-convo-ell-H-plus}, we have that, for any $x \in \bb R$ and $y \in \bb R_+$, 
\begin{align}\label{lattice-conv001}
 \phi_{\eta_n}*\ell^+_{1-\eta_n}\left(\frac{x}{\sigma\sqrt{n}} ,\frac{y}{\sigma\sqrt{n}}\right)   
 = \ell \left(\frac{x}{\sigma\sqrt{n}} ,\frac{y}{\sigma\sqrt{n}}\right)  
 -  \int_{\frac{y}{\sigma\sqrt{n}}}^{\infty}  
\phi_{\eta_n} (t) \ell_{1-\eta_n} \left( \frac{x}{\sigma \sqrt{n}}, \frac{y}{\sigma\sqrt{n}} - t \right) dt. 
\end{align}
Since $\frac{y}{\sigma \sqrt{n}} \in [0, t]$, using \eqref{ell-intergr-bound-002} 
of Lemma \ref{Lipschiz  property for ell_H in y}
and the fact that $\ell_{1-\eta_n} ( \frac{x}{\sigma \sqrt{n}}, 0 ) = 0$,
\begin{align*} %\label{lattice-001}
\left| \ell_{1-\eta_n} \left( \frac{x}{\sigma \sqrt{n}}, \frac{y}{\sigma\sqrt{n}} - t \right)  \right|
\leq c \left( t-\frac{y}{\sigma \sqrt{n}}\right) \leq c| t |. 
\end{align*}
Therefore, 
\begin{align} \label{lattice-bound-ell-psi-a}
\left| \int_{\frac{y}{\sigma\sqrt{n}}}^{\infty}  
\phi_{\eta_n} (t) \ell_{1-\eta_n} \left( \frac{x}{\sigma \sqrt{n}}, \frac{y}{\sigma\sqrt{n}} - t \right) dt  \right|
\leq  c \int_{\frac{y}{\sigma\sqrt{n}}}^{\infty}  \phi_{\eta_n} (t)  | t | dt \leq c \sqrt{\eta_n} \leq c \ee^{1/2}. 
\end{align}
Together with \eqref{lattice-heat-kernel-esti-003} and 
\eqref{lattice-conv001}, this concludes the proof of the lemma. 
\end{proof}

Combining Lemma \ref{lattice-New-ApplCondLT-002} with \eqref{lattice-JJJ-markov property} and \eqref{lattice-JJJ004},  
we obtain that %uniformly for $z\in (\bb Z+x)\cap \bb R_+$, 
there exist constants $c, \ee_0 > 0$ such that, for any $n \geq 1$, 
$\ee \in (0, \ee_0)$, $x \in \bb R$ and $y \in \bb R_+$ with $y - x \in \hbar \bb Z + na$, 
\begin{align}\label{lattice-last-inequality-001}
&  \bb{P} \big( x+S_n = y, \tau_x >n \big)
  -  \hbar  \frac{V_n(x)}{\sigma^2 n} %\frac{1}{\sigma\sqrt{n}}
   \ell\left(\frac{x}{\sigma\sqrt{n}} ,\frac{y}{\sigma\sqrt{n}}\right)   \notag\\
 & \leq  c  \ee^{- \frac{1 + \delta_1}{2} } \frac{ n^{- \frac{\delta}{4}} + V_n(x) }{  n^{ 1 + \frac{\delta_1}{2}} }     
+ c \ee^{-1/2} \frac{ n^{-\frac{\delta}{4} }  + V_{n}(x) n^{-\frac{\delta}{4(3+\delta)} }\log n }{  n } 
+  c  \ee^{1/2}  \frac{V_n(x)}{ n }. 
\end{align}
Taking $\ee = n^{-\frac{\delta}{4(3+\delta)} }$, 
the last term in \eqref{lattice-last-inequality-001} is bounded by the second one. 
This concludes the proof of the upper bound in Theorem  \ref{T-Caravenatype-lattice 001}. 

\subsection{Proof of the lower bound in the lattice case}
In this subsection we establish the lower bound in Theorem  \ref{T-Caravenatype-lattice 001}. 
In the proof we shall make use of the following Fuk-Nagaev inequality for random walks, which can be found in \cite{FN71, Nag79}. 
\begin{lemma}\label{Lem_FukNagaev}
Let  $\bb E X_1 = 0$ and  $\bb E (X_1^{2}) = \sigma^2  < \infty.$
Then, for any $u, v >0$, 
\begin{align*}
\bb P \left(\max_{1 \leq k \leq n} |S_k| > u \right) 
\leq  2 \exp \left[ \frac{u}{v}  \left( 1 + \log \frac{n}{uv} \right)  \right]   + n \bb P \left(|X_1| > v \right). 
\end{align*}
\end{lemma}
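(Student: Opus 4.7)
The strategy I would follow is the classical truncation-and-Chernoff route that underlies all Fuk--Nagaev-type tail inequalities. First, at the fixed level $v>0$, split each increment $X_i = Y_i + Z_i$ with $Y_i := X_i \mathds 1_{\{|X_i|\leq v\}}$ and $Z_i := X_i \mathds 1_{\{|X_i|>v\}}$. On the ``bad'' event $B := \{\max_{1\leq i\leq n}|X_i| > v\}$ a union bound produces $\bb P(B) \leq n\, \bb P(|X_1|>v)$, supplying the second summand of the claimed bound. On $B^c$ all $Z_i$ vanish, hence $S_k = T_k := \sum_{i\leq k} Y_i$ for every $k\leq n$, reducing the task to bounding $\bb P(\max_{k\leq n}|T_k| > u)$.

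For the truncated maximum I would use the Chernoff--Doob route: for $\lambda > 0$ and each sign $\pm$, the process $e^{\pm\lambda T_k}$ is a non-negative submartingale, so Doob's maximal inequality combined with the independence of the $Y_i$ gives
\[
\bb P\Big(\max_{1\leq k\leq n} \pm T_k > u\Big) \leq e^{-\lambda u}\prod_{i=1}^n \bb E e^{\pm \lambda Y_i}.
\]
Since $|Y_i|\leq v$ and $\bb E X_i = 0$ (so that $\bb E Y_i = -\bb E Z_i$), a standard Bennett-type estimate of the form
\[
\log \bb E e^{\pm\lambda Y_i} \leq \pm\lambda\, \bb E Y_i + \frac{e^{\lambda v} - 1 - \lambda v}{v^2}\, \bb E Y_i^2
\]
yields $\prod_i \bb E e^{\pm \lambda Y_i} \leq \exp(n\, g(\lambda v))$ for an explicit function $g$ with $g(x)\asymp e^x$ as $x\to\infty$. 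The truncation drift $\bb E Y_i = -\bb E Z_i$ is small and is absorbed using $\bb E X_1 = 0$. Summing over the two signs produces the factor $2$ in the final bound.

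Finally, I would optimize $\lambda > 0$. The canonical choice $\lambda = \frac{1}{v}\log\frac{uv}{n}$---meaningful precisely in the regime $uv > en$, outside of which the claimed bound is $\geq 1$ and therefore trivial---simplifies $-\lambda u + n\, g(\lambda v)$ to essentially $\frac{u}{v}\bigl(1 + \log\frac{n}{uv}\bigr)$, matching the stated exponential factor.

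The main obstacle I foresee is book-keeping rather than conceptual: the bound as stated does not involve $\sigma^2$ explicitly, which is unusual for a Fuk--Nagaev estimate. To eliminate $\sigma^2$ cleanly, one must either use the trivial envelope $\bb E Y_i^2 \leq v^2$ together with a careful choice of $\lambda$, or absorb $n\sigma^2$ inside the logarithm via $n\sigma^2 \leq nv^2$ in the non-trivial regime $uv > en$. A further subtle point is neutralizing the truncation-induced drift $\bb E Y_i$ without letting it produce extraneous polynomial factors in the final estimate.
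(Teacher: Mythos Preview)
The paper does not prove this lemma at all: it is stated without proof and attributed to the literature via \cite{FN71, Nag79}. Your proposal therefore goes beyond what the paper does. The truncation-plus-Chernoff/Doob route you outline is exactly the classical argument used in those references, so your sketch is consistent with how the cited result is actually established; there is nothing to compare on the paper's side.
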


By Lemma \ref{lattice-New-ApplCondLT-002}, \eqref{lattice-JJJ-markov property} and \eqref{lattice-JJJ004}, 
it remains to provide an upper bound for $I_{n,2}(x, y)$ defined in \eqref{lattice-JJJ-markov property}. 
Since 
\begin{align}\label{lattice-bound-Li-norm-M}
\sum_{z\geq 0:\, z-x \in \hbar \bb Z + ka}  \bb{P} \big( z+S_{m}=y , \tau_z \leq m  \big) \leq 1, 
\end{align}
applying the upper bound in Theorem \ref{T-Caravenatype-lattice 001} established in the previous section,   
we obtain that, for any $n\geq 1$, $x \in \bb R$, $y \in \bb R_+$ such that $y -x \in \hbar \bb Z+na$, 
\begin{align}\label{lattice-eqt-A 001_Lower}
I_{n,2}(x, y) 
  \leq   \frac{V_k(x)}{\sigma^2 k}  K_n(x,y) 
  + c  \frac{n^{-\frac{\delta }{6} }  + V_{n}(x) n^{-\frac{\delta}{8(3+\delta)} } \log n }{  n }.  
\end{align}
where we use the fact that $z-x \in \hbar \bb Z + ka$ is equivalent to $y-z \in \hbar \bb Z + ma$, and for brevity we denote
\begin{align*}%\label{}
K_n(x,y) =  \sum_{z\geq 0:\, z- y \in \hbar \bb Z + ma}
   \bb{P} \big( z+S_{m}=y , \tau_z \leq m  \big)   \ell \left(\frac{x}{\sigma\sqrt{k}} ,\frac{z}{\sigma\sqrt{k}}\right). 
\end{align*}
We write 
\begin{align}\label{lattice-decom-integral-gm-ell}
K_n(x,y) 
&= \sum_{0 \leq  z \leq \sqrt{\ee n \log n}:  \, z-y \in \hbar \bb Z + ma}  
 \bb{P} \big( z+S_{m}=y , \tau_z \leq m  \big)   \ell \left(\frac{x}{\sigma\sqrt{k}} ,\frac{z}{\sigma\sqrt{k}}\right)  \notag\\ 
& \quad + 
\sum_{z >  \sqrt{\ee n \log n}: \,  z-y \in \hbar \bb Z + ma}  
 \bb{P} \big( z+S_{m}=y , \tau_z \leq m  \big)   \ell \left(\frac{x}{\sigma\sqrt{k}} ,\frac{z}{\sigma\sqrt{k}}\right)  \notag \\
&=: K_{n,1}(x,y) + K_{n,2}(x,y).
\end{align}
By the local limit theorem (see Theorem \ref{Th-lattice-LLT-general}), 
there exists $c>0$ such that for any $y,z \in \bb R$ such that $z-y\in \hbar \bb Z + ma$, 
\begin{align*}%\label{}
\bb{P} \big( z+S_{m}=y , \tau_z \leq m  \big)
\leq   \frac{c}{\sqrt{m}}. 
\end{align*}
Using this and the fact that $m = \floor{\ee n}$ and $k = n-m$ (so that $\sqrt{n/k} \leq 2$ for some small enough $\ee>0$), we get
\begin{align}\label{latice-first-term-gm-y-inte}
 \left| K_{n,1}(x,y)   \right|  
& \leq  \frac{c}{\sqrt{m}} 
\sum_{0 \leq  z \leq \sqrt{\ee n \log n}:  \, z-y \in \hbar \bb Z + ma} 
  \ell \left(\frac{x}{\sigma\sqrt{k}}, \frac{z}{\sigma\sqrt{k}}\right)  \notag\\
& \leq  c \ee^{- 1/2}    \int_{0}^{ 2 \sqrt{\ee \log n} }  \ell \left(\frac{x}{\sigma\sqrt{k}} , z \right)   dz.
\end{align}
%where in the last line we used a change of variable $\frac{y}{\sigma\sqrt{k}} = y'$. 
Note that, by Lemma \ref{Lipschiz  property for ell_H in y} and the fact that $\ell(x, 0) = 0$, 
there exists a constant $c >0$ such that $|\ell(x, z)| = |\ell(x, z) - \ell(x, 0)| \leq c |z|$ 
for any $x, z \in \bb R$.
Hence, from \eqref{latice-first-term-gm-y-inte}
we obtain 
\begin{align}\label{latice-integral-gm-ell-small}
 \left| K_{n,1}(x,y)   \right|  
 \leq  c \ee^{- 1/2} \int_{0}^{ 2 \sqrt{\ee \log n} }  z dz 
 \leq  c  \ee^{1/2} \log n. 
\end{align}

Next we handle the second term on the right-hand side of \eqref{lattice-decom-integral-gm-ell}. 
Since the function $(x, z) \mapsto |\ell(x, z)|$ is bounded on $\bb R \times \bb R$, we get
\begin{align}\label{latice-integral-My-Fk-xy}
 \left| K_{n,2}(x,y)   \right|  
& \leq  c  \sum_{z > \sqrt{\ee n \log n}: \,  z-y \in \hbar \bb Z + ma}   \bb{P} \big( z+S_{m}=y , \tau_z \leq m  \big)  \notag\\
& = c \,  \bb P \left( \tau_{y - S_m }  \leq m,  \,  y - S_m \geq  \sqrt{\ee n \log n}   \right). 
\end{align}
Observe that the event $\{ \tau_{y - S_m }  \leq m \}$
means that there exists an integer $1 \leq j \leq m$ such that 
\begin{align*}%\label{latice-latice-}
y - S_m + S_j  < 0, 
\end{align*}
so that on the event $\{ \tau_{y - S_m}  \leq m,  y - S_m \geq  \sqrt{\ee n \log n}  \}$, 
there exists an integer $1 \leq j \leq m$ such that $\sqrt{\ee n \log n} + S_j <   0$. 
This implies that 
\begin{align}\label{latice-large-deviation-aa}
\bb P \left( \tau_{y - S_m }  \leq m,  \,  y - S_m \geq  \sqrt{\ee n \log n}   \right)
& \leq  \bb P \left(  \sqrt{\ee  n \log n} + \min_{1 \leq j \leq m} S_j   < 0   \right)  \notag\\
& \leq  \bb P \left(  \max_{1 \leq j \leq m}  |S_j| \geq  \sqrt{\ee  n \log n}  \right). 
\end{align}
Now we apply the Fuk-Nagaev inequality (Lemma \ref{Lem_FukNagaev}) 
with $n$ replaced by $m= \floor{\ee n}$ and 
$u = \sqrt{\ee n \log n}$, $v = \sqrt{\ee n}$,
and the moment condition $\bb E (|X_1|^{2 + \delta}) < \infty$ to get
\begin{align}\label{latice-Proba_002}
\bb P \left(  \max_{1 \leq j \leq m}  |S_j| \geq   \sqrt{\ee  n\log n}  \right)
& \leq  c  e^{ \sqrt{\log n}  (1+ \log \frac{1}{\sqrt{\log n}}) }   
+ \ee n \bb P(|X_1| > \sqrt{\ee n})   \notag \\
& \leq  c  n^{-\delta}  +  c \frac{ \ee n }{ (\sqrt{\ee n})^{2 + \delta} } 
 \leq    c (\ee n)^{-\delta/2}. 
\end{align}
This, together with \eqref{lattice-decom-integral-gm-ell}, \eqref{latice-integral-gm-ell-small} 
and \eqref{latice-integral-My-Fk-xy}, implies that 
\begin{align}\label{latice-Bound_J1}
K_n(x,y) 
  \leq   c  \left(   \ee^{1/2}\log n  +  (\ee n)^{-\delta/2} \right).
\end{align}
Combining this with \eqref{lattice-eqt-A 001_Lower} and taking $\ee = n^{-\frac{\delta}{4(3+\delta)} }$,
we conclude the proof of the lower bound in Theorem  \ref{T-Caravenatype-lattice 001}.

%%%%%%%%%%%%%%%%%%%%%%%%%%%%%%%%%%%%%%%%%%%%%%%%%%%%%%
\subsection{Proof of the ballot type local limit theorem in the lattice case} 
\label{SecProofUnifballotLLTheor-lattice-001}

The goal of this subsection is to establish Theorem \ref{CLLT-lattice-n3/2 main result}. 
We will make use of the following duality identities. 
Note that in the lattice case under consideration $S_n \in \hbar \bb Z+na$ and $\check S_n \in \hbar \bb Z - na$. 
Moreover, for any $x,y\in \bb R$ we have that $y-x\in \hbar \bb Z+na$ is equivalent to $x-y\in \hbar \bb Z - na$.  
\begin{lemma} \label{Lemma-lattice duality-001}
For any $n \geq 1$ 
and $x, y \in \bb R$ with $y-x \in \hbar \bb Z+na$, we have
\begin{align} \label{lattice-duality-001}
\bb P \big( x+S_n = y, \tau_x >n-1 \big) = \bb P \big( y+\check S_n = x, \check \tau_y > n-1 \big). 
\end{align}
Moreover, for any function $f: \bb R_+ \to \bb R_+$ and any $y\in \bb R$,
\begin{align} \label{lattice-duality-002}
\sum_{x\geq 0: \, y-x\in \hbar \bb Z+na} f(x) \bb P ( x+S_n = y, \tau_x >n-1)  
= \bb E \left( f(y+\check S_n);  \check \tau_y  > n \right).
\end{align}
 \end{lemma}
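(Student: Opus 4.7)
\textbf{Proof plan for Lemma \ref{Lemma-lattice duality-001}.}

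For part \eqref{lattice-duality-001}, the plan is to use time reversal of the i.i.d.\ increments. Fix $n\geq 1$ and set $Y_i=X_{n+1-i}$ for $i=1,\dots,n$, so that $(Y_1,\dots,Y_n)$ has the same joint distribution as $(X_1,\dots,X_n)$. Let $T_k=\sum_{i=1}^k Y_i$. Then $T_n=S_n$ and $T_k=S_n-S_{n-k}$, which I would rewrite as $S_{n-k}=(y-x)-T_k$ on the event $\{x+S_n=y\}$. Consequently, the constraint $x+S_j\geq 0$ for $j=1,\dots,n-1$ translates, after the change of index $j=n-k$, into $y-T_k\geq 0$ for $k=1,\dots,n-1$, while $x+S_n=y$ becomes $-T_n=x-y$. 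Since $-Y_i$ has the same law as $\check X_i=-X_i$, the random vector $(-T_1,\dots,-T_n)$ has the same distribution as $(\check S_1,\dots,\check S_n)$, and the rewritten event becomes exactly $\{y+\check S_n=x,\ y+\check S_k\geq 0\ \forall k\leq n-1\}=\{y+\check S_n=x,\ \check\tau_y>n-1\}$. Distributional equality then gives \eqref{lattice-duality-001}. I would also note the compatibility of the lattice constraints: $S_n\in\hbar\bb Z+na$ and $\check S_n\in\hbar\bb Z-na$, so $x+S_n=y$ and $y+\check S_n=x$ both require $y-x\in\hbar\bb Z+na$.

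For part \eqref{lattice-duality-002}, I would start from the right-hand side and decompose according to the value of $y+\check S_n$. For $x\geq 0$ with $y-x\in\hbar\bb Z+na$, on the event $\{y+\check S_n=x\}$ the condition $y+\check S_n\geq 0$ is automatic, so $\{y+\check S_n=x,\check\tau_y>n\}=\{y+\check S_n=x,\check\tau_y>n-1\}$. Therefore
\begin{equation*}
\bb E\bigl(f(y+\check S_n);\check\tau_y>n\bigr)=\sum_{x\geq 0:\,y-x\in\hbar\bb Z+na} f(x)\,\bb P\bigl(y+\check S_n=x,\check\tau_y>n-1\bigr),
\end{equation*}
and applying part \eqref{lattice-duality-001} to each summand yields the claimed identity. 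The argument is essentially combinatorial and poses no real obstacle; the only point one has to watch carefully is the distinction between the strict inequalities $\tau_x>n-1$ versus $\check\tau_y>n$ on the two sides, which is reconciled precisely by the constraint $x\geq 0$ built into the summation range on the left-hand side.
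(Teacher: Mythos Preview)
Your proof is correct and follows essentially the same route as the paper: the paper declares \eqref{lattice-duality-001} ``obvious'' (you supply the standard time-reversal details), and for \eqref{lattice-duality-002} it sums the duality identity over $x\geq 0$ in the opposite direction but with the same key observation that, for $x\geq 0$, the events $\{y+\check S_n=x,\check\tau_y>n\}$ and $\{y+\check S_n=x,\check\tau_y>n-1\}$ coincide.
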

\begin{proof}
The first identity \eqref{lattice-duality-001} is obvious. 
For the second one, 
 for any function $f: \bb Z \to \bb R_+$ and $y\in \bb R$, we have
\begin{align*} %\label{lattice-duality-003}
&\sum_{x\geq 0: \,  y-x\in \hbar \bb Z+na} f(x) \bb P ( x+S_n = y, \tau_x >n-1) \notag \\
&\qquad\qquad 
= \sum_{x\geq 0: \, x-y\in \hbar \bb Z-na} f(x) \bb P ( y+\check S_n = x, \check \tau_y >n-1) \notag\\
&\qquad\qquad = \bb E \left( f(y+\check S_n); \check \tau_y >n \right),
\end{align*}
which proves \eqref{lattice-duality-002}.
\end{proof}

The following lemma %will be used in the proof. 
 is an easy consequence of the duality formula \eqref{lattice-duality-002} 
 and bounds of the exit time for the dual random walk. 

\begin{lemma}\label{latt-lemma Qng integr}
Assume that $\bb E (X_1) = 0$, $\bb E (X^2_1)= \sigma^2$,  and that there exists $\delta > 0$ 
such that $\bb E (|X_1|^{2+\delta})  < \infty.$ 
Then, there exists a constant $c>0$ such that for any $n\geq 1$ and $y\in \bb R$, 
\begin{align*}
\sum_{x\geq 0: \, y-x\in \hbar \bb Z+na}   \bb P (x+S_n=y, \tau_x >n - 1) 
\leq  \frac{c}{\sqrt{n}}   \left( n^{ -\delta/4} +  \check V_n(y) \right). 
\end{align*}
\end{lemma}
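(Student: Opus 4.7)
The plan is to apply the duality identity \eqref{lattice-duality-002} with the constant function $f \equiv 1$. This immediately yields
\begin{align*}
\sum_{x\geq 0: \, y-x\in \hbar \bb Z+na} \bb P (x+S_n = y, \tau_x > n-1)
= \bb E \left( 1;  \check \tau_y > n \right) = \bb P \left( \check \tau_y > n \right),
\end{align*}
so the problem reduces to bounding the persistence probability of the reversed walk starting from $y$.

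Next, I would invoke the analogue of the persistence upper bound \eqref{upper-bound-probab-tau-x} for the reversed random walk $(\check S_n)_{n \geq 1}$. Since the reversed increments $\check X_i = -X_i$ satisfy the same hypotheses as $X_i$ ($\bb E \check X_1 = 0$, $\bb E \check X_1^2 = \sigma^2$ and $\bb E |\check X_1|^{2+\delta} < \infty$), and since Theorem~\ref{introTheor-probtauUN-001} is stated for any random walk satisfying these moment conditions, the reasoning of Theorem~\ref{introTheor-probtauUN-001} and the ensuing inequality \eqref{upper-bound-probab-tau-x} apply verbatim to $\check \tau_y$ and the harmonic function $\check V$. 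This was already noted in the paragraph following \eqref{upper-bound-probab-tau-x}. Consequently, there exists a constant $c > 0$ such that for any $n \geq 1$ and $y \in \bb R$,
\begin{align*}
\bb P \left( \check \tau_y > n \right) \leq c \frac{n^{-\delta/4} + \check V_n(y)}{\sqrt{n}},
\end{align*}
which combined with the duality identity yields the desired inequality.

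There is essentially no obstacle: the whole argument amounts to converting the sum into a single persistence probability for the reversed walk via Lemma~\ref{Lemma-lattice duality-001} and then quoting the already established uniform upper bound. The only point worth verifying carefully is that in passing from the forward to the reversed walk the quantities $V_n$ and $\check V_n$ are interchanged correctly, which follows since the reversal maps $V \leftrightarrow \check V$ while the symmetric function $L$ (even in $x$) remains unchanged.
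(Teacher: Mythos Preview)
Your proposal is correct and follows essentially the same approach as the paper: apply the duality identity \eqref{lattice-duality-002} with $f\equiv 1$ to rewrite the sum as $\bb P(\check\tau_y > n)$, and then invoke the reversed-walk version of \eqref{upper-bound-probab-tau-x}. The paper's proof is just these two steps stated in two sentences.
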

\begin{proof}
By  the duality identity \eqref{lattice-duality-002} with $f=1$, we have, for any $y\in \bb R$, 
\begin{align*}
\sum_{x\geq 0: \, y-x \in \hbar \bb Z + na}  \bb P ( x+S_n = y, \tau_x >n-1)
 =    \mathbb{P} \big( \check \tau_{y} > n \big).
\end{align*}
The conclusion follows using the bound \eqref{upper-bound-probab-tau-x} applied to the 
reversed walk $\check S_n$. 
\end{proof}

Let us now proceed with the proof of Theorem \ref{CLLT-lattice-n3/2 main result}. 
Set $m=\floor{n/2}$ and $k = n-m.$  
By the Markov property, we have for any $x, y \in \bb R$ such that $y - x \in \hbar \bb Z+na$, 
\begin{align} \label{latt-JJJ-markov property_aa}
&\bb{P}  \Big(  x+S_n = y, \tau_x > n - 1 \Big)  \notag \\
& = \sum_{z\geq 0: \, z-x\in \hbar \bb Z + ka}  \bb P  \Big( z+S_{m}= y,  \tau_{z} > m - 1 \Big) \bb{P}\left( x+S_{k} = z,  \tau_x > k \right),
\end{align}
Applying  Theorem \ref{T-Caravenatype-lattice 001}, we obtain the following bound: 
for any $n \geq 1$ and $x, y \in \bb R$ with $y - x \in \hbar \bb Z+na$, 
\begin{align}\label{latt-UpperBoundhhn32}
& \bigg| \bb{P}  \Big( x+S_n = y, \tau_x > n - 1 \Big)    \notag\\
&  \quad  - \hbar \frac{V_k(x)}{\sigma^2 k} 
 \sum_{z\geq 0: \, y-z\in  \hbar \bb Z +ma}  \bb P  \Big( z+S_{m}= y,  \tau_{z} > m - 1 \Big)
  \ell \left(\frac{x}{\sigma\sqrt{k}}, \frac{z}{\sigma\sqrt{k}}\right) \bigg| \notag\\
 & \leq  c  \frac{n^{-\frac{\delta}{6} }  + V_{n}(x) n^{-\frac{\delta}{8(3+\delta)} } \log n }{  n }  
 \sum_{z\geq 0: \,  y-z\in  \hbar \bb Z +ma}  \bb P  \Big( z+S_{m}= y,  \tau_{z} > m - 1 \Big), 
\end{align}
where we used the fact that $k = n - \floor{n/2}$, $V_k(x) \leq V_n(x)$ (cf.\ \eqref{inequality-L-increase-n})
and $z-x\in \hbar \bb Z + ka$ if and only if $y-z\in  \hbar \bb Z +ma$. 
For the last term on the right-hand side of \eqref{latt-UpperBoundhhn32}, by Lemma \ref{latt-lemma Qng integr}
and again the fact that $m = \floor{n/2}$ and $\check V_m(x) \leq \check V_n(x)$,
\begin{align} \label{latt-BoundJ2Order32}
\sum_{z\geq 0: \,  y-z\in  \hbar \bb Z +ma}  \bb P  \Big( z+S_{m}= y,  \tau_{z} > m - 1 \Big) 
 \leq c  \left( n^{ -\delta/4} +  \check V_n(y) \right).
\end{align}
For the main term in \eqref{latt-UpperBoundhhn32}, 
using the duality identity \eqref{lattice-duality-002}, we get
\begin{align}\label{latt-BoundJ1rr}
&  \sum_{z\geq 0: \,  y-z\in  \hbar \bb Z +ma}  \bb P  \Big( z+S_{m}= y,  \tau_{z} > m - 1 \Big)
  \ell \left(\frac{x}{\sigma\sqrt{k}}, \frac{z}{\sigma\sqrt{k}}\right)  \notag\\
  & \qquad =   \bb E \left[  \ell\left(\frac{x}{\sigma\sqrt{k}}, \frac{y + \check S_m}{\sigma \sqrt{k}} \right); 
  \check \tau_{y} > m   \right],
\end{align}
where $\check \tau_{y}$ is defined by \eqref{definition of tau x star-001}. 
Using the conditioned central limit theorem (cf.\ Lemma \ref{New-ApplCondLT-002}), 
we derive the following effective
Gaussian heat kernel approximation for the expectation in \eqref{latt-BoundJ1rr}. 
The crucial step in proving this result involves the convolution operation \eqref{psi-convolution-lem}.  

\begin{lemma} \label{Lem-ballot-001}
There exists a constant $c>0$ such that, for any $n \geq 2$ and $x, y \in \bb R$ 
with $m= \floor{n/2}$ and $k = n - m$, 
\begin{align*}%\label{latt-lemma apply cond-CLT-00}      
 \Bigg|   
  \frac{V_k(x)}{\sigma^2 k } 
 \bb E \left[ \ell \left( \frac{x}{\sigma \sqrt{k}},  \frac{y + \check S_m}{\sigma \sqrt{k}} \right);  \check \tau_y > m  \right] 
 - \frac{ V_n(x) \check V_n(y) }{ \sigma^3 n^{3/2}}   p \left( \frac{x}{\sigma\sqrt{n}} , \frac{y}{\sigma\sqrt{n} } \right)
\Bigg|    
  \leq  c \frac{V_n(x) \check R_{n,\delta}(y) }{ n^{3/2} }, 
\end{align*}
where the function $p$ is defined by \eqref{def of func ell_H-002} 
and $\check R_{n,\delta}(y) = n^{-\frac{\delta}{4} } + \check V_n(y)  n^{-\frac{\delta}{4(3+\delta)} }\log n$.
\end{lemma}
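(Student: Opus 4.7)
The plan is to apply Lemma~\ref{New-ApplCondLT-002} to the reversed walk $\check S_m$ started at $y$ (the moment hypothesis is symmetric in $X_1$ and $\check X_1 = -X_1$), with the test function
\begin{align*}
\varphi(s) := \ell\!\left(\frac{x}{\sigma\sqrt{k}},\, s\sqrt{m/k}\right), \qquad s\in\bb R,
\end{align*}
chosen precisely so that $\varphi\!\left(\frac{y+\check S_m}{\sigma\sqrt{m}}\right)$ coincides with the integrand $\ell\!\left(\frac{x}{\sigma\sqrt{k}}, \frac{y+\check S_m}{\sigma\sqrt{k}}\right)$. By Lemma~\ref{Lipschiz  property for ell_H in y}, $\varphi$ is differentiable, vanishes at $\pm\infty$, and a change of variable yields $\int_\bb R |\varphi'(s)|\,ds \leq c$ uniformly in $n$ and $x$. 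Thus Lemma~\ref{New-ApplCondLT-002} (applied to the reversed walk) gives
\begin{align*}
\bb E\!\left[\ell\!\left(\tfrac{x}{\sigma\sqrt{k}}, \tfrac{y+\check S_m}{\sigma\sqrt{k}}\right); \check\tau_y > m\right]
= \frac{\check V_m(y)}{\sigma\sqrt{m}} \int_{\bb R_+} \varphi(s)\, \ell\!\left(\tfrac{y}{\sigma\sqrt{m}}, s\right)ds + O\!\left(\tfrac{\check R_{m,\delta}(y)}{\sqrt{m}}\right).
\end{align*}

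The second step is to evaluate the leading integral in closed form. Writing $\ell = \psi/H$ and performing the substitution $z = \sigma\sqrt{m}\,s$, so that $s\sqrt{m/k} = z/(\sigma\sqrt{k})$, together with the scaling identity $\psi(a/\sqrt{t}, b/\sqrt{t}) = \sqrt{t}\,\psi_t(a,b)$ from \eqref{heat-kernel-001}, the integral reduces to
\begin{align*}
\frac{\sigma\sqrt{k}}{H(x/(\sigma\sqrt{k}))\, H(y/(\sigma\sqrt{m}))} \int_{\bb R_+} \psi_{\sigma^2 k}(x, z)\, \psi_{\sigma^2 m}(y, z)\, dz.
\end{align*}
The Chapman--Kolmogorov-type identity of Lemma~\ref{lem: product heat kernels}, used together with the symmetry $\psi_t(y,z) = \psi_t(z,y)$ and $k+m = n$, evaluates the inner integral to $\psi_{\sigma^2 n}(x, y) = \frac{1}{\sigma\sqrt{n}}\psi(x/(\sigma\sqrt{n}), y/(\sigma\sqrt{n}))$.

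The decisive algebraic simplification is the pair of identities
\begin{align*}
\frac{V_k(x)}{H(x/(\sigma\sqrt{k}))} = \frac{V(x)\,\sigma\sqrt{k}}{x}, \qquad \frac{\check V_m(y)}{H(y/(\sigma\sqrt{m}))} = \frac{\check V(y)\,\sigma\sqrt{m}}{y},
\end{align*}
immediate from $V_k(x) = V(x) L(x/(\sigma\sqrt{k}))$, $\check V_m(y) = \check V(y) L(y/(\sigma\sqrt{m}))$ and $H(u) = u L(u)$. Substituting these into the leading term, all dependence on $k$ and $m$ collapses through $k+m = n$, and a parallel use of $H(u) = uL(u)$ together with the definition \eqref{def of func ell_H-002} of $p$ shows that the leading term equals exactly $\frac{V_n(x)\check V_n(y)}{\sigma^3 n^{3/2}}\,p\!\left(\tfrac{x}{\sigma\sqrt{n}},\tfrac{y}{\sigma\sqrt{n}}\right)$. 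For the error, multiplying the $O$-term by $V_k(x)/(\sigma^2 k)$ and using the monotonicity $V_k(x) \leq V_n(x)$ from \eqref{inequality-L-increase-n}, together with $k \asymp n$ and $\check R_{m,\delta}(y) \leq c\,\check R_{n,\delta}(y)$, yields the claimed bound. The main obstacle is tracking the mixed $\sqrt{k}$/$\sqrt{m}$ normalizations through the heat-kernel rescaling so that the $L$-factors cancel cleanly; once that bookkeeping is arranged, the argument is essentially a direct combination of the conditioned central limit theorem with Chapman--Kolmogorov.
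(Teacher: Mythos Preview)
Your proposal is correct and follows essentially the same approach as the paper: apply Lemma~\ref{New-ApplCondLT-002} to the reversed walk with the test function $\varphi(s)=\ell(\tfrac{x}{\sigma\sqrt{k}},s\sqrt{m/k})$, then evaluate the leading integral via the heat-kernel semigroup identity of Lemma~\ref{lem: product heat kernels}, and finally simplify using $H(u)=uL(u)$. The only cosmetic difference is that the paper writes the key cancellation as $\frac{V_k(x)}{\sigma\sqrt{k}\,H(x/(\sigma\sqrt{k}))}=\frac{V_n(x)}{\sigma\sqrt{n}\,H(x/(\sigma\sqrt{n}))}$ rather than passing through $\frac{V(x)\sigma\sqrt{k}}{x}$, which avoids the (removable) apparent singularity at $x=0$ or $y=0$.
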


\begin{proof} %\label{}
Since $\lim_{y \to \pm \infty} \ell ( \frac{x}{\sigma \sqrt{k}},  y) = 0$ (cf.\ \eqref{def of func ell_H-001}), 
applying Lemma \ref{New-ApplCondLT-002} to the reversed random walk $\check S_n$ 
and the target function $\varphi(\cdot) = \ell ( \frac{x}{\sigma \sqrt{k}},   \sqrt{\frac{m}{k}} \cdot)$, 
we derive that 
\begin{align}\label{Heat-kernel-approx-00a}
& \left| \bb E \left[ \ell \left( \frac{x}{\sigma \sqrt{k}},  \frac{y + \check S_m}{\sigma \sqrt{k}} \right);  \check \tau_y > m  \right]  
 -  \frac{\check V_m(y) }{\sigma \sqrt{m}}  
  \int_{\bb R_+}  \ell \left( \frac{x}{\sigma \sqrt{k}},   \sqrt{\frac{m}{k}}  \, t \right)  \ell \left( \frac{y}{\sigma \sqrt{m}},  t  \right)  dt  \right|
  \notag\\
&  \leq  c \frac{\check R_{m,\delta}(y)}{ \sqrt{m} }   
 \int_{\bb R} \left|  \frac{\partial}{\partial t}  \ell \left( \frac{x}{\sigma \sqrt{k}}, \sqrt{\frac{m}{k}} \, t \right)  \right| dt  \notag\\
& \leq  c \frac{\check R_{n,\delta}(y)}{ \sqrt{n} }, 
\end{align}
where in the last line we used the fact that $\check R_{m,\delta}(y) \leq c \check R_{n,\delta}(y)$ (cf.\ \eqref{inequality-L-increase-n}) 
and the inequality \eqref{derivative-ell-H-ineq} of Lemma \ref{Lipschiz  property for ell_H in y}. 
For the main term in \eqref{Heat-kernel-approx-00a}, we have 
\begin{align*}%\label{}
A_{k,m}(x, y) : & =  \frac{V_k(x) \check V_m(y) }{ \sigma^3 k \sqrt{m}}   
 \int_{\bb R_+}  \ell \left( \frac{x}{\sigma \sqrt{k}},   \sqrt{\frac{m}{k}}  \, t \right)  \ell \left( \frac{y}{\sigma \sqrt{m}},  t  \right)  dt  \notag\\
 & = \frac{V_k(x) \check V_m(y) }{\sigma^4  k m}  \int_{\bb R_+}  \ell \left( \frac{x}{\sigma \sqrt{k}},\frac{u}{\sigma\sqrt{k}} \right) 
\ell \left( \frac{y}{\sigma\sqrt{m}},  \frac{u}{\sigma\sqrt{m} }   \right)  du, 
\end{align*}
where we used a change of variable $t = \frac{u}{\sigma \sqrt{m}}$. 
By \eqref{heat-kernel-001} and Lemma \ref{lem: product heat kernels}, we get 
\begin{align*}
 \int_{\bb R_+}  \psi \left( \frac{x}{\sigma \sqrt{k}},\frac{u}{\sigma\sqrt{k}} \right) 
\psi \left( \frac{y}{\sigma\sqrt{m}},  \frac{u}{\sigma\sqrt{m} }   \right)   du  
& = \sigma^2 \sqrt{km}  \int_{\bb R_+} \psi_{\sigma^2k} (x, u)  \psi_{\sigma^2m}  (y, u)  du  \notag\\
& = \sigma^2 \sqrt{km}  \,  \psi_{\sigma^2 n} (x, y),  
\end{align*}
so that, by \eqref{func L as integral of heat kern}, \eqref{def of func ell_H-001}, \eqref{def of Q-001} and \eqref{def of func ell_H-002}, 
\begin{align}\label{psi-convolution-lem}
A_{k,m}(x, y)  = \frac{V_k(x) \check V_m(y) }{ \sigma^4  km } 
\frac{ \sigma^2 \sqrt{km}  \,  \psi_{\sigma^2 n} (x, y) }{ H \left( \frac{x}{\sigma \sqrt{k}} \right) H \left( \frac{y}{\sigma\sqrt{m}}  \right) }   
 =  \frac{V_n(x) \check V_n(y)}{ \sigma^3 n^{3/2} }   p \left( \frac{x}{\sigma\sqrt{n}} , \frac{y}{\sigma\sqrt{n} } \right). 
\end{align}
Combining \eqref{Heat-kernel-approx-00a} and \eqref{psi-convolution-lem}, 
and using the fact that $V_k(x)  \leq  V_n(x)$ (cf.\ \eqref{inequality-L-increase-n}),
we obtain the assertion of the lemma. 
\end{proof}

Combining \eqref{latt-UpperBoundhhn32}, \eqref{latt-BoundJ2Order32}, \eqref{latt-BoundJ1rr} 
and Lemma \ref{Lem-ballot-001}, we get 
\begin{align*}
& \bigg|  \bb{P}  \Big( x+S_n = y, \tau_x > n - 1 \Big)
 -   \hbar \frac{V_n(x) \check V_n(y) }{ \sigma^3 n^{3/2} }   p \left( \frac{x}{\sigma\sqrt{n}} , \frac{y}{\sigma\sqrt{n} } \right) \bigg|  \notag\\
&   \leq  c  \frac{n^{-\frac{\delta}{6} }  + V_{n}(x) n^{-\frac{\delta}{8(3+\delta)} }  \log n }{  n^{3/2} }
 \left( n^{-\frac{\delta}{4}} + \check V_n(y)  \right)  
 + c \frac{V_n(x) }{ n^{3/2} }  \check R_{n, \delta}(y)  \notag\\
& \leq  c \frac{\left(n^{-\frac{\delta}{8} }  + V_{n}(x) \right)
  \left( n^{-\frac{\delta}{8}} + \check V_n(y)  \right)}{n^{3/2  }}  n^{ -\frac{\delta}{8(3+\delta)} }\log n.
 \end{align*}
This finishes the proof of  Theorem \ref{CLLT-lattice-n3/2 main result}.

%%%%%%%%%%%%%%%%%%%%%%%%%%%%%%%%%%%%%%%%%%%%%%%%%%%
\subsection{Proof of the local theorem for the exit time} \label{sec: proof LLT for tau}
The goal of this section is to prove Theorem \ref{theorem local for tau lattice}, by using Theorem \ref{CLLT-lattice-n3/2 main result}.

Denote, for any $n \geq 1$ and $x\in \bb R$,
\begin{align} \label{definition of varkappa-aa001}
K_n(x)  = \sum_{y \geq 0: \,  y-x \in \hbar \bb Z+na} \check V_{n}(y) 
  \bb P(X_1 < -y) \,  p\left( \frac{x}{\sigma\sqrt{n}}, \frac{y}{\sigma\sqrt{n}} \right).
\end{align}
Since $p$ is bounded on $\bb R \times \bb R$ and $\check V_{n}(y) \leq \check V(y)$, %by \eqref{bound varkappa-000}, 
it follows that $K_n$ is also bounded on $\bb R$:
\begin{align*} %\label{def kappa-002}
K_n(x) \leq c \varkappa_n( x ) <\infty.
\end{align*}
where $\varkappa_n$ is defined by \eqref{def of varkappa_n(x) abcd001}. 
The following lemma establishes the relation between the functions $K_n$ and $\varkappa_n$,
where we recall that the function $\phi_L$ is defined by \eqref{approx by normal density-001b}. 

\begin{lemma} \label{lem-appox for varkappa_n-001}
There exists a constant $c>0$ such that, for any $n\geq 1$ and $x \in \bb R$,
\begin{align*} %\label{aa001}
\left| K_n(x) - \phi_L\left(\frac{x}{\sigma\sqrt{n}}\right)  L(0)  \varkappa_n(x) \right| 
\leq c n^{- \frac{\delta}{2(1 + \delta)}}.
\end{align*}
 \end{lemma}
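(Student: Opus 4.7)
The plan is to write the difference as a weighted sum of termwise errors and to control it by a truncation argument. The crucial fact underpinning the approximation is the identity $p(x,0)=\phi_L(x)$, which is not apparent from the definition \eqref{def of func ell_H-002} but follows from the symmetry $p(x,y)=p(y,x)$ combined with the boundary evaluation \eqref{approx by normal density-001b}, $p(0,y)=\phi_L(y)$. After substituting $\check V_n(y)=\check V(y)L(y/(\sigma\sqrt n))$ in the definition of $K_n$, the difference in the lemma becomes
\begin{align*}
K_n(x) - \phi_L\!\left(\frac{x}{\sigma\sqrt n}\right)L(0)\,\varkappa_n(x)
= \sum_{y} \check V(y)\,\bb P(X_1<-y)\,D_n(x,y),
\end{align*}
where the sum ranges over $y\geq 0$ with $y-x\in\hbar\bb Z+na$ and
\begin{align*}
D_n(x,y) := L\!\left(\frac{y}{\sigma\sqrt n}\right) p\!\left(\frac{x}{\sigma\sqrt n},\frac{y}{\sigma\sqrt n}\right) - L(0)\,\phi_L\!\left(\frac{x}{\sigma\sqrt n}\right).
\end{align*}

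I would then introduce a truncation level $A\geq 1$ to be optimized and split the sum at $y=A$. For $y\leq A$, a telescoping gives
\begin{align*}
D_n(x,y) = \Bigl[L\!\left(\frac{y}{\sigma\sqrt n}\right)-L(0)\Bigr]\,p\!\left(\frac{x}{\sigma\sqrt n},\frac{y}{\sigma\sqrt n}\right) + L(0)\Bigl[p\!\left(\frac{x}{\sigma\sqrt n},\frac{y}{\sigma\sqrt n}\right) - p\!\left(\frac{x}{\sigma\sqrt n},0\right)\Bigr],
\end{align*}
where on the second piece I have used the identity $p(x',0)=\phi_L(x')$. Lemma \ref{lem-inequality for L} controls the first bracket by $c\,y/\sqrt n$, and Lemma \ref{lem-holder prop for ell-001}(4) controls the second by $c\,y/\sqrt n$. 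Combined with the boundedness of $p$ (Lemma \ref{lem-holder prop for ell-001}(3)) and of $L$, this yields $|D_n(x,y)|\leq c\,y/\sqrt n$, so the small-$y$ contribution is bounded by $(cA/\sqrt n)\sum_{y\leq A}\check V(y)\bb P(X_1<-y) \leq (cA/\sqrt n)\,\varkappa_n(x)$. The uniform bound $\varkappa_n(x)\leq C$ is immediate from $\check V(y)\leq c(1+y)$ together with the Markov estimate $\bb P(X_1<-y)\leq c'\bb E|X_1|^{2+\delta}/y^{2+\delta}$.

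For the tail $y>A$, I would use the crude bound $|D_n(x,y)|\leq c$ obtained from the boundedness of $L$, $p$ and $\phi_L$; this reduces the task to estimating $\sum_{y>A}\check V(y)\bb P(X_1<-y)$, which the same combination of the linear bound for $\check V$ and the Markov moment estimate shows to be of order $A^{-\delta}$, uniformly in $x$. Summing the two contributions produces a total error of order $A/\sqrt n + A^{-\delta}$, and the balance $A=n^{1/(2(1+\delta))}$ delivers exactly the stated rate $n^{-\delta/(2(1+\delta))}$. No genuine analytic obstacle appears beyond the book-keeping: the key ingredients—the identity $p(x',0)=\phi_L(x')$, the Lipschitz estimates for $L$ and $p$, and the tail bound coming from the $2+\delta$ moment condition—are all supplied by the auxiliary results already in hand.
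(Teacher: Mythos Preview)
Your proposal is correct and follows essentially the same approach as the paper: truncate the sum at level $A=n^{1/(2(1+\delta))}$ (the paper writes this as $\beta_n\sqrt n$ with $\beta_n=n^{-\delta/(2(1+\delta))}$), use the Lipschitz properties of $p$ and $L$ in the second variable together with the identity $p(x',0)=\phi_L(x')$ to control the truncated part, and bound the tail via $\check V(y)\leq c(1+y)$ and the $(2+\delta)$-moment Markov estimate. Your organization via the single error term $D_n(x,y)$ is marginally cleaner than the paper's two-sided sandwich $I_1(x)\leq K_n(x)\leq I_1(x)+cI_2(x)$, but the ingredients and the optimization are identical.
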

\begin{proof}
Let $(\beta_n)_{n \geq 1}$ be a sequence of positive numbers such that $\beta_n\to 0$ as $n\to\infty$. 
Note that $\check V_n(y) = \check  V(y)  L \left(\frac{y}{\sigma\sqrt{n}}\right)$.  
Since $p$ is positive and bounded on $\bb R \times \bb R$,
and $L$ is bounded on $\bb R$,  the quantity
$K_n(x)$ can be bounded from above and below as follows: % \label{definition of varkappa-aa001} we have
\begin{align} \label{two sided bound K_n-001}
I_1(x) \leq  K_n(x) \leq  I_1(x) + c I_2(x), 
\end{align}
where, for $n\geq 1$ and $x\in \bb R$, 
\begin{align*} %\label{aa001}
I_1(x) &= \sum_{0 \leq y \leq \beta_n \sqrt{n}: \,  y - x \in \hbar \bb Z+na} \check V(y)  \bb P(X_1 < -y) \, 
p\left( \frac{x}{\sigma\sqrt{n}}, \frac{y}{\sigma\sqrt{n}} \right) L\left(\frac{y}{\sigma\sqrt{n}}\right),  \notag\\
I_2(x) &= \sum_{y > \beta_n \sqrt{n}: \,  y - x \in \hbar \bb Z+na} \check V(y) \bb P(X_1 < -y). 
\end{align*}
We first give an upper bound of $I_1(x)$. 
By the  Lipschitz property of the function $p$ (see \eqref{lem-holder prop for ell-002}), 
there exists $c>0$ such that for any $n \geq 1$, $y \in [0, \beta_n \sqrt{n}]$ and $x \in \bb R$, 
\begin{align*} 
\left|p\left( \frac{x}{\sigma\sqrt{n}}, \frac{y}{\sigma\sqrt{n}} \right) - p\left( \frac{x}{\sigma\sqrt{n}}, 0 \right)\right|
\leq c  \frac{y}{\sigma\sqrt{n}} \leq c \beta_n.
\end{align*}
By Lemma 2.3 in \cite{GX-2024-AIHP}, 
we have  
\begin{align*} %\label{aa001}
\left| L\left(\frac{y}{\sigma\sqrt{n}}\right) - L(0) \right| \leq c \frac{y}{\sigma\sqrt{n}} \leq c \beta_n. 
\end{align*}
Therefore, using the symmetry of the function $p$
 and the fact that $\phi_L$ is bounded on $\bb R$ (cf.\  \eqref{approx by normal density-001b}), we obtain
\begin{align} \label{bound I_1aadd001}
I_1(x)
& \leq  \left(\phi_L\left(\frac{x}{\sigma\sqrt{n}}\right) + c \beta_n\right) \left(L(0)+c\beta_n\right)
   \sum_{0 \leq y \leq \beta_n \sqrt{n}: \,  y-x \in \hbar \bb Z+na} \check V(y) 
 \bb P(X_1 < -y)  \notag \\
&\leq  \left(\phi_L\left(\frac{x}{\sigma\sqrt{n}}\right) + c \beta_n\right) \left(L(0)+c\beta_n\right)   \varkappa_n(x) \notag \\
&\leq   \phi_L\left(\frac{x}{\sigma\sqrt{n}}\right) L(0) \varkappa_n(x) +c\beta_n.
\end{align}
For $I_2(x)$, since $\check V(y) \leq c(1+y)$ and $\bb P(X_1 < -y) \leq \frac{1}{ y^{2+\delta} } \bb E(|X_1|^{2+\delta})$,
 we derive that 
\begin{align*} %\label{aa001}
I_2(x) 
\leq  \sum_{y > \beta_n \sqrt{n}: \,  y-x \in \hbar \bb Z+na} \frac{1+y}{y^{2+\delta}} \bb E(|X_1|^{2+\delta}) \leq c(\beta_n\sqrt{n})^{-\delta}. 
\end{align*}
Combining this with \eqref{two sided bound K_n-001} and \eqref{bound I_1aadd001}, we get the following upper bound:
\begin{align*} %\label{aa001}
K_n(x) 
%&\leq \varkappa_n(x) \left(\phi^+_L\left(\frac{x}{\sigma\sqrt{n}}\right) + c \beta_n\right)\left(L(0)+c\beta_n\right) + c(\beta_n\sqrt{n})^{-\delta}\\
\leq L(0) \phi_L\left(\frac{x}{\sigma\sqrt{n}}\right) \varkappa_n(x)  + c \beta_n + c(\beta_n\sqrt{n})^{-\delta}.  
\end{align*}
A lower bound is proved in the same way. Indeed, proceeding as in \eqref{bound I_1aadd001}, we have
\begin{align*} %\label{aa001}
I_1(x)
& \geq  \left(\phi_L\left(\frac{x}{\sigma\sqrt{n}}\right) - c \beta_n\right)  \left( L(0) - c \beta_n \right) 
 \sum_{0 \leq y \leq \beta_n \sqrt{n}: \,  y-x \in \hbar \bb Z+na} \check V(y) 
\bb P(X_1 < -y)  \notag \\ 
&\geq  \left(\phi_L\left(\frac{x}{\sigma\sqrt{n}}\right) - c \beta_n\right)  \left( L(0) - c \beta_n \right)  \varkappa_n(x) 
   -c I_2(x)\\
&\geq  \phi_L\left(\frac{x}{\sigma\sqrt{n}}\right) L(0) \varkappa_n(x) - c \beta_n - c(\beta_n\sqrt{n})^{-\delta}.
\end{align*}
Choosing $\beta_{n} = n^{- \frac{\delta}{2(1 + \delta)}}$, we obtain the assertion of the lemma. 
\end{proof}

\begin{proof}[Proof of Theorem \ref{theorem local for tau lattice}]
By the Markov property, we have, for any $n \geq 1$ and $x\in \bb R$,
\begin{align} \label{obvious identity-tau002}
\bb P\left( \tau_x = n+1 \right) 
= \sum_{y \geq 0: \, y-x \in \hbar \bb Z+na}  \bb P\left( y + X_1 <0 \right)  \bb P\left( x + S_n = y, \tau_x > n-1 \right).
\end{align}
Summing up over $y \in \bb R_+$ in the lattice $\hbar \bb Z + na +x$ in  \eqref{obvious identity-tau002},
using Theorem \ref{CLLT-lattice-n3/2 main result} and the definition of $K_n(x)$ (cf.\ \eqref{definition of varkappa-aa001}), we get
\begin{align*} %\label{001}
 & \left| \bb{P} ( \tau_x =n+1 ) 
   - \hbar   \frac{V_n(x) }{ \sigma^3 n^{3/2} } K_n(x) \right| \\
&\leq c \frac{\left(n^{-\frac{\delta}{6} }  + V_{n}(x) \right)}{n^{3/2  }}  n^{ -\frac{\delta}{8(3+\delta)} }\log n 
\sum_{y \geq 0: \, y-x \in \hbar \bb Z+na}  \left( n^{-\frac{\delta}{6}} + \check V_n(y)  \right)  \bb P\left( X_1 < -y \right). 
\end{align*}
Since the function $\varkappa_n$ is bounded on $\bb R$, 
the last sum is also bounded.
By Lemma \ref{lem-appox for varkappa_n-001}
and using the fact that $L(0) = \frac{2}{\sqrt{2 \pi}}$
and $V_n(x) \phi_L\left(\frac{x}{\sigma\sqrt{n}}\right) = \sqrt{2 \pi} V(x) \phi \left(\frac{x}{\sigma\sqrt{n}}\right)$,
this ends the proof of the theorem.
\end{proof}

\begin{proof}[Proof of Lemma \ref{lemma-varkappa-001}]
For $x\in \bb R$ and $n\geq 1$, set for brevity $u_{n,x}=\{na + x\}_{\hbar}$.
By \eqref{definition varkappa-aa001} and \eqref{identity for varkappa(u) aa001}, we have 
\begin{align} \label{latt-loctau-bbb001}
\varkappa_n(x) = \varkappa(u_{n,x}) 
& = \sum_{k =0}^{\infty} \check V(\hbar k +u_{n,x}) \bb P(X_1< -\hbar k-u_{n,x})  \notag\\
& \geq \check V(0) \sum_{k =0}^{\infty}  \bb P(X_1< -\hbar k-u_{n,x}),
\end{align}
where we used the fact that $\check V(\hbar k +u_{n,x})\geq \check V(0)$. %, we have, for $x\in \bb R$ and $n\geq 1$,
On the other hand, since $na+x = m\hbar + u_{n,x}$ for some $m\in \bb Z$, we have
\begin{align*} %\label{001}
\bb P(\tau_x=n+1) 
&\leq \bb P(x+ S_{n}\geq 0,  x+ S_{n}+ X_{n+1} < 0) \\ 
&= \sum_{k \in \bb Z: \,  k \hbar + na+x \geq 0}  \bb P \left( X_{n+1} < -k \hbar - na - x, S_n=k \hbar + na \right)  \notag\\
&\leq \sum_{k \in \bb Z: \,  (k+m) \hbar + \left\{na + x\right\}_{\hbar} \geq 0} \bb P(X_1< - (k+m) \hbar - u_{n,x})\\
&= \sum_{k' \in \bb Z: \,  k' \hbar + u_{n,x} \geq 0} \bb P(X_1< - k' \hbar - u_{n,x})\\
&= \sum_{k=0}^{\infty} \bb P(X_1< - k \hbar - u_{n,x}).
\end{align*}
The first assertion follows using \eqref{latt-loctau-bbb001}. 

For the second, if $\varkappa_n(x)=0$, then, by \eqref{latt-loctau-bbb001} and the fact that $X_1$ is supported on the lattice $\hbar \mathbb{Z} + a$, 
we get $\bb P(X_1< a- \hbar) \leq  \bb P(X_1< -u_{n,x}) =0.$
\end{proof}

\begin{proof}[Proof of Lemma \ref{lemma-varkappa-002}]
We use the notation from the proof of Lemma \ref{lemma-varkappa-001}. 
If $\bb P(X_1<a-\hbar)=0$, 
then, by \eqref{latt-loctau-bbb001}, we get $\varkappa_n(x)  = \check V(u_{n,x})  \bb P(X_1< -u_{n,x})$, where
\begin{align*} %\label{001}
\bb P(X_1< -u_{n,x}) 
= \left\{
\begin{array}{ll}
 \bb P(X_1=a-\hbar)         & \text{if }    u_{n,x} \in [0,\hbar-a), \\
0     & \text{if }   u_{n,x} \in [\hbar - a,\hbar).
\end{array}
\right.   
\end{align*}
This proves the first assertion. 

For the second assertion, it follows from \eqref{latt-loctau-bbb001} that, for $x\in \bb R$ and $n\geq 1$,
\begin{align*} %\label{001}
\varkappa_n(x)  \geq \check V(0)  \bb P(X_1<-u_{n,x}) \geq \check V(0)  \bb P(X_1<a-\hbar)>0.
\end{align*}
\end{proof}

\section{Proofs in the non-lattice case} \label{SecProof-non-lattice Theor-probtau-001}
\subsection{Effective two-sided bound version of the Caravenna type result}

We first prove an effective local limit theorem of Caravenna type for target functions 
with unbounded support and no a priori regularity assumptions. 
Furthermore, the result will be formulated as two distinct bounds--an upper and a lower estimate.

\begin{theorem} \label{t-B 002} 
Assume that $X_1$ is non-lattice, $\bb E (X_1) = 0$, $\bb E (X^2_1)= \sigma^2$,  and that there exists $\delta > 0$ 
such that $\bb E (|X_1|^{2+\delta})  < \infty.$ 
Then, one can find a constant $c >0$ and a sequence $(\alpha_n)_{n \geq 1}$ of positive numbers with the property that 
$\lim_{n\to\infty}\alpha_n = 0$ such that: 

\noindent 1. 
For any $n\geq 1$, $x \in \bb R$ and 
any integrable functions $f, g: \bb R \mapsto \bb R_+$ 
satisfying $f \leq_{\alpha_n} g$, $f(t)=0$ for $t<0$, and $g(t)=0$ for $t<-\alpha_n$,       
\begin{align}\label{eqt-B 001} 
  \bb{E} \Big( f(x+S_n ); \tau_x >n \Big)
& \leq  (1 +  c \alpha_n)  \frac{V_n(x)}{\sigma^2 n} %\frac{1}{\sigma\sqrt{n}}
  \int_{\bb R_+} g(y) \ell\left(\frac{x}{\sigma\sqrt{n}} ,\frac{y}{\sigma\sqrt{n}}\right) dy   \notag\\
& \quad   
+ c  \frac{n^{-\frac{\delta}{6} }  + V_{n}(x) n^{-\frac{\delta}{8(3+\delta)} } \log n}{  n }  \left\|  g \right\|_1. 
\end{align}

\noindent 2. 
For any $n\geq 1$, $x \in \bb R$ and 
any integrable functions $f, g, h : \bb R \mapsto \bb R_+$
satisfying $h \leq_{\alpha_n} f \leq_{\alpha_n} g$, $f(t)=0$ for $t<0$, $g(t)=0$ for $t<-\alpha_n$, 
and $h(t)=0$ for $t<\alpha_n$, 
\begin{align} \label{eqt-B 002} 
  \bb{E} \Big( f(x+S_n ); \tau_x >n \Big)
&  \geq   
 \frac{V_n(x)}{\sigma^2 n} 
    \int_{\bb R_+} \Big[  h(y)  - c \alpha_n f (y)  \Big]  
    \ell\left(\frac{x}{\sigma\sqrt{n}} ,\frac{y}{\sigma\sqrt{n}}\right) dy  
    \notag\\
 %  &\quad  -  c  \frac{n^{-\frac{\delta}{6} }  + V_{n}(x) n^{-\frac{\delta}{8(3+\delta)} }\log n }{  n }  \left\|  f \right\|_1  \notag\\
 & \quad 
 -  c  \frac{n^{-\frac{\delta }{6} }  + V_{n}(x) n^{-\frac{\delta}{8(3+\delta)} } \log n }{  n }  \left\|  g \right\|_1. 
\end{align}
\end{theorem}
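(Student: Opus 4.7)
My plan is to mimic the proof of Theorem \ref{T-Caravenatype-lattice 001}, substituting the discrete local limit theorem by its two-sided non-lattice counterpart, Theorem \ref{LLT-general}, and then carrying out the same convolution with the conditioned central limit theorem. Fix $\veps\in(0,\veps_0)$ small, set $m=\floor{\veps n}$, $k=n-m$, and decompose by the Markov property at time $k$:
\begin{align*}
\bb E\bigl(f(x+S_n);\tau_x>n\bigr)=\bb E\bigl[G(x+S_k);\tau_x>k\bigr],\qquad G(z):=\bb E f(z+S_m)-\bb E\bigl(f(z+S_m);\tau_z\leq m\bigr).
\end{align*}

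For the upper bound \eqref{eqt-B 001}, drop the nonnegative overshoot term and apply part 1 of Theorem \ref{LLT-general} to the shifted data $f(z+\cdot)\leq_{\alpha_n}g(z+\cdot)$:
\begin{align*}
\bb E f(z+S_m)\leq (1+c\alpha_n)\int_{\bb R}g(y)\phi_{\sigma^2 m}(y-z)\,dy+\frac{c_{\alpha_n}}{m^{(1+\delta_1)/2}}\|g\|_1.
\end{align*}
Taking expectation against $\{\tau_x>k\}$ and using Fubini reduces the problem to bounding $\bb E\bigl[\phi_{\sigma^2 m}(y-(x+S_k));\tau_x>k\bigr]$ for each $y$. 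The rescaled smooth target $\varphi(t)=\phi_{\sigma^2 m}(y-\sigma\sqrt{k}\,t)$ has $\int|\varphi'|=O(1/(\sigma\sqrt{m}))$, so Lemma \ref{New-ApplCondLT-002} yields the heat-kernel approximation with error of order $\veps^{-1/2}R_{n,\delta}(x)/n$. After the change of variables $z=\sigma\sqrt{k}\,t$ and invocation of the convolution identity $\phi_v*\psi_{1-v}=\psi$ (Lemma \ref{convol-phi-psi-001}), the leading term is identified with $V_n(x)(\sigma^2 n)^{-1}\ell\bigl(x/(\sigma\sqrt{n}),y/(\sigma\sqrt{n})\bigr)$, up to the Dirichlet boundary remainder $\int_{\bb R_-}\phi_{\sigma^2 m}(y-z)\psi_{\sigma^2 k}(x,z)\,dz$, which is bounded by $c\veps^{1/2}V_n(x)/n$ via the Lipschitz estimates of Lemma \ref{Lipschiz  property for ell_H in y} (cf.\ \eqref{lattice-bound-ell-psi-a}). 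The support assumption $g(t)=0$ for $t<-\alpha_n$ is used here to absorb the $\alpha_n$-shift created by the domination.

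For the lower bound \eqref{eqt-B 002}, part 2 of Theorem \ref{LLT-general} applied to $h\leq_{\alpha_n}f\leq_{\alpha_n}g$ produces the main term together with the $c\alpha_n\int f\,\phi_{\sigma^2 m}$ deficit matching the $c\alpha_n f(y)$ term in the statement. What remains is to control the overshoot $\bb E\bigl[\bb E(f(z+S_m);\tau_z\leq m)|_{z=x+S_k};\tau_x>k\bigr]$. Following \eqref{lattice-eqt-A 001_Lower}-\eqref{latice-Bound_J1}, I apply the upper bound \eqref{eqt-B 001} (just proved) to the outer expectation with target $\tilde g(z):=\bb E(f(z+S_m);\tau_z\leq m)$, reducing matters to an integral of $\tilde g(z)\ell\bigl(x/(\sigma\sqrt{k}),z/(\sigma\sqrt{k})\bigr)$ over $z\geq 0$. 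Splitting this at $z_\ast=\sqrt{\veps n\log n}$: for $z\leq z_\ast$, the ordinary non-lattice LLT (Theorem \ref{LLT-general}) gives $\tilde g(z)\leq c\|g\|_1/\sqrt{m}$ and the vanishing $\ell(\cdot,0)=0$ (Lemma \ref{Lipschiz  property for ell_H in y}) yields an $O(\veps^{1/2}\log n\cdot\|g\|_1)$ contribution; for $z>z_\ast$, the downcrossing probability is bounded by $\bb P(\max_{1\leq j\leq m}|S_j|>z_\ast)\leq c(\veps n)^{-\delta/2}$ via Fuk-Nagaev (Lemma \ref{Lem_FukNagaev}), exactly as in \eqref{latice-large-deviation-aa}-\eqref{latice-Proba_002}.

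The principal obstacle is the simultaneous tuning of the two parameters $\alpha_n$ and $\veps$: the constant $c_{\alpha_n}$ in Theorem \ref{LLT-general} may diverge as $\alpha_n\to 0$, so $\alpha_n$ cannot shrink too fast, yet $(1+c\alpha_n)$ must stay close to $1$. Setting $\alpha_n=\veps=n^{-\delta/(4(3+\delta))}$ balances the conditioned CLT remainder $\veps^{-1/2}R_{n,\delta}(x)/n$, the heat-kernel boundary correction $\veps^{1/2}V_n(x)/n$, and the overshoot error $\veps^{1/2}\log n+(\veps n)^{-\delta/2}$, producing the claimed rate $n^{-\delta/(8(3+\delta))}\log n$ in \eqref{eqt-B 001}-\eqref{eqt-B 002}. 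A secondary technical point is the bookkeeping of Dirichlet boundary effects separating the free-space convolution $\phi_v*\psi_{1-v}$ from its $\bb R_+$-truncation, which propagates cleanly through Lemmas \ref{Lipschiz  property for ell_H in y} and \ref{convol-phi-psi-001}.
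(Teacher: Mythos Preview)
Your approach is essentially the paper's: the same Markov split at $k=n-\lfloor\veps n\rfloor$, the same use of Theorem~\ref{LLT-general} on the short segment, Lemma~\ref{New-ApplCondLT-002} on the long one, the convolution identity of Lemma~\ref{convol-phi-psi-001}, and for the lower bound the same re-application of the upper bound to the overshoot term followed by the $\sqrt{\veps n\log n}$ split and Fuk--Nagaev. All the structural pieces are in place.

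There is one genuine slip in the parameter choice. You correctly flag that $c_{\alpha_n}$ in Theorem~\ref{LLT-general} may diverge as $\alpha_n\to 0$ at an uncontrolled rate, but then you set $\alpha_n=\veps=n^{-\delta/(4(3+\delta))}$. That does not resolve the difficulty you just named: nothing guarantees $c_{\alpha_n}$ grows slowly enough at this specific rate to be absorbed into the error term $c_{\alpha_n}\veps^{-(1+\delta_1)/2}n^{-(1+\delta_1/2)}(n^{-\delta/4}+V_n(x))\|g\|_1$. The paper instead keeps $\veps=n^{-\delta/(4(3+\delta))}$ but chooses $\alpha_n$ \emph{implicitly}, as any sequence with $\alpha_n\to 0$ slowly enough that $c_{\alpha_n}\leq n^{\delta/(8(3+\delta))}$; such a sequence exists because $\alpha\mapsto c_\alpha$ is monotone and finite for each fixed $\alpha>0$. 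With that choice the $c_{\alpha_n}$ term is dominated by the conditioned-CLT remainder, and the rest of your balancing goes through. Decouple $\alpha_n$ from $\veps$ in this way and your argument is complete.
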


Theorem \ref{t-B 002} will play a key role in the proof of Theorem \ref{theorem-n3/2-upper-lower-bounds}, 
where it is applied to a target function of the form
\begin{align*} %\label{f of this form-001}
f_m(y) = \mathds 1_{\{y \geq 0\}}  \bb E  \Big( f \left( y+S_{m} \right);  \tau_{y} > m - 1 \Big), \quad y \in \bb R. 
\end{align*}

\subsection{Proof of the upper bound}
In this section we will prove the upper bound \eqref{eqt-B 001} in Theorem \ref{t-B 002}. 

Let $\ee \in (0,\ee_0)$, where $\ee_0>0$ is a sufficiently small constant. 
Set $m = \floor{\ee n}$ and $k = n-m.$ 
We can assume that $n>n_0(\ee) := \frac{4}{\ee^2}$, 
otherwise the bound that we want to prove becomes trivial. 
By the Markov property,  for any starting point $x \in \bb R$, we have
\begin{align} \label{JJJ-markov property}
I_n(x):= \bb{E} \Big( f(x+S_n ); \tau_x >n \Big) 
&= \int_{\bb R_+}  \bb{E} \Big( f(t+S_{m}); \tau_t >m \Big) \bb{P}\left( x+S_{k}\in dt,  \tau_x >k\right)
 \notag \\
&\leq \int_{\bb R_+}  \bb{E} \Big[ f(t+S_{m})  %\mathds 1_{\{ t +S_{m} \geq 0 \}}  
\Big]
\bb{P} \left( x+S_{k}\in dt,  \tau_x >k \right).
\end{align}
By the upper bound \eqref{LLT-general001} of Theorem \ref{LLT-general}, 
one can find a constant $c >0$ with the property that 
for any $\alpha \in (0, \frac{1}{2})$, there is a constant $c_{\alpha} >0$ such that, for any $t \in \bb R$
and any integrable function $g: \bb R\mapsto \bb R_+$ satisfying $f \leq_{\alpha} g$, 
$f(t) = 0$ for $t < 0$, and $g(t) = 0$ for $t < -\alpha$,       
\begin{align} \label{JJJJJ-1111-001}
\bb{E} \Big[ f(t+S_{m})  \Big] 
& \leq  (1 + c \alpha)  \int_{\bb R} g(s)   \phi_{\sigma^2 m} \left( t-s \right) ds 
    +   \frac{c_{\alpha} \ee^{-(1 + \delta_1)/2}}{ n^{(1 + \delta_1)/2}} \|g\|_1, 
\end{align}
where $\phi_{v}(x) = \frac{1}{\sqrt{v}}\phi \left(\frac{x}{\sqrt{v}}\right)$, 
$\delta_1 = \min\{ 1, \delta \}$ and $\delta>0$ is from the moment assumption $\bb E (|X_1|^{2+\delta})  < \infty.$ 
Substituting \eqref{JJJJJ-1111-001} into \eqref{JJJ-markov property} and using \eqref{lattice-upper-bound-probab-tau-x-002}, 
we get that, for $x \in \bb R$,
\begin{align} \label{JJJ004}
I_{n}(x)    
& \leq  (1+ c \alpha) J_n(x) 
   +   \frac{c_{\alpha} \ee^{-(1 + \delta_1)/2}}{ n^{(1 + \delta_1)/2} }   \|g\|_1 \bb{P} \left( \tau_{x} > k \right)  \notag\\
&  \leq  (1+ c \alpha)  J_n(x) +  c_{\alpha} \ee^{- \frac{1 + \delta_1}{2}} 
\frac{ n^{- \frac{\delta}{4}} + V_n(x) }{ n^{ 1 + \frac{\delta_1}{2}} }  \|g\|_1,  
\end{align}
where for brevity we set
\begin{align} \label{JJJ006}
J_n(x) 
:  = \int_{\bb R _{+}}   
\left( \int_{\bb R}  g(s)  
   \phi_{\sigma^2  m} \left( t-s \right) ds  \right)
   \bb{P}\left( x+S_{k}\in dt, \tau_{x}>k\right). 
\end{align}
Using the change of variables $t = \sigma \sqrt{k} t'$ and $s = \sigma \sqrt{n} s'$, we have 
\begin{align}\label{def-Jn-x-abc}
J_n(x) 
&=  \bb E \left( \varphi_n\left(\frac{x+S_{k}}{\sigma \sqrt{k}}\right); \tau_{x}>k \right), 
\end{align}
where, with the notation $\eta_n=\frac{m}{n}$, 
\begin{align} \label{JJJ006b}  
 \varphi_n(t) 
:= \int_{\bb R} g(\sigma \sqrt{n} s) \phi_{\eta_n} \left( t \sqrt{k/n} - s \right) ds,  \quad  t \in \bb R. 
\end{align}
Since the function $t \mapsto \varphi_n(t)$ is differentiable on $\bb R$ and vanishes as $t \to \pm \infty$,  
by Lemma \ref{New-ApplCondLT-002}, there exist $c, \ee_0 > 0$ such that, for any $\ee \in (0, \ee_0)$, $x\in \bb R$ 
and $n \geq 1$, 
\begin{align}\label{LLT-nonlatice-thm01}
\left| J_n(x) 
 -   \frac{V_k(x)}{ \sigma \sqrt{k}}  \int_{\bb R _{+}}  \varphi_n(t) 
\ell \left(\frac{x}{\sigma\sqrt{k}}, t \right)dt \right| 
& \leq  c \frac{R_{n,\delta}(x)}{\sqrt{n}}   \int_{\bb R}  | \varphi_n'(t) | dt, 
\end{align}
where $R_{n,\delta}(x)$ is defined in Theorem \ref{introTheor-probtauUN-001}. % \eqref{UniformCondtau-001b}.
By \eqref{JJJ006b} and a change of variable, %it is easy to see that
\begin{align} \label{JJJ-20001}
\int_{\bb R}  | \varphi'_n(t) | dt 
&  \leq  \int_{\bb R}  \left[ \int_{\bb R}  
g (\sigma \sqrt{k} s)  
   \left| \phi'\left( \frac{ t -s }{\sqrt{m/k}}\right) \right|  \frac{ds}{\sqrt{m/k}}  \right]
\frac{dt}{\sqrt{m/k}} \notag\\
&  = \int_{\bb R}  \left[ \int_{\bb R}  
g (\sigma \sqrt{m} s)    \left| \phi'\left( t -s\right) \right| ds \right] dt   \notag\\
&  \leq \frac{c}{\sqrt{m}} \|g \|_1 = \frac{c}{\ee^{1/2} \sqrt{n}} \|g \|_1. 
\end{align}
Concerning the main term in \eqref{LLT-nonlatice-thm01}, 
substituting $t' = t \sqrt{n/k}$ and using \eqref{new-density-scale-v}, 
\begin{align} \label{heat-kernel-esti-002}
& \frac{V_k(x)}{ \sigma \sqrt{k}}  \int_{\bb R _{+}}  \varphi_n(t') \ell \left(\frac{x}{\sigma\sqrt{k}}, t' \right)dt'  \notag\\
& =  \frac{V_k(x)}{ \sigma \sqrt{k}} 
  \int_{\bb R _{+}}  \varphi_n \left( t \sqrt{n/k} \right) \frac{1}{\sqrt{k/n}}  \ell \left(\frac{\frac{x}{\sigma\sqrt{n}}}{\sqrt{k/n}}, \frac{t}{\sqrt{k/n}} \right) dt  \notag \\
  & =  \frac{V_k(x)}{ \sigma \sqrt{k}} 
  \int_{\bb R _{+}}  \varphi_n \left( t \sqrt{n/k} \right)  \frac{\psi_{1-\eta_n}\left(\frac{x}{\sigma\sqrt{n}}, t \right)}{H\left( \frac{x}{\sigma \sqrt{k}} \right)}  dt  \notag \\
& = \frac{V_n(x)}{ \sigma \sqrt{n}}  \int_{\bb R _{+}}  \varphi_n \left( t \sqrt{n/k} \right) \ell_{1-\eta_n}\left(\frac{x}{\sigma\sqrt{n}}, t \right) dt, 
\end{align}
where $\eta_n = \frac{m}{n}$ and $1 - \eta_n = \frac{k}{n}$,
and we also made use of the fact that $\frac{V_k(x)}{ \sigma \sqrt{k}} \frac{1}{H( \frac{x}{\sigma \sqrt{k}})} = \frac{V_n(x)}{ \sigma \sqrt{n}} \frac{1}{H( \frac{x}{\sigma \sqrt{n}})}$. 
By \eqref{JJJ006b}, \eqref{def-convo-ell-H-plus} and Fubini's theorem, we get
\begin{align*}
&  \int_{\bb R _{+}}  \varphi_n \left( t \sqrt{n/k} \right) \ell_{1-\eta_n}\left(\frac{x}{\sigma\sqrt{k}}, t \right)dt \\
&= \int_{\bb R _{+}}  \left[ \int_{\bb R} 
g(\sigma \sqrt{n}s )  \phi_{\eta_n} \left(s-t\right) ds  \right]
    \ell_{1-\eta_n}\left(\frac{x}{\sigma\sqrt{n}} , t \right) dt \\
&=  \int_{\bb R} g(\sigma \sqrt{n}s ) 
\Bigg[ \int_{\bb R _{+}}  \phi_{\eta_n} \left(s-t\right)  \ell_{1-\eta_n}\left(\frac{x}{\sigma\sqrt{n}} , t \right) dt\Bigg] ds \\
& =  \int_{\bb R} g(\sigma \sqrt{n} s)  
\phi_{\eta_n}*\ell^+_{1-\eta_n}\left(\frac{x}{\sigma\sqrt{n}} ,s\right)  ds  \notag\\ 
& = \frac{1}{\sigma\sqrt{n}}  
\int_{\bb R} g(y)  \phi_{\eta_n}*\ell^+_{1-\eta_n}\left(\frac{x}{\sigma\sqrt{n}} ,\frac{y}{\sigma\sqrt{n}}\right)  dy. 
\end{align*}
Combining this with \eqref{heat-kernel-esti-002} and using the fact that $g(t) = 0$ for $t < -\alpha$,  we have
\begin{align}\label{heat-kernel-esti-003}
 \frac{V_k(x)}{ \sigma \sqrt{k}}  \int_{\bb R _{+}}  \varphi_n(t') 
\ell \left(\frac{x}{\sigma\sqrt{k}}, t' \right)dt'  
%& =  \frac{V_n(x)}{ \sigma^2 n }
%\int_{-\alpha}^{\infty} g(y)  \phi_{\eta_n}*\ell^+_{1-\eta_n}\left(\frac{x}{\sigma\sqrt{n}} ,\frac{y}{\sigma\sqrt{n}}\right)  dy \notag\\
& =   \frac{V_n(x)}{ \sigma^2 n }
\int_{\bb R_+} g(y)  \phi_{\eta_n}*\ell^+_{1-\eta_n}\left(\frac{x}{\sigma\sqrt{n}} ,\frac{y}{\sigma\sqrt{n}}\right)  dy  \notag\\
& \quad +  \frac{V_n(x)}{ \sigma^2 n }
\int_{-\alpha}^0 g(y)  \phi_{\eta_n}*\ell^+_{1-\eta_n}\left(\frac{x}{\sigma\sqrt{n}} ,\frac{y}{\sigma\sqrt{n}}\right)  dy. 
\end{align}
For the first term in \eqref{heat-kernel-esti-003}, 
 it follows from \eqref{lattice-conv001} and \eqref{lattice-bound-ell-psi-a} that  
\begin{align}\label{bound-ell-psi-a}
\int_{\bb R_+} g(y)  \phi_{\eta_n}*\ell^+_{1-\eta_n}\left(\frac{x}{\sigma\sqrt{n}} ,\frac{y}{\sigma\sqrt{n}}\right)  dy
\leq  \int_{\bb R_+} g(y)   \ell \left(\frac{x}{\sigma\sqrt{n}} ,\frac{y}{\sigma\sqrt{n}}\right)   dy + c \ee^{1/2} \|g\|_1. 
\end{align}
By \eqref{ell-intergr-bound-002} of Lemma \ref{Lipschiz  property for ell_H in y} and the fact that $\ell_{1-\eta_n} ( \frac{x}{\sigma \sqrt{n}}, 0 ) = 0$,
 we have 
$| \ell_{1-\eta_n}  ( \frac{x}{\sigma \sqrt{n}}, \frac{y}{\sigma\sqrt{n}} - z )| \leq c ( \frac{1}{\sqrt{n}} + |z| )$
for any $x, z \in \bb R$ and $y \in [-\alpha, 0]$ with $\alpha <1$, so that, by \eqref{def-ell-H-plus-minus}, 
\begin{align*}%\label{}
\left| \phi_{\eta_n}*\ell^+_{1-\eta_n}\left(\frac{x}{\sigma\sqrt{n}} ,\frac{y}{\sigma\sqrt{n}}\right) \right|
& = \left|  \int_{\bb R}  \phi_{\eta_n} (z) \ell^+_{1-\eta_n} \left( \frac{x}{\sigma \sqrt{n}}, \frac{y}{\sigma\sqrt{n}} - z \right) dz \right| \notag\\
& \leq  c  \int_{\bb R}  \phi_{\eta_n} (z) \left(  \frac{1}{\sqrt{n}}  + |z| \right) dz \notag\\
& \leq  c \left( \frac{1}{\sqrt{n}} + \ee^{1/2} \right). 
\end{align*}
Hence 
\begin{align}\label{bound-ell-psi-b}
\left| \int_{-\alpha}^0 g(y)  \phi_{\eta_n}*\ell^+_{1-\eta_n}\left(\frac{x}{\sigma\sqrt{n}} ,\frac{y}{\sigma\sqrt{n}}\right)  dy  \right|
\leq  c \left( \frac{1}{\sqrt{n}} + \ee^{1/2} \right)  \int_{-\alpha}^0 g(y) dy. 
\end{align}
Substituting \eqref{bound-ell-psi-a} and \eqref{bound-ell-psi-b} into \eqref{heat-kernel-esti-003}, we get
\begin{align}\label{heat-kernel-esti-004}
& \frac{V_k(x)}{ \sigma \sqrt{k}}  \int_{\bb R _{+}}  \varphi_n(t') 
\ell \left(\frac{x}{\sigma\sqrt{k}}, t' \right)dt'  \notag\\
& \leq  \frac{V_n(x)}{ \sigma^2 n }
\int_{\bb R_+} g(y) \ell\left(\frac{x}{\sigma\sqrt{n}} ,\frac{y}{\sigma\sqrt{n}}\right)  dy 
 +  c \left( \frac{1}{\sqrt{n}} + \ee^{1/2} \right) \frac{V_n(x)}{ n } \|g\|_1.  
\end{align}
From Lemma \ref{New-ApplCondLT-002} and \eqref{heat-kernel-esti-004},  it follows that
\begin{align}\label{Bound-Jn-x-abc}
& \left| J_{n}(x) -  \frac{V_n(x)}{ \sigma^2 n }
\int_{\bb R_+} g(y)   \ell\left(\frac{x}{\sigma\sqrt{n}} ,\frac{y}{\sigma\sqrt{n}}\right)  dy \right|   \notag\\
& \leq  c \ee^{-1/2} \frac{R_{n, \delta}(x)}{ n } \left\|  g \right\|_1 + c \left( \frac{1}{\sqrt{n}} + \ee^{1/2} \right) \frac{V_n(x)}{ n } \|g\|_1.  
\end{align}
Combining this with \eqref{JJJ004}, we obtain 
\begin{align}\label{last-inequality-001}
&  \bb{E} \Big( f(x+S_n ); \tau_x >n \Big)
 \leq  (1 +  c \alpha)  \frac{V_n(x)}{\sigma^2 n} %\frac{1}{\sigma\sqrt{n}}
  \int_{\bb R_+} g(y) \ell\left(\frac{x}{\sigma\sqrt{n}} ,\frac{y}{\sigma\sqrt{n}}\right) dy   \notag\\
& \qquad\qquad\qquad   
+ c \ee^{-1/2} \frac{ n^{-\frac{\delta}{4} }  + V_{n}(x) n^{-\frac{\delta}{4(3+\delta)} }\log n }{  n }  \left\|  g \right\|_1
+  c \left( \frac{1}{\sqrt{n}} + \ee^{1/2} \right) \frac{V_n(x)}{ n } \|g\|_1  \notag\\
& \qquad\qquad\qquad  + c_{\alpha}  \ee^{- \frac{1 + \delta_1}{2} } \frac{ n^{- \frac{\delta}{4}} + V_n(x) }{  n^{ 1 + \frac{\delta_1}{2}} }  \|g\|_1. 
\end{align}
Now we take $\ee = n^{-\frac{\delta}{4(3+\delta)} }$ and choose a sequence $(\alpha_n)_{n \geq 1}$ of positive numbers such that $c_{\alpha_n} \leq n^{\frac{\delta}{8(3+\delta)}}$. 
Then the last term on the right-hand side of \eqref{last-inequality-001} is bounded by the second one. 
This concludes the proof of the upper bound \eqref{eqt-B 001}.

\subsection{Proof of the lower bound}
In this section we establish the lower bound \eqref{eqt-B 002} in Theorem \ref{t-B 002}. 

Let us keep the notation used in the proof of the upper bound \eqref{eqt-B 001}, notably the same $\ee_0, \ee, \alpha, k, m, \delta, \delta_1$. 
In view of \eqref{JJJ-markov property}, by the Markov property, we have
\begin{align} \label{staring point for the lower-bound-001} 
 \bb{E} \Big( f(x+S_n ); \tau_x >n \Big)  
 =  I_{n,1}(x) - I_{n,2}(x), 
\end{align}
where 
\begin{align}%\label{}
I_{n,1}(x) & = \int_{\bb R_+}  \bb E  \Big( f ( t + S_m)  \Big) \bb{P} \left( x+S_{k} \in dt,  \tau_x >k\right),  \label{def-In-x-1} \\
I_{n,2}(x) & =  \int_{\bb R_+}  \bb{E} \Big( f(t + S_m);  \tau_t \leq m \Big)  \bb{P} \left( x+S_{k}\in dt, \tau_x >k\right).  \label{def-In-x-2}
\end{align}
While the lower bound of $I_{n,1}(x)$ is very similar to that in the upper bound, the estimation of 
$I_{n,2}(x)$ is much more involved. 

\textit{Lower bound of $I_{n,1}(x)$. }
By  the local limit theorem (see \eqref{LLT-general002} in Theorem \ref{LLT-general}), 
one can find a constant $c >0$ with the property that 
for any $\alpha \in (0, \frac{1}{2})$, there is a constant $c_{\alpha} >0$ such that, for any $t \in \bb R$
and any integrable function $h: \bb R\mapsto \bb R_+$ satisfying $h \leq_{\alpha} f$, 
\begin{align} \label{lower-LLT-Cara-001}
\bb{E} \Big[ f(t+S_{m})  \Big] 
 \geq  \int_{\bb R}  (h(s) - c \alpha f(s))   \phi_{\sigma^2 m} \left( s-t \right) ds 
    -  \frac{c_{\alpha} \ee^{-(1 + \delta_1)/2}}{ n^{(1 + \delta_1)/2}} \|f\|_1. 
\end{align}
Substituting \eqref{lower-LLT-Cara-001} into \eqref{def-In-x-1} and using the bound \eqref{lattice-upper-bound-probab-tau-x-002},
we get, 
 for any $n \geq 1$ and $x \in \bb R$,
\begin{align}\label{bound-In1-lower001}
I_{n,1}(x) \geq J_{n,1}(x) 
-  c_{\alpha} \frac{ \ee^{-(1 + \delta_1)/2} (n^{- \frac{\delta}{4}} + V_n(x)) }{ n^{1 + \frac{\delta_1}{2} }} \left\Vert f  \right\Vert_1, 
\end{align}
where 
\begin{align*}%\label{}
J_{n,1}(x): =   \int_{\bb R_+}  \left( \int_{\bb R }  \Big[  h(y)  - c \alpha f (y)  \Big]   \phi_{\sigma^2  m} \left( s - t \right) ds  \right)
   \bb{P}\left( x+S_{k}\in dt,  \tau_x >k \right).
\end{align*}
For $J_{n,1}(x)$, proceeding in the same way as in the proof of the upper bound of $J_n(x)$ (cf.\ \eqref{JJJ006} and \eqref{Bound-Jn-x-abc}),
one has, uniformly in $x \in \bb R$,  
\begin{align}\label{lower-bound-Jn-x-prime}
J_{n,1}(x)
& \geq  \frac{V_n(x)}{ \sigma^2 n }  \int_{\bb R_+}  \Big[  h(y)  - c \alpha f (y)  \Big] 
 \ell\left(\frac{x}{\sigma\sqrt{n}} ,\frac{y}{\sigma\sqrt{n}}\right) dy  \notag\\
 &\quad    - c \ee^{-1/2} \frac{R_{n, \delta}(x)}{ n } \left\|  f \right\|_1 - c \left( \frac{1}{\sqrt{n}} + \ee^{1/2} \right) \frac{V_n(x)}{ n } \|f\|_1. 
\end{align}
Substituting \eqref{lower-bound-Jn-x-prime} into \eqref{bound-In1-lower001}, we obtain 
the lower bound for $I_{n,1}(x)$: 
\begin{align*}
 I_{n,1}(x)  
 & \geq  \frac{V_n(x)}{ \sigma^2 n }  \int_{\bb R_+}  \Big[  h(y)  - c \alpha f (y)  \Big] 
 \ell\left(\frac{x}{\sigma\sqrt{n}} ,\frac{y}{\sigma\sqrt{n}}\right) dy  \notag\\
 &\quad    - c \ee^{-1/2} \frac{R_{n, \delta}(x)}{ n } \left\|  f \right\|_1 - c \left( \frac{1}{\sqrt{n}} + \ee^{1/2} \right) \frac{V_n(x)}{ n } \|f\|_1
 \notag\\
 & \quad -  c_{\alpha} \frac{ \ee^{-(1 + \delta_1)/2} (n^{- \frac{\delta}{4}} + V_n(x)) }{ n^{1 + \frac{\delta_1}{2} }} \left\Vert f  \right\Vert_1. 
\end{align*}
As in the proof of the upper bound, taking $\ee = n^{-\frac{\delta}{4(3+\delta)} }$ and choosing a sequence $(\alpha_n)_{n \geq 1}$ of positive numbers such that $c_{\alpha_n} \leq n^{\frac{\delta}{8(3+\delta)}}$,
we get 
\begin{align}\label{Lower_In_lll}
 I_{n,1}(x)  
 & \geq  \frac{V_n(x)}{ \sigma^2 n }  \int_{\bb R_+}  \Big[  h(y)  - c \alpha_n f (y)  \Big] 
 \ell\left(\frac{x}{\sigma\sqrt{n}} ,\frac{y}{\sigma\sqrt{n}}\right) dy  \notag\\
 &\quad    - c  \frac{n^{-\frac{\delta}{6} }  + V_{n}(x) n^{-\frac{\delta}{8(3+\delta)} }\log n }{  n }  \left\|  f \right\|_1. 
\end{align}

\textit{Upper bound of $I_{n,2}(x)$.}
We proceed to give an upper bound for $I_{n,2}(x)$ defined in \eqref{def-In-x-2}, which can be rewritten as
\begin{align} 
I_{n,2}(x) = \int_{\bb R_+}  f_m(y)  \bb{P}\left( x+S_{k}\in dy, \tau_x > k  \right),  \label{K2-b01c-001}
\end{align}
where, for $y \in \bb R$, 
\begin{align} \label{Bytheduality-001}
f_m(y) : =  \mathds 1_{\{y \geq 0\}} \bb{E}  \Big( f(y + S_{m});  \tau_{y} \leq m  \Big). 
\end{align}
The function $y \mapsto f_m(y)$ is integrable on $\bb R$ since  $f$ is integrable on $\bb R$.
Denote 
\begin{align} \label{DefM88}
g_m(y) :=  \mathds 1_{\{y + \alpha \geq 0\}}
\bb{E}  \Big( g(y + S_{m});   \tau_{y - \alpha} \leq m  \Big),  \quad  y \in \bb R.   
\end{align}
Then $f_m \leq_{\alpha} g_m$ since $f \leq_{\alpha} g$. 
By \eqref{DefM88} and a change of variable $t + S_m =u$, we get
\begin{align}\label{bound-Li-norm-M}
\| g_m \|_1 \leq \left\|  g  \right\|_1. 
\end{align}
Applying the upper bound \eqref{eqt-B 001} of  Theorem \ref{t-B 002} to the function $f_m$, 
we obtain that, 
uniformly in $x\in \bb R$,  
\begin{align}\label{eqt-A 001_Lower}
I_{n,2}(x) 
&  \leq   (1 +  c \alpha_n)  \frac{V_k(x)}{\sigma^2 k}  
 \int_{\bb R_+}  g_m(y)  \ell \left(\frac{x}{\sigma\sqrt{k}} ,\frac{y}{\sigma\sqrt{k}}\right) dy   \notag\\
& \quad  + c  \frac{n^{-\frac{\delta }{6} }  + V_{n}(x) n^{-\frac{\delta}{8(3+\delta)} } \log n}{  n }  \left\|  g \right\|_1.  
\end{align}
For the first term in \eqref{eqt-A 001_Lower}, we write 
\begin{align}\label{decom-integral-gm-ell}
 \int_{\bb R_+}  g_m(y)  \ell \left(\frac{x}{\sigma\sqrt{k}} ,\frac{y}{\sigma\sqrt{k}}\right) dy  
& = \int_{0}^{\sqrt{\ee n \log n}}  g_m(y)  \ell \left(\frac{x}{\sigma\sqrt{k}} ,\frac{y}{\sigma\sqrt{k}}\right) dy
 \notag\\
& \quad + \int_{\sqrt{\ee n \log n}}^{\infty}  g_m(y) \ell \left(\frac{x}{\sigma\sqrt{k}} ,\frac{y}{\sigma\sqrt{k}}\right) dy. 
\end{align}
By \eqref{DefM88} and the local limit theorem (see \eqref{LLT-general001} in Theorem \ref{LLT-general}), 
there exists $c>0$ such that for any $y \in \bb R$, 
\begin{align*}%\label{}
g_m(y) =  
\mathds 1_{\{y + \alpha \geq 0\}}  \bb{E}  \Big(  g(y + S_{m});   \tau_{y - \alpha} \leq m  \Big)
\leq  \bb{E}  \Big( \tilde g(y + S_{m})  \Big)  \leq \frac{c}{\sqrt{m}} \| g\|_1. 
\end{align*}
Here we choose $\tilde g$ such that $f\leq_{\alpha/2} \tilde g \leq_{\alpha/2}  g$.
Using this and the fact that $m = \floor{\ee n}$ and $k = n-m$ (so that $\sqrt{n/k} \leq 2$ for $\ee$ small), we get
\begin{align}\label{first-term-gm-y-inte}
 \left| \int_{0}^{\sqrt{\ee n \log n}}  g_m(y)  
 \ell \left(\frac{x}{\sigma\sqrt{k}} ,\frac{y}{\sigma\sqrt{k}}\right) dy  \right|  
%& \leq  \frac{c}{\sqrt{m}} \| g\|_1 
%\int_{0}^{\sqrt{\ee n \log n}}  \ell \left(\frac{x}{\sigma\sqrt{k}}, \frac{y}{\sigma\sqrt{k}}\right)  dy \notag\\
& \leq  c \ee^{- 1/2}  \| g\|_1  \int_{0}^{ 2 \sqrt{\ee \log n} }  \ell \left(\frac{x}{\sigma\sqrt{k}} , y \right)   dy \notag\\
& \leq   c  \| g\|_1  \ee^{1/2} \log n, 
\end{align}
where in the last line we used the second inequality in \eqref{latice-integral-gm-ell-small}.

Next we handle the second term in \eqref{decom-integral-gm-ell}. 
Since the function $\ell$ is bounded on $\bb R \times \bb R$, by \eqref{DefM88}, we get
\begin{align}\label{integral-My-Fk-xy}
&  \int_{\sqrt{\ee n \log n}}^{\infty} 
 g_m(y)  \ell \left(\frac{x}{\sigma\sqrt{k}} ,\frac{y}{\sigma\sqrt{k}}\right) dy  \notag\\
& \leq  c  \int_{\bb R}  
\bb{E}  \Big( g(y + S_{m});   \tau_{y - \alpha} \leq m    \Big)
  \mathds 1_{\{ y  \geq \sqrt{\ee n \log n}  \}}  dy   \notag\\
& = c  \int_{\bb R}  g(t) 
\bb P \left( \tau_{t - S_m - \alpha}  \leq m,  t - S_m \geq  \sqrt{\ee n \log n}  \right) dt,
\end{align}
where in the last equality we used a change of variable $y + S_m = t$.
Following the same proof as in \eqref{latice-large-deviation-aa} and \eqref{latice-Proba_002}, there exists $c>0$ such that for any $t \in \bb R$, 
\begin{align*}%\label{}
\bb P \left( \tau_{t - S_m - \alpha}  \leq m,  t - S_m \geq  \sqrt{\ee n \log n}  \right)
\leq c (\ee n)^{-\delta/2}. 
\end{align*}
This, together with \eqref{decom-integral-gm-ell}, \eqref{first-term-gm-y-inte} and \eqref{integral-My-Fk-xy}, implies that 
\begin{align}\label{Bound_J1}
\int_{\bb R_+}  g_m(y)  \ell \left(\frac{x}{\sigma\sqrt{k}} ,\frac{y}{\sigma\sqrt{k}}\right) dy
  \leq   c  \left(   \ee^{1/2} \log n  +  (\ee n)^{-\delta/2} \right) \left\|  g  \right\|_1.
\end{align}
Combining this with \eqref{staring point for the lower-bound-001}, \eqref{Lower_In_lll} and \eqref{eqt-A 001_Lower}, 
taking $\ee = n^{-\frac{\delta}{4(3+\delta)} }$ 
and using the fact that $\|f\|_1 \leq \|g\|_1$,
we conclude the proof of the lower bound \eqref{eqt-B 002}.

%%%%%%%%%%%%%%%%%%%%%%%%%%%%%%%%%%%%%%%%%%%%%%%%%%%%%%%%%%
\subsection{Two-sided bound ballot type theorem} %--non-lattice case
\label{SecProof Theor-probtau-001}

For functions $f$ with fixed support, Theorem \ref{t-B 002} does not provide a satisfactory approximation, 
as the remainder term $o(n^{-1})$ dominates the main term, which is of order $n^{-3/2}$. 
To refine the approximation and improve the remainder, we convolve Theorem \ref{t-B 002} 
with a conditioned integral-type local limit theorem. 
This leads to a different form of conditioned local limit theorem--often referred to as the ballot theorem--which guarantees a main term of order $n^{-3/2}$.
To prepare for a sufficiently general formulation of the local limit theorem, we begin by presenting it in the form of a two-sided bound.

\begin{theorem} \label{theorem-n3/2-upper-lower-bounds}
Assume that $X_1$ is non-lattice, $\bb E (X_1) = 0$, $\bb E (X^2_1)= \sigma^2$,  and that there exists $\delta > 0$ 
such that $\bb E (|X_1|^{2+\delta})  < \infty.$
Then, one can find a constant $c>0$ and a sequence $(\alpha_n)_{n \geq 1}$ of positive numbers
satisfying $\lim_{n \to \infty} \alpha_n = 0$ such that:  %with the property that, 

\noindent 1. For any $n\geq 1$ and $x\in \bb R$ and 
any measurable functions $f, g: \bb R \mapsto \bb R_+$ satisfying $f \leq_{\alpha_n} g$  and $\int_{\bb R} g(y) dy < \infty$, 
% $\int_{\bb R} g(y) (1 + |y|) dy < \infty$, 
\begin{align}\label{theorem-n3/2 001}
&  \bb{E}  \Big( f (x+S_n );  \tau_x > n - 1 \Big)
 \leq  (1 + c \alpha_n)  \frac{V_n(x) }{ \sigma^3 n^{3/2} }  
  \int_{\bb R}  g (y)  \check V_n(y)  p \left( \frac{x}{\sigma\sqrt{n}} , \frac{y}{\sigma\sqrt{n} } \right) dy  \notag\\
&   \qquad\qquad\qquad\qquad  +  c  \frac{n^{-\frac{\delta}{6} }  + V_{n}(x) n^{-\frac{\delta}{8(3+\delta)} } \log n}{  n^{3/2} }
 \int_{\bb R} g(y) \left(  \check V_n(y) + \frac{1}{n^{\delta/4}} \right) dy. 
 \end{align}

\noindent 2. For any $n\geq 1$ and $x\in \bb R$ and 
any measurable functions $f, g, h: \bb R \mapsto \bb R_+$  satisfying $h \leq_{\alpha_n} f \leq_{\alpha_n} g$ 
and $\int_{\bb R} g(y) dy < \infty$,  %s $\int_{ \bb R } g(y) (1 + |y|) dy < \infty$, 
\begin{align}\label{theorem-n3/2 002}
&  \bb{E}  \Big( f (x+S_n );  \tau_x > n - 1 \Big)
   \geq   \frac{V_n(x) }{ \sigma^3 n^{3/2} }  
 \int_{\bb R}   \left(  h(y) - c \alpha_n f(y) \right)   \check V_n(y)  p \left( \frac{x}{\sigma\sqrt{n}} , \frac{y}{\sigma\sqrt{n} } \right) dy   \notag\\
 & \qquad\qquad\qquad\qquad  -  c  \frac{n^{-\frac{\delta}{6} }  + V_{n}(x) n^{-\frac{\delta}{8(3+\delta)} } \log n }{  n^{3/2} }
 \int_{\bb R} g(y) \left(  \check V_n(y) + \frac{1}{n^{\delta/4}} \right) dy.   
 \end{align}
\end{theorem}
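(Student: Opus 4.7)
The plan is to mirror the proof of the lattice analogue Theorem \ref{CLLT-lattice-n3/2 main result} (given in Section \ref{SecProofUnifballotLLTheor-lattice-001}), adapting it to the functional non-lattice setting. I will split the trajectory at the midpoint via the Markov property, apply the Caravenna-type Theorem \ref{t-B 002} on one half and the conditioned CLT of Lemma \ref{New-ApplCondLT-002} on the time-reversed other half, and recombine them using the heat kernel convolution identity of Lemma \ref{lem: product heat kernels}. The two-sided formulation is obtained by feeding Theorem \ref{t-B 002} with suitably chosen dominating and minorating functions derived from $g$ and $h$, respectively.

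Setting $m = \floor{n/2}$ and $k = n - m$, the Markov property gives
\begin{align*}
\bb E\bigl(f(x+S_n); \tau_x > n-1\bigr) = \bb E\bigl(\tilde f_m(x+S_k); \tau_x > k\bigr), \quad \tilde f_m(z) := \mathds 1_{\{z \geq 0\}}\bb E\bigl(f(z+S_m); \tau_z > m-1\bigr).
\end{align*}
For the upper bound I introduce $\tilde g_m(z) := \mathds 1_{\{z \geq -\alpha_n\}}\, \bb E\bigl(g(z+S_m); \tau_{z+\alpha_n} > m-1\bigr)$ and check that $\tilde f_m \leq_{\alpha_n} \tilde g_m$ (using $f \leq_{\alpha_n} g$ together with the inclusion $\{\tau_z > m-1\} \subset \{\tau_{z+u+\alpha_n} > m-1\}$ valid for $|u| \leq \alpha_n$). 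Applying Theorem \ref{t-B 002}(1) to the walk of length $k$ produces the leading term $(1+c\alpha_n)\frac{V_k(x)}{\sigma^2 k}\int_{\bb R_+} \tilde g_m(z)\, \ell(x/\sigma\sqrt{k}, z/\sigma\sqrt{k})\, dz$ plus a remainder bounded by $c\bigl[n^{-\delta/6} + V_n(x)n^{-\delta/(8(3+\delta))}\log n\bigr] \|\tilde g_m\|_1/n$. A Fubini-and-time-reversal duality argument, the non-lattice analogue of Lemma \ref{Lemma-lattice duality-001}, rewrites the leading integral as
\begin{align*}
\int_{\bb R} g(y)\, \bb E\!\left[\ell\!\left(\tfrac{x}{\sigma\sqrt{k}}, \tfrac{y + \check S_m}{\sigma\sqrt{k}}\right);\check\tau_{y+\alpha_n} > m-1\right] dy,
\end{align*}
up to a Lipschitz correction for the $\alpha_n$-shift in the argument of $\ell$, controlled by \eqref{ell-intergr-bound-002}. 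Invoking Lemma \ref{New-ApplCondLT-002} on the reversed walk with target $t \mapsto \ell(x/\sigma\sqrt{k}, t\sqrt{m/k})$ and combining with the prefactor $V_k(x)/(\sigma^2 k)$ and Lemma \ref{lem: product heat kernels}, the inner double expectation reduces to
\begin{align*}
\frac{V_n(x)\check V_n(y)}{\sigma^3 n^{3/2}}\, p\!\left(\tfrac{x}{\sigma\sqrt{n}}, \tfrac{y}{\sigma\sqrt{n}}\right),
\end{align*}
via the same algebraic simplification as in the proof of Lemma \ref{Lem-ballot-001}. The lower bound \eqref{theorem-n3/2 002} is obtained symmetrically with the minorator $\tilde h_m(z) := \mathds 1_{\{z \geq \alpha_n\}}\, \bb E(h(z+S_m); \tau_{z-\alpha_n} > m-1)$ satisfying $\tilde h_m \leq_{\alpha_n} \tilde f_m$ and Theorem \ref{t-B 002}(2).

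The main obstacle is to confirm that the $\alpha_n$-shifts present in $\tilde g_m$ and $\tilde h_m$---both in the argument of $\ell$ and in the starting point $y \pm \alpha_n$ of the reversed walk---produce only negligible errors absorbable into the prefactor $(1 + c\alpha_n)$. The first shift is handled by the Lipschitz bound \eqref{ell-intergr-bound-002}; the second by \eqref{UniformCondtau-001} together with $\check V_n(y+\alpha_n) = \check V_n(y) + O(\alpha_n)$, following from Lemma \ref{lem-inequality for L} and the linear growth of $\check V$. A secondary technical step is to express $\|\tilde g_m\|_1$ in terms of $g$: the same duality yields $\|\tilde g_m\|_1 = \int g(y)\bb P(\check\tau_{y+\alpha_n} > m-1)\,dy$, and the bound \eqref{upper-bound-probab-tau-x} applied to $\check S_m$ produces $\|\tilde g_m\|_1 \leq (c/\sqrt{n})\int g(y)(\check V_n(y) + n^{-\delta/4})\,dy$, which when multiplied by the $1/n$ remainder from Theorem \ref{t-B 002} yields precisely the $n^{-3/2}$ error announced in \eqref{theorem-n3/2 001}--\eqref{theorem-n3/2 002}.
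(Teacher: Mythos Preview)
Your proposal follows essentially the same route as the paper: split at $m=\floor{n/2}$, apply Theorem \ref{t-B 002} to the first segment with the dominated/dominating functions $\tilde f_m, \tilde g_m, \tilde h_m$, rewrite the leading integral by duality (the paper's Lemma \ref{lemma-duality-lemma-2_Cor}), and then evaluate the inner reversed-walk expectation via Lemma \ref{New-ApplCondLT-002} together with the heat-kernel convolution identity of Lemma \ref{lem: product heat kernels} (this is packaged in the paper as Lemma \ref{Lem-ballot-nonlattice-001}). The control of $\|\tilde g_m\|_1$ is also the same (the paper's Lemma \ref{lemma Qng integr}).

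The one soft spot is your treatment of the $\alpha_n$-shift in the starting point of the reversed walk. The claim $\check V_n(y+\alpha_n)=\check V_n(y)+O(\alpha_n)$ does not follow from ``linear growth of $\check V$'' and Lemma \ref{lem-inequality for L}: a non-decreasing function with $\check V(y)\sim y$ need not be Lipschitz, and no such uniform estimate for $\check V$ is available in the paper. The paper sidesteps this entirely by inserting an intermediate function $\tilde g$ with $f\leq_{\alpha_n}\tilde g\leq_{\alpha_n} g$ (always constructible as $\tilde g(y)=\sup_{|u|\leq\alpha_n/2}f(y+u)$), so that after the change of variable $z\mapsto z+\alpha_n$ the duality produces $\check\tau_y$ with no shift, the $\alpha_n$ lands in the argument of $\tilde g$, and one simply uses $\tilde g(\cdot-\alpha_n)\leq g(\cdot)$; the only residual shift sits inside $\ell$ and is handled by its Lipschitz bound \eqref{ell-intergr-bound-002}. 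With this adjustment your argument goes through and matches the paper's.
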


Note that, under condition $ \int_{\bb R} g(y) dy <\infty$, the right-hand sides of \eqref{theorem-n3/2 001} 
and \eqref{theorem-n3/2 002} are finite, since $\frac{1}{\sqrt{n}}\check V_n(y) \leq c$ for any $y\in \bb R$, 
by Lemma 2.4 in \cite{GX-2024-CCLT}.

\subsection{Proof of Theorem \ref{theorem-n3/2-upper-lower-bounds}}

We recall a duality identity for the random walk $x + S_n$ jointly with the exit time $\tau_x$.
This identity is a simple consequence of the fact that the Lebesgue measure is translation invariant. 
Recall that $S_0 = 0$ and $S_n=\sum_{i=1}^{n} X_i$ for $n \geq 1$, and its dual random walk is 
given by $\check S_n= -S_n = - \sum_{i=1}^{n} X_i$ for $n \geq 1$. % $\check S_j = S_{n-j} - S_n$ for $0 \leq j \leq n.$
The following lemma is from \cite[Lemma 3.2]{GX-2024-AIHP}.

\begin{lemma} \label{lemma-duality-lemma-2_Cor}
For any $n\geq 1$ and any bounded measurable functions $g, h: \bb R \mapsto \bb R$, 
we have
\begin{align} \label{eq-duality-lemma-002_Cor}
\int_{\bb R_+}  h(x) \bb E \left(   g(x+S_n) \mathds 1_{\{ \tau_x > n-1  \}}  \right) dx
=\int_{\bb R}  g(y) \bb E \left(  h(y + \check S_n)  \mathds 1_{\{ \check \tau_y > n \}}  \right) dy. 
\end{align}
\end{lemma}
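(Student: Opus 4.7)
The plan is to establish the identity by Fubini, a spatial change of variable, and a time-reversal of the increments. By a standard approximation argument it suffices to treat bounded nonnegative $g, h$. Bringing the Lebesgue integral inside the expectation and writing $\{\tau_x > n-1\} = \bigcap_{k=1}^{n-1}\{x+S_k \geq 0\}$, while absorbing the constraint $x \in \bb R_+$ into the extra indicator $\mathds 1_{\{x + S_0 \geq 0\}}$ (using $S_0 = 0$), the left-hand side reads
\begin{equation*}
\bb E \int_{\bb R} h(x)\, g(x+S_n) \prod_{k=0}^{n-1} \mathds 1_{\{x+S_k \geq 0\}}\, dx.
\end{equation*}

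Next I would perform the change of variable $y = x + S_n$, so that $x = y - S_n$ and, for each $k \in \{0, 1, \dots, n-1\}$, $x + S_k = y - (S_n - S_k)$. Introducing the reversed partial sums $\tilde S_j := X_n + X_{n-1} + \cdots + X_{n-j+1}$ for $j = 1, \dots, n$, one has $S_n - S_k = \tilde S_{n-k}$ and in particular $\tilde S_n = S_n$. Reindexing via $j = n-k$ turns the product into $\prod_{j=1}^{n} \mathds 1_{\{y - \tilde S_j \geq 0\}}$, and the argument of $h$ into $y - \tilde S_n$. After exchanging integration and expectation, the left-hand side becomes
\begin{equation*}
\int_{\bb R} g(y)\, \bb E\Big[ h(y - \tilde S_n) \prod_{j=1}^{n} \mathds 1_{\{y - \tilde S_j \geq 0\}} \Big]\, dy.
\end{equation*}

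The decisive step is a distributional identity: since $(X_1, \dots, X_n)$ are i.i.d., the reversed tuple $(X_n, X_{n-1}, \dots, X_1)$ has the same joint law as $(X_1, X_2, \dots, X_n)$, and consequently the negated reversed partial sums satisfy $(-\tilde S_1, \dots, -\tilde S_n) \stackrel{d}{=} (\check S_1, \dots, \check S_n)$ as $\bb R^n$-valued random vectors. Replacing $-\tilde S_j$ by $\check S_j$ simultaneously in the expectation converts each condition $y - \tilde S_j \geq 0$ into $y + \check S_j \geq 0$ and the argument $y - \tilde S_n$ into $y + \check S_n$. Since $\{\check \tau_y > n\} = \bigcap_{j=1}^{n}\{y + \check S_j \geq 0\}$, this produces exactly the right-hand side of \eqref{eq-duality-lemma-002_Cor}.

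There is no real obstacle beyond careful bookkeeping. The only item that requires attention is matching the indicator constraints on both sides: the left-hand side contributes $n-1$ indicators from $\{\tau_x > n-1\}$ together with the positivity constraint $x \geq 0$, for a total of $n$, while the right-hand side carries exactly $n$ indicators from $\{\check \tau_y > n\}$ (with no positivity constraint on $y$, because the indicator originating from $x \geq 0$ is the one reindexed to $j = n$). The use of Fubini is justified by nonnegativity, and the identity extends to bounded measurable $g, h$ by splitting into positive and negative parts.
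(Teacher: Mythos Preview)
Your proof is correct and follows exactly the approach the paper indicates: the paper cites this lemma from \cite{GX-2024-AIHP} and notes it is ``a simple consequence of the fact that the Lebesgue measure is translation invariant,'' which is precisely your Fubini $+$ change of variable $y=x+S_n$ $+$ time-reversal argument. Your bookkeeping on the indicator counts (the $n-1$ constraints from $\{\tau_x>n-1\}$ plus the constraint $x\geq 0$ matching the $n$ constraints from $\{\check\tau_y>n\}$) is correct.
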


The lemma below is an easy consequence of the duality formula %(Lemma \ref{lemma-duality-lemma-2_Cor}) 
and bounds of the exit time for the dual random walk. 

\begin{lemma}\label{lemma Qng integr}
Assume that $\bb E (X_1) = 0$, $\bb E (X^2_1)= \sigma^2$,  and that there exists $\delta > 0$ 
such that $\bb E (|X_1|^{2+\delta})  < \infty.$ 
Let $g: \bb R \to  \bb R_{+}$ be a measurable function
satisfying $ \int_{\bb R} g(y) dy <\infty$.
Then, there exists a constant $c>0$ such that for any $n\geq 1$, 
\begin{align} \label{eq in lemma Qng integr}
\int_{\bb R _{+}}  \bb E ( g(x+S_n); \tau_x >n - 1)  dx 
\leq  \frac{c}{\sqrt{n}}  \int_{\bb R} g(y) \left(  \check V_n(y) + \frac{1}{n^{ \delta/4}} \right)dy. 
\end{align}
\end{lemma}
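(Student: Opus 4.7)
\smallskip
\noindent\textbf{Proof proposal.} The plan is to derive the bound as an almost immediate consequence of two already-established ingredients: the duality identity of Lemma \ref{lemma-duality-lemma-2_Cor} and the uniform persistence bound \eqref{upper-bound-probab-tau-x}, applied to the reversed walk.

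First, I would apply \eqref{eq-duality-lemma-002_Cor} with the test function $h(x) = \mathds{1}_{\{x \geq 0\}}$. On the event $\{\check\tau_y > n\}$ one has $y + \check S_k \geq 0$ for all $k = 1, \dots, n$, so in particular $h(y + \check S_n) = 1$ on this event. Consequently the right-hand side of the duality identity collapses to $\int_{\bb R} g(y)\, \bb P(\check\tau_y > n)\, dy$, yielding the exact equality
\begin{align*}
\int_{\bb R_+} \bb E\bigl( g(x+S_n);\, \tau_x > n-1 \bigr) dx
= \int_{\bb R} g(y)\, \bb P(\check\tau_y > n)\, dy.
\end{align*}

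Next, I would invoke the reversed-walk analogue of \eqref{upper-bound-probab-tau-x}. Since $\check X_1 = -X_1$ has mean zero, variance $\sigma^2$ and $\bb E(|\check X_1|^{2+\delta}) = \bb E(|X_1|^{2+\delta}) < \infty$, Theorem \ref{introTheor-probtauUN-001} applies verbatim to $(\check S_n)_{n \geq 1}$ and gives a constant $c>0$ such that, uniformly in $y \in \bb R$ and $n \geq 1$,
\begin{align*}
\bb P(\check\tau_y > n) \leq c\, \frac{n^{-\delta/4} + \check V_n(y)}{\sqrt{n}}.
\end{align*}
Substituting this bound into the integral identity above immediately produces \eqref{eq in lemma Qng integr}.

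There is no real obstacle here; the only point requiring a sentence of care is that the duality delivers an integral over the whole real line (the factor $h(y+\check S_n)=1$ is automatic, but the variable $y$ ranges over $\bb R$), and one must invoke the uniform-in-$x \in \bb R$ form of the persistence bound rather than the weaker $x \geq 0$ version. This is precisely what \eqref{upper-bound-probab-tau-x} provides, so the argument closes cleanly in two short steps.
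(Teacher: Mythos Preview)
Your proof is correct and essentially identical to the paper's own argument: both apply the duality identity \eqref{eq-duality-lemma-002_Cor} with the trivial test function $h=1$ (your choice $h=\mathds 1_{\bb R_+}$ is equivalent, since the left-hand integral is already over $\bb R_+$) to obtain $\int_{\bb R_+}\bb E(g(x+S_n);\tau_x>n-1)\,dx=\int_{\bb R}g(y)\,\bb P(\check\tau_y>n)\,dy$, and then invoke the reversed-walk persistence bound from Theorem \ref{introTheor-probtauUN-001}. Your extra sentence explaining why $h(y+\check S_n)=1$ on $\{\check\tau_y>n\}$ is a helpful clarification that the paper omits.
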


\begin{proof}
Using \eqref{eq-duality-lemma-002_Cor} of Lemma \ref{lemma-duality-lemma-2_Cor} with $h = 1$, we get that for any $n \geq 1$, 
\begin{align*}
\int_{\bb R_{+}}  \bb E ( g(x+S_n) ; \tau_x >n-1)  dx
 = \int_{\bb R} g(y)   \mathbb{P} \big( \check \tau_{y} > n \big) dy.
\end{align*}
The conclusion follows from \eqref{UniformCondtau-001} of Theorem \ref{introTheor-probtauUN-001}.   
\end{proof}

The following effective
Gaussian heat kernel approximation result is an analogue of Lemma \ref{Lem-ballot-001}. 

\begin{lemma} \label{Lem-ballot-nonlattice-001}
Let $(\alpha_n)_{n\geq 1}$ be a sequence of non-negative numbers such that $\alpha_n\to 0$ as $n\to\infty$.
There exists a constant $c>0$ such that, for any $n \geq 2$ and  $x, y \in \bb R$,
with $m= \floor{n/2}$ and $k = n - m$, 
\begin{align*}%\label{latt-lemma apply cond-CLT}      
& \Bigg|   \frac{V_k(x)}{\sigma^2 k } 
 \bb E \left[ \ell \left( \frac{x}{\sigma \sqrt{k}},  \frac{y + \check S_m \pm \alpha_n}{\sigma \sqrt{k}} \right);  \check \tau_y > m  \right] 
 - \frac{ V_n(x) \check V_n(y) }{ \sigma^3 n^{3/2}}   p \left( \frac{x}{\sigma\sqrt{n}} , \frac{y}{\sigma\sqrt{n} } \right)
\Bigg|    \notag\\
&  \leq  c \frac{V_n(x) \check R_{n, \delta}(y) }{ n^{3/2} }. %  +  \frac{c \alpha_n V_n(x) \check V_n(y) }{ n^{2}}. 
\end{align*}
\end{lemma}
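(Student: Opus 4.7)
The statement is a shifted version of Lemma \ref{Lem-ballot-001}, so the natural plan is to reduce it to that lemma via the Lipschitz continuity of $\ell$ in its second argument, controlling the perturbation contributed by the shift $\pm \alpha_n$.

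First, using the triangle inequality, I split the quantity to bound as
\begin{align*}
\Delta_n^\pm(x,y) + \left|  \frac{V_k(x)}{\sigma^2 k }
 \bb E \left[ \ell \left( \tfrac{x}{\sigma \sqrt{k}},  \tfrac{y + \check S_m}{\sigma \sqrt{k}} \right);  \check \tau_y > m  \right]
 - \frac{ V_n(x) \check V_n(y) }{ \sigma^3 n^{3/2}}   p \left( \tfrac{x}{\sigma\sqrt{n}} , \tfrac{y}{\sigma\sqrt{n} } \right) \right|,
\end{align*}
where
\begin{align*}
\Delta_n^\pm(x,y) := \frac{V_k(x)}{\sigma^2 k}  \bb E \left[ \left| \ell \left( \tfrac{x}{\sigma \sqrt{k}},  \tfrac{y + \check S_m \pm \alpha_n}{\sigma \sqrt{k}} \right) - \ell \left( \tfrac{x}{\sigma \sqrt{k}},  \tfrac{y + \check S_m}{\sigma \sqrt{k}} \right) \right|;\,  \check \tau_y > m  \right].
\end{align*}
The second summand is precisely the quantity estimated by Lemma \ref{Lem-ballot-001} and is already bounded by $c V_n(x)\check R_{n,\delta}(y)/n^{3/2}$, so it remains to show that $\Delta_n^\pm(x,y)$ is controlled by the same quantity.

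For $\Delta_n^\pm(x,y)$, I apply the Lipschitz estimate \eqref{ell-intergr-bound-002} of Lemma \ref{Lipschiz  property for ell_H in y} to $y \mapsto \ell(x/\sigma\sqrt{k}, y)$, which yields pointwise
$$\left| \ell \left( \tfrac{x}{\sigma \sqrt{k}},  \tfrac{y + \check S_m \pm \alpha_n}{\sigma \sqrt{k}} \right) - \ell \left( \tfrac{x}{\sigma \sqrt{k}},  \tfrac{y + \check S_m}{\sigma \sqrt{k}} \right) \right| \leq \frac{c\, \alpha_n}{\sigma \sqrt{k}}.$$
Combined with the upper bound \eqref{upper-bound-probab-tau-x} applied to the reversed walk (so that $\bb P(\check\tau_y > m) \leq c(n^{-\delta/4} + \check V_n(y))/\sqrt{n}$), the monotonicity $V_k(x)\leq V_n(x)$ from \eqref{inequality-L-increase-n}, and the equivalence $k \asymp n$, one obtains
\begin{align*}
\Delta_n^\pm(x,y) \leq c\, \alpha_n\, \frac{V_n(x)\left(n^{-\delta/4}+\check V_n(y)\right)}{n^{2}}.
\end{align*}

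Finally, it remains to verify that this bound is absorbed by $c V_n(x)\check R_{n,\delta}(y)/n^{3/2}$. The ratio of the two equals
$\alpha_n n^{-1/2} \cdot \bigl(n^{-\delta/4}+\check V_n(y)\bigr)/\check R_{n,\delta}(y)$, which is of order $\alpha_n n^{\delta/(4(3+\delta))-1/2}/\log n$, and since $\delta/(4(3+\delta))<1/2$ and $\alpha_n\to 0$, this tends to $0$. Consequently $\Delta_n^\pm(x,y)$ is dominated by the target error, completing the proof.

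Since the heavy lifting—the convolution identity of heat kernels and the conditioned central limit approximation—has already been done in Lemma \ref{Lem-ballot-001}, I do not expect a genuine obstacle here; the only delicate point is the bookkeeping required to show that the Lipschitz-induced error $\alpha_n$ is compatible with $\check R_{n,\delta}(y)$, which is routine.
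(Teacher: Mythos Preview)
Your proof is correct and follows essentially the same approach as the paper: split off the $\pm\alpha_n$ perturbation via the Lipschitz bound \eqref{ell-intergr-bound-002}, control the resulting error using \eqref{upper-bound-probab-tau-x} for the reversed walk, and then invoke Lemma \ref{Lem-ballot-001} for the unshifted term. The paper's own proof is simply a terser version of exactly these steps.
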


\begin{proof}
By Lemma \ref{Lipschiz  property for ell_H in y}, there exists a constant $c>0$ such that for any $x, y \in \bb R$, 
\begin{align*}
 \left| \ell \left( \frac{x}{\sigma \sqrt{k}},  \frac{y + \check S_m \pm \alpha_n}{\sigma \sqrt{k}} \right) 
-  \ell \left( \frac{x}{\sigma \sqrt{k}},  \frac{y + \check S_m}{\sigma \sqrt{k}} \right)   \right|  
 \leq  \frac{c \alpha_n}{\sqrt{k}}.  
\end{align*}
The assertion follows from Lemma \ref{Lem-ballot-001}, using \eqref{upper-bound-probab-tau-x}. 
\end{proof}

Let us now proceed with the proof of Theorem \ref{theorem-n3/2-upper-lower-bounds}. 
We first establish the upper bound \eqref{theorem-n3/2 001}.   
Without loss of generality we assume that there is a function $\widetilde g: \bb R \to \bb R_+$ such that 
$f \leq_{\alpha_n} \widetilde g \leq_{\alpha_n} g$. 
Set $m=\floor{n/2}$ and $k = n-m.$  
By the Markov property, we have for any $x \in \bb R$, 
\begin{align} \label{JJJ-markov property_aa}
\bb{E}  \Big( f (x+S_n ); \tau_x > n - 1 \Big)  
 = \int_{\bb R_+}  f_m(y) \bb{P}\left( x+S_{k}\in dy,  \tau_x > k \right),
\end{align}
where 
\begin{align} \label{JJJ-markov property_aa-002}
f_m(y) = \mathds 1_{\{y \geq 0\}}  \bb E  \Big( f \left( y+S_{m} \right);  \tau_{y} > m - 1 \Big). 
\end{align}
Denote
\begin{align} \label{DefHmt}  
g_m(y) := \mathds 1_{\{y + \alpha_n \geq 0\}} 
\bb E  \Big( \widetilde g \left( y+S_{m} \right);  \tau _{y + \alpha_n} > m - 1 \Big),  \quad  y \in \bb R. 
\end{align} 
Since $f\leq_{\alpha_n}  \widetilde g$, we see that $f_m \leq_{\alpha_n} g_m$ and that both functions $f_m$ and $g_m$ are integrable on $\bb R$. 
Applying the upper bound \eqref{eqt-B 001} of Theorem \ref{t-B 002},  for any $n \geq 1$ and $x \in \bb R$, 
\begin{align}\label{UpperBoundhhn32}
& \bb{E}  \Big( f (x+S_n ); \tau_x > n - 1 \Big)   = \bb E \left( f_m(x + S_k); \tau_x > k \right)  \notag\\
&  \leq   (1 + c \alpha_n)    \frac{V_k(x)}{\sigma^2 k} 
  \int_{\bb R_+}  g_m(y)  \ell \left(\frac{x}{\sigma\sqrt{k}}, \frac{y}{\sigma\sqrt{k}}\right) dy 
   +  c  \frac{n^{-\frac{\delta}{6} }  + V_{n}(x) n^{-\frac{\delta}{8(3+\delta)} } \log n}{  n }  \left\|  g_m \right\|_1, 
\end{align}
where we used \eqref{inequality-L-increase-n} and the fact that $m= \floor{n/2}$.
% and $V_m(x) = V(x) L(\frac{x}{\sigma \sqrt{m}}) \leq V(x) L(\frac{x}{\sigma \sqrt{n}}) = V_n(x)$. 
For the last term on the right-hand side of \eqref{UpperBoundhhn32}, 
by \eqref{DefHmt}, \eqref{inequality-L-increase-n} and Lemma \ref{lemma Qng integr}, we get 
\begin{align} \label{BoundJ2Order32}
\int_{\bb R} g_{m}(y) dy
& = \int_{\bb R_+} \mathbb E  \left( \widetilde g \left( y- \alpha_n +S_{m} \right) ;\tau_y > m - 1 \right) dy  \notag\\
& \leq  \int_{\bb R_+} \mathbb E  \left( g \left( y +S_{m} \right) ;\tau_y > m - 1 \right) dy  \notag\\
%& \leq  \frac{c}{\sqrt{m}}  \int_{\bb R} g(y) \left(  \check V_m(y) + \frac{1}{m^{ \delta/4 }} \right) dy \notag\\
& \leq \frac{c}{\sqrt{n}}  \int_{\bb R} g(y) \left(  \check V_n(y) + \frac{1}{n^{ \delta/4 }} \right) dy. 
\end{align}
For the first term on the right-hand side of \eqref{UpperBoundhhn32}, 
we use \eqref{DefHmt}, a change of variable  
and the duality formula (\eqref{eq-duality-lemma-002_Cor} of Lemma \ref{lemma-duality-lemma-2_Cor}) to get
\begin{align}\label{BoundJ1rr}
A_{k, m}(x) :& =  \frac{V_k(x)}{ \sigma^2 k} 
 \int_{\bb R_+}  g_m(y)  \ell \left(\frac{x}{\sigma\sqrt{k}}, \frac{y}{\sigma\sqrt{k}}\right) dy \notag\\
&=  \frac{V_k(x)}{\sigma^2 k }  
\int_{\bb R_+} \bb E \Big[ \widetilde g(y + S_m); \tau_{y + \alpha_n} > m - 1 \Big]  \mathds 1_{\{y + \alpha_n \geq 0\}} 
 \ell\left(\frac{x}{\sigma\sqrt{k}}, \frac{y}{\sigma\sqrt{k}}\right) dy  \notag\\
&=  \frac{V_k(x)}{\sigma^2 k} \int_{\bb R_+} \bb E  \Big[  \widetilde g(y + S_m - \alpha_n); \tau_{y} > m - 1 \Big]  
     \ell\left(\frac{x}{\sigma\sqrt{k}}, \frac{y - \alpha_n}{\sigma\sqrt{k}}\right) dy  \notag\\
& =   \int_{\bb R}   \widetilde g(y - \alpha_n)   \frac{V_k(x)}{\sigma^2 k }  
 \bb E \left[  \ell\left(\frac{x}{\sigma\sqrt{k}}, \frac{y + \check S_m - \alpha_n }{\sigma \sqrt{k}} \right); 
  \check \tau_{y} > m   \right]  dy  \notag\\
  & \leq  \int_{\bb R}  g(y)   \frac{V_k(x)}{\sigma^2 k }  
 \bb E \left[  \ell\left(\frac{x}{\sigma\sqrt{k}}, \frac{y + \check S_m - \alpha_n }{\sigma \sqrt{k}} \right); 
  \check \tau_{y} > m   \right]  dy. 
\end{align}
By Lemma \ref{Lem-ballot-nonlattice-001}, it follows that 
\begin{align}\label{non-lattice gauss approx-001}
A_{k, m}(x)
\leq  \frac{V_n(x) }{ \sigma^3 n^{3/2} }  
  \int_{\bb R}  g (y)  \check V_n(y)  p \left( \frac{x}{\sigma\sqrt{n}} , \frac{y}{\sigma\sqrt{n} } \right) dy
  +  c \frac{V_n(x) }{ n^{3/2} }   \int_{\bb R}  g (y)   R_{n, \delta}(y) dy. 
\end{align}
Combining \eqref{UpperBoundhhn32}, \eqref{BoundJ2Order32}, \eqref{BoundJ1rr} and 
\eqref{non-lattice gauss approx-001} %Lemma \ref{Lem-ballot-001}, 
finishes the proof of the upper bound \eqref{theorem-n3/2 001}.

Now we prove the lower bound \eqref{theorem-n3/2 002}.
Denote
\begin{align} \label{Def-funct-hm}  
h_m(y) := \mathds 1_{\{y - \alpha_n \geq 0\}} 
\bb E  \Big( \widetilde h \left( y+S_{m} \right);  \tau _{y - \alpha_n} > m - 1 \Big),  \quad  y \in \bb R,  
\end{align}
where $m=\floor{n/2}$, $k = n-m$ and the function $\widetilde h: \bb R \to \bb R_+$ satisfies $h \leq_{\alpha_n} \widetilde h \leq_{\alpha_n} f$. 
Recall that $f_m$ and $g_m$ are defined by \eqref{JJJ-markov property_aa-002} and \eqref{DefHmt}, respectively. 
Applying the lower bound \eqref{eqt-B 002} of Theorem \ref{t-B 002}, 
we get 
\begin{align}\label{lower-bound-Ballot-001}
 \bb{E}  \Big( f (x+S_n ); \tau_x > n - 1 \Big)  
 &  = \bb E \left( f_m(x + S_k); \tau_x > k \right)  \notag\\
&  \geq    \frac{V_k(x)}{\sigma^2 k} 
    \int_{\bb R_+} \Big[  h_m(y)  - c \alpha_n  f_m(y)  \Big]  
    \ell\left(\frac{x}{\sigma\sqrt{k}} ,\frac{y}{\sigma\sqrt{k}}\right) dy   \notag\\
 & \quad  -  c  \frac{n^{-\frac{\delta }{6} }  + V_{n}(x) n^{-\frac{\delta}{8(3+\delta)} } \log n}{  n }  \left\|  g_m \right\|_1,  
\end{align}
where we used \eqref{inequality-L-increase-n} and the fact that $m= \floor{n/2}$. 
By the duality formula (\eqref{eq-duality-lemma-002_Cor} of Lemma \ref{lemma-duality-lemma-2_Cor}), we have
\begin{align*}%\label{}
 \int_{\bb R_+}   h_m(y)  \ell\left(\frac{x}{\sigma\sqrt{k}} ,\frac{y}{\sigma\sqrt{k}}\right) dy 
&=   \int_{\bb R_+} \bb E  \Big[  \widetilde h(y + S_m + \alpha_n); \tau_{y} > m - 1 \Big]  
     \ell\left(\frac{x}{\sigma\sqrt{k}},  \frac{y + \alpha_n}{\sigma\sqrt{k}}\right) dy  \notag\\
& =   \int_{\bb R}   \widetilde h(y + \alpha_n)  
 \bb E \left[  \ell\left(\frac{x}{\sigma\sqrt{k}}, \frac{y + \check S_m + \alpha_n }{\sigma \sqrt{k}} \right); 
  \check \tau_{y} > m   \right]  dy  \notag\\
  & \geq  \int_{\bb R}  h(y)   \bb E \left[  \ell\left(\frac{x}{\sigma\sqrt{k}}, \frac{y + \check S_m + \alpha_n }{\sigma \sqrt{k}} \right); 
  \check \tau_{y} > m   \right]  dy
\end{align*}
and 
\begin{align*}%\label{}
  \int_{\bb R_+}   f_m(y)  \ell\left(\frac{x}{\sigma\sqrt{k}} ,\frac{y}{\sigma\sqrt{k}}\right) dy  
& =  \int_{\bb R_+} \bb E \Big[ f(y + S_m); \tau_{y} > m - 1 \Big]  
 \ell\left(\frac{x}{\sigma\sqrt{k}},  \frac{y}{\sigma\sqrt{k}}\right) dy  \notag\\
& =   \int_{\bb R}   f(y)  
 \bb E \left[  \ell\left(\frac{x}{\sigma\sqrt{k}}, \frac{y + \check S_m}{\sigma \sqrt{k}} \right); 
  \check \tau_{y} > m   \right]  dy. 
\end{align*}
From Lemma \ref{Lem-ballot-nonlattice-001}, %By Lemma \ref{Lem-ballot-001}, 
it follows that 
\begin{align*}%\label{}
&   \frac{V_k(x)}{\sigma^2 k} 
    \int_{\bb R_+} \Big[  h_m(y)  - c \alpha_n  f_m(y)  \Big]  
    \ell\left(\frac{x}{\sigma\sqrt{k}} ,\frac{y}{\sigma\sqrt{k}}\right) dy   \notag\\
& \geq   \frac{V_n(x) }{ \sigma^3 n^{3/2} }  
  \int_{\bb R}  \left(  h(y) - c \alpha_n f(y) \right) \check V_n(y)  p \left( \frac{x}{\sigma\sqrt{n}} , \frac{y}{\sigma\sqrt{n} } \right) dy 
 - c \frac{V_n(x) }{ n^{3/2} }  \int_{\bb R}   f(y) \check R_{n, \delta}(y) dy.
\end{align*}
Substituting this into \eqref{lower-bound-Ballot-001} and using \eqref{BoundJ2Order32} finishes the proof 
of the lower bound \eqref{theorem-n3/2 002}.

%%%%%%%%%%%%%%%%%%%%%%%%%%%%%%%%%%%%%%%%%%%%%%%%%%%%
\subsection{Proof of Theorem \ref{main res non-lattice case-001}}  \label{sec-proof of equiv for intervals}
By Theorem \ref{theorem-n3/2-upper-lower-bounds}, %using the fact that the function $\ell$ is bounded, we have
there exists a sequence of positive numbers $(\alpha_n)_{n \geq 1}$ satisfying $\lim_{n \to \infty} \alpha_n = 0$ such that 
\begin{align*}%\label{from theorem-n3/2 001}
  \bb{P}  \Big( x+S_n \in [y,y+v), & \tau_x > n - 1 \Big) \leq  (1 + c \alpha_n)  \frac{V_n(x) }{ \sigma^3 n^{3/2} } 
 \int_{y-\alpha_n}^{y+v+\alpha_n}  \check V_n(z)  p \left( \frac{x}{\sigma\sqrt{n}} , \frac{z}{\sigma\sqrt{n} } \right) dz 
%R_n(x,z)   
\notag\\
&  +  c  \frac{n^{-\frac{\delta}{6} }  + V_{n}(x) n^{-\frac{\delta}{8(3+\delta)} } \log n}{  n^{3/2} }
 \int_{y-\alpha_n}^{y+v+\alpha_n}  \left(  \check V_n(z) + \frac{1}{n^{\delta/4}} \right) dz. 
 \end{align*}
 Note that 
 \begin{align}\label{3 intergrals}
 & \int_{y-\alpha_n}^{y+v+\alpha_n}  \check V_n(z)  p \left( \frac{x}{\sigma\sqrt{n}} , \frac{z}{\sigma\sqrt{n} } \right) dz  \notag\\
 & = \left( \int_{y}^{y+v}  + \int_{y-\alpha_n}^{y}  + \int_{y+v}^{y+v+\alpha_n}  \right) \check V_n(z)  p \left( \frac{x}{\sigma\sqrt{n}} , \frac{z}{\sigma\sqrt{n} } \right) dz. 
\end{align}
We first deal with the second integral in the right-hand side of \eqref{3 intergrals}.
Since $\check V$ is increasing on $\bb R$, by Lemma \ref{lem-inequality for L}, for any $n\geq 1$ and $z\in \bb R$,
\begin{align*} %\label{001}
\check V_n(z-\alpha_n) 
&= \check V_n(z-\alpha_n) L\left(\frac{z-\alpha_n}{\sigma\sqrt{n}}\right) \notag\\
&\leq  \check V(z) L\left(\frac{z}{\sigma\sqrt{n}}\right) \left(1+c\frac{\alpha_n}{\sqrt{n}}\right)
=  \check V_n(z) \left(1+c\frac{\alpha_n}{\sqrt{n}}\right). 
\end{align*}
Using \eqref{lem-holder prop for ell-002}, this implies that, for any $n\geq 1$ and $x,z\in \bb R$,
\begin{align*}%\label{}
& \int_{y-\alpha_n}^y \check V_n(z)  p \left( \frac{x}{\sigma\sqrt{n}} , \frac{z}{\sigma\sqrt{n} } \right) dz =  \int_{y}^{y+ \alpha_n} \check V_n(z -\alpha_n)  p \left( \frac{x}{\sigma\sqrt{n}} , \frac{z -\alpha_n}{\sigma\sqrt{n} } \right) dz \notag\\
& \leq \left( 1 +  c \frac{\alpha_n}{\sqrt{n}} \right) 
\int_{y}^{y+ \alpha_n} \check V_n(z)  p \left( \frac{x}{\sigma\sqrt{n}} , \frac{z}{\sigma\sqrt{n} } \right) dz 
+  c \frac{\alpha_n}{\sqrt{n}}   \int_{y}^{y+ v} \check V_n(z) dz. 
\end{align*}
For the third integral in the right-hand side of \eqref{3 intergrals},
using the bound 
\begin{align*} %\label{001}
\sup_{z \geq v_0+ \inf (\supp \check V)} \frac{\check V(z + \alpha_n)}{\check V(z)} \leq c
\end{align*}
and Lemma \ref{lem-inequality for L}, we have, for any $z \geq v_0+\inf(\supp \check V)$, 
\begin{align*}%\label{}
\check V_n(z + \alpha_n) 
&=   \check V(z + \alpha_n) L \left( \frac{z + \alpha_n}{\sigma \sqrt{n}} \right) \notag\\
 &\leq c \check V(z) L \left( \frac{z}{\sigma \sqrt{n}} \right) \left( 1 + c \frac{\alpha_n}{\sqrt{n}}   \right)
 =  c \check V_n(z) \left( 1 + c \frac{\alpha_n}{\sqrt{n}}   \right). 
\end{align*}
As above, using \eqref{lem-holder prop for ell-002}, we get, for $n\geq 1$ 
and any $x \in \bb R$, $y \in\supp \check V$ and $v \geq v_0$, 
\begin{align*}%\label{}
& \int_{y+v}^{y +v+\alpha_n}  \check V_n(z)  p \left( \frac{x}{\sigma\sqrt{n}} , \frac{z}{\sigma\sqrt{n} } \right) dz 
 = \int_{y+v- \alpha_n}^{y+v}  \check V_n(z + \alpha_n)  p \left( \frac{x}{\sigma\sqrt{n}} , \frac{z + \alpha_n}{\sigma\sqrt{n} } \right) dz  \notag\\
&  \leq  c \int_{y+v- \alpha_n}^{y+v}  \check V_n(z)  p \left( \frac{x}{\sigma\sqrt{n}} , \frac{z}{\sigma\sqrt{n} } \right) dz
 +   c \frac{\alpha_n}{\sqrt{n}}   \int_{y}^{y+v}  \check V_n(z) dz. 
\end{align*}
Since $\lim_{n \to \infty} \alpha_n = 0$ and $v\geq v_0$, we have, as $n \to \infty$,
uniformly in $x\in \bb R$, $y \in  \supp \check V$ and $v\geq v_0$,
%uniformly in $y \in \bb R$, 
\begin{align*}%\label{}
 \left( \int_{y}^{y+ \alpha_n}  +  \int_{y+v-\alpha_n}^{y+v}  \right) \check V_n(z)  p \left( \frac{x}{\sigma\sqrt{n}} , \frac{z}{\sigma\sqrt{n} } \right) dz 
= o(1) \int_{y}^{y+v}  \check V_n(z)  p \left( \frac{x}{\sigma\sqrt{n}} , \frac{z}{\sigma\sqrt{n} } \right) dz. 
\end{align*}
Therefore, as $n\to\infty$, uniformly in $x \in \bb R$, $y \in\supp \check V$ and $v \geq v_0$, 
\begin{align*}%\label{}
 &\left( \int_{y}^{y+ \alpha_n}  +  \int_{y+v-\alpha_n}^{y+v}  \right)
  \check V_n(z)  p \left( \frac{x}{\sigma\sqrt{n}} , \frac{z}{\sigma\sqrt{n} } \right) dz \notag\\
& \qquad =  o(1)  \int_{y}^{y+v}  \check V_n(z)  p \left( \frac{x}{\sigma\sqrt{n}} , \frac{z}{\sigma\sqrt{n} } \right) dz +   
O\left( \frac{\alpha_n}{\sqrt{n}}\right)   \int_{y}^{y+v}  \check V_n(z) dz. 
\end{align*}
Collecting the above expansions finishes the proof of the upper bound of the theorem. 

The lower bound is obtained in the same way.

%%%%%%%%%%%%%%%%%%%%%%%%%%%%%%%%%%%%%%%%%%%%%%%%%%%%
\subsection{Proofs for local behavior of $\tau_x$ in the non-lattice case.}  \label{sec proof of equiv local for tau} 
In this section we prove  Theorem \ref{theorem local for tau non-lattice }.
Set $f(y)=\bb P\left( y+X_1 <0 \right)= \bb P(\check S_1 >y)$ for $y\in\bb R$. 
By the Markov property and Theorem \ref{theorem-n3/2-upper-lower-bounds}, 
%we have, for any $n \geq 1$ and $x\in \bb R$,
one can find a constant $c>0$ and a sequence $(\alpha_n)_{n \geq 1}$ of positive numbers satisfying $\lim_{n \to \infty} \alpha_n = 0$  
such that for any $n\geq 1$ and $x\in \bb R$, 
\begin{align*}
\bb P\left( \tau_x = n+1 \right) 
& = \bb{E}  \Big( f (x+S_n ) \mathds 1_{\{ x+S_n \geq 0\} };  \tau_x > n - 1 \Big)  \notag\\
& \leq   (1 + c \alpha_n)  \frac{V_n(x) }{ \sigma^3 n^{3/2} }  \bar K_n(x)  \notag\\
& \quad  +  c  \frac{n^{-\frac{\delta}{6} }  + V_{n}(x) n^{-\frac{\delta}{8(3+\delta)} } \log n}{  n^{3/2} }
 \int_{-\alpha_n}^{\infty}  f (y - \alpha_n) \left(  \check V_n(y) + \frac{1}{n^{\delta/4}} \right) dy, 
\end{align*}
where, 
for $x\in \bb R$, we denoted for short,
\begin{align*} %\label{aa001}
\bar K_n(x) & = \int_{-\alpha_n}^{\infty}  f (y - \alpha_n)  \check V_n(y)  p \left( \frac{x}{\sigma\sqrt{n}} , \frac{y}{\sigma\sqrt{n} } \right) dy  \notag\\
& = \int_{-2\alpha_n}^{\infty}  f (y)  \check V_n(y + \alpha_n)  p \left( \frac{x}{\sigma\sqrt{n}} , \frac{y + \alpha_n}{\sigma\sqrt{n} } \right) dy. 
\end{align*}
Since $p$ is bounded,  by finiteness of $\varkappa$ (see \eqref{def kappanonlatt 001}), we have, for $x\in \bb R$,
\begin{align*} %\label{def kappa-bar-002}
\bar K_n(x) \leq  \int_{-\alpha_n}^{\infty} f (y - \alpha_n) \left(\check V_n(y)+\frac{1}{n^{\delta/4}} \right) dy \leq c  <\infty.
\end{align*}
To accomplish the proof of Theorem \ref{theorem local for tau non-lattice }, it is enough to establish the following lemma, 
where we recall that the function $\phi_L$ is defined by \eqref{approx by normal density-001b}. 
\begin{lemma} \label{lem-appox for varkappa-non-latt-001}
There exists a constant $c>0$ such that, for any $n\geq 1$ and $x \in \bb R$,
\begin{align*} %\label{aa001}
\left| \bar K_n(x) -  \varkappa L(0) \phi_L\left(\frac{x}{\sigma\sqrt{n}}\right) \big( 1 + o(1) \big) \right| 
\leq c n^{-\frac{\delta}{2(1+\delta)}}.
\end{align*}
 \end{lemma}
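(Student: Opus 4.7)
The strategy mirrors the lattice argument used in Lemma \ref{lem-appox for varkappa_n-001}: we split the range of integration at a threshold $\beta_n\sqrt n$ where $\beta_n\to 0$ is to be chosen, use the Lipschitz continuity of $L$ and of $y\mapsto p(x,y)$ on the bounded part to freeze the integrand at $y=0$, and control the tail by the $(2+\delta)$-moment of $X_1$. Writing $\bar K_n(x)=I_{n,1}(x)+I_{n,2}(x)$ with $I_{n,1}(x)$ the integral over $y\in[-2\alpha_n,\beta_n\sqrt n]$ and $I_{n,2}(x)$ the integral over $y>\beta_n\sqrt n$, the target is to show $I_{n,1}(x)=L(0)\varkappa\,\phi_L(\frac{x}{\sigma\sqrt n})(1+o(1))$ plus a small error, while $|I_{n,2}(x)|\leq c(\beta_n\sqrt n)^{-\delta}$.

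For the tail $I_{n,2}(x)$, Markov's inequality applied with $\bb E(|\check X_1|^{2+\delta})<\infty$ gives $f(y)=\bb P(\check X_1>y)\leq cy^{-(2+\delta)}$ for $y\geq 1$, and the bound $\check V(y+\alpha_n)\leq c(1+y)$ together with the boundedness of $p$ and of $L$ yields
\[
|I_{n,2}(x)|\leq c\int_{\beta_n\sqrt n}^\infty\frac{1+y}{y^{2+\delta}}\,dy\leq c(\beta_n\sqrt n)^{-\delta}.
\]
For the main part $I_{n,1}(x)$, on the window $y\in[-2\alpha_n,\beta_n\sqrt n]$ the arguments $(y+\alpha_n)/(\sigma\sqrt n)$ stay of order $\beta_n$, so Lemma \ref{lem-inequality for L} gives $|L(\tfrac{y+\alpha_n}{\sigma\sqrt n})-L(0)|\leq c\beta_n$ and inequality \eqref{lem-holder prop for ell-002} gives $|p(\tfrac{x}{\sigma\sqrt n},\tfrac{y+\alpha_n}{\sigma\sqrt n})-p(\tfrac{x}{\sigma\sqrt n},0)|\leq c\beta_n$. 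Using $p(\tfrac{x}{\sigma\sqrt n},0)=\phi_L(\tfrac{x}{\sigma\sqrt n})$ (which follows from the symmetry of $p$ together with \eqref{approx by normal density-001b}) and the uniform boundedness of $\phi_L$ on $\bb R$, this produces
\[
I_{n,1}(x)=\bigl(L(0)\phi_L(\tfrac{x}{\sigma\sqrt n})+O(\beta_n)\bigr)\int_{-2\alpha_n}^{\beta_n\sqrt n}f(y)\check V(y+\alpha_n)\,dy.
\]

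It remains to show that the integral on the right tends to $\varkappa=\int_{\bb R_+}\check V(y)f(y)\,dy$. The contribution over $[-2\alpha_n,0]$ is $O(\alpha_n)$ since $f$ is bounded by $1$ and $\check V(y+\alpha_n)$ is bounded on this interval. For the contribution over $[0,\beta_n\sqrt n]$, we write
\[
\int_0^{\beta_n\sqrt n}f(y)\check V(y+\alpha_n)\,dy-\varkappa=\int_0^{\beta_n\sqrt n}f(y)\bigl[\check V(y+\alpha_n)-\check V(y)\bigr]dy-\int_{\beta_n\sqrt n}^\infty f(y)\check V(y)\,dy.
\]
Monotonicity of $\check V$ together with the Rogozin estimate \eqref{Rogizin estim-001} (applied to $\check V$) shows that $\check V(y+\alpha_n)-\check V(y)\leq c\alpha_n$ uniformly in $y\geq 0$; combined with $\int_{\bb R_+}f(y)\,dy=\bb E(\check X_1)_+<\infty$, the first integral is $O(\alpha_n)$. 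The second integral is the tail of a convergent integral (since $\check V(y)f(y)\leq cy^{-(1+\delta)}$ for large $y$) and is thus $o(1)$ as $n\to\infty$.

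Putting everything together, $\bar K_n(x)=L(0)\varkappa\,\phi_L(\tfrac{x}{\sigma\sqrt n})(1+o(1))+O(\beta_n)+O((\beta_n\sqrt n)^{-\delta})+O(\alpha_n)$, with $o(1)$ and $O$-constants independent of $x$. Balancing $\beta_n$ against $(\beta_n\sqrt n)^{-\delta}$ by choosing $\beta_n=n^{-\delta/(2(1+\delta))}$ yields the claimed error bound. The main technical point is the justification of the approximation of $\int_0^{\beta_n\sqrt n}f(y)\check V(y+\alpha_n)\,dy$ by $\varkappa$, which requires both the $(2+\delta)$-moment of $X_1$ (for tail control via Rogozin's estimate) and the fact that $\alpha_n\to 0$; all other steps are elementary consequences of results already established in Section \ref{Sec-Auxiliary statements}.
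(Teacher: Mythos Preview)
Your approach is essentially identical to the paper's: the same splitting at $\beta_n\sqrt n$, the same use of the Lipschitz properties of $L$ and $p$, the same tail bound via the $(2+\delta)$-moment, and the same final choice $\beta_n=n^{-\delta/(2(1+\delta))}$.

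There is one genuine gap. You claim that ``monotonicity of $\check V$ together with the Rogozin estimate \eqref{Rogizin estim-001} shows that $\check V(y+\alpha_n)-\check V(y)\leq c\alpha_n$ uniformly in $y\geq 0$.'' This is not what Rogozin's estimate gives: the bound $0\leq \check V(y)/y-1\leq cy^{-\delta}$ controls the asymptotic slope of $\check V$, not its local increments. In fact $\check V$ is only guaranteed to be non-decreasing, and it may have jumps (the ladder-height distribution can have atoms even in the non-lattice case), so a uniform pointwise Lipschitz bound of this form is simply false in general. The paper fixes this by observing that since $\check V$ is monotone it has at most countably many discontinuities, hence $\check V(y+\alpha_n)\to\check V(y)$ for almost every $y$, and then invokes dominated convergence (the dominating function being $c\,f(y)(1+y)$, which is integrable) to conclude
\[
\int_{\bb R_+} f(y)\bigl[\check V(y+\alpha_n)-\check V(y)\bigr]\,dy \to 0.
\]
This yields only $o(1)$ rather than your claimed $O(\alpha_n)$, but $o(1)$ is all that the statement of the lemma requires (note the $(1+o(1))$ factor). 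With this correction your argument goes through.
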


%%%%%%%%%%%%%%%%%%%%%%%%%%%%%%%%%%%%%%%%%%%
%For the paper version on could remove the proof, if necessary, 
% since the proof proceeds along the same lines as that of Lemma \ref{lem-appox for varkappa_n-001}. 
%The proof proceeds along the same lines as that of Lemma  \ref{lem-appox for varkappa_n-001}. 
%%%%%%%%%%%%%%%%%%%%%%%%%%%%%%%%%%%%%%%%%%%%%%%%%%%%
\begin{proof} 
Lemma \ref{lem-appox for varkappa_n-001}
Let $(\beta_n)$ be a sequence of positive numbers such that $n^{-1/2}\leq \beta_n\to 0$ as $n\to\infty$. 
Then, for any $n\geq 1$ and $x\in \bb R$, the quantity
$\bar K_n(x)$ can be bounded from above and below as follows: % \label{definition of varkappa-aa001} we have
\begin{align*} %\label{two sided bound K_n-nonlatt-001}
I_1(x) \leq  \bar K_n(x)  \leq  I_1(x) + c I_2(x),
\end{align*}
where
\begin{align*} %\label{aa001}
I_1(x) 
= \int_{-2 \alpha_n}^{\beta_n \sqrt{n} }  f(y) \check V_{n}(y + \alpha_n) \, p\left( \frac{x}{\sigma\sqrt{n}}, \frac{y+ \alpha_n}{\sigma\sqrt{n}} \right) dy, 
\quad
I_2(x) = \int_{\beta_n \sqrt{n}}^{\infty} f(y) \check V_{n}(y + \alpha_n) dy.
\end{align*}
By the  Lipschitz property of the function $p$ (see \eqref{lem-holder prop for ell-002}), 
we have, for $y \in [-2\alpha_n, \beta_n \sqrt{n}]$, 
\begin{align*} %\label{eq-usin Lipsch property-abc002}
\left|p\left( \frac{x}{\sigma\sqrt{n}}, \frac{y+\alpha_n}{\sigma\sqrt{n}} \right) - p\left( \frac{x}{\sigma\sqrt{n}}, 0 \right)\right|
\leq c  \frac{|y+\alpha_n|}{\sigma\sqrt{n}} \leq c \beta_n + c \frac{\alpha_n}{\sqrt{n}} \leq c\beta_n.
\end{align*}
Note that $\check V_n(y)= \check  V(y) L\left(\frac{y}{\sigma\sqrt{n}}\right)$. By Lemma 2.3 in \cite{GX-2024-AIHP}, 
we have  
\begin{align*} %\label{aa001}
\left| L\left(\frac{y+\alpha_n}{\sigma\sqrt{n}}\right) - L(0) \right| \leq c \frac{|y+\alpha_n|}{\sigma\sqrt{n}} \leq c \beta_n 
+ c \frac{\alpha_n}{\sqrt{n}} \leq c \beta_n. 
\end{align*}
Since $\check V$ is increasing on $\bb R$, it has at most countably many discontinuity points.
By the dominated convergence theorem, it holds that $\int_{\bb R_+} f(y) |\check V(y+\alpha_n) -  \check V(y)| dy \to 0$ as $n \to \infty$. 
Therefore, combining \eqref{approx by normal density-001b} with the symmetry of the function $p$, we obtain
\begin{align} \label{bound I_1nonlatt-aadd001}
I_1(x)
&\leq  \int_{-2\alpha_n}^{\beta_n \sqrt{n}}  f(y) \check V(y+\alpha_n)  dy 
 \left(\phi_L\left(\frac{x}{\sigma\sqrt{n}}\right) + c \beta_n\right) \left(L(0)+c\beta_n\right) \notag \\
&\leq \left( \int_{0}^{\beta_n \sqrt{n}}  f(y)  \check V(y+\alpha_n)   dy + c \alpha_n\right) 
\left(\phi_L\left(\frac{x}{\sigma\sqrt{n}}\right) + c \beta_n\right) \left(L(0)+c\beta_n\right) \notag \\
&\leq \left( \int_{0}^{\infty}  f(y)  \check V(y)   dy + c \alpha_n + o(1) \right) 
\left(\phi_L\left(\frac{x}{\sigma\sqrt{n}}\right) + c \beta_n\right) \left(L(0)+c\beta_n\right) \notag \\
&\leq  \varkappa L(0) \phi_L\left(\frac{x}{\sigma\sqrt{n}}\right)   \big( 1 + o(1) \big) + c \beta_n.
\end{align}
For $I_2(x)$, we derive the bound
\begin{align*} %\label{aa001}
I_2(x) & = \int_{\beta_n \sqrt{n}}^{\infty}  \bb P(\check S_1 >y)  \check V_{n}(y + \alpha_n) dy   
\leq  c \bb E(|X_1|^{2+\delta})  \int_{\beta_n \sqrt{n}}^{\infty} \frac{1+y}{y^{2+\delta}} dy  
\leq  c(\beta_n\sqrt{n})^{-\delta}. 
\end{align*}
Combining the above inequalities, we get the following upper bound:
\begin{align*} %\label{aa001}
\bar K_n(x) 
\leq \varkappa L(0) \phi_L\left(\frac{x}{\sigma\sqrt{n}}\right) \big( 1 + o(1) \big) + c \beta_n + c(\beta_n\sqrt{n})^{-\delta}.
\end{align*}
A lower bound is proved in the same way. Indeed, proceeding as in \eqref{bound I_1nonlatt-aadd001}, we have
\begin{align*} %\label{aa001}
I_1(x)
\geq  \varkappa L(0)  \phi_L\left(\frac{x}{\sigma\sqrt{n}}\right) \big( 1 - o(1) \big) - c \beta_n - c(\beta_n\sqrt{n})^{-\delta}.
\end{align*}
Choosing $\beta_{n}=n^{-\frac{\delta}{2(1+\delta)}}$, we obtain the assertion of the lemma. 
\end{proof}

%%%%%%%%%%%%%%%%%%%%%%%%%%%%%%%%%%%%%%%%%%%%%%
%KEEP THE APPNDIX FOR THE ARXIV VERSION
%%%%%%%%%%%%%%%%%%%%%%%%%%%%%%%%%%%%%%%%%%%%
%%
%% DO NOT REMOVE THE APPENDIX IT WILL BE USED IN THE arXiv VERSION
%%
%%%%%%%%%%%%%%%%%%%%%%%%%%%%%%%%%%%%%%%%%%%

\appendix

\section{Renewal function formulations}\label{sec: renewal func formulation}
Our results are expressed in terms of the harmonic functions $V$ and $\check V$. 
To facilitate comparison with results formulated using the corresponding renewal functions, 
we provide the necessary definitions below and establish their connections with $V$ and $\check V$.
Let 
\begin{align*} %\label{}
c_+  &= \sum_{k=1}^{\infty} \frac{1}{k}\left(\bb P(S_k>0) - \frac{1}{2}\right)=\sum_{k=1}^{\infty} \frac{1}{k}\left(\bb P(\check S_k<0) - \frac{1}{2}\right) =\check c_-,\\
c_- &= \sum_{k=1}^{\infty} \frac{1}{k}\left(\bb P(S_k<0) - \frac{1}{2}\right) = \sum_{k=1}^{\infty} \frac{1}{k}\left(\bb P(\check S_k>0) - \frac{1}{2}\right) =\check c_+,\\
c_0 &= \sum_{k=1}^{\infty} \frac{1}{k}\left(\bb P(S_k=0) - \frac{1}{2}\right) = \sum_{k=1}^{\infty} \frac{1}{k}\left(\bb P(\check S_k=0) - \frac{1}{2}\right) =\check c_0.
\end{align*}
From \cite[Chapter 4]{KV17} it follows that 
\begin{align} \label{appendix-001}
c_0 + c_- + c_+  = 0 
\end{align}
and also that 
\begin{align} \label{appendix-002}
 V(0) = - \bb E S_{\tau_0} = \frac{\sigma e^{-c_-}}{\sqrt{2}}, \quad 
 \check V(0) = - \bb E \check S_{\check \tau_0}= \bb E S_{\check \tau_0} =\frac{\sigma e^{- \check c_-}}{\sqrt{2}}=\frac{\sigma e^{- c_+}}{\sqrt{2}}.
\end{align}
Following \cite{Don89}, consider the stopping time $\tau_D^- =\inf\{k\geq 1: S_k \leq 0 \}$. 
Define the functions $U_D$ and $\check U_K$ as follows:
for any $x\in \bb R$,
\begin{align} 
U_D(x) 
&:= \sum_{k=0}^{\infty} \bb P(S_k \leq x, \tau_D^->k)
= \mathds 1_{\{x \geq 0\}} + \sum_{k=0}^{\infty} \bb P(S_k \leq x, S_1> 0,\ldots,S_k>0) \label{def of U_D-U_K-001} \\
&= \mathds 1_{\{x \geq 0\}} + \sum_{k=0}^{\infty} \bb P(\check S_k \geq -x, \check S_1< 0,\ldots,\check S_k<0)=:\check U_K(x). 
\label{def of U_D-U_K-002} 
\end{align}
The function $U_{D}$ was introduced in \cite{Don89} (it was denoted $U$ there)  
%It can be expressed in the following equivalent forms: 
while the function $\check U_K$ was defined in \cite{KV17} (it coincides with the function $U$ defined there but 
 applied to the random walk $(\check S_k)_{k\geq 1}$).
Let $\tau_D =\inf\{k\geq 1: S_k > 0 \}=\check \tau_0$.
Consider the function $V_D$  
\begin{align*} %\label{}
V_D(x)
&=\sum_{k=0}^{\infty} \bb P(S_k\leq -x, \tau_D>k) \\
&= \mathds 1_{\{x \geq 0\}} + \sum_{k=1}^{\infty} \bb P(S_k\leq -x, S_1\leq 0,\ldots,S_k\leq 0).
\end{align*}
Note that $V_D$ coincides with the function $V$ introduced in \cite{Don89}.

Using Theorem 4.7 of \cite{KV17} and Proposition 11 of \cite{Don89} we establish the following relations 
among the functions $V, \check V, U_D, V_D$ and $\check U_K$.

\begin{lemma}\ \\ 
1. For any $x\geq 0$, $U_D(x) = \check U_K(x)  = \frac{\check V(x)}{\check V(0)}.$\\
2. When $X_1$ is lattice, for any $x\geq 0$, $\frac{V(x)}{V(0)} = e^{-c_0} V_D(x).$ 
When $X_1$ is non-lattice, $\frac{V(x)}{V(0)} = e^{-c_0} V_D(x),$ for almost all $x\geq 0$.  
\end{lemma}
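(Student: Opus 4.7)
The first identity $U_D(x)=\check U_K(x)$ in part~1 is tautological from comparing \eqref{def of U_D-U_K-001} and \eqref{def of U_D-U_K-002}: since $\check S_k=-S_k$ by definition, the inequalities $S_k\leq x$ and $S_j>0$ are equivalent, respectively, to $\check S_k\geq -x$ and $\check S_j<0$, so the two series coincide term by term (including the $k=0$ term). To then identify this common value with $\check V(x)/\check V(0)$, I would invoke Theorem 4.7 of \cite{KV17} applied to the reversed walk $(\check S_n)_{n\geq 1}$. That theorem asserts that the harmonic function of a centred random walk killed upon its first exit from $\bb R_+$ coincides, up to a multiplicative constant, with the renewal-type series defining $\check U_K$. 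Evaluating at $x=0$ gives $\check U_K(0)=1$, which pins the constant down to $1/\check V(0)$ and yields the stated equality for every $x\geq 0$.

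For part~2, I would apply Proposition~11 of \cite{Don89} to our walk $(S_n)$. In the notation of \cite{Don89} the function there denoted $V$ is precisely our $V_D$, and this proposition identifies it with Doney's harmonic function up to a multiplicative constant. Doney's harmonic function is in turn proportional to our $V$ (both are harmonic for the walk killed at $\tau_x$, and the space of such harmonic functions is one-dimensional by \eqref{Doob transf}), so the ratio $V(x)/V_D(x)$ is constant in $x$ and equals $V(0)/V_D(0)$. Using \eqref{appendix-002} to read off $V(0)=\sigma e^{-c_-}/\sqrt{2}$, together with a short Spitzer-Baxter computation of $V_D(0)=\sum_{k\geq 0}\bb P(S_k\leq 0,\tau_D>k)$ (which yields a closed form in terms of $c_+$ and $c_0$), and the cancellation $c_0+c_-+c_+=0$ from \eqref{appendix-001}, this ratio collapses to $V(0)\, e^{-c_0}$. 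Dividing through by $V(0)$ gives the claimed identity $V(x)/V(0)=e^{-c_0}V_D(x)$.

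The lattice versus non-lattice dichotomy in part~2 is purely a regularity artefact: in the lattice case both $V$ and $V_D$ are step functions supported on (and constant between consecutive points of) the lattice, so a proportionality valid at every lattice point automatically extends to every $x\geq 0$; in the non-lattice case the renewal measure underlying $V_D$ has, generically, no atoms but the identification between $V_D$ and $V$ obtained from \cite{Don89} is only an equality of right-continuous versions of the corresponding densities, which coincide Lebesgue-almost everywhere. The main obstacle in carrying out this plan is not the computation itself but the bookkeeping of conventions: \cite{KV17} works with weak descending ladder heights while \cite{Don89} works with strict ascending ones, and one must verify that the harmonic functions appearing in those two references are indeed our $\check V$ and $V$ as defined by $\check V(x)=-\bb E\check S_{\check\tau_x}$ and $V(x)=-\bb E S_{\tau_x}$, rather than some other normalization.
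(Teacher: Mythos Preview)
Your route differs from the paper's in a substantive way, and part~2 has a concrete gap.

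The paper does not read Theorem~4.7 of \cite{KV17} or Proposition~11 of \cite{Don89} as direct algebraic identifications between renewal series and harmonic functions. It treats them as \emph{asymptotic} statements and matches them against the asymptotics proved in this paper. For part~1, Theorem~4.7 of \cite{KV17} (applied to $(\check S_n)$) gives $\bb P(\check\tau_x>n)\sim e^{-\check c_-}\check U_K(x)/\sqrt{\pi n}$, while Theorem~\ref{introTheor-probtauUN-001} of the present paper gives $\bb P(\check\tau_x>n)\sim 2\check V(x)/(\sigma\sqrt{2\pi n})$; equating and using \eqref{appendix-002} yields $\check U_K(x)=\check V(x)/\check V(0)$. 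For part~2, Proposition~11 (lattice) and Proposition~18 (non-lattice) of \cite{Don89} are \emph{conditioned local limit theorems}: they give $\bb P(S_n=x-y,\,T_x>n)\sim U_D(x)V_D(y)/(\sqrt{2\pi}\sigma n^{3/2})$, which the paper matches against its own results \eqref{lattice-corrol-eq01} and \eqref{non-lattice-corrol-geneq01b}. Cancelling $U_D(x)=\check V(x)/\check V(0)$ from part~1 and simplifying via \eqref{appendix-001}--\eqref{appendix-002} gives the claim. The non-lattice caveat ``almost all $x$'' arises because Proposition~18 of \cite{Don89} is an interval statement, so the matching only identifies $\int_y^{y+v}V_D$ with $\int_y^{y+v}e^{c_0}V/V(0)$.

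Your plan for part~2 --- establish that $V_D$ is harmonic, invoke one-dimensionality, then pin the constant by computing $V_D(0)$ --- runs into trouble at the last step. With the definition written in the appendix, $V_D(0)=\sum_{k\geq 0}\bb P(\tau_D>k)=\bb E[\tau_D]$, and for a centred square-integrable walk $\bb P(\tau_D>k)\asymp k^{-1/2}$, so this series diverges. Whatever Spitzer--Baxter identity you have in mind cannot give a finite value here; the constant must be extracted differently (e.g.\ via the ladder-height renewal normalization in \cite{Don89}, or, as the paper does, by matching asymptotics without ever evaluating $V_D$ at zero). Your characterization of Proposition~11 of \cite{Don89} as ``identifying $V_D$ with Doney's harmonic function up to a constant'' is also not what that proposition says; it is a local limit theorem, and the identification with a harmonic function is precisely what this lemma is meant to establish.
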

\begin{proof}
Proof of part 1. 
The first equality follows from the definitions \eqref{def of U_D-U_K-001} and \eqref{def of U_D-U_K-002}.

For the second, by applying Theorem 4.7 of \cite{KV17} to the random walk $(\check S_k)_{k\geq 1}$, we have, for $x\geq 0$,
\begin{align*} %\label{}
\bb P_n(x) = \bb P(\min \{0,\check S_1,\ldots \check S_n \} \geq -x) \sim \frac{e^{-\check c_-}}{\sqrt{\pi n} } \check U_K(x).
\end{align*}
On the other hand, by \eqref{lattice-corrol-eq01} and \eqref{appendix-002}, for $x\geq 0$,
\begin{align*} %\label{}
\bb P_n(x) 
&= \bb P(x+\check S_1\geq 0,\ldots, x+\check S_n\geq 0) \\
&=\bb P(\check \tau_x >n) 
\sim \frac{2\check V(x)}{\sqrt{2\pi n} \sigma} = \frac{2\check V(0)}{\sqrt{2\pi n} \sigma} \frac{\check V(x)}{\check V(0)} 
=  \frac{e^{-\check c_-}}{\sqrt{\pi n}} \frac{\check V(x)}{\check V(0)}.
\end{align*}
From the two last equivalences we derive the second equality in the first part. % \eqref{Fct-002}.
 
Proof of part 2. 
By Proposition 11 of \cite{Don89}, it follows that
\begin{align} \label{compar005a}
 \check {\bb P}_n(x,y):=\bb P(S_n=x-y, T_x>n) 
\sim \frac{U_D(x) V_D(y)}{\sqrt{2\pi}\sigma n^{3/2}}.
\end{align}
On the other hand, from \eqref{lattice-corrol-eq01}, using \eqref{appendix-001} and \eqref{appendix-002}, we have that
\begin{align} \label{compar005b}
\check {\bb P}_n(x,z)
&=\bb P(x+\check S_n=y, x+\check S_1 \geq 0,\ldots, x+\check S_n \geq 0  )
=\bb P(x+\check S_n=y, \check \tau_x>n) \notag \\
&\sim \frac{2\check V(x) V(y)}{\sqrt{2\pi}\sigma^3 n^{3/2}} 
= \frac{2\check V(0) V(0)}{\sqrt{2\pi}\sigma^3 n^{3/2}} \frac{\check V(x)}{\check V(0)} \frac{V(y)}{V(0)}
= \frac{e^{c_0}}{\sqrt{2\pi}\sigma n^{3/2}} \frac{\check V(x)}{\check V(0)} \frac{V(y)}{V(0)}. 
\end{align}  
Comparing \eqref{compar005a} and  \eqref{compar005b} and using the first part of the lemma, we get
$V_{D}(y)=e^{c_0} \frac{V(y)}{V(0)}$.

In the non-lattice case, by Proposition 18 of \cite{Don89}, for any $x,y \geq 0$ and $v>0$, 
\begin{align} \label{non-lett-compar06a}
 \check {\bb P}_n(x,y,v):=\bb P(S_n\in (x-y-v,x-y], T_x>n) 
\sim \frac{U_D(x) \int_{y}^{y+v} V_D(z) dz}{\sqrt{2\pi}\sigma n^{3/2}}.
\end{align}
In the same way, from \eqref{non-lattice-corrol-geneq01}, applied to random walk $(\check S_n)_{n\geq 0}$, 
using \eqref{appendix-001} and \eqref{appendix-002},
we have that,  for any $x,y\in \bb R$,
\begin{align} \label{non-lett-compar06b}
\check {\bb P}_n(x,y,v)
&= \bb{P} \left(x+\check S_n \in [y,y+v], \tau_x >n-1\right) \\
& \sim  \frac{2\check V(x) \int_{y}^{y+v}  V(z) dz}{\sigma^3 n^{3/2}} 
= \frac{e^{c_0}}{\sqrt{2\pi}\sigma n^{3/2}} \frac{\check V(x)}{\check V(0)} \frac{\int_{y}^{y+v}  V(z) dz}{V(0)}. 
\end{align}
Comparing \eqref{non-lett-compar06a} and \eqref{non-lett-compar06b} and using the first part of the lemma, we get
$V_{D}(y)=e^{c_0} \frac{V(y)}{V(0)}$, for almost  all $y \geq 0$.
\end{proof}

Introduce the functions $\check U_D, \check V_D$ and $U_K$. 
Consider the stopping time $\check \tau_D^- =\inf\{k\geq 1: \check S_k \leq 0 \}$. 
For any $x\in \bb R$,
\begin{align*} 
\check U_D(x) 
&:= \sum_{k=0}^{\infty} \bb P(\check S_k \leq x, \check \tau_D^->k)
= \mathds 1_{\{x \geq 0\}} + \sum_{k=0}^{\infty} \bb P(\check S_k \leq x, \check S_1> 0,\ldots,\check S_k>0) 
\notag\\
&= \mathds 1_{\{x \geq 0\}} + \sum_{k=0}^{\infty} \bb P(S_k \geq -x, S_1< 0,\ldots,S_k<0)=: U_K(x). 
\end{align*}
Let $\check \tau_D =\inf\{k\geq 1: \check S_k > 0 \}=\check \tau_0$.
Consider the function $\check V_D$  
\begin{align*} %\label{}
\check V_D(x)
&=\sum_{k=0}^{\infty} \bb P(\check  S_k\leq -x, \check \tau_D>k) \\
&= \mathds 1_{\{x \geq 0\}} + \sum_{k=1}^{\infty} \bb P(\check S_k\leq -x, \check S_1\leq 0,\ldots,\check S_k\leq 0).
\end{align*}
In the same way one can establish the following:
\begin{lemma}\ \\ 
1. For any $x\geq 0$, $\check U_D(x) = U_K(x)  = \frac{V(x)}{V(0)}.$\\
2. When $X_1$ is lattice, for any $x\geq 0$, $\frac{\check  V(x)}{\check  V(0)} = e^{-c_0} \check  V_D(x).$ 
When $X_1$ is non-lattice, $\frac{\check  V(x)}{\check  V(0)} = e^{-c_0} \check  V_D(x),$ for almost all $x\geq 0$. 
\end{lemma}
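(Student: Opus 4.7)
The plan is to transcribe the proof of the preceding lemma step by step, exchanging the roles of the random walks $(S_k)_{k\geq 1}$ and $(\check S_k)_{k\geq 1}$ throughout. Since the triple $(\check U_D, U_K, \check V_D)$ is obtained from $(U_D, \check U_K, V_D)$ by the substitution $S \leftrightarrow \check S$, and since this substitution swaps $c_-$ with $c_+$ while leaving $c_0$ invariant, this symmetry is essentially all that is required.

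For part 1, the identity $\check U_D(x) = U_K(x)$ is immediate from the defining sums by writing $\check S_k = -S_k$. For the remaining equality, I would apply Theorem 4.7 of \cite{KV17} to the walk $(S_k)_{k\geq 1}$ itself (rather than to its reverse, as in the previous lemma) to obtain
\begin{equation*}
\bb Q_n(x) := \bb P(\min\{0, S_1, \ldots, S_n\} \geq -x) \sim \frac{e^{-c_-}}{\sqrt{\pi n}}\, U_K(x).
\end{equation*}
But $\bb Q_n(x) = \bb P(\tau_x > n)$, and by Theorem \ref{introTheor-probtauUN-001} (equivalently, by summing \eqref{lattice-corrol-eq01} in the lattice case) this quantity is asymptotic to $\frac{2 V(x)}{\sigma\sqrt{2\pi n}}$. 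Using the identity $V(0) = \sigma e^{-c_-}/\sqrt{2}$ from \eqref{appendix-002}, the latter rewrites as $\frac{e^{-c_-}}{\sqrt{\pi n}}\cdot\frac{V(x)}{V(0)}$, and comparing the two asymptotics yields $U_K(x) = V(x)/V(0)$.

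For part 2 in the lattice case, I would apply Proposition 11 of \cite{Don89} to the walk $(\check S_k)_{k\geq 1}$, obtaining
\begin{equation*}
\bb P(\check S_n = x - y, \check T_x > n) \sim \frac{\check U_D(x)\, \check V_D(y)}{\sqrt{2\pi}\,\sigma\, n^{3/2}}.
\end{equation*}
On the other hand, via the substitution $\check S_k = -S_k$, this same probability coincides with $\bb P(x + S_n = y, \tau_x > n-1)$, which by \eqref{lattice-corrol-eq01} together with \eqref{appendix-001} and \eqref{appendix-002} is asymptotic to
\begin{equation*}
\frac{2\, V(x)\, \check V(y)}{\sqrt{2\pi}\,\sigma^3\, n^{3/2}}
\;=\; \frac{e^{c_0}}{\sqrt{2\pi}\,\sigma\, n^{3/2}}\cdot\frac{V(x)}{V(0)}\cdot\frac{\check V(y)}{\check V(0)}.
\end{equation*}
Comparing the two equivalences and invoking part 1 to substitute $\check U_D(x) = V(x)/V(0)$ forces $\check V_D(y) = e^{c_0}\, \check V(y)/\check V(0)$, which is the claimed identity. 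In the non-lattice case I would replace Proposition 11 by Proposition 18 of \cite{Don89} and \eqref{lattice-corrol-eq01} by \eqref{non-lattice-corrol-geneq01}; an analogous comparison for the probability $\bb P(\check S_n \in (x-y-v, x-y], \check T_x > n)$, together with part 1, then delivers the identity for almost every $y\geq 0$.

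The only real obstacle is the bookkeeping of the constants: one must verify that the swap $S \leftrightarrow \check S$ interchanges $c_-$ with $c_+$ (both entering \eqref{appendix-002}) while preserving $c_0$, so that the factor $e^{-c_0}$ reappears unchanged in the conclusion. Once this symmetry is noted, every step of the argument is an unambiguous transcription of the proof of the previous lemma with $S$ replaced by $\check S$.
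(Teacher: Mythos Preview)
Your proposal is correct and takes essentially the same approach as the paper: the paper simply states ``In the same way one can establish the following'' before this lemma, and what you have written is precisely the transcription of the preceding proof under the substitution $S \leftrightarrow \check S$, with the correct bookkeeping of the constants $c_-$, $c_+$, $c_0$.
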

In particular, from these two lemmas, it follows that, for any $x,y\geq 0$, 
\begin{align} \label{product vcheckV-001}
V(x)\check V(y) 
 = \check U_D(x)  U_D(y) \frac{\sigma^2e^{c_0}}{2}.
\end{align}
When $X_1$ is lattice, for $x,y\geq 0$,
\begin{align} \label{product vcheckV-002}
V(x)\check V(y) & = V_D(x)  U_D(y) \frac{\sigma^2}{2} = \check U_D(x)  \check V_D(y) \frac{\sigma^2}{2}. 
\end{align}
In the case when $X_1$ is non-lattice, 
for any $x\geq 0$, the identity 
$$V(x)\check V(y) = \check U_D(x)  \check V_D(y) \frac{\sigma^2}{2}$$ 
holds for almost all $y\geq 0$,
and
for any $y\geq 0$, the identity 
$$V(x)\check V(y) = V_D(x)  U_D(y) \frac{\sigma^2}{2}$$ 
holds for almost all $x\geq 0$.

%%%%%%%%%%%%%%%%%%%%%%%%%%%%%%%%%%%%%%%%%%%%%%%%
%%%%%%%%%%%%%%%%%%%%%%%%%%%%%%%%%%%%%%%%%%%%%%%%

\end{document}